\documentclass[12pt]{article}
\usepackage{geometry}                
\geometry{letterpaper}                   
\usepackage{graphicx,color,mathtools}
\usepackage{amssymb,amsmath,amsthm,mathrsfs}
\usepackage[all,cmtip]{xy}
\usepackage{epstopdf, comment, url}
\usepackage{bm} 
\usepackage{enumitem}

\usepackage[pdftex,bookmarks,pdfnewwindow,plainpages=false,unicode,pdfencoding=auto]{hyperref}

 \hypersetup{
pdfauthor={Oleg Ivrii, Mariusz Urba\'nski},
pdftitle={Inner Functions, Composition Operators, Symbolic Dynamics and Thermodynamic Formalism},
pdfsubject={Dynamical systems in the unit disk},
bookmarksdepth={4}
}

\DeclareGraphicsRule{.tif}{png}{.png}{`convert #1 `dirname #1`/`basename #1 .tif`.png}
\linespread{1.2}

\newtheorem{theorem}{Theorem}[section]
\newtheorem{corollary}[theorem]{Corollary}
\newtheorem{lemma}[theorem]{Lemma}

\theoremstyle{remark}
\newtheorem*{remark}{Remark}
\newtheorem*{example}{Example}

\numberwithin{equation}{section}

\DeclareMathOperator{\diam}{diam}

\DeclareMathOperator{\aut}{Aut}
\DeclareMathOperator{\hol}{Hol}
\DeclareMathOperator{\hyp}{hyp}
\DeclareMathOperator{\dist}{dist}
\DeclareMathOperator{\loc}{loc}

\DeclareMathOperator{\sph}{\hat{\mathbb{C}}}

\DeclareMathOperator{\re}{Re}
\DeclareMathOperator{\im}{Im}

\DeclareMathOperator{\per}{Per}

\DeclareMathOperator{\id}{Id}
\DeclareMathOperator{\BV}{BV}

\DeclareMathOperator{\Cl}{Cl}

\DeclareMathOperator{\Interior}{Int}

\DeclareMathOperator{\bounded}{b}
\def\N{\mathbb N}
\def\C{\mathbb C}

\title{\bf \Large{Inner Functions, Composition Operators, Symbolic Dynamics and Thermodynamic Formalism}}
\author{Oleg Ivrii and Mariusz Urba\'nski}

\date{}                                           

\begin{document}


\maketitle

\abstract{In this paper, we use thermodynamic formalism to study the dynamics of inner functions $F$ acting on the unit disk. If the Denjoy-Wolff point of $F$ is in the open unit disk, then without loss of generality, we can assume that $F(0) = 0$ so that 0 is an attracting fixed point of $F$ and the Lebesgue measure on the unit circle is invariant under $F$. Utilizing the connection between composition operators, Aleksandrov-Clark measures and Perron-Frobenius operators, we develop a rudimentary thermodynamic formalism which allows us to prove the Central Limit Theorem and the Law of the Iterated Logarithm for Sobolev multipliers and H\"older continuous observables. 

Under the more restrictive, but natural hypothesis that $F$ is a one component inner function, we develop a more complete thermodynamic formalism which is sufficient for orbit counting, assuming only the $(1+\varepsilon)$-integrability of $\log|F'|$. As one component inner functions admit countable Markov partitions of the unit circle, we may work in the abstract symbolic setting of countable alphabet subshifts of finite type. Due to the very weak hypotheses on the potential, we need to pay close attention to the regularity of the complex Perron-Frobenius operators $\mathcal L_s$ with $\re s > 1$ near the boundary.

 Finally, we discuss inner functions with a Denjoy-Wolff point on the unit circle. We assume a parabolic type behavior of $F$ around this point and we introduce the class of parabolic one component inner functions. By making use of the first return map, we deduce various stochastic laws and orbit counting results from the aforementioned abstract symbolic results.}

\tableofcontents

\section{Introduction}

A {\em finite Blaschke product} is a  holomorphic self-map of the unit disk which extends to a continuous dynamical system
on the unit circle. It is determined by the location of its zeros up to a rotation:
$$
F(z) = e^{i\theta} \prod_{i=1}^d \frac{z-a_i}{1-\overline{a_i}z}, \qquad a_i \in \mathbb{D}.
$$

Loosely speaking, an {\em inner function} is a  holomorphic self-map of the disk which extends to a measure-theoretic dynamical system
on the unit circle. More precisely, we require that for a.e.~$\theta \in [0, 2\pi)$, the radial boundary value $F(e^{i\theta}) := \lim_{r \to 1} F(re^{i\theta})$ exists and has absolute value 1.

One can classify inner functions according to the location of the Denjoy-Wolff fixed point. We mostly deal with the case when the Denjoy-Wolff fixed point is in the open unit  disk. After a conjugation, we may assume that $F(0) = 0$. We call such inner functions {\em centered}\/. By the Schwarz Lemma, under iteration, all points in the unit disk tend to 0.
We write $m = d\theta/2\pi$ for the normalized Lebesgue measure on the unit circle.

To set the stage, we recall a number of dynamical properties of $m$ such as invariance, ergodicity and mixing due to Sullivan-Shub \cite{SS} and Pommerenke \cite{pom}:

\begin{lemma}
Suppose that $F(z)$ is an inner function such that $F(0) = 0$. 

{\em (i)} The Lebesgue measure on the unit circle is invariant under $F$, that is,
$m(F^{-1}(E)) = m(E)$ for any measurable set $E \subset \partial \mathbb{D}$.

{\em (ii)} $m$ is ergodic, that is,
any invariant set $E \subset \mathbb{D}$ has measure 0 or 1.

{\em (iii)} The map $F: \partial \mathbb{D} \to \partial \mathbb{D}$ is mixing, that is, for any measurable sets $A, E \in \partial \mathbb{D}$,
$$
m(A \cap F^{-n}(E)) \to m(A) \cdot m(E), \qquad \text{as }n \to \infty.
$$
\end{lemma}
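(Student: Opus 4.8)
\section*{Proof proposal}

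The plan is to use throughout the one structural feature of inner maps: since $F$ is inner with $F(0)=0$, for every $g\in L^\infty(m)$ the bounded harmonic function $u_g=P[g]$ on $\mathbb D$ (the Poisson extension of $g$) satisfies the intertwining identity $u_g\circ F=P[g\circ F]$, and likewise $u_g\circ F^n=P[g\circ F^n]$ for every $n$, since each iterate $F^n$ is again inner with $F^n(0)=0$. Indeed $u_g\circ F^n$ is a bounded harmonic function on $\mathbb D$, and because $F^n$ has unimodular radial limits $m$-a.e.\ and $u_g$ has nontangential boundary values $g$ $m$-a.e., the radial limit of $u_g\circ F^n$ at $e^{i\theta}$ is $g(F^n(e^{i\theta}))$ for $m$-a.e.\ $\theta$; for continuous $g$ this is immediate, as $u_g\in C(\overline{\mathbb D})$, and the bounded case follows by approximation. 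Granting this, part (i) drops out: for $g\in C(\partial\mathbb D)$, evaluating at the origin,
\[
\int_{\partial\mathbb D}(g\circ F)\,dm=(u_g\circ F)(0)=u_g(F(0))=u_g(0)=\int_{\partial\mathbb D}g\,dm,
\]
so $F_*m=m$ as measures by the uniqueness part of the Riesz representation theorem, i.e.\ $m(F^{-1}(E))=m(E)$. In particular $g\mapsto g\circ F$ is an isometry of $L^2(m)$, a fact I will use below.

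For (ii) and (iii) we take $F$ not to be a rotation, equivalently (given $F(0)=0$) not a degree-one Blaschke product; this is the standing hypothesis of Sullivan--Shub and Pommerenke, and is necessary since rational rotations are not ergodic. Then $F$ is not a M\"obius automorphism of $\mathbb D$, so the Denjoy--Wolff theorem gives $F^n\to 0$ locally uniformly on $\mathbb D$. For (ii), let $E$ be invariant, $F^{-1}(E)=E$ up to an $m$-null set, and set $h=P[\chi_E]$. The intertwining identity gives $h\circ F=P[\chi_E\circ F]=P[\chi_{F^{-1}(E)}]=h$, hence $h\circ F^n=h$ for all $n$; fixing $z\in\mathbb D$ and using continuity of $h$ at $0$,
\[
h(z)=h(F^n(z))\longrightarrow h(0)=m(E)\qquad(n\to\infty),
\]
so $h\equiv m(E)$. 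But then the $m$-a.e.\ boundary values of $h$ are the constant $m(E)$, so $\chi_E=m(E)$ $m$-a.e., which forces $m(E)\in\{0,1\}$.

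Part (iii) reduces to testing against Poisson kernels $P_z$, $z\in\mathbb D$. By the intertwining identity for $F^n$ and $F^n(z)\to 0$,
\[
\int_{\partial\mathbb D}P_z\,(g\circ F^n)\,dm=(u_g\circ F^n)(z)=u_g(F^n(z))\longrightarrow u_g(0)=\int_{\partial\mathbb D}g\,dm\qquad(n\to\infty),
\]
for every $g\in L^\infty(m)$, using continuity of $u_g$ at $0$. By linearity the same limit holds with $P_z$ replaced by any finite linear combination $f=\sum_i c_i P_{z_i}$, the limit being $\bigl(\sum_i c_i\bigr)\int g\,dm=\bigl(\int f\,dm\bigr)\bigl(\int g\,dm\bigr)$ since $\int P_{z_i}\,dm=1$. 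Now the linear span of $\{P_z:z\in\mathbb D\}$ is dense in $L^2(m)$, because its $L^2$-annihilator is trivial: $u_\phi(z)=\int\phi\,P_z\,dm=0$ for all $z$ forces $\phi=0$. Moreover the bilinear forms $B_n(f,g):=\int f\,(g\circ F^n)\,dm-\bigl(\int f\,dm\bigr)\bigl(\int g\,dm\bigr)$ are bounded on $L^2(m)\times L^2(m)$ uniformly in $n$, since $g\mapsto g\circ F^n$ is an $L^2$-isometry. Hence $B_n\to 0$ on a dense set implies $B_n\to 0$ everywhere, and taking $f=\chi_A$, $g=\chi_E$ yields $m(A\cap F^{-n}(E))=\int\chi_A\,(\chi_E\circ F^n)\,dm\to m(A)m(E)$. (Mixing re-implies ergodicity, but the argument for (ii) above is self-contained.)

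The only step requiring genuine care is the intertwining identity, specifically the identification of the a.e.\ radial boundary values of the bounded harmonic function $u_g\circ F^n$ with $g\circ F^n$: for continuous $g$ it is a two-line argument from $u_g\in C(\overline{\mathbb D})$ together with the existence of unimodular radial limits of the inner function $F^n$, while for bounded $g$ one reduces to continuous $g$, the reduction being stable because $F^n$ preserves $m$. The remaining ingredients --- the Riesz representation step, the Denjoy--Wolff input, and the density plus uniform-boundedness argument upgrading convergence from Poisson kernels to arbitrary $L^2$ functions --- are routine.
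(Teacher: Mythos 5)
Your proof is correct and follows exactly the route the paper indicates (it gives no details beyond the remark that the properties ``may be proved by examining Poisson extensions of characteristic functions,'' deferring to Shub--Sullivan and Pommerenke): the intertwining identity $u_g\circ F^{\circ n}=P[g\circ F^{\circ n}]$, evaluation at the origin for invariance, and $F^{\circ n}\to 0$ for ergodicity and mixing. You are also right to flag that (ii) and (iii) implicitly require $F$ not to be a rotation, a hypothesis the lemma as stated omits.
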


The above properties may be proved by examining Poisson extensions of characteristic functions.

In this paper, we employ thermodynamic formalism to show stronger statistical properties of inner functions such as the Central Limit Theorem, the Law of the Iterated Logarithm and the Almost Sure Invariance Principle. We also obtain an Orbit Counting Theorem for one component inner functions satisfying a mild integrability condition. Later, we also discuss Orbit Counting for parabolic one component inner functions.

\subsection{Thermodynamic Formalism and Stochastic Laws for Centered Inner Functions}

Our first objective is to study stochastic properties of arbitrary inner functions $F$ with $F(0) = 0$ acting on the unit circle. We begin by surveying a number of results on the spectral properties of the {\em composition operator} 
$$
C_F: h \to h \circ F
$$ 
acting on various spaces of real and holomorphic functions on the unit circle. We then define  the {\em Perron-Frobenius} or {\em transfer operator} $L_F$ with potential $-\log|F'|$ as the dual of the composition operator with respect to the $L^2$ pairing on the unit circle.

In Section \ref{sec:transfer-operators}, we will see that the operator $L_F$ admits a convenient representation in terms of the Aleksandrov-Clark measures:
 \begin{equation*}
(L_F g)(\alpha) = \int_{\partial \mathbb{D}} g(z) d\mu_\alpha(z), \qquad \alpha \in \partial \mathbb{D},
\end{equation*}
which generalizes the classical definition
\begin{equation*}
(L_F g) (x) = \sum_{F(y) = x} |F'(y)|^{-1} g(y),
\end{equation*}
valid for finite Blaschke products. In fact, the two definitions agree precisely when the Alexandrov-Clark measures $\{\mu_\alpha\}$ are purely atomic. More generally, one can define Perron-Frobenius operators for other potentials by taking adjoints of weighted composition operators.

We use duality arguments to show that for a large class of potentials, the Perron-Frobenius operator has a spectral gap when acting on spaces of Sobolev and H\"older functions. From here, stochastic laws follow quite easily: the Central Limit Theorem is an immediate consequence of Gordin's results \cite{gordin} while the Law of the Iterated Logarithm and the Almost Sure Invariance Principle follow from the work of Gou\"ezel  \cite{gouezel}.

We now describe the Central Limit Theorem  (CLT) in greater detail.
 As is standard in ergodic theory and dynamical systems, we use the notation
$$
S_n h(x) := h(x) + h(F(x))+ \dots + h(F^{\circ (n-1)})(x)
$$
to repesent the sum of a function $h$ along the forward orbit of $x$. 
One says that the Central Limit Theorem holds for an observable $h: \partial \mathbb{D} \to \mathbb{R}$ and a measure $\mu$ on $\partial \mathbb{D}$ if the random variables
$\frac{S_n h}{\sqrt{h}}$ on the probability space $(\partial \mathbb{D}, \mu)$ converge in distribution to the Gaussian distribution
$$
\mathcal N(\overline{h}, \, \sigma^2(h - \overline{h})), \qquad \overline{h} = \int_{\partial \mathbb{D}} h(x) d\mu(x),
$$
for some constant $\sigma^2 = \sigma^2(h) \ge 0$ depending on $h$.

\begin{theorem}
\label{main-thm}
If $F$ is a centered inner function, then the Central Limit Theorem holds with respect to the Lebesgue measure on the unit circle for 
\begin{itemize}
\item any Sobolev multiplier $h \in  \mathcal M(W^{1/2,2}(\partial \mathbb{D}))$, 
\item any H\"older continuous function $h \in  C^\alpha(\partial \mathbb{D})$ with $\alpha > 0$.
\end{itemize}
Furthermore, if $F$ is not a finite Blaschke product and   $h$ is non-constant, then $\sigma^2(h) \ne 0$.
\end{theorem}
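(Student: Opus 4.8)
The plan is to establish the CLT via the standard route through a spectral gap for the Perron–Frobenius operator $L_F$ acting on an appropriate Banach space, and then apply Gordin's martingale approximation theorem. The architecture has two halves: (1) prove $L_F$ has a spectral gap on $W^{1/2,2}(\partial\mathbb D)$ and on $C^\alpha(\partial\mathbb D)$, and (2) derive the stochastic laws and the non-degeneracy of the variance from this.

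First I would recall that $L_F$ is the $L^2$-adjoint of the composition operator $C_F$, and that $L_F 1 = 1$ because $m$ is $F$-invariant (Lemma~1.1(i)). The key analytic input is the spectral gap: one shows that on the space $X$ (either the Sobolev-multiplier algebra $\mathcal M(W^{1/2,2})$ or $C^\alpha$), the operator $L_F$ decomposes as $L_F = P + N$ where $P$ is the rank-one projection $g \mapsto (\int g\, dm)\cdot 1$ onto constants and $N$ has spectral radius strictly less than $1$, with $PN = NP = 0$. I expect this to follow from the duality argument alluded to in the paper: $C_F$ is a bounded operator on these spaces with $1$ a simple eigenvalue (ergodicity, Lemma~1.1(ii)) and no other spectrum on the unit circle (mixing, Lemma~1.1(iii), rules out other unimodular eigenvalues), together with quasi-compactness coming from a Lasota–Yorke / Doeblin–Fortet type inequality $\|L_F g\|_X \le \theta \|g\|_X + C\|g\|_{L^2}$ for some $\theta<1$ and the compactness of the inclusion $X \hookrightarrow L^2$. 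The Aleksandrov–Clark representation $(L_F g)(\alpha) = \int g\, d\mu_\alpha$ is what lets one control the $X$-norm of $L_F g$ in terms of the $X$-norm of $g$ uniformly in $\alpha$.

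Given the spectral gap, I would write $h_0 = h - \overline h$ and observe that $\sum_{n\ge 0} L_F^n h_0$ converges in $X$ (since $h_0 \in \ker P$ and $L_F$ is a contraction on that complemented subspace with spectral radius $<1$). Setting $u = \sum_{n\ge 1} L_F^n h_0$, one checks $h_0 = u - C_F u + \psi$ where $\psi = h_0 - (u - C_F u)$ satisfies $L_F\psi = 0$; the relation $L_F\psi=0$ is exactly the martingale coboundary condition of Gordin, so the CLT follows from Gordin's theorem \cite{gordin}, with $\sigma^2(h) = \int \psi^2\, dm$, and simultaneously the LIL and ASIP follow from \cite{gouezel}. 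This requires checking $h\in X$: for $h\in C^\alpha$ this is immediate, and for $h$ a Sobolev multiplier one uses that $C_F$ is bounded on $W^{1/2,2}$ (a consequence of $F$ being inner, hence a measure-preserving map of the circle — composition with such maps is bounded on the fractional Sobolev space of critical exponent $1/2$) so that the relevant series still converge in the multiplier norm or at least in $W^{1/2,2}$.

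The main obstacle, and the step I would spend the most care on, is non-degeneracy: showing $\sigma^2(h)\ne 0$ when $F$ is not a finite Blaschke product and $h$ is non-constant. Since $\sigma^2(h) = 0$ forces $\psi = 0$ a.e., i.e. $h_0 = u\circ 1 - u\circ F$ is an $L^2$ coboundary, I would argue by contradiction: a coboundary relation $h - \overline h = u - u\circ F$ with $u\in L^2(m)$ is very rigid. The plan is to exploit the fact that an inner function which is not a finite Blaschke product has infinite degree — its Aleksandrov–Clark measures $\mu_\alpha$ are non-atomic for a.e.\ $\alpha$ (equivalently, $F$ is not finite-to-one) — so the transfer operator genuinely averages, and a coboundary with an $L^2$ (or $X$-regular) transfer function cannot have $\psi$ supported on a set of positive measure unless it vanishes. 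Concretely: from $L_F\psi = 0$ and $\psi$ real, take $L_F(\psi^2)$; using the Aleksandrov–Clark representation and Jensen/Cauchy–Schwarz, $L_F(\psi^2)(\alpha) \ge (L_F\psi(\alpha))^2 = 0$ with equality for a.e.\ $\alpha$ only if $\psi$ is $\mu_\alpha$-a.e.\ constant for a.e.\ $\alpha$; when the $\mu_\alpha$ are non-atomic this constancy on fibers, combined with ergodicity, forces $\psi$ constant, hence $\psi \equiv 0$ (as $\int\psi\,dm = 0$), hence $h_0$ is a coboundary with $h_0 = u - u\circ F$; iterating and using that $F^{\circ n}$ equidistributes (mixing) one pushes $u$ to be constant, contradicting $h$ non-constant. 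The delicate point is justifying the fiberwise-constancy-implies-globally-constant step rigorously — this is where the hypothesis "not a finite Blaschke product" is essential and where I anticipate needing the non-atomicity of Aleksandrov–Clark measures together with a Rokhlin-type disintegration of $m$ over $F$.
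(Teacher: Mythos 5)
Your overall architecture (spectral gap for $L_F$, Gordin's martingale--coboundary decomposition, then ruling out degenerate variance) matches the paper's, and the first two steps are essentially workable, with two caveats. First, the paper does not obtain the spectral gap from a Doeblin--Fortet inequality for $L_F$: for a general centered inner function there is no Markov structure to run such an estimate, and it is not even clear a priori that $L_F$ preserves $C^\alpha$ or $W^{1/2,2}$. Instead the paper shows that $C_F$ has essential spectral radius $<1$ on the weighted Bergman spaces $A^2_\alpha$ (Shapiro's Nevanlinna-counting-function formula plus a hyperbolic contraction estimate), identifies the eigenvalues via Koenigs' lemma, and transfers the gap to $L_F$ on $W^{(1+\alpha)/2,2}=\mathcal D_{1+\alpha}+\overline{\mathcal D_{1+\alpha}}$ and on $C^\beta \cong (A^1_{\beta-1})^*$ by duality. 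Second, your claim that $C_F$ is bounded on $W^{1/2,2}$ because $F$ is measure-preserving is false when $F$ has infinite degree: the Dirichlet integral of $g\circ F$ is the area of the image of $g$ counted with the multiplicity of $F$, hence infinite for non-constant $g$ (Lemma \ref{dirichlet-composition}). This is harmless for your series $\sum_{n\ge1}L_F^n h_0$, which only needs $L_F$ to act with a gap, but it is the warning sign for your last step. (Minor: the Gordin decomposition should read $\psi=h_0+u-u\circ F$ with $u=\sum_{n\ge1}L_F^nh_0$, so that $L_F\psi=0$; your sign gives $L_F\psi=2L_Fh_0$.)

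The genuine gap is in the non-degeneracy argument, where two ingredients fail. (a) ``Not a finite Blaschke product'' does not imply that the Aleksandrov--Clark measures are non-atomic for a.e.\ $\alpha$: by Theorem \ref{finite-angular-derivative}, the $\mu_\alpha$ are purely atomic a.e.\ whenever $F$ has an angular derivative a.e., which holds for instance for every one component inner function of infinite degree. Your Jensen-inequality step also gives no information, since $L_F(\psi^2)\ge (L_F\psi)^2=0$ is mere nonnegativity, not an equality constraint. (b) More fundamentally, the closing step --- ``iterating and using that $F^{\circ n}$ equidistributes one pushes $u$ to be constant'' --- cannot work, because it uses nothing that distinguishes infinite-degree inner functions from finite Blaschke products: $z\mapsto z^2$ is mixing and equidistributes preimages, yet $h=u\circ F-u$ with $u(z)=\re z$ is a non-constant trigonometric polynomial with $\sigma^2(h)=0$. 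The hypothesis must enter elsewhere, and the paper's mechanism is precisely the fact noted above: if $\sigma^2(h)=0$ one gets a coboundary $h_0=u\circ F-u$ whose transfer function $u$ can be taken in $W^{1/2,2}$ (the paper upgrades the a priori $L^2$ regularity using compactness of $W^{1/2,2}\hookrightarrow L^2$ and boundedness of the iterates $\mathcal L^n_{-\log|F'|+ig}$), hence $u\circ F=h_0+u\in W^{1/2,2}$; but for $F$ of infinite degree the holomorphic part of $u\circ F$ has infinite Dirichlet integral unless $u$ is constant, forcing $h$ to be constant. This area-counted-with-multiplicity lemma is the missing idea.
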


The space of H\"older continuous functions $C^\alpha(\partial \mathbb{D})$ is an algebra since the product of two functions in $C^\alpha(\partial \mathbb{D})$ is again a function in $C^\alpha(\partial \mathbb{D})$.

Loosely speaking, for $\beta > 0$, the Sobolev space $W^{\beta,2}(\partial \mathbb{D})$ consists of $L^2(\partial \mathbb{D})$ functions which have $\beta$ derivatives in $L^2(\partial \mathbb{D})$. In terms of Fourier coefficients, this means that
$$
\sum_{n=-\infty}^\infty |n|^{2\beta} |\hat{h}(n)|^2 < \infty.
$$ 
For $0 < \beta \le 1/2$, the space $W^{\beta,2}(\partial \mathbb{D})$ is not an algebra, so we work with the multiplier space
$\mathcal M(W^{\beta,2}(\partial \mathbb{D}))$ instead.
A function $\gamma$ is called a  $W^{\beta,2}(\partial \mathbb{D})$ {\em multiplier} if for any function $g \in W^{\beta,2}(\partial \mathbb{D})$, the product $g\gamma \in W^{\beta,2}(\partial \mathbb{D})$, in which case,
\begin{equation}
\label{eq:multiplier-def}
\| g\gamma \|_{W^{\beta,2}(\partial \mathbb{D})} \le  C \| g \|_{W^{\beta,2}(\partial \mathbb{D})},
\end{equation}
for some $C > 0$. The infimum of constants $C$ one can put in the right hand side of (\ref{eq:multiplier-def}) is called the {\em multiplier norm} and is denoted by $\| \gamma \|_{\mathcal M(W^{\beta,2}(\partial \mathbb{D}))}$. 

For concreteness, we take $\beta = 1/2$. It is well known that any Sobolev multiplier is a bounded function, and for any $\varepsilon > 0$,
$$
W^{1/2+\varepsilon, 2}(\partial \mathbb{D})  \, \subset \, \mathcal M(W^{1/2,2}(\partial \mathbb{D})) \, \subset \, W^{1/2,2}(\partial \mathbb{D}).
$$
The first inclusion shows that any $C^1$ function is a $W^{1/2,2}$ multiplier. For more on multiplication in Sobolev spaces, we refer the reader to the survey \cite{sobolev-multipliers}.

In additional to the Lebesgue measure, we also establish the Central Limit Theorem with respect to equilibrium measures associated to potentials of the form $-\log|F'| + \gamma$, where $\gamma$ is a positive function in $C^\alpha(\partial \mathbb{D})$  of small norm.

\subsection{Orbit Counting for Centered and Parabolic One Component Inner Functions}

To delve deeper into understanding the dynamical properties of inner functions, we restrict to a natural class of inner functions called {\em one component inner functions} introduced by W.~Cohn in \cite{cohn}. According to the original definition, an inner function $F:\mathbb{D}\to\mathbb{D}$ is a one component inner function if the set $\{ z \in \mathbb{D} : |F(z)| < c \}$ is connected for some $c\in (0,1)$. From the point of view of dynamical systems, it is more useful to say that a one component inner function is an inner function whose set of singular values is compactly contained in the unit disk. 

The equivalence of the two definitions, as well as a number of basic properties of one component inner functions will be established in Section \ref{sec:one-component}. In particular, we will see that one component inner functions admit sufficiently nice Markov partitions, which allows us  to view them as conformal graph directed Markov systems -- a notion introduced in \cite{MU} and further developed in \cite{URM22, KU1}. We may therefore study the dynamics of one component inner functions on the unit circle by working in the abstract symbolic setting of countable alphabet subshifts of finite type.

Let $F:\mathbb{D}\to\mathbb{D}$ be a centered one-component inner function. For $x \in \partial \mathbb{D}$ and $T > 0$, consider the {\em counting function}
$$
n(x, T):= \# \bigl \{ (n \ge 0, y \in \partial \mathbb{D}) : F^{\circ n}(y) = x \text{ and } \log | (F^{\circ n})'(y)| < T \bigr \}.
$$
More generally, for a Borel set $B \subset \partial \mathbb{D}$, one can form the function $n(x,T,B)$ which counts the number of iterated pre-images $y$ that lie in $B$. We show:

\begin{theorem}
\label{orbit-counting1a}
Let $F$ be a one component inner function with $F(0) = 0$. Assume that $F$ has infinite degree or is a finite Blaschke product of degree $d \ge 2$ other than  $z \to z^d$.
Under the hypothesis
\begin{equation}
\label{eq:orbit-counting1}
\int_{\partial \mathbb{D}} \bigl ( \log |F'(z)| \bigl )^{1+\varepsilon} dm < \infty, \qquad \text{for some }\varepsilon > 0,
\end{equation}
we have
$$
n(x, T, B) \sim \frac{e^T}{\int_{\partial \mathbb{D}} \log |F'(z)| dm}  \cdot m(B) \qquad \text{as }T \to \infty,
$$
where $B \subset \partial \mathbb{D}$ is a Borel set with $m(\partial B) = 0$.
\end{theorem}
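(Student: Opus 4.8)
The plan is to transfer the orbit-counting problem to the symbolic setting of countable-alphabet subshifts of finite type and then apply the thermodynamic formalism machinery developed in the earlier sections. First I would invoke the structure theory of one component inner functions: as promised in Section~\ref{sec:one-component}, such an $F$ admits a countable Markov partition $\{I_j\}$ of $\partial\mathbb{D}$ so that the dynamical system $(\partial\mathbb{D}, F, m)$ is conjugate, up to a measure-zero set, to a conformal graph directed Markov system / countable alphabet subshift of finite type $(\Sigma_A, \sigma)$ with the potential $\varphi = -\log|F'|$. The pre-images $y$ with $F^{\circ n}(y)=x$ correspond to admissible words $\omega$ of length $n$ with $\sigma^n([\omega x]) \ni x$, and the quantity $\log|(F^{\circ n})'(y)|$ becomes $-S_n\varphi(\omega x)$, the $n$-th Birkhoff sum of the potential. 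Thus
$$
n(x,T,B) = \#\{(n\ge 0,\ \omega\ \text{admissible},\ |\omega|=n)\ :\ F^{\circ n}_\omega(x)\in B,\ -S_n\varphi < T\},
$$
where $F^{\circ n}_\omega$ denotes the relevant inverse branch. The hypothesis $F\ne z\mapsto z^d$ (and deg $F\ge 2$ or infinite degree) is exactly what rules out the lattice/arithmetic obstruction: it guarantees that the potential $-\log|F'|$ is not cohomologous to a function taking values in a discrete subgroup of $\mathbb{R}$, so the relevant length spectrum is non-arithmetic and the counting asymptotic is a clean exponential rather than a sum of exponentials along an arithmetic progression.

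Next I would set up the complex Perron–Frobenius operators $\mathcal L_s$ with potential $s\varphi = -s\log|F'|$ and use the fine regularity results established earlier for $\re s$ near and above $1$. The key analytic input is that the pressure function $t\mapsto P(t) := P(-t\log|F'|)$ satisfies $P(1)=0$ (since Lebesgue measure is $F$-invariant, $\varphi=-\log|F'|$ has a genuine equilibrium state of pressure zero at parameter $1$), that $P$ is real-analytic and strictly decreasing near $t=1$ with $P'(1) = -\int_{\partial\mathbb{D}}\log|F'|\,dm =: -\chi$, and that $\mathcal L_1$ has a spectral gap with leading eigenvalue $1$, leading eigenfunction $\equiv 1$ (by the normalization coming from invariance) and leading eigenmeasure $m$. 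The $(1+\varepsilon)$-integrability hypothesis \eqref{eq:orbit-counting1} is precisely what is needed to control $\mathcal L_s$ for complex $s$ with $\re s$ slightly larger than $1$ near the boundary of the parameter region, ensuring $P(t)<\infty$ and the operator is well-behaved for $t$ in a one-sided neighborhood of $1$; without it the operator could blow up and the counting could fail. I would then form the Dirichlet-type series / Laplace transform
$$
\eta(s) := \sum_{n\ge 0}\ \sum_{|\omega|=n} \mathbf{1}_B\bigl(F^{\circ n}_\omega(x)\bigr)\, e^{s\,S_n\varphi(\omega x)},
$$
which is essentially $\sum_n (\mathcal L_s^n \mathbf 1_B)(x)$, and observe that its abscissa of convergence is $s=1$ with a simple pole there, the residue being governed by the leading eigendata of $\mathcal L_1$: the residue equals $\frac{1}{\chi}\cdot\frac{\int_B 1\, dm}{\int_{\partial\mathbb D}1\,dm} = \frac{m(B)}{\int_{\partial\mathbb D}\log|F'|\,dm}$ once one differentiates the perturbed leading eigenvalue $\lambda(s)$ and uses $\lambda'(1) = \int\varphi\,dm = -\chi$.

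The final step is a Tauberian argument: $n(x,T,B)$ is, up to the substitution $T\leftrightarrow$ log-scale, the counting function associated to the measure whose Laplace transform is $\eta(s)$, so a Tauberian theorem of Ikehara–Wiener type (or the renewal-theoretic approach à la Lalley, which is the standard route in the graph directed Markov systems literature \cite{MU, URM22, KU1}) yields
$$
n(x,T,B)\sim \frac{e^{T}}{\chi}\, m(B) = \frac{e^T}{\int_{\partial\mathbb D}\log|F'|\,dm}\cdot m(B),\qquad T\to\infty.
$$
The hypothesis $m(\partial B)=0$ is used to pass from the indicator $\mathbf 1_B$ to a genuine continuous observable by sandwiching between continuous functions, so that the spectral/eigenmeasure identities apply. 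The main obstacle, and the reason the earlier sections must do careful work, is establishing the analytic continuation and non-vanishing of $\eta(s)$ slightly past $\re s = 1$ (equivalently, quasi-compactness and a uniform spectral gap for $\mathcal L_s$ with $\re s$ close to $1$) under only the weak $(1+\varepsilon)$-integrability assumption: the operators act on a delicate scale of spaces and lose regularity near $\partial\mathbb D$, so verifying the Tauberian hypotheses — rather than the conjugacy or the residue computation, which are comparatively routine — is where the difficulty concentrates. A secondary point requiring care is the uniformity of the asymptotic in the base point $x$; one handles this by noting that the leading eigenfunction is bounded and bounded away from $0$, so the implied constants are uniform on $\partial\mathbb D$.
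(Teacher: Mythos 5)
Your overall strategy -- code the circle by a countable Markov partition, pass to the symbolic transfer operators $\mathcal L_s$, study the Poincar\'e series $\eta(s)$, and finish with a Wiener--Ikehara Tauberian argument, with D-genericity excluding $z\mapsto z^d$ -- is exactly the route the paper takes (Theorem~\ref{orbit-counting1a} is deduced from the abstract Theorem~\ref{orbit-counting-tdf} via Lemma~\ref{behaviour-of-poincare-series}). However, you have misidentified the key analytic input, in a way that would make the proof fail as written. You assert that the difficulty is ``establishing the analytic continuation and non-vanishing of $\eta(s)$ slightly past $\re s=1$,'' i.e.\ quasi-compactness of $\mathcal L_s$ for $\re s$ in a two-sided neighborhood of $1$, and you speak of a ``simple pole'' at $s=1$. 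Under the weak hypothesis (\ref{eq:orbit-counting1}) this continuation is generally \emph{not available}: a meromorphic extension to a half-plane $\mathbb{C}^+_{1-p}$ would require $\int_{\partial\mathbb D}|F'|^p\,dm<\infty$ for some $p>0$, which is strictly stronger than $(1+\varepsilon)$-integrability of $\log|F'|$. The version of Wiener--Ikehara used here (Theorem~\ref{wiener-ikehara}) deliberately asks for much less: only that $\eta(s)-c/(s-1)$ extend to an $L^1_{\loc}$ function \emph{on} the vertical line $\{\re s=1\}$, with no continuation beyond it.

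What actually has to be proved, and what the $(1+\varepsilon)$-integrability buys, is one-sided boundary regularity: the maps $s\mapsto\mathcal L_s$, $s\mapsto R_s$, $s\mapsto\lambda_s$ are $C^{1+\varepsilon}$ on $\overline{\mathbb{C}_1^+}\cap B(1,\delta)$ (Theorem~\ref{continuity-and-holomorphy-Lsp}, Lemma~\ref{resolvent-diff-lemma}, Corollary~\ref{eigenvalue-diff-lemma}), so that $\lambda_s=1+\lambda_1'(s-1)+O(|s-1|^{1+\varepsilon})$ and hence $\eta(s)-\frac{(-\lambda_1')^{-1}R_1 f}{s-1}=O(|s-1|^{\varepsilon-1})$, which is locally integrable on the critical line; continuity of $\eta$ on $\overline{\mathbb{C}_1^+}\setminus\{1\}$ comes from D-genericity forcing the spectral radius of $\mathcal L_s$ below $1$ there. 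Establishing this $C^{1+\varepsilon}$ regularity up to the closed half-plane (the content of Appendix~\ref{sec:continuity-of-sp-transfer-operators}) is the real work your sketch omits. Two smaller points: the pressure is holomorphic only in the open half-plane, not real-analytic at $s=1$; and the passage from symbolically coded points to arbitrary $x\in\partial\mathbb D$ is not a matter of uniformity of eigendata but of handling the exceptional set of points without unique codings, done in the paper by approximation and Koebe distortion.
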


\begin{theorem}
\label{orbit-counting2a}
Let $F$ be a one component inner function such that $F(0) = 0$, which is not a rotation.
If the Lebesgue measure of $F$ has finite entropy, i.e.~
\begin{equation}
\label{eq:orbit-counting2}
\int_{\partial \mathbb{D}} \log |F'(z)| dm < \infty,
\end{equation}
then
$$
 \frac{1}{T} \biggl \{ \int_0^T \frac{n(x, t, B)}{e^t} \, dt \biggr \} \sim   \frac{1}{\int_{\partial \mathbb{D}} \log |F'(z)| dm}  \cdot m(B) 
  \qquad \text{as }T \to \infty.
$$
\end{theorem}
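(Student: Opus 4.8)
The plan is to reduce to the symbolic model furnished by the one component hypothesis and then read off the growth of the counting function from a Poincar\'e series built out of the complex transfer operators $\mathcal L_s$. By Section~\ref{sec:one-component}, $F$ is (semi-)conjugate to a countable alphabet conformal graph directed Markov system, so we may replace $n(x,T,B)$ by the associated count of inverse branches and work with $\mathcal L_s$, $\re s>1$, having potential $-s\log|F'|$; since the Aleksandrov--Clark measures of a one component inner function are purely atomic, the chain rule gives $\mathcal L_s^n$ the explicit form $g\mapsto\sum_{F^{\circ n}(y)=x}|(F^{\circ n})'(y)|^{-s}g(y)$. Accordingly, put
\begin{equation*}
\Delta_x(s,B)\ :=\ \sum_{n\ge 0}\bigl(\mathcal L_s^n\mathbf 1_B\bigr)(x)\ =\ \int_{[0,\infty)}e^{-sT}\,dn(x,T,B),
\end{equation*}
the Laplace--Stieltjes transform (in $T$) of the nondecreasing function $T\mapsto n(x,T,B)$. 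Because $\sum_{F^{\circ n}(y)=x}|(F^{\circ n})'(y)|^{-1}=1$ one has $\|\mathcal L_s\|_\infty\le 1$ for $\re s\ge 1$, and the pressure $P(s)$ (the logarithm of the spectral radius of $\mathcal L_s$) is decreasing with $P(1)=0$ and $P(s)<0$ for $s>1$; hence the series converges for $\re s>1$.

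The analytic core of the argument is the asymptotic
\begin{equation}
\label{eq:poincare-asymp-proof}
\Delta_x(s,B)\ \sim\ \frac{m(B)}{\lambda\,(s-1)}\qquad\text{as }s\downarrow 1,\qquad\lambda:=\int_{\partial\mathbb D}\log|F'|\,dm\in(0,\infty),
\end{equation}
uniformly in $x\in\partial\mathbb D$. To obtain it, we use that for each $s$ with $\re s>1$ the operator $\mathcal L_s$ is quasi-compact with a spectral gap on the Banach space of the symbolic model: a simple leading eigenvalue $e^{P(s)}$, a positive eigenfunction $h_s$ and an eigenmeasure $\nu_s$ (normalised by $\nu_s(h_s)=1$), all holomorphic in $s$, the remaining spectrum confined to a disk of radius strictly below $e^{P(s)}$. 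The spectral decomposition then gives
\begin{equation*}
\Delta_x(s,B)\ =\ \frac{\nu_s(\mathbf 1_B)\,h_s(x)}{1-e^{P(s)}}\ +\ \bigl((\mathrm{Id}-\mathcal N_s)^{-1}\mathbf 1_B\bigr)(x),
\end{equation*}
with the second term bounded as $s\downarrow 1$. It then remains to verify the four limits $P(s)\to 0$, $h_s\to\mathbf 1$ uniformly, $\nu_s\to m$ weak-$*$, and---the key point---$P(s)/(s-1)\to P'(1^+)=-\lambda$; then $1-e^{P(s)}\sim-P(s)\sim\lambda(s-1)$, while $\nu_s(\mathbf 1_B)\to m(B)$ because $m(\partial B)=0$, so \eqref{eq:poincare-asymp-proof} follows. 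The inequality $P'(1^+)\ge-\lambda$ is immediate from the variational principle, $P(s)\ge h_m(F)-s\int_{\partial\mathbb D}\log|F'|\,dm=(1-s)\lambda$, together with the identification of $h_m(F)$ with $\lambda$. The matching reverse inequality, and the continuity of the triple $(P(s),h_s,\nu_s)$ up to the critical line $\re s=1$, is exactly where the mild hypothesis \eqref{eq:orbit-counting2} (rather than the stronger \eqref{eq:orbit-counting1}) suffices but where one must control the complex operators $\mathcal L_s$ very carefully near $\re s=1$ in the symbolic model. This is the main obstacle.

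Granting \eqref{eq:poincare-asymp-proof}, the conclusion follows from a real-variable Tauberian theorem, which is all that a one-sided estimate as $s\downarrow 1$ can feed. Write $N(T)=n(x,T,B)$ (nondecreasing and finite, since $\Delta_x(\sigma,B)<\infty$ for real $\sigma>1$) and $\beta(T)=\int_{[0,T]}e^{-u}\,dN(u)$; then $\int_0^\infty e^{-\sigma T}\,d\beta(T)=\Delta_x(\sigma+1,B)\sim(m(B)/\lambda)\,\sigma^{-1}$ as $\sigma\downarrow 0$, so Karamata's Tauberian theorem (exponent $\rho=1$) gives $\beta(T)\sim(m(B)/\lambda)\,T$. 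By Fubini,
\begin{equation*}
\int_0^T\frac{n(x,t,B)}{e^t}\,dt\ =\ \int_0^T e^{-t}N(t)\,dt\ =\ \beta(T)-e^{-T}N(T),
\end{equation*}
and for $0<\delta<1$ one has $e^{-T}\bigl(N(T)-N((1-\delta)T)\bigr)\le\beta(T)-\beta((1-\delta)T)$ and $e^{-T}N((1-\delta)T)\le e^{-\delta T}\beta((1-\delta)T)\to 0$; letting $T\to\infty$ and then $\delta\downarrow 0$ yields $e^{-T}N(T)=o(T)$, so dividing by $T$ completes the proof. Finally, this is genuinely weaker than Theorem~\ref{orbit-counting1a}: the stronger hypothesis \eqref{eq:orbit-counting1} is what lets one continue $\Delta_x(\cdot,B)-m(B)/(\lambda(s-1))$ across $\re s=1$ and invoke the Wiener--Ikehara theorem to get the sharp asymptotic $n(x,T,B)\sim(e^T/\lambda)\,m(B)$, whereas for $F\colon z\mapsto z^d$ the ratio $n(x,T,B)/e^T$ genuinely oscillates with period $\log d$ in $T$ and only the Ces\`aro average $\frac1T\int_0^T e^{-t}n(x,t,B)\,dt$ has a limit---precisely what Karamata's theorem delivers, which is why $z\mapsto z^d$ may be included here.
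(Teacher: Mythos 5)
Your overall strategy is the same as the paper's: code the circle by the countable Markov partition of Section~\ref{sec:one-component}, expand the Poincar\'e series via the spectral decomposition of $\mathcal L_s$, extract the asymptotic $\sim m(B)/(\lambda(s-1))$ as $s\downarrow 1$ along the real axis, and finish with a Hardy--Littlewood/Karamata Tauberian theorem. Your endgame is in fact done more carefully than usual: the reconciliation of $\int_0^T e^{-t}N(t)\,dt$ with the Stieltjes integral $\beta(T)=\int_0^T e^{-t}\,dN(t)$ via the estimate $e^{-T}N(T)=o(T)$ is correct and welcome, and you rightly observe that only the real-axis asymptotic is needed, so D-genericity plays no role and $z\mapsto z^d$ is covered.

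However, there is a genuine gap at the analytic heart of the argument, and you name it yourself without closing it: the ``matching reverse inequality'' $P'(1^+)\le-\lambda$ and the continuity of $(P(s),h_s,\nu_s)$ as $s\downarrow 1$ are asserted to be ``the main obstacle'' and then assumed. The variational principle only gives $P(s)\ge(1-s)\lambda$, i.e.\ one side; to get $1-e^{P(s)}\sim\lambda(s-1)$ you need the full one-sided differentiability $\lambda_s=1+\lambda_1'(s-1)+o(s-1)$ with $\lambda_1'=\int\psi\,d\mu_\psi$, which is exactly what the finite-entropy ($(1,1)$-summability) hypothesis must deliver. The paper spends most of its effort here: the modified transfer operators $\mathcal L_{s,p}$ of Section~\ref{sec:sp-transfer}, the $C^1$ regularity of $s\mapsto\mathcal L_s$ up to $\overline{\mathbb{C}_1^+}$ (Theorem~\ref{continuity-and-holomorphy-Lsp}, proved in Appendix~\ref{sec:continuity-of-sp-transfer-operators}), the resolvent and eigenvalue regularity (Lemma~\ref{resolvent-diff-lemma}, Corollary~\ref{eigenvalue-diff-lemma}), and the derivative-of-pressure computation (Lemma~\ref{first-derivative-of-pressure}). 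Without some version of this, your proof is incomplete precisely where the weak integrability hypothesis bites. A secondary, fixable issue: you apply the spectral decomposition directly to $\mathbf 1_B$ for a general Borel set $B$, but $\mathbf 1_B\notin C^\alpha(E_A^\infty)$ unless $B$ is a finite union of cylinders; one must first prove the statement for cylinders and then sandwich a general $B$ with $m(\partial B)=0$ between finite unions of cylinders from inside and outside, as in the proof of Theorem~\ref{orbit-counting-tdf}.
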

 
 \smallskip
 
 The above theorems were obtained for finite Blaschke products in \cite[Section 7]{ivrii-wp} based on renewal-theoretic considerations of S.~Lalley.
In order to study orbit counting for inner functions, we use the approach pioneered in \cite{PU} which involves examining the Poincar\'e series
$$
\eta_x^B(s) \, = \,   \sum_{n \ge 0} \sum_{\substack{F^{\circ n}(y)=x \\ y \in B}} |(F^{\circ n})'(y)|^{-s} \, = \, \int_0^\infty e^{-sT} \, dn(x,T, B).
$$
Following the strategy outlined in \cite{PU}, we show:
\begin{lemma}
\label{eta}
Let $F$ be a one component inner function with $F(0) = 0$, which satisfies the integrability condition (\ref{eq:orbit-counting1}) for some $0 < \varepsilon < 1$. Suppose $B \subset \partial \mathbb{D}$ is a Borel set with $m(\partial B) = 0$. For any $x \in \partial \mathbb{D}$, the  function $\eta^B_x(s)$ is

{\em (1)} holomorphic from $\mathbb{C}_1^+ = \{\re s > 1\} \to \mathbb{C}$, 

{\em (2)} continuous from $\overline{\mathbb{C}_1^+} \setminus \{1\} \to \mathbb{C}$.

{\em (3)} near $s = 1$,
$$
 \eta_x^B(s) - \frac{1}{\int_{\partial \mathbb{D}} \log |F'(z)| dm} \cdot \frac {1}{s-1} = O  \biggl (\frac{1}{|s-1|^{1-\varepsilon}} \biggr).
$$
If we merely assume that the inner function $F$ has finite entropy, then the above conclusions hold if we replace
 {\em (3)} with

{\em ($3'$)} near $s=1$,
$$
 \eta_x^B(s) - \frac{1}{\int_{\partial \mathbb{D}} \log |F'(z)| dm} \cdot \frac {1}{s-1} = o \biggl (\frac{1}{|s-1|} \biggr).
$$
\end{lemma}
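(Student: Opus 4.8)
The plan is to pass to the symbolic model of Section~\ref{sec:one-component} and to express $\eta^B_x$ through the resolvent of a complex Perron--Frobenius operator. Since $F$ is a one component inner function it carries a countable Markov partition of $\partial\mathbb D$; working on the associated countable alphabet subshift of finite type with the potential $\varphi_s := -s\log|F'|$, the transfer operator $\mathcal L_s := \mathcal L_{\varphi_s}$ satisfies $\mathcal L_s^n g(x) = \sum_{F^{\circ n}(y)=x}|(F^{\circ n})'(y)|^{-s}g(y)$ by the chain rule, so that
\[
\eta^B_x(s) \;=\; \sum_{n\ge 0}\mathcal L_s^n(\mathbf 1_B)(x).
\]
Everything then reduces to the behaviour of $(I-\mathcal L_s)^{-1}$ near and on the line $\re s=1$, tested against the discontinuous observable $\mathbf 1_B$ at the point $x$. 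On the space of bounded H\"older functions built in Section~\ref{sec:transfer-operators}, $\mathcal L_s$ is bounded and quasi-compact for $\re s\ge 1$; since $\mathcal L_1\mathbf 1=\mathbf 1$, analytic perturbation theory supplies, on a neighbourhood of $s=1$, a simple isolated leading eigenvalue $\lambda(s)$ with $\lambda(1)=1$, an eigenfunction $h_s$ with $h_1=\mathbf 1$, a conformal eigenmeasure $\nu_s$ with $\nu_1=m$, a rank-one projection $\Pi_s g=\nu_s(g)\,h_s$, and a complementary operator $N_s:=\mathcal L_s-\lambda(s)\Pi_s$ of strictly smaller spectral radius, all depending analytically on $s$. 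Because $F$ is not a rotation the entropy $L:=\int_{\partial\mathbb D}\log|F'|\,dm$ is positive, and convexity of the pressure $P(\sigma)=\log\lambda(\sigma)$ with $P(1)=0$, $P'(1)=-L<0$ forces $|\lambda(s)|\le\lambda(\re s)<1$ whenever $\re s>1$.

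For $(1)$ I would sandwich $\mathbf 1_{B^\circ}\le\mathbf 1_B\le\mathbf 1_{\overline B}$ between H\"older functions $\phi_k\uparrow\mathbf 1_{B^\circ}$, $\psi_k\downarrow\mathbf 1_{\overline B}$ with $\int(\psi_k-\phi_k)\,dm\to m(\overline B)-m(B^\circ)=m(\partial B)=0$; then $|\mathcal L_s^n\mathbf 1_B(x)|\le\mathcal L_{\re s}^n\psi_k(x)$ pointwise and $\sum_n\mathcal L_{\re s}^n\psi_k(x)<\infty$ by the spectral gap, so $\eta^B_x$ is a locally uniform limit of holomorphic functions on $\mathbb C_1^+$. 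For $(2)$ one needs, at each $s_0$ with $\re s_0=1$ and $s_0\ne 1$, that $1$ is not an eigenvalue of $\mathcal L_{s_0}$ --- this is the non-lattice (aperiodicity) property of $\log|F'|$, which holds for the inner functions of interest, in particular when $F$ has infinite degree or is a finite Blaschke product of degree $\ge 2$ other than $z\mapsto z^d$ --- whereupon quasi-compactness makes $(I-\mathcal L_s)^{-1}$ bounded and continuous near $s_0$, and the same sandwiching shows $\eta^B_x(s)=(I-\mathcal L_s)^{-1}\mathbf 1_B(x)$ extends continuously there; uniqueness of continuation identifies this extension with the series.

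For $(3)$ and $(3')$ I would isolate the singular part: near $s=1$,
\[
(I-\mathcal L_s)^{-1}=\frac{\Pi_s}{1-\lambda(s)}+(I-N_s)^{-1}(I-\Pi_s),
\]
the second term being holomorphic and uniformly bounded, hence contributing $O(1)$ to $\eta^B_x$ (tested via the sandwiching). The derivative-of-pressure formula gives $\lambda'(1)=-\int_{\partial\mathbb D}\log|F'|\,d\mu_1=-L$, the equilibrium state $\mu_1$ being $m$; writing $1-\lambda(s)=L(s-1)+E(s)$ and using $|1-\lambda(s)|\gtrsim|s-1|$,
\[
\frac{1}{1-\lambda(s)}-\frac{1}{L(s-1)}=\frac{-E(s)}{(1-\lambda(s))\,L(s-1)}=O\!\left(\frac{|E(s)|}{|s-1|^2}\right),
\]
while $\Pi_s\mathbf 1_B(x)=\nu_s(B)\,h_s(x)$, with $\nu_s(B)\to m(B)$ (routine sandwiching, using $m(\partial B)=0$) and $h_s(x)\to 1$. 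Thus the whole lemma reduces to the two perturbative estimates
\[
\lambda(s)=1-L(s-1)+O(|s-1|^{1+\varepsilon}),\qquad \nu_s(B)h_s(x)-m(B)=O(|s-1|^{\varepsilon})
\]
under the $(1+\varepsilon)$-integrability hypothesis, together with their $o(|s-1|)$ and $o(1)$ analogues under the bare finite-entropy hypothesis; granting these, $(3)$ and $(3')$ follow from the displayed algebra, the principal part of $\eta^B_x$ at $s=1$ being $m(B)/(L(s-1))$.

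The main obstacle is exactly this pair of perturbative estimates. Because $\log|F'|$ is merely $(1+\varepsilon)$-integrable, and in general unbounded, $s\mapsto\mathcal L_s$ is not twice differentiable in operator norm on the sup-H\"older space, so the usual second-order bound on $E(s)$ --- which would need $\int(\log|F'|)^2\,dm<\infty$ --- is unavailable. I would instead run the perturbation theory in the weighted Banach spaces of Section~\ref{sec:transfer-operators}, estimating the first Taylor remainder of $s\mapsto\mathcal L_s$ at $s=1$ through $\bigl\|\,|F'|^{-s}-|F'|^{-1}-(1-s)|F'|^{-1}\log|F'|\,\bigr\|$, which is controlled by the $(1+\varepsilon)$-th moment of $\log|F'|$; equivalently, one shows that for $\re s\ge 1$ near $1$ the equilibrium states $\mu_s$ have uniformly bounded moments $\int(\log|F'|)^{1+\varepsilon}\,d\mu_s$, which produces the $C^{1,\varepsilon}$-type expansion of $s\mapsto P(-s\log|F'|)$ at $s=1$ and, through Kato's resolvent formulas, the H\"older-$\varepsilon$ dependence of $h_s$ and $\nu_s$. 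Under the bare finite-entropy hypothesis the same scheme yields only differentiability of the pressure at $s=1$ and the corresponding weak continuity of $h_s,\nu_s$, hence the $o$-versions. I expect this uniform-moment control near the boundary $\re s=1$, and the choice of weighted space that makes it go through, to be the principal technical burden; the remaining resolvent algebra is routine.
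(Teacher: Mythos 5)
Your overall strategy is the one the paper follows: code the circle by the countable Markov partition, write $\eta^B_x$ as a geometric series in the complex transfer operator, split off the rank-one part $\lambda_s R_s$, and reduce everything to a $C^{1+\varepsilon}$ expansion of $\lambda_s$ and of the Riesz projection at $s=1$ under the $(1+\varepsilon)$-integrability of $\log|F'|$. You also correctly locate the main technical burden; the paper resolves it exactly as you propose, by controlling the first Taylor remainder of $s\mapsto\mathcal L_s$ through the $(1+\varepsilon)$-th moment of the potential (this is the role of the modified operators $\mathcal L_{s,p}g=\mathcal L_s(\psi^p g)$ of Section \ref{sec:sp-transfer} and the Appendix, which feed into Lemma \ref{resolvent-diff-lemma} and Corollary \ref{eigenvalue-diff-lemma}).

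The one step that does not go through as written is your treatment of a general Borel set $B$ at the level of the Poincar\'e series. You apply the rank-one projection to $\mathbf 1_B$ and assert $\nu_s(B)h_s(x)-m(B)=O(|s-1|^{\varepsilon})$ by ``routine sandwiching''. But all of your resolvent and projection estimates live on the H\"older space: for each H\"older approximant $\phi_k$ of $\mathbf 1_B$ the rate in $s$ carries a constant proportional to $\|\phi_k\|_\alpha$, which blows up as $k\to\infty$, and for nonreal $s$ there is no positivity or monotonicity in $B$ that would let you decouple the error in $k$ from the error in $s$. The paper sidesteps this: the abstract Lemma \ref{behaviour-of-poincare-series} is proved only for counting functions with a H\"older offset --- in particular for cylinders $[\tau]$, via $\varphi(\xi)=S_{|\tau|}\psi(\tau\xi)$ --- and the passage to a general $B$ with $m(\partial B)=0$ is performed only \emph{after} the Wiener--Ikehara theorem, in the proof of Theorem \ref{orbit-counting-tdf}, where the counting function $N^B_\xi(T)$ is monotone in $B$ and the sandwich by finite unions of cylinders closes. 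Reordering your argument in the same way repairs the gap; the rest of your outline then matches the paper's proof.
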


\smallskip

Once we show the above lemma, Theorem \ref{orbit-counting1a} and Theorem \ref{orbit-counting2a} follow follow from the Tauberian theorems of Wiener-Ikehara and Hardy-Littlewood respectively. For the convenience of the reader, we recall these theorems below:

\begin{theorem}[Wiener-Ikehara]
\label{wiener-ikehara}
Suppose $\mu \ge 0$ is a locally finite measure on $(0,+\infty)$ and $$\eta(s) = \int_0^\infty e^{-sT} d\mu$$ converges in the right half-plane $\mathbb{C}^+_1 = \{\re s > 1\}$. If
$$
\eta(s) - \frac{c}{s-1}
$$
extends to an $L^1_{\loc}$ function on the vertical line $\{\re s = 1 \}$, then
$$
\mu([0,T]) \sim c \, e^T  \qquad \text{as }T \to \infty.
$$
\end{theorem}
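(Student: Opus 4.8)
The plan is to run the classical Fejér--kernel proof of the Wiener--Ikehara theorem, adapted to the mildly weak boundary hypothesis used here; I will use that hypothesis in the concrete form that $\eta(\sigma+it)-\frac{c}{\sigma-1+it}$ converges in $L^1_{\loc}(\mathbb{R})$, as a function of $t$, as $\sigma\to 1^+$ (this is what Lemma \ref{eta}(3) together with the stated continuity off $s=1$ actually supplies). First I would pass from the Stieltjes integral to an ordinary Laplace transform: writing $A(T):=\mu([0,T])$ --- a non-negative, non-decreasing, right-continuous function that grows at most exponentially (from convergence of $\eta$ at real points $>1$) --- Fubini gives $\eta(s)=s\int_0^\infty A(T)e^{-sT}\,dT$ for $\re s>1$, and a short manipulation then yields, for $\varepsilon>0$,
$$
\frac{\eta(1+\varepsilon+it)}{1+\varepsilon+it}-\frac{c}{\varepsilon+it}\;=\;\int_0^\infty\bigl(A(T)e^{-T}-c\bigr)\,e^{-\varepsilon T}\,e^{-itT}\,dT.
$$
The left-hand side is the Fourier transform of the $L^1(0,\infty)$ function $u_\varepsilon(T):=(A(T)e^{-T}-c)e^{-\varepsilon T}$, and, upon dividing the hypothesis by $s$ and removing the bounded continuous term $c/s$, it converges in $L^1_{\loc}(\mathbb{R})$ as $\varepsilon\to 0^+$ to a boundary function $g_0\in L^1_{\loc}(\mathbb{R})$.

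Next I would fix, for each $\lambda>0$, a Fejér-type kernel $k_\lambda\ge 0$ on $\mathbb{R}$ with $\int k_\lambda=1$ and $\widehat{k_\lambda}$ a triangular bump supported in $[-2\lambda,2\lambda]$. Parseval's identity turns the above into
$$
\int_0^\infty\bigl(A(T)e^{-T}-c\bigr)e^{-\varepsilon T}k_\lambda(y-T)\,dT\;=\;\frac{1}{2\pi}\int_{-2\lambda}^{2\lambda}\Bigl(\tfrac{\eta(1+\varepsilon+it)}{1+\varepsilon+it}-\tfrac{c}{\varepsilon+it}\Bigr)\widehat{k_\lambda}(t)\,e^{iyt}\,dt.
$$
Letting $\varepsilon\to 0^+$: on the right, $L^1$-convergence on the compact interval $[-2\lambda,2\lambda]$ --- precisely where the local-integrability hypothesis is used --- passes to the limit uniformly in $y$; on the left, the negative part of the integrand is dominated by $c\,k_\lambda(y-T)$ while the positive part increases as $\varepsilon\downarrow 0$, so monotone convergence applies. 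Hence, for all $\lambda$ and $y$,
$$
\int_0^\infty\bigl(A(T)e^{-T}-c\bigr)k_\lambda(y-T)\,dT\;=\;\frac{1}{2\pi}\int_{-2\lambda}^{2\lambda}g_0(t)\,\widehat{k_\lambda}(t)\,e^{iyt}\,dt,
$$
and since $g_0\widehat{k_\lambda}\in L^1(\mathbb{R})$, the Riemann--Lebesgue lemma sends the right-hand side to $0$ as $y\to\infty$; together with $\int_0^\infty k_\lambda(y-T)\,dT=\int_{-\infty}^y k_\lambda(z)\,dz\to 1$, this gives $\lim_{y\to\infty}\int_0^\infty A(T)e^{-T}k_\lambda(y-T)\,dT=c$ for every $\lambda>0$.

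The final step is the Tauberian de-smoothing, where the monotonicity of $A$ is used. Since $A(x)e^{-x}\le e^{x'-x}A(x')e^{-x'}$ for $x\le x'$, the function $T\mapsto A(T)e^{-T}$ is squeezed between $e^{-2\delta}A(y-\delta)e^{-(y-\delta)}$ and $e^{2\delta}A(y+\delta)e^{-(y+\delta)}$ on the window $[y-\delta,y+\delta]$. Feeding the lower bound into the limit just obtained, using $A\ge 0$ and $\int_{|z|\le\delta}k_\lambda\to 1$ as $\lambda\to\infty$, and then sending $\delta\to 0$, yields $\limsup_{y\to\infty}A(y)e^{-y}\le c$; in particular $A(T)e^{-T}\le M$ for some $M<\infty$. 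Feeding in the upper bound together with the tail estimate $\int_{|z|>\delta}k_\lambda\to 0$ and the crude bound $M$ gives $\liminf_{y\to\infty}A(y)e^{-y}\ge c$. Therefore $A(T)e^{-T}\to c$, that is, $\mu([0,T])\sim c\,e^T$. I expect the only genuinely delicate point to be the interchange of limits as $\varepsilon\to 0^+$ under merely local integrability of the boundary data --- handled by the domination/monotonicity split above --- and, relatedly, checking that the hypothesis really delivers $L^1_{\loc}$ convergence of the boundary values and not merely almost-everywhere limits; the remainder is the standard Fejér-kernel scheme, and one could alternatively invoke the $L^1$-boundary version of the Wiener--Ikehara theorem from the Tauberian literature.
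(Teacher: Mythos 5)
The paper does not prove this theorem at all: it is quoted as a classical result, with the proof deferred to Montgomery--Vaughan \cite[Corollary 8.7]{montgomery-vaughan}. Your Fej\'er-kernel argument is correct and is, in substance, exactly the proof given in that reference for the $L^1_{\loc}$-boundary version of Wiener--Ikehara, so there is no divergence of method to report --- you have simply supplied the proof the paper outsources. The steps all check out: the reduction $\eta(s)=s\int_0^\infty A(T)e^{-sT}\,dT$ with $A$ of subexponential-times-$e^T$ growth, the identity expressing $\eta(1+\varepsilon+it)/(1+\varepsilon+it)-c/(\varepsilon+it)$ as the Fourier transform of $u_\varepsilon\in L^1$, the passage $\varepsilon\to 0^+$ via domination of the $-c$ part and monotone convergence of the $A$ part against $L^1$ convergence of the boundary data on the compact support of $\widehat{k_\lambda}$, Riemann--Lebesgue, and the standard monotonicity de-smoothing ($\limsup$ first to get boundedness, then $\liminf$). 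You are also right to flag the one genuinely delicate point: the hypothesis ``extends to an $L^1_{\loc}$ function'' must be read as $L^1_{\loc}$ convergence of $\eta(\sigma+it)-c/(\sigma-1+it)$ as $\sigma\to 1^+$, not merely an a.e.\ pointwise boundary limit; in the paper's application this is supplied by Lemma \ref{eta}, whose parts (2) and (3) give locally uniform convergence away from $s=1$ together with a uniform integrable majorant $O(|s-1|^{-(1-\varepsilon)})$ near $s=1$, so dominated convergence yields the $L^1_{\loc}$ convergence your argument needs. Your observation that $\eta(s)/s - c/(s-1)$ inherits this convergence (since it differs from $s^{-1}(\eta(s)-c/(s-1))$ by the bounded continuous term $-c/s$) is also correct.
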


\begin{theorem}[Hardy-Littlewood]
\label{hardy-littlewood}
If
$$
\eta(s) \sim \frac{c}{s-1}
$$
as $s \to 1^+$ along the real axis, then
$$
\frac{1}{T} \int_0^T \frac{d\mu(t)}{e^t} \to c \qquad \text{as }T \to \infty.
$$
\end{theorem}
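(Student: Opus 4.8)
The plan is to prove the statement as the $\rho=1$, $L\equiv c$ case of Karamata's Tauberian theorem for Laplace transforms, via the polynomial-approximation method. First I would recast everything in terms of Laplace transforms: put $d\nu(t):=e^{-t}\,d\mu(t)$, a nonnegative locally finite measure on $(0,\infty)$ (local finiteness of $\mu$, hence of $\nu$, is automatic from $\eta(s)<\infty$ for real $s>1$, since $\mu((0,R])\le e^{sR}\eta(s)$), and observe that for $u>0$
$$
\widehat\nu(u):=\int_0^\infty e^{-ut}\,d\nu(t)=\int_0^\infty e^{-(u+1)t}\,d\mu(t)=\eta(u+1),
$$
which, being monotone non-increasing in $u$ and finite near $0$, is finite and continuous on all of $(0,\infty)$. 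The hypothesis then reads $\widehat\nu(u)\sim c/u$ as $u\to 0^+$, and, setting $N(T):=\nu([0,T])=\int_0^T e^{-t}\,d\mu(t)$ and substituting $T=1/u$, the assertion to prove becomes $u\,N(1/u)\to c$ as $u\to 0^+$.

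The heart of the matter is Karamata's trick: for a test function $g$ on $[0,1]$, track $I_u(g):=u\int_0^\infty g(e^{-ut})\,e^{-ut}\,d\nu(t)$ as $u\to 0^+$. For $g(x)=x^k$ one has $g(e^{-ut})e^{-ut}=e^{-(k+1)ut}$, so $I_u(g)=u\,\widehat\nu\bigl((k+1)u\bigr)\to c/(k+1)=c\int_0^1 g$; by linearity $I_u(P)\to c\int_0^1 P$ for every polynomial $P$. To extend this to an arbitrary continuous $g\colon[0,1]\to\mathbb{R}$, pick, given $\varepsilon>0$, a polynomial $P$ with $\sup_{[0,1]}|g-P|<\varepsilon$; then, crucially using $\nu\ge 0$,
$$
|I_u(g)-I_u(P)|\le \varepsilon\,u\,\widehat\nu(u)\longrightarrow\varepsilon c,
$$
so $\limsup_{u\to 0^+}\bigl|I_u(g)-c\int_0^1 g\bigr|\le 2\varepsilon c$, and letting $\varepsilon\to 0$ gives $I_u(g)\to c\int_0^1 g$ for every continuous $g$.

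Finally I would apply this with (an approximation of) $h(x):=x^{-1}\mathbf 1_{[1/e,1]}(x)$, which is bounded on $[0,1]$ and continuous away from $x=1/e$, and for which $h(e^{-ut})\,e^{-ut}=\mathbf 1_{\{t\le 1/u\}}$, so that $I_u(h)=u\,N(1/u)$. For small $\delta>0$, sandwich $h$ between continuous functions $g_-\le h\le g_+$ on $[0,1]$ agreeing with $h$ outside a $\delta$-neighbourhood of $1/e$ and satisfying $\int_0^1(g_+-g_-)\le 2e\delta$; multiplying $g_-\le h\le g_+$ by $e^{-ut}>0$ and integrating against $\nu\ge 0$ yields $I_u(g_-)\le u\,N(1/u)\le I_u(g_+)$. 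Letting $u\to 0^+$ and then $\delta\to 0$, both $I_u(g_\pm)$ are squeezed to $c\int_0^1 h=c\int_{1/e}^1 x^{-1}\,dx=c$, so $u\,N(1/u)\to c$, which is exactly $\frac1T\int_0^T e^{-t}\,d\mu(t)\to c$. The one delicate point — and the only place nonnegativity of $\mu$ is genuinely used — is this two-stage approximation (polynomials $\rightsquigarrow$ continuous functions $\rightsquigarrow$ the discontinuous $h$): each step is controlled purely because $\nu\ge 0$ lets one dominate the error by $\varepsilon\,u\,\widehat\nu(u)\to\varepsilon c$, and indeed the conclusion is false once the Tauberian hypothesis $\mu\ge 0$ is dropped.
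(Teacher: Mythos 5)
Your proof is correct. Note, though, that the paper itself does not prove this statement: Theorem \ref{hardy-littlewood} is quoted as a classical result, with the proof delegated to the references (Hardy's \emph{Divergent Series}, Theorem 98), so there is no in-paper argument to compare against. What you have written is essentially the standard Karamata proof of that cited result: the reduction $d\nu = e^{-t}\,d\mu$, $\widehat\nu(u)=\eta(u+1)\sim c/u$, the computation $I_u(x^k)=u\,\widehat\nu((k+1)u)\to c/(k+1)$, Weierstrass approximation to pass to continuous test functions (with the error controlled by $\varepsilon\,u\,\widehat\nu(u)\to\varepsilon c$, which is exactly where $\mu\ge 0$ enters), and finally the sandwich of the discontinuous kernel $h(x)=x^{-1}\mathbf 1_{[1/e,1]}(x)$ between continuous minorants and majorants, again using positivity, to get $u\,N(1/u)\to c\int_0^1 h = c$. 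All steps check out, including the identification $I_u(h)=u\,\nu([0,1/u])$ and the bound $\mu((0,R])\le e^{sR}\eta(s)$ giving local finiteness. One cosmetic remark: monotonicity of $\widehat\nu$ alone does not give continuity (that follows from dominated convergence), but continuity is never actually used in your argument, so nothing is affected. Your closing observation that positivity is the genuine Tauberian hypothesis, and that the approximation steps are the only places it is used, is accurate and worth keeping.
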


Note that the Hardy-Littlewood Tauberian Theorem has weaker hypotheses than
Wiener-Ikehara Tauberian theorem but also a weaker conclusion. The Hardy-Littlewood theorem is usually stated in the half-plane $\{\re s > 0\}$ rather than $\{ \re s > 1 \}$, which explains the $e^t$ in the denominator. For the proofs, we refer the reader to \cite[Corollary 8.7]{montgomery-vaughan} and \cite[Theorem 98]{hardy}.

\begin{remark}
(i) In the treatise \cite{PU}, one asks that the Poincar\'e series admits a meromorphic continuation to a slightly larger half-plane
$\mathbb{C}_{1-p}^+ = \{\re s > 1-p \}$ with a simple pole at $s = 1$, which translates to the more stringent requirement
$$
\int_{\partial \mathbb{D}}   |F'(z)|^p dm < \infty, \qquad \text{for some }p > 0.
$$
Due to the weak integrability hypothesis on the inner function $F$, the Poincar\'e series usually does not have a meromorphic continuation beyond the line 
vertical line $\{\re s=1\}$. As such, we are forced to deal with the subtle issues of differentiability and holomorphy of the complex Perron-Frobenius operators $\mathcal L_s$ with $\re s \ge 1$ near the vertical line $\{\re s=1\}$.

(ii) Although we will not pursue this here, one can also count periodic orbits. Using the methods of \cite{PU}, one can show that
$$
n_{\per}(T, B) \sim \frac{e^T}{\int_{\partial \mathbb{D}} \log |F'(z)| dm}  \cdot m(B) \qquad \text{as }T \to \infty,
$$
where
$$
n_{\per}(T) = \# \bigl \{ y \in \partial \mathbb{D} : F^{\circ n}(y) = y \text{ for some }n \ge 1 \text{ and } \log | (F^{\circ n})'(y)| < T \bigr \}.
$$

(iii) For the finite Blaschke products $F(z) = z^d$, with $d \ge 2$, the counting function $n(x,T)$ is a step function, so the asymptotics in Theorem \ref{orbit-counting1a} do no hold. This exceptional behaviour is caused by the failure of D-genericity. Showing that other centered one component inner functions satisfy this mixing condition will be an important ingredient in the proof of Theorem \ref{orbit-counting1a}. 
\end{remark}

In Section~\ref{POCIF}, we define the class of parabolic one component inner functions and prove an analogue of Theorem~\ref{orbit-counting1a}.  It is convenient to view parabolic inner functions as holomorphic self-maps of the upper half-plane $\mathbb{H}$, in which case, the absolutely continuous invariant measure is just the Lebesgue measure $\ell$ on the real line $\mathbb{R}$. In this setting, the Orbit Counting Theorem reads as follows:

\begin{theorem}
\label{parabolic-orbit-counting}
Let $F: \mathbb{H} \to \mathbb{H}$ be a doubly parabolic one component inner function with a doubly parabolic fixed point at infinity. Suppose that
$$
\int_{ \mathbb{R}} \bigl ( \log |F'(z)| \bigl )^{1+\varepsilon} d\ell < \infty,
$$
for some $\varepsilon > 0$.
If $B \subset \mathbb{R}$ is a bounded Borel set with $\ell(B) < \infty$ and $\ell(\partial B) = 0$, then for any $x \in \mathbb{R}$,
$$
n(x, T, B) \sim \frac{e^T}{\int_{\mathbb{R}} \log |F'(z)| d\ell}  \cdot \ell(B) \qquad \text{as }T \to \infty.
$$
\end{theorem}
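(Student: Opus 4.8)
\emph{Proposed proof.} The plan is to remove the parabolic point by passing to a first-return map, and then to feed the resulting uniformly expanding system into the abstract symbolic orbit-counting machinery (the conformal graph directed Markov system analogues of Lemma~\ref{eta} and Theorem~\ref{orbit-counting1a}, which in turn rest on Wiener--Ikehara, Theorem~\ref{wiener-ikehara}). Fix $x\in\mathbb{R}$ and the bounded Borel set $B$ with $\ell(\partial B)=0$. Since $B\cup\{x\}$ lies at positive distance from the doubly parabolic fixed point $\infty$, we pick a bounded set $\Delta\subset\mathbb{R}$, taken to be a finite union of elements of the Markov partition of $F$, with $B\cup\{x\}\subset\Delta$. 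Let $\rho(w)=\min\{n\ge 1:F^{\circ n}(w)\in\Delta\}$ and $F_\Delta=F^{\circ\rho}$. Away from $\infty$, the one component hypothesis makes $F$ uniformly expanding with bounded distortion along Markov cylinders, and inducing past the parabolic excursions turns $F_\Delta\colon\Delta\to\Delta$ into a mixing conformal graph directed Markov system over a countable alphabet. Since a doubly parabolic inner function is conservative and ergodic for $\ell$, almost every point of $\mathbb{R}$ eventually enters $\Delta$, so the Rokhlin tower over $(\Delta,F_\Delta)$ exhausts $\mathbb{R}$ and $F_\Delta$ has invariant probability measure $\mu_\Delta=\ell|_\Delta/\ell(\Delta)$.

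Next comes the combinatorial heart of the reduction. Because $x\in\Delta$ and $B\subset\Delta$, every orbit segment of $F$ from $y\in B$ to $x$ decomposes into excursions lying between consecutive returns to $\Delta$; hence $F^{\circ n}(y)=x$ forces $n=\rho_1+\dots+\rho_j$ for the successive return times, and then $F_\Delta^{\circ j}(y)=x$ with $|(F^{\circ n})'(y)|=|(F_\Delta^{\circ j})'(y)|$ by the chain rule, while conversely every $F_\Delta^{\circ j}$-preimage of $x$ in $B$ arises this way. Therefore the counting functions agree exactly,
$$n(x,T,B)=\#\bigl\{\,(j\ge 0,\ y\in B)\ :\ F_\Delta^{\circ j}(y)=x,\ \log|(F_\Delta^{\circ j})'(y)|<T\,\bigr\}=:n_\Delta(x,T,B),$$
and so do the Poincaré series, $\eta^B_x=\eta^B_{\Delta,x}$. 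Moreover the Abramov--Kac identity, valid because the tower exhausts $\mathbb{R}$, gives
$$\int_\Delta\log|F_\Delta'|\,d\ell=\int_\Delta\Bigl(\textstyle\sum_{k=0}^{\rho-1}\log|F'|\circ F^{\circ k}\Bigr)d\ell=\int_{\mathbb{R}}\log|F'|\,d\ell,$$
the integral being finite, as the doubly parabolic asymptotics make $\log|F'|$ decay fast near $\infty$. Thus it suffices to prove $n_\Delta(x,T,B)\sim e^T\mu_\Delta(B)/\!\int\log|F_\Delta'|\,d\mu_\Delta$; substituting $\mu_\Delta(B)=\ell(B)/\ell(\Delta)$ and the identity above yields exactly $n(x,T,B)\sim e^T\ell(B)/\!\int_{\mathbb{R}}\log|F'|\,d\ell$.

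It remains to verify the hypotheses of the symbolic orbit-counting theorem for $F_\Delta$. Non-degeneracy (D-genericity) is inherited from $F$ as in the proof of Theorem~\ref{orbit-counting1a}. The essential point --- and the step I expect to be the main obstacle --- is propagating the $(1+\varepsilon)$-integrability of $\log|F'|$ on $\mathbb{R}$ to the induced potential $\log|F_\Delta'|=\sum_{k=0}^{\rho-1}\log|F'|\circ F^{\circ k}$, a sum whose length $\rho$ is unbounded along long parabolic excursions. Here one must use the quantitative behaviour of $F$ near $\infty$: along an excursion out of $\Delta$ the iterates $F^{\circ k}(w)$ march towards $\infty$ at a definite rate, along which $\log|F'|$ decays summably, so the deep part of each excursion contributes $O(1)$ uniformly and $\log|F_\Delta'|$ is controlled, up to a bounded error, by $\log|F'|$ at the (bounded) entry and exit points of the excursion. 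Re-summing over excursions by the invariance of $\ell$ then bounds $\int_\Delta(\log|F_\Delta'|)^{1+\varepsilon}\,d\ell$, possibly after shrinking $\varepsilon$, by an integral of $(\log|F'|)^{1+\varepsilon}$ over a bounded subset of $\mathbb{R}$, which is finite.

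Granting this, the symbolic version of Lemma~\ref{eta} applies to $\eta^B_{\Delta,x}(s)=\sum_{j\ge 0}\sum_{F_\Delta^{\circ j}(y)=x,\,y\in B}|(F_\Delta^{\circ j})'(y)|^{-s}$, giving holomorphy on $\mathbb{C}_1^+$, continuity on $\overline{\mathbb{C}_1^+}\setminus\{1\}$, and $\eta^B_{\Delta,x}(s)-\frac{\mu_\Delta(B)}{\int\log|F_\Delta'|\,d\mu_\Delta}\cdot\frac{1}{s-1}=O(|s-1|^{-(1-\varepsilon)})$ near $s=1$; Wiener--Ikehara then yields $n_\Delta(x,T,B)\sim e^T\mu_\Delta(B)/\!\int\log|F_\Delta'|\,d\mu_\Delta$. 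By $\eta^B_x=\eta^B_{\Delta,x}$ and Abramov--Kac this is the asserted asymptotic for $F$. In short, everything new lies in building the induced system and in the parabolic tail estimate; the remainder is the one component circle argument read through the first-return correspondence.
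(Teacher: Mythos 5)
Your overall strategy is exactly the paper's: pass to the first return map $\hat F$ on a compact interval $X\supset B\cup\{x\}$ built from the Markov partition generated by $F^{-1}(\infty)$, identify $n(x,T,B)$ with the counting function of the induced system via the return-time decomposition and the chain rule, transfer the Lyapunov exponent and the $(1,1+\varepsilon)$-summability by Kac/Abramov, and then invoke the abstract symbolic Theorem~\ref{orbit-counting-tdf}. The bijection between $F$-preimages and $\hat F$-preimages of $x$ inside $B$, the identity $\int_X\log|\hat F'|\,d\ell=\int_{\mathbb R}\log|F'|\,d\ell$, and the reduction of the asymptotic are all as in Section~\ref{POCIF}.

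Two points need repair. First, your justification of D-genericity --- that it is ``inherited from $F$ as in the proof of Theorem~\ref{orbit-counting1a}'' --- does not work: the circle argument (Lemma~\ref{non-existence-of-eigenfunctions2}) exploits that every arc of the Markov partition maps onto the whole circle and has arbitrarily small $\int_{I_k}\log|F'|\,dm$, and it is an argument about $F$ itself, not about an induced system; moreover $F:\mathbb R\to\mathbb R$ is not in the hyperbolic framework to begin with, so ``D-genericity of $F$'' is not even defined. The correct mechanism is that parabolic systems are automatically D-generic: the periodic orbits $q_n$ of period $n$ shadowing the excursion $J_{n-1}^+\to\cdots\to J_1^+\to J_1^-\to J_n^+$ have multiplier logarithms $L_n$ with $L_n\to\infty$ and $L_{n+1}-L_n\to 0$, so the length spectrum of the induced potential cannot lie in a discrete subgroup. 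Second, the tail estimate for $\int_X(\log|\hat F'|)^{1+\varepsilon}\,d\ell$ should be made quantitative using the doubly parabolic normal form $F(z)=z-a/z+\cdots$: one gets $\diam J_n^{\pm}\asymp n^{-1/2}$ and $|(F^{\circ N(w)})'(w)|\asymp n^{1/2}$ on $J_n^{\pm}$, so the excursion contribution is $\sum_n n^{-1/2}(1+\log n)^{1+\varepsilon}<\infty$ (no shrinking of $\varepsilon$ is needed). Note also that during an excursion the iterates march \emph{away from} infinity back toward the bounded region (the orbit jumps deep into $J_n^{\pm}$ in one step and then descends $J_n\to J_{n-1}\to\cdots$), not toward it as you wrote; the estimate survives, but the bookkeeping should follow the paper's decomposition $\log|\hat F'(w)|\le\log|F'(w)|+\log|(F^{\circ(N(w)-1)})'(F(w))|$ together with the invariance of~$\ell$.
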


\subsection{Orbit Counting in Symbolic Dynamics}

Since our approach to orbit counting is very general, it is applicable to a plethora of other dynamical systems, not just to inner functions. We will deduce Theorems \ref{orbit-counting1a}, \ref{orbit-counting2a} and \ref{parabolic-orbit-counting} from a general orbit counting theorem, valid in the setting of countable alphabet subshifts of finite type.
A complete explanation of the terms will be given in  Section~\ref{sec:SFTTFSL}.

\begin{theorem}
\label{orbit-counting-tdf}
Let $E$ be a countable set, $A: E \times E \to \{0,1\}$ be a finitely irreducible incidence matrix and $\psi: E_A^\infty \to \mathbb{R}$ be a robust potential such that
$$
\int_{E_A^\infty} \psi^{1+\varepsilon} d\mu_\psi < + \infty,
$$
for some $\varepsilon > 0$.
Given $\xi \in E_A^\infty$ and a Borel set $B \subset E_A^\infty$ with $m_\psi(\partial B) = 0$, the counting function
$$
 N^{B}_\xi(T) := \# \bigl \{ \omega \in E^*_\xi:\ \omega \xi \in B \ \text {and }\,  S_{|\omega|} (-\psi) (\omega \xi) \le T \bigr \}
 $$
 satisfies
$$
\lim_{T \to \infty} \frac{N^{B}_\xi(T)}{e^T} = \frac{\rho_\psi(\xi)}{\int_{E_A^\infty} (-\psi) d\mu_\psi } \cdot m_\psi(B).
$$
\end{theorem}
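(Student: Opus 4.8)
\emph{Strategy.} The plan is to adapt the renewal-theoretic approach of \cite{PU} used in Lemma~\ref{eta}: pass to the Poincar\'e series, the Laplace transform of the counting measure,
\[
\eta^B_\xi(s)\;=\;\int_0^\infty e^{-sT}\,dN^B_\xi(T)\;=\;\sum_{\omega\in E^*_\xi}\mathbf 1_B(\omega\xi)\,e^{\,s\,S_{|\omega|}(\psi)(\omega\xi)}\;=\;\sum_{n\ge 0}\bigl(\mathcal L_s^{\,n}\mathbf 1_B\bigr)(\xi),
\]
where $\mathcal L_s$ is the complex Perron--Frobenius operator with potential $s\psi$, acting by $(\mathcal L_s g)(\tau)=\sum_{e:\,A_{e\tau_1}=1}e^{s\psi(e\tau)}g(e\tau)$, and $-S_n(-\psi)=S_n(\psi)$. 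Writing $c:=\rho_\psi(\xi)\,m_\psi(B)\big/\!\int_{E_A^\infty}(-\psi)\,d\mu_\psi$, I would then deduce the theorem from the Wiener--Ikehara Theorem~\ref{wiener-ikehara} applied to the nonnegative measure $dN^B_\xi$ on $(0,\infty)$, once I show that $\eta^B_\xi$ is holomorphic on $\mathbb C_1^+$ and that $\eta^B_\xi(s)-c/(s-1)$ extends to an $L^1_{\loc}$ function on $\{\re s=1\}$; this is precisely the symbolic version of Lemma~\ref{eta}.

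\emph{Spectral picture for real $s$.} First I would assemble the standard spectral apparatus. For real $s$ in a neighbourhood of $1$, robustness of $\psi$ should give, on the ambient function space, that $\mathcal L_s$ is bounded with a simple isolated leading eigenvalue $\lambda(s)=e^{P(s\psi)}$, a strictly positive eigenfunction $\rho_s$, a Borel eigenmeasure $\nu_s$ for $\mathcal L_s^*$, and a spectral gap. Normalising so that $m_\psi=\nu_1$, $\rho_\psi=\rho_1$, $\int\rho_\psi\,dm_\psi=1$, $d\mu_\psi=\rho_\psi\,dm_\psi$, we have $P(\psi)=0$ and $\lambda(1)=1$. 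Since $\psi\in L^1(\mu_\psi)$ by hypothesis, the pressure is differentiable at $1$, so $\lambda'(1)=\int\psi\,d\mu_\psi=-\int(-\psi)\,d\mu_\psi<0$, and hence $\lambda(s)<1$ for real $s>1$ near $1$. Combining this with the spectral gap, the upper semicontinuity of the spectrum, and $|e^{s\psi}|=e^{(\re s)\psi}$, the spectral radius of $\mathcal L_s$ stays below $1$ on all of $\mathbb C_1^+$; the Neumann series then converges and $\eta^B_\xi(s)=\bigl((I-\mathcal L_s)^{-1}\mathbf 1_B\bigr)(\xi)$ is holomorphic there.

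\emph{Boundary behaviour --- the crux.} The heart of the matter, and what I expect to be the main obstacle, is the behaviour of $\mathcal L_s$ as $\re s\downarrow 1$; as the authors emphasise in Remark~(i), this is delicate because $\psi$ is unbounded and no analytic continuation across $\{\re s=1\}$ is available. Three ingredients are needed. (a) Using only the $(1+\varepsilon)$-integrability of $\psi$, show that $s\mapsto\mathcal L_s$ extends to $\overline{\mathbb C_1^+}$, H\"older continuously (with exponent comparable to $\varepsilon$) in the operator norm; this is where the hypothesis is spent. (b) Show that $1\notin\operatorname{spec}\mathcal L_{1+it}$ for every $t\ne 0$: the spectral radius there is $\le\lambda(1)=1$, and a peripheral eigenvalue of modulus $1$ would force $\psi$ to be cohomologous to a function valued in a coset of a discrete subgroup of $\mathbb R$, which is excluded by the non-lattice (D-generic) character of a robust potential, cf.\ Remark~(iii); consequently $(I-\mathcal L_{1+it})^{-1}$ is a norm-continuous family of bounded operators in $t\ne0$, so $\eta^B_\xi$ extends continuously, hence is locally bounded, on $\{\re s=1\}\setminus\{1\}$. (c) Near $s=1$, Kato perturbation theory for the simple eigenvalue gives $(I-\mathcal L_s)^{-1}=\dfrac{\Pi_s}{1-\lambda(s)}+R_s$ with $\Pi_s g=\dfrac{\nu_s(g)}{\nu_s(\rho_s)}\,\rho_s$ and $R_s$ bounded, so that
\[
\eta^B_\xi(s)\;=\;\frac{\nu_s(\mathbf 1_B)\,\rho_s(\xi)}{\bigl(1-\lambda(s)\bigr)\,\nu_s(\rho_s)}\;+\;O(1).
\]
From the H\"older modulus of continuity I would get $1-\lambda(s)=\bigl(\int(-\psi)\,d\mu_\psi\bigr)(s-1)+O(|s-1|^{1+\varepsilon})$ and $\nu_s(\mathbf 1_B)\rho_s(\xi)/\nu_s(\rho_s)=m_\psi(B)\,\rho_\psi(\xi)+O(|s-1|^{\varepsilon})$, whence $\eta^B_\xi(s)-c/(s-1)=O\bigl(|s-1|^{\varepsilon-1}\bigr)$, which is $L^1_{\loc}$ on the one-dimensional line $\{\re s=1\}$ precisely because $\varepsilon>0$.

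\emph{From the indicator to continuous observables.} Since $\mathbf 1_B$ need not lie in the function space used above, I would finally sandwich it using $m_\psi(\partial B)=0$: for each $\delta>0$ choose $g_-,g_+$ in the space with $0\le g_-\le\mathbf 1_B\le g_+\le 1$ and $m_\psi(g_+-g_-)<\delta$. Running the preceding argument with $g_\pm$ in place of $\mathbf 1_B$ and applying Wiener--Ikehara to $\mu_{g_\pm}:=\sum_{\omega\in E^*_\xi}g_\pm(\omega\xi)\,\delta_{S_{|\omega|}(-\psi)(\omega\xi)}$ gives $\mu_{g_\pm}([0,T])\sim\frac{\rho_\psi(\xi)}{\int(-\psi)\,d\mu_\psi}\,m_\psi(g_\pm)\,e^T$; since $\mu_{g_-}([0,T])\le N^B_\xi(T)\le\mu_{g_+}([0,T])$ and $\delta$ is arbitrary, I conclude $N^B_\xi(T)/e^T\to\frac{\rho_\psi(\xi)}{\int(-\psi)\,d\mu_\psi}\,m_\psi(B)$, as desired.
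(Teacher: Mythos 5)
Your proposal is correct and follows essentially the same route as the paper: the Poincar\'e series $\sum_n \mathcal L_s^n(\cdot)(\xi)$, H\"older continuity of $s\mapsto\mathcal L_s$ up to $\{\re s=1\}$ from $(1,1+\varepsilon)$-summability, D-genericity to clear the line away from $s=1$, the Kato expansion $\eta=\frac{(-\lambda_1')^{-1}R_1(\cdot)(\xi)}{s-1}+O(|s-1|^{\varepsilon-1})$, and Wiener--Ikehara. The only (cosmetic) difference is the last step: the paper first proves the asymptotic for cylinders via an offset $\varphi=S_{|\tau|}\psi(\tau\,\cdot)$ and then squeezes a general $B$ between finite unions of cylinders, whereas you sandwich $\mathbf 1_B$ by H\"older observables $g_\pm$ -- which in the shift space one would realize precisely as indicators of finite unions of cylinders, so the two are equivalent.
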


\begin{theorem}
\label{orbit-counting-tdf2}
Let $E$ be a countable set, $A: E \times E \to \{0,1\}$ be a finitely irreducible incidence matrix and $\psi: E_A^\infty \to \mathbb{R}$ be a normal potential such that
$$
\int_{E_A^\infty} \psi d\mu_\psi < + \infty.
$$
Given $\xi \in E_A^\infty$ and a Borel set $B \subset E_A^\infty$ with $m_\psi(\partial B) = 0$, we have
$$
\lim_{T \to \infty} \frac{1}{T} \int_0^T \frac{N^{B}_\xi(t)}{e^t} dt = \frac{\rho_\psi(\xi)}{\int_{E_A^\infty} (-\psi) d\mu_\psi } \cdot m_\psi(B).
$$
\end{theorem}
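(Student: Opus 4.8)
The plan is to reduce the statement to the Hardy--Littlewood Tauberian theorem (Theorem~\ref{hardy-littlewood}), applied to the Poincar\'e series of the counting function, following the strategy of \cite{PU} used to prove Lemma~\ref{eta}. Viewing $N^B_\xi$ as a non-decreasing right-continuous function on $[0,\infty)$, form its Laplace--Stieltjes transform
$$
\eta^B_\xi(s)\;:=\;\int_0^\infty e^{-sT}\,dN^B_\xi(T)\;=\;\sum_{\substack{\omega\in E^*_\xi\\ \omega\xi\in B}}\exp\bigl(s\,S_{|\omega|}\psi(\omega\xi)\bigr)\;=\;s\int_0^\infty e^{-sT}N^B_\xi(T)\,dT,
$$
the last equality by integration by parts; for real $s>1$ the topological pressure $P(s\psi)$ is negative, so all three expressions converge. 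Grouping the admissible words $\omega$ by their length $n=|\omega|$ further identifies $\eta^B_\xi(s)$ with $\sum_{n\ge0}\bigl(\mathcal L_{s\psi}^{\,n}\mathbf 1_B\bigr)(\xi)$, where $\mathcal L_{s\psi}$ denotes the Perron--Frobenius operator of the potential $s\psi$. By Theorem~\ref{hardy-littlewood}, applied to the measure $N^B_\xi(T)\,dT$, it now suffices to prove
$$
\eta^B_\xi(s)\;\sim\;\frac{\rho_\psi(\xi)\,m_\psi(B)}{\int_{E_A^\infty}(-\psi)\,d\mu_\psi}\cdot\frac1{s-1}\qquad\text{as }s\to1^+\text{ along the real axis.}
$$

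For $s$ in a right neighbourhood of $1$ the potential $s\psi$ still lies in the class to which the Ruelle--Perron--Frobenius machinery applies: $\mathcal L_{s\psi}$ has a simple leading eigenvalue $\lambda(s)=e^{P(s\psi)}$, a positive eigenfunction $\rho_s$ (normalised by $\int\rho_s\,dm_{s\psi}=1$), a conformal eigenmeasure $m_{s\psi}$, and a spectral gap on the relevant space of H\"older functions, uniform in $s$. After replacing $\mathbf 1_B$ by continuous functions $g_-\le\mathbf 1_B\le g_+$ with $\int(g_+-g_-)\,dm_\psi$ arbitrarily small --- which is legitimate since $m_\psi(\partial B)=0$ --- the decomposition $\mathcal L_{s\psi}^{\,n}=\lambda(s)^n\Pi_s+\mathcal R_s^{\,n}$, with $\Pi_s(\cdot)=\rho_s\int(\cdot)\,dm_{s\psi}$ the leading spectral projection and $\|\mathcal R_s^{\,n}\|=O(\theta^n\lambda(s)^n)$ for some $\theta<1$, yields
$$
\eta^B_\xi(s)\;=\;\frac{(\Pi_s\mathbf 1_B)(\xi)}{1-\lambda(s)}\;+\;O(1)\qquad\text{as }s\to1^+.
$$
On one hand $(\Pi_s\mathbf 1_B)(\xi)=\rho_s(\xi)\,m_{s\psi}(B)$, and as $s\downarrow1$ one has $\rho_s\to\rho_\psi$ and $m_{s\psi}\to m_\psi$ weakly, so $m_{s\psi}(B)\to m_\psi(B)$ by $m_\psi(\partial B)=0$, giving $(\Pi_s\mathbf 1_B)(\xi)\to\rho_\psi(\xi)\,m_\psi(B)$. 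On the other hand $1-\lambda(s)=1-e^{P(s\psi)}\sim-P(s\psi)$, and since $t\mapsto P(t\psi)$ is convex with $P(\psi)=0$ and right derivative at $t=1$ equal to $\int_{E_A^\infty}\psi\,d\mu_\psi$ --- which is finite by the standing hypothesis $\int_{E_A^\infty}(-\psi)\,d\mu_\psi<+\infty$ (the finite-entropy condition) --- we get $P(s\psi)=(s-1)\int_{E_A^\infty}\psi\,d\mu_\psi+o(s-1)$ and hence $1-\lambda(s)\sim(s-1)\int_{E_A^\infty}(-\psi)\,d\mu_\psi$. Combining the two asymptotics gives the displayed behaviour of $\eta^B_\xi(s)$, and Theorem~\ref{hardy-littlewood} completes the proof.

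The main obstacle is controlling $\mathcal L_{s\psi}$ as $s\to1^+$ when $\psi$ is merely assumed to be a normal potential of finite entropy. The family $s\mapsto\mathcal L_{s\psi}$ is not analytic as a family of bounded operators --- its formal derivative is multiplication by the possibly unbounded function $\psi$ --- so analytic perturbation theory is unavailable, and one must argue along the real axis, using convexity and monotonicity of the pressure and positivity of the operators in place of complex-analytic tools. This is why one obtains only one-sided continuity of $\lambda(s),\rho_s,m_{s\psi}$ and the error term $o\bigl(1/(s-1)\bigr)$, rather than the sharper remainder available under the $(1+\varepsilon)$-integrability hypothesis of Theorem~\ref{orbit-counting-tdf}; correspondingly, the conclusion here is the Ces\`aro average of Theorem~\ref{hardy-littlewood} and not the stronger asymptotic $N^B_\xi(T)\sim c\,e^T$. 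One should also verify that the RPF constructions, the spectral gap, and the finite irreducibility genuinely survive the passage from $\psi$ to $s\psi$ for $s$ slightly above $1$, which is routine using the summability and normalisation properties built into the notion of a normal potential, and that the discontinuous observable $\mathbf 1_B$ is handled correctly via the sandwich by $g_\pm$ and the assumption $m_\psi(\partial B)=0$.
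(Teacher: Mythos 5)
Your overall strategy --- Poincar\'e series, the spectral decomposition $\mathcal L_{s\psi}^{\,n}=\lambda(s)^n\Pi_s+\mathcal R_s^{\,n}$, the asymptotic $\eta^B_\xi(s)\sim c/(s-1)$ along the real axis, the Hardy--Littlewood Tauberian theorem, and approximation of $B$ using $m_\psi(\partial B)=0$ --- is exactly the paper's (Sections \ref{sec:poincare} and \ref{sec:behaviour-near-1}, together with Lemma \ref{behaviour-of-poincare-series2} for potentials that fail D-genericity, an issue you correctly render irrelevant by staying on the real axis). The gap is in the one step that carries all the analytic weight: the claim that $1-\lambda(s)\sim(s-1)\int(-\psi)\,d\mu_\psi$ as $s\to1^+$. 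You assert that the family $s\mapsto\mathcal L_{s\psi}$ cannot be differentiated because its formal derivative involves multiplication by the unbounded function $\psi$, and you propose convexity of $t\mapsto P(t\psi)$ as a substitute. The first assertion is false in the relevant sense: the derivative is the composite operator $g\mapsto\mathcal L_{s\psi}(\psi g)$, i.e.\ the modified transfer operator $\mathcal L_{s,1}$ of (\ref{eq:Lsp-def}), and under the standing hypothesis $\int(-\psi)\,d\mu_\psi<\infty$ (equivalently $(1,1)$-summability, Lemma \ref{sp-summability-lemma}) this is bounded on $C^\alpha(E_A^\infty)$; Theorem \ref{continuity-and-holomorphy-Lsp} shows that $s\mapsto\mathcal L_s$ is $C^1$ on the closed half-plane $\overline{\mathbb{C}_1^+}$, whence $R_s$ and $\lambda_s$ are $C^1$ there (Lemma \ref{resolvent-diff-lemma}, Corollary \ref{eigenvalue-diff-lemma}) and $\lambda'_1=\int\psi\,d\mu_\psi$ (Lemma \ref{first-derivative-of-pressure}). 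This is precisely how the paper obtains $\lambda_s=1+\lambda'_1(s-1)+o(|s-1|)$.

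Your proposed substitute does not close this gap as stated. Convexity plus the variational principle gives $P(t\psi)-P(\psi)\ge(t-1)\int\psi\,d\mu_\psi$ for $t>1$, hence $D^+P(1)\ge\int\psi\,d\mu_\psi$; the reverse inequality requires $\limsup_{t\to1^+}\int\psi\,d\mu_{t\psi}\le\int\psi\,d\mu_\psi$, and since $\psi$ is unbounded and only assumed to lie in $L^1(\mu_\psi)$, this is exactly the delicate point (mass of $\mu_{t\psi}$ could a priori drift to cylinders where $\psi$ is very negative). The same uniform-integrability issue underlies your unproved claims that $\rho_s\to\rho_\psi$, that $m_{s\psi}\to m_\psi$ weakly, and that the remainder coming from $\mathcal R_s$ is $O(1)$ uniformly as $s\to1^+$; the paper derives all of these from operator-norm continuity of $s\mapsto\mathcal L_s$ up to the boundary (Appendix \ref{sec:continuity-of-sp-transfer-operators}). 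So either import the paper's regularity theory for the modified transfer operators, or supply the missing uniform-integrability argument for the family $\mu_{t\psi}$; without one of these the constant in the Tauberian theorem is not identified. Two smaller points: the sandwich functions $g_\pm$ should be H\"older (e.g.\ finite sums of indicators of cylinders, which do lie in $C^\alpha(E_A^\infty)$), not merely continuous, for the spectral decomposition to apply; the paper instead performs the approximation of $B$ after the Tauberian step, at the level of counting functions, which avoids tracking the $s$-dependence of the sandwich error.
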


\subsection{Notes and references}
 
\paragraph*{Blaschke-lacunary series.} In \cite{NS, NS2},  A.~Nicolau and O.~Soler i Gibert proved a Central Limit Theorem for ``Blaschke-lacunary series''
$$
\sum_{n=1}^N a_n F^{\circ n}(z),
$$
where $\{a_n\}_{n=0}^\infty$ is a sequence of complex numbers satisfying
\begin{equation}
\label{eq:blaschke-lacunary}
a_n \notin \ell^2, \qquad \lim_{N \to \infty} \frac{|a_N|^2}{\sum_{n=1}^N |a_n|^2} = 0.
\end{equation}
More precisely, they show that for an appropriate choice of scaling factors $\sigma_N$, the distribution of the functions
$$
\frac{\sqrt{2}}{\sigma_N} \cdot \sum_{n=1}^N a_n F^{\circ n}(z),
$$
on the unit circle converges to the standard complex Gaussian. As explained in \cite{NS2}, the condition (\ref{eq:blaschke-lacunary}) is sharp in the sense if (\ref{eq:blaschke-lacunary}) is not satisfied, then the Central Limit Theorem cannot hold.
The proof leverages that iterates of the observable $h(z) = z$ possesses a certain ``uniform indepedence property'' which is made precise by BMO estimates.
For further results in this direction, we refer the reader to \cite{linear-combinations, donaire-nicolau}.

\part{General Centered Inner Functions}
\label{GCIF}

\section{Background on Composition Operators}
\label{sec:composition-operators}

In this section, we assume that $F$ is a holomorphic self-map of the unit disk with $F(0) = 0$. We survey a number of results on the spectral properties of the composition operator $C_F: g \to g \circ F$ acting on various spaces of real and holomorphic functions. We first recall a classical lemma due to Koenigs from 1884:

\begin{lemma}
\label{holomorphic-eigenvalues}
Suppose $F$ is a holomorphic self-map of the unit disk with $F(0) = 0$.
If $F'(0) \ne 0$, then the eigenvalues of $C_F: \hol(\mathbb{D}) \to \hol(\mathbb{D})$ are 1 and $F'(0)^k$, $k = 1, 2, \dots$. All have multiplicity 1. 
On the other hand, if $F'(0) = 0$, then $C_F$ only has the constant eigenfunction ${\bf 1}$.
\end{lemma}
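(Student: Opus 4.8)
The plan is to reduce the statement to the classical Koenigs linearization theorem. First I would treat the case $F'(0) = \lambda$ with $0 < |\lambda| < 1$. Koenigs' theorem provides a holomorphic function $\sigma$ on $\mathbb{D}$ with $\sigma(0) = 0$, $\sigma'(0) = 1$, solving the Schröder equation $\sigma \circ F = \lambda \sigma$. This $\sigma$ is precisely an eigenfunction of $C_F$ with eigenvalue $\lambda$, and then $\sigma^k$ is an eigenfunction with eigenvalue $\lambda^k$ for each $k \ge 1$; together with the constant function $\mathbf{1}$ (eigenvalue $1$), this exhibits all the claimed eigenvalues. To see there are no others, suppose $C_F h = \mu h$ with $h \in \hol(\mathbb{D})$ nonconstant. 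Expanding $h$ in a Taylor series around the fixed point $0$ and comparing lowest-order terms of $h \circ F = \mu h$: if $h(z) = c_m z^m + O(z^{m+1})$ with $c_m \ne 0$ and $m \ge 1$, then $h(F(z)) = c_m \lambda^m z^m + O(z^{m+1})$, forcing $\mu = \lambda^m$. If instead $h$ is constant, $\mu = 1$. This shows the spectrum of eigenvalues is exactly $\{1\} \cup \{\lambda^k : k \ge 1\}$. For simplicity of each eigenvalue, I would argue that if $h$ is an eigenfunction for $\lambda^m$, then $h - c_m \sigma^m$ is again an eigenfunction for $\lambda^m$ (or zero) but vanishes to order $> m$ at $0$; iterating this subtraction and using that a holomorphic function vanishing to infinite order is $0$, we get $h = c_m \sigma^m$, so the eigenspace is one-dimensional (one must note the $\lambda^k$ are distinct, using $0 < |\lambda| < 1$, so no collisions occur — and $1$ itself is never equal to any $\lambda^k$).

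For the second assertion, when $F'(0) = 0$: suppose $C_F h = \mu h$ with $h$ nonconstant holomorphic, and let $m \ge 1$ be the order of vanishing of $h - h(0)$ at $0$, so $h(z) = h(0) + c_m z^m + \cdots$ with $c_m \ne 0$. Since $F(0) = 0$ and $F'(0) = 0$, we have $F(z) = O(z^2)$, hence $h(F(z)) = h(0) + O(z^{2m})$. Comparing with $\mu h(z) = \mu h(0) + \mu c_m z^m + \cdots$: the $z^m$ coefficient on the left is $0$, so $\mu c_m = 0$, forcing $\mu = 0$. But $\mu = 0$ would mean $h \circ F \equiv 0$, impossible for $h$ nonconstant since $F$ is nonconstant (its image is open). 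Hence no nonconstant eigenfunction exists, and only $\mathbf{1}$ with eigenvalue $1$ survives.

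The main obstacle is the bookkeeping in establishing multiplicity one: one needs the Koenigs eigenfunction $\sigma$ to exist and be genuinely holomorphic on all of $\mathbb{D}$ (not merely near $0$), and one must carefully run the "subtract the leading term" induction to conclude that any eigenfunction for $\lambda^m$ is a scalar multiple of $\sigma^m$. This is where I would be most careful: the induction produces a sequence of eigenfunctions vanishing to ever-higher order, and I would invoke that the only holomorphic function on $\mathbb{D}$ vanishing to infinite order at $0$ is identically zero to terminate it. A minor subtlety worth a remark is that the statement as written only asserts the eigenvalue structure — it does not claim $C_F$ is diagonalizable or that these eigenfunctions span — so I do not need to address completeness of the eigensystem, only the list of eigenvalues and their geometric multiplicities.
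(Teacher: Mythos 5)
Your proof is, in substance, the standard Koenigs argument from Section 6.1 of Shapiro's book --- which is exactly what the paper cites for this lemma instead of proving it --- and it is essentially correct. Three points to tighten. First, your case analysis at the leading-order comparison omits a nonconstant eigenfunction $h$ with $h(0)\neq 0$: there the constant term of $h\circ F=\mu h$ forces $\mu=1$, and then $g=h-h(0)$ is a nonzero eigenfunction for the eigenvalue $1$ vanishing to some order $m\ge 1$, so your same coefficient computation gives $\lambda^m=1$, impossible when $0<|\lambda|<1$. This case is needed anyway, since it is precisely what shows the eigenvalue $1$ is simple, which is part of the assertion. Second, the ``iterate the subtraction to infinite order'' step is both unnecessary and, as described, does not quite run: after one subtraction $h_1=h-c_m\sigma^m$ is (if nonzero) an eigenfunction for $\lambda^m$ vanishing to order $m'>m$, and your leading-order computation already forces $\lambda^{m'}=\lambda^m$, a contradiction since $0<|\lambda|<1$; by contrast, subtracting $c_{m'}\sigma^{m'}$ from $h_1$ would destroy the eigenfunction property, because $\sigma^{m'}$ belongs to the eigenvalue $\lambda^{m'}\neq\lambda^m$. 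So one subtraction plus the distinctness of the $\lambda^k$ finishes it. Third, your argument (and Koenigs' theorem itself) requires $0<|F'(0)|<1$; the hypothesis $F'(0)\neq 0$ as literally written also admits rotations, for which the conclusion can fail (the identity map being the extreme example), so one should note that the Schwarz lemma gives $|F'(0)|<1$ once rotations are excluded, as is implicit in the paper's statement.
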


A proof can be found in \cite[Section 6.1]{shapiro-book}.
The Koenigs eigenfunction
$$
\varphi(z) = \lim_{n \to \infty} F'(0)^{-n} \cdot F^{\circ n}(z)
$$
is well known in complex dynamics. It serves as a linearizing coordinate near an attracting fixed point with a non-zero multiplier:
$$
\varphi(F(z)) = F'(0) \cdot \varphi(z), \qquad \varphi'(0) = 1.
$$
 The other eigenfunctions are powers of the Koenigs eigenfunction.

\subsection{Hardy Spaces}

The Hardy space $H^p$  consists of holomorphic functions on the unit disk which satisfy
$$
\| g \|_{H^p}^p = \sup_{0 < r < 1} \frac{1}{2\pi} \int_{0}^{2\pi} |g(re^{i\theta})|^p d\theta < \infty.
$$
To study the action of $C_F$ on the space $H^2$, one uses the following two facts, which one can find in \cite[Sections 2 and 3]{shapiro-inner}:

\begin{lemma}[Littlewood-Paley formula]
\label{littlewood-paley}
For any function $g \in H^2$,
\begin{equation}
\| g\|_{H^2}^2 =  |g(0)|^2 + \frac{2}{\pi} \int_{\mathbb{D}} |g'(z)|^2 \log \frac{1}{|z|} |dz|^2.
\end{equation}
\end{lemma}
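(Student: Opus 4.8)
The statement to prove is the Littlewood–Paley identity: for $g \in H^2$,
$$
\| g\|_{H^2}^2 = |g(0)|^2 + \frac{2}{\pi} \int_{\mathbb{D}} |g'(z)|^2 \log \frac{1}{|z|} \, |dz|^2.
$$
(I read the displayed formula in the excerpt as having a typo — the left side should be $\|g\|_{H^2}^2$ — and I will prove that version.)

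The plan is to start from the power series $g(z) = \sum_{n \ge 0} a_n z^n$, for which the Hardy norm has the clean Parseval expression $\|g\|_{H^2}^2 = \sum_{n\ge 0} |a_n|^2$, so that $\|g\|_{H^2}^2 - |g(0)|^2 = \sum_{n \ge 1} |a_n|^2$. The strategy is then to compute the right-hand integral term by term and match. First I would write $g'(z) = \sum_{n \ge 1} n a_n z^{n-1}$ and, passing to polar coordinates $z = re^{i\theta}$ with $|dz|^2 = r\,dr\,d\theta$, integrate in $\theta$ first: by orthogonality of $\{e^{ik\theta}\}$ on $[0,2\pi]$, the cross terms vanish and
$$
\int_0^{2\pi} |g'(re^{i\theta})|^2 \, d\theta = 2\pi \sum_{n \ge 1} n^2 |a_n|^2 r^{2n-2}.
$$
Substituting and interchanging sum and integral (justified below), the right-hand side becomes
$$
\frac{2}{\pi} \cdot 2\pi \sum_{n \ge 1} n^2 |a_n|^2 \int_0^1 r^{2n-2} \log\frac{1}{r} \, r \, dr
= 4 \sum_{n \ge 1} n^2 |a_n|^2 \int_0^1 r^{2n-1} \log\frac{1}{r}\,dr.
$$
The remaining one-dimensional integral is elementary: the substitution $r = e^{-t}$ (or integration by parts) gives $\int_0^1 r^{2n-1}\log\frac1r\,dr = \tfrac{1}{(2n)^2} = \tfrac{1}{4n^2}$. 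Hence the right-hand side collapses to $4\sum_{n\ge1} n^2|a_n|^2 \cdot \frac{1}{4n^2} = \sum_{n \ge 1}|a_n|^2$, which matches $\|g\|_{H^2}^2 - |g(0)|^2$, completing the identity.

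The only genuine subtlety — and the step I'd treat most carefully — is the justification of the term-by-term manipulations (interchange of summation with both the $\theta$-integral and the $r$-integral). Since all terms are nonnegative after taking $|\cdot|^2$ and integrating in $\theta$, Tonelli's theorem applies with no integrability hypothesis needed, so the interchange is unconditional; the final series is then either finite (exactly when $g \in H^2$, i.e. $\sum|a_n|^2 < \infty$) or both sides are $+\infty$ simultaneously. This also shows the formula is really an identity in $[0,+\infty]$ valid for all holomorphic $g$, and membership in $H^2$ is precisely the condition under which it is finite. One should also note $g'$ is holomorphic on $\mathbb{D}$, so the polar integrals over $\{|z| \le r\}$ are finite for each $r < 1$ and one passes to the limit $r \to 1$ by monotone convergence. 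No deeper input (such as boundary value theory) is required; this is a self-contained computation, which is why it serves as a convenient lemma for studying $C_F$ on $H^2$.
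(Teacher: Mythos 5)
Your computation is correct (including the observation that the left-hand side should read $\|g\|_{H^2}^2$), and it is the standard power-series/Parseval argument; the paper itself offers no proof, simply citing Shapiro's survey, where the lemma is established by essentially this same term-by-term computation with the Tonelli justification you give.
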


\begin{lemma}
\label{nevanlinna}
Let $F:\mathbb{D}\to\mathbb{D}$ be a holomorphic self-map of the unit disk which vanishes at the origin and
$$
N_{F}(z) = \sum_{F(w) = z} \log \frac{1}{|z|}, \qquad  z\in \mathbb{D},
$$
be its {\em Nevanlinna counting function}. We have:

{\em (i)} The inequality $N_F(z) \le \log \frac{1}{|z|}$ holds for any $z \in \mathbb{D}$.
 
{\em (ii)} If $F$ is an inner function, then $N_F(z) = \log \frac{1}{|z|}$ outside a set of logarithmic capacity zero.

{\em (iii)} If the equality $N_F(z) = \log \frac{1}{|z|}$ holds at a single point, then $F$ is an inner function.
\end{lemma}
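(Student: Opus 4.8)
\emph{Proof sketch.} The plan is to read off all three statements from Jensen's formula applied to the Frostman shifts of $F$. Fix $z\in\mathbb{D}\setminus\{0\}$ (the case $z=0$ being the trivial identity $+\infty=+\infty$, since $F(0)=0$) and set $h_z:=(F-z)/(1-\overline{z}F)$, a bounded holomorphic self-map of $\mathbb{D}$ with $h_z(0)=-z\neq 0$ whose zeros, counted with multiplicity, are exactly the solutions of $F(w)=z$; as $h_z\in H^\infty$ these zeros form a Blaschke sequence $\{a_k\}$ with $\sum_k\log(1/|a_k|)=N_F(z)$. Jensen's formula on the circle $|w|=r<1$ reads
\[
\sum_{|a_k|<r}\log\frac{r}{|a_k|}=\frac{1}{2\pi}\int_0^{2\pi}\log|h_z(re^{i\theta})|\,d\theta-\log|z|.
\]
Since $|h_z|\le 1$, the integral on the right is $\le 0$; the left-hand side is non-decreasing in $r$ and increases to $N_F(z)$ as $r\to 1$ (monotone convergence). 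Letting $r\to1$ gives $N_F(z)\le\log(1/|z|)$, which is (i).

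For (iii) I would argue as follows. If equality holds at some point --- necessarily a point $z_0\neq 0$, since at $0$ the identity is automatic --- then the displayed formula forces $\frac{1}{2\pi}\int_0^{2\pi}\log|h_{z_0}(re^{i\theta})|\,d\theta\to 0$ as $r\to1$. The integrands $-\log|h_{z_0}(re^{i\theta})|$ are non-negative and, by Fatou's theorem on radial limits, converge for a.e.\ $\theta$ to $-\log|h_{z_0}^*(\theta)|\ge 0$, where $h_{z_0}^*$ is the boundary function. Fatou's lemma then yields $\int_0^{2\pi}(-\log|h_{z_0}^*|)\,d\theta\le 0$, and since the integrand is $\ge 0$ this forces $|h_{z_0}^*|=1$ a.e.; thus $h_{z_0}$ is inner. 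Writing $F=(h_{z_0}+z_0)/(1+\overline{z_0}h_{z_0})$ and using that a disk automorphism maps $\partial\mathbb{D}$ onto $\partial\mathbb{D}$, one concludes $|F^*|=1$ a.e., i.e.\ $F$ is inner.

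For (ii), assume now $F$ is inner, so each $h_z$ is inner and admits the canonical factorization $h_z=c_zB_zS_{\sigma_z}$ into a unimodular constant, the Blaschke product over $\{a_k\}$, and a singular inner factor with singular measure $\sigma_z\ge 0$. Comparing moduli at the origin (using $|B_z(0)|=\prod_k|a_k|$ and $|S_{\sigma_z}(0)|=e^{-\sigma_z(\partial\mathbb{D})}$) upgrades the Jensen identity to
\[
N_F(z)=\log\frac{1}{|z|}-\sigma_z(\partial\mathbb{D}),
\]
so equality in (i) holds precisely when $\sigma_z=0$, i.e.\ precisely when the Frostman shift $h_z=(F-z)/(1-\overline{z}F)$ is a Blaschke product. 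By Frostman's theorem this holds for every $z$ outside a set of logarithmic capacity zero, which is exactly (ii).

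The only ingredients beyond Jensen's formula are standard facts from $H^p$-theory: Fatou's theorem on radial limits, the canonical factorization of $H^\infty$ functions, and Frostman's theorem (which is precisely what guarantees that the exceptional set in (ii) has logarithmic capacity zero; alternatively one can reprove this by observing that $z\mapsto\sigma_z(\partial\mathbb{D})=\log(1/|z|)-N_F(z)$ is a non-negative ``defect'' function and running the usual potential-theoretic estimate). The step I expect to need the most care is (iii): one must ensure that the vanishing of the circular means of $\log|h_{z_0}|$ genuinely propagates to $|h_{z_0}^*|=1$ a.e., rather than merely to the weaker statement $\int\log|h_{z_0}^*|=0$ modulo a singular mass --- this is exactly where the sign of $-\log|h_{z_0}|$ and Fatou's lemma are used.
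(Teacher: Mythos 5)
Your proof is correct. The paper does not prove this lemma at all --- it quotes it from Shapiro's survey \cite[Sections 2--3]{shapiro-inner} --- and your argument (Jensen's formula applied to the Frostman shift $h_z=(F-z)/(1-\overline{z}F)$ for (i), Fatou's lemma on the non-negative integrands $-\log|h_{z_0}|$ for (iii), and the canonical factorization plus Frostman's theorem for (ii)) is exactly the standard proof found in that reference, with the delicate step in (iii) handled properly. One minor point: the paper's displayed definition of $N_F$ contains a typo ($\log\frac{1}{|z|}$ should be $\log\frac{1}{|w|}$, summed over the preimages $w$), which you have silently and correctly repaired.
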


Using Lemmas \ref{littlewood-paley} and \ref{nevanlinna}, it is easy to see that $C_F$ is bounded on $H^2$:
\begin{align*}
\| C_F g \|^2_{H^2} 
& = |g(0)|^2 + \frac{2}{\pi} \int_{\mathbb{D}} |(g \circ F)'(z)|^2 \log \frac{1}{|z|} |dz|^2 \\
& =  |g(0)|^2 + \frac{2}{\pi} \int_{\mathbb{D}} |g'(z)|^2 N_F(z) |dz|^2 \\
 & \le  |g(0)|^2 + \frac{2}{\pi} \int_{\mathbb{D}} |g'(z)|^2 \log \frac{1}{|z|} |dz|^2 \\
 & = \| g \|^2_{H^2}.
\end{align*}
If $F$ is an inner function and $g(0) = 0$, then the inequalities in the computation above are equalities and we have 
$\| C_F g \|^2_{H^2} = \| g \|^2_{H^2}$. As explained in \cite[Theorem 5.1]{shapiro-inner}, if $F$ is not inner,
then $C_F g$ is a strict contraction on the orthogonal complement of the constant functions:

\begin{lemma}
\label{contraction-on-h2}
Let $F$ be a holomorphic self-map of the unit disk with $F(0) = 0$. If $F$ is an inner function, then $C_F$ is an isometry on $H^2$. Otherwise, $F$ is a strict contraction on $H^2_0 = \{ g \in H^2 : g(0) = 0 \}$, i.e.~$\| C_F \|_{H^2_0 \to H^2_0} < 1$.
\end{lemma}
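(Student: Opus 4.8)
The plan is to derive both halves of the lemma from the identity already assembled above: for a self-map $F$ with $F(0)=0$ and every $g\in H^2$, combining Lemma~\ref{littlewood-paley} with the change of variables $(g\circ F)'=g'(F)\cdot F'$ gives
\[
\|g\|_{H^2}^2-\|C_Fg\|_{H^2}^2=\frac2\pi\int_{\mathbb D}|g'(z)|^2\,u(z)\,|dz|^2,\qquad u:=\log\tfrac1{|z|}-N_F\ \ge\ 0,
\]
the inequality being Lemma~\ref{nevanlinna}(i). If $F$ is inner, Lemma~\ref{nevanlinna}(ii) says $u=0$ off a set of logarithmic capacity zero, hence off a set of planar measure zero, so the right-hand side vanishes for \emph{every} $g\in H^2$ and $C_F$ is an isometry. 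If $F$ is not inner we must instead bound the right-hand side below; since $H^2_0$ is $C_F$-invariant (because $(C_Fg)(0)=g(0)$) and the constant term drops out for $g\in H^2_0$, it suffices to produce $\delta>0$ with $\frac2\pi\int_{\mathbb D}|g'|^2u\ge\delta$ whenever $g\in H^2_0$ and $\|g\|_{H^2}=1$.

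The crux is a quantitative lower bound for the Nevanlinna defect, namely $u(z)\ge c\,(1-|z|)$ on $\mathbb D$ for some $c=c(F)>0$. I would get this by running the Jensen-formula argument behind Lemma~\ref{nevanlinna}(i), but applied to $h:=\phi_z\circ F$ with $\phi_z(w)=\frac{w-z}{1-\overline z w}$, rather than to $F$ itself: the zeros of the bounded function $h$ are exactly the $F$-preimages of $z$, $h(0)=-z$, and $h^*=\phi_z\circ F^*$ almost everywhere, so its canonical factorisation gives
\[
u(z)\ \ge\ \int_{\partial\mathbb D}\log\frac1{|\phi_z(F^*)|}\,dm\ \ge\ 0
\]
(a singular-inner factor of $h$ only makes $u$ larger). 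Since $F$ is not inner, $|F^*|<1$ on a set of positive measure, so there are $\varepsilon_0>0$ and $E\subset\partial\mathbb D$ with $m(E)>0$ and $|F^*|\le 1-\varepsilon_0$ on $E$; and the elementary identity $1-|\phi_z(w)|^2=\frac{(1-|z|^2)(1-|w|^2)}{|1-\overline z w|^2}$ forces $\log\frac1{|\phi_z(w)|}\gtrsim\varepsilon_0(1-|z|)$ whenever $|w|\le 1-\varepsilon_0$. Integrating over $E$ gives the bound with $c\asymp m(E)\varepsilon_0$.

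Granting this, the endgame is routine. Fixing a small $\varepsilon\in(0,\tfrac12]$, the bound above and $\log\frac1t\le\frac{1-t}{t}$ give $u(z)\ge c\varepsilon\log\frac1{|z|}$ on $\{|z|\ge\varepsilon\}$, hence
\[
\frac2\pi\int_{\mathbb D}|g'|^2u\ \ge\ c\varepsilon\cdot\frac2\pi\int_{\{|z|\ge\varepsilon\}}|g'|^2\log\tfrac1{|z|}\ =\ c\varepsilon\Bigl(1-\frac2\pi\int_{\{|z|<\varepsilon\}}|g'|^2\log\tfrac1{|z|}\Bigr),
\]
using $\|g\|_{H^2}^2=\frac2\pi\int_{\mathbb D}|g'|^2\log\frac1{|z|}$ for $g\in H^2_0$. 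As $\|g\|_{H^2}=1$ forces $|g'|\le C$ on $\{|z|\le\tfrac12\}$ for an absolute constant $C$ (Cauchy--Schwarz applied to the Taylor series of $g'$), the subtracted integral is $\le C^2\,O(\varepsilon^2\log\tfrac1\varepsilon)\le\tfrac12$ once $\varepsilon$ is fixed small enough, so $\delta:=c\varepsilon/2$ works and $\|C_F\|_{H^2_0\to H^2_0}^2\le 1-\delta$. The main obstacle is exactly this passage from the pointwise strict inequality $N_F<\log\frac1{|z|}$ of Lemma~\ref{nevanlinna}(iii)---too weak by itself, since the unit ball of $H^2_0$ is non-compact and the $|g'|^2\log\frac1{|z|}$-mass can escape to $\partial\mathbb D$---to the \emph{uniform} estimate $u\gtrsim 1-|z|$; it is the refinement of Lemma~\ref{nevanlinna}(i) described above, powered by the positive-measure defect $\{|F^*|<1\}$, that effects it. The region near $0$, where $u/\log\frac1{|z|}\to0$, is harmless because there $\|g\|_{H^2}=1$ already controls $g'$.
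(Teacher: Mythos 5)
Your proposal is correct. The isometry half is exactly the paper's argument (Littlewood--Paley plus the Stanton change of variables and part (ii) of the Nevanlinna lemma, noting that sets of logarithmic capacity zero are planar null sets). For the strict-contraction half the paper offers no proof at all --- it simply cites Shapiro's \emph{What do composition operators know about inner functions?} --- so you have supplied the missing argument, and it is essentially the right one. Your two key moves both check out: (1) the quantitative refinement of Littlewood's inequality, obtained by factoring $h=\phi_z\circ F$ into Blaschke, singular and outer parts, which gives $\log\frac1{|z|}-N_F(z)\ge\int_{\partial\mathbb D}\log\frac1{|\phi_z(F^*)|}\,dm$ and hence $u(z)\ge c(1-|z|)$ once $|F^*|\le 1-\varepsilon_0$ on a set of positive measure (this is precisely where non-innerness enters, and is the content of Shapiro's proof); and (2) the observation that the only obstruction to converting the pointwise defect into a uniform operator bound is the mass of $|g'|^2\log\frac1{|z|}$ near the origin, where $u/\log\frac1{|z|}$ degenerates, which you control by the absolute sup-bound on $g'$ over $\{|z|\le 1/2\}$ for unit vectors $g$. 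The constants are consistent ($\varepsilon$ is chosen from an absolute constant, $c$ from $F$ alone, and $\delta=c\varepsilon/2$ depends on both but on nothing else), and $H^2_0$ is indeed $C_F$-invariant since $F(0)=0$. The one degenerate case worth a sentence is $F\equiv 0$, where the Blaschke-factorization step is vacuous but the conclusion $\|C_F\|_{H^2_0\to H^2_0}=0$ is immediate.
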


The {\em spectrum} $\sigma$ of an operator consists of all $\lambda \in \mathbb{C}$ for which $\lambda I - F$ does not have a bounded inverse.
The {\em spectral radius} is the radius of the smallest closed disk centered at the origin which contains the spectrum. Since $C_F$ is a bounded operator acting on a Banach space, its spectral radius may be computed by the formula 
$$
r_{H^2}(C_F) = \lim_{n\to\infty}\| C_F^n \|^{1/n}. 
$$

We are more interested in the {\em essential spectum} of $C_F$, which is the part of the spectrum that survives under compact perturbations. More precisely,
$$
\sigma_{e}(C_F) := \bigcap_K \sigma(C_F - K),
$$
where $K$ ranges over compact operators. The {\em essential spectral radius} is the radius of the smallest disk centered at the origin which contains the essential spectrum.
Outside $\overline{B(0, r_{e}(C_F))}$, the spectrum $\sigma(C_F)$ can consist only of isolated eigenvalues of finite multiplicity. 

In the literature, one often says that an operator has a {\em spectral gap} if its essential spectral radius $r_e$ is strictly smaller than its spectral radius $r$. For our purposes, it is preferable to use a more restrictive definition: we say that an operator has a spectral gap if it has a simple eigenvalue $\lambda$ with $|\lambda| = r$, and the rest of the spectrum is contained in a ball $B(0, \rho)$ for some $\rho < r$.

The {\em essential norm} of $C_F$ is defined as
$$
\| C_F \|_{e,\, H^2 \to H^2} := \inf_K \bigl \{\| C_F - K \|_{H^2 \to H^2} \bigr \}.
$$
The spectral radius formula shows that $r_{e,H^2}(C_F) \le \| C_F \|_{e, H^2 \to H^2}$.   In an important work \cite{shapiro}, J.~H.~Shapiro computed the essential norm of $C_F$ when acting on $H^2$:

\begin{theorem}
\label{shapiro-thm}
If $F$ is a holomorphic self-map of the unit disk, then the essential norm of the composition operator $C_F$ acting on $H^2$ is 
$$\| C_F \|_{e,\, H^2 \to H^2}^2 = \limsup_{|z| \to 1} \frac{N_F(z)}{\log(1/|z|)}.$$
\end{theorem}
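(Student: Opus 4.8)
The plan is to follow Shapiro \cite{shapiro} and establish the asserted identity by proving matching upper and lower bounds for $\|C_F\|_{e, H^2\to H^2}^2$. Write $\gamma := \limsup_{|z|\to 1} N_F(z)/\log(1/|z|)$. The single tool common to both halves is the identity read off from the computation above,
$$
\| C_F g \|_{H^2}^2 = |g(0)|^2 + \frac{2}{\pi}\int_{\mathbb D}|g'(w)|^2\, N_F(w)\,|dw|^2, \qquad g \in H^2,
$$
obtained by combining the Littlewood--Paley formula (Lemma \ref{littlewood-paley}) for $g\circ F$ with the non-univalent change of variables $\frac{2}{\pi}\int_{\mathbb D}|(g\circ F)'(z)|^2\log\frac1{|z|}\,|dz|^2 = \frac{2}{\pi}\int_{\mathbb D}|g'(w)|^2 N_F(w)\,|dw|^2$; recall that $0\le N_F\le\log(1/|\cdot|)$ everywhere by Lemma \ref{nevanlinna}(i), so the right-hand side never exceeds $\|g\|_{H^2}^2$.

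For the upper bound $\|C_F\|_e^2\le\gamma$ I would approximate $C_F$ by the finite-rank operators $C_F P_N$, where $P_N$ is the orthogonal projection of $H^2$ onto $\operatorname{span}\{1,z,\dots,z^{N-1}\}$; then $\|C_F\|_e\le\|C_F(I-P_N)\|$, and it suffices to estimate $\|C_F g\|_{H^2}$ for unit vectors $g$ orthogonal to $1,z,\dots,z^{N-1}$ (in particular $g(0)=0$). Fixing $\varepsilon>0$ and $r_0<1$ with $N_F(w)\le(\gamma+\varepsilon)\log\frac1{|w|}$ for $r_0<|w|<1$, I would split the integral in the identity at $|w|=r_0$: on $\{|w|>r_0\}$ the contribution is $\le(\gamma+\varepsilon)\|g\|_{H^2}^2\le\gamma+\varepsilon$ by Littlewood--Paley, while on $\{|w|\le r_0\}$ the vanishing of the first $N$ Taylor coefficients of $g$ forces, via Cauchy--Schwarz, $\sup_{|w|\le r_0}|g'(w)|\le\big(\sum_{k\ge N}k^2 r_0^{2k-2}\big)^{1/2}\to 0$ as $N\to\infty$, so that contribution is $o(1)$. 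Letting $N\to\infty$ and then $\varepsilon\to0$ gives the bound; this half is routine once the identity is in hand.

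For the lower bound $\|C_F\|_e^2\ge\gamma$ I would test against the normalized reproducing kernels $k_a=K_a/\|K_a\|_{H^2}$, $K_a(z)=(1-\bar a z)^{-1}$, which are weakly null (since $\langle g,k_a\rangle=\sqrt{1-|a|^2}\,g(a)\to 0$ for every $g\in H^2$) with $\|k_a\|_{H^2}=1$, so that $\|C_F\|_e\ge\limsup_{|a|\to 1}\|C_F k_a\|_{H^2}$. Applying the identity and the substitution $\zeta=\varphi_a(w)$ with $\varphi_a(w)=\frac{a-w}{1-\bar a w}$, $|\varphi_a'(w)|^2=\frac{(1-|a|^2)^2}{|1-\bar a w|^4}$, one finds
$$
\|C_F k_a\|_{H^2}^2 = (1-|a|^2) + \frac{2|a|^2}{1-|a|^2}\cdot\frac{1}{\pi}\int_{\mathbb D}|\varphi_a'(w)|^2\, N_F(w)\,|dw|^2 .
$$
Here I would invoke the classical fact (see \cite{shapiro}) that $u:=N_F+\log|\cdot|$ extends to a subharmonic function on $\mathbb D$, the $+\infty$ singularity of $N_F$ at the origin being absorbed by $\log|\cdot|$; then $u\circ\varphi_a$ is subharmonic on $\mathbb D$, and the sub-mean-value property of normalized area measure gives $\frac1\pi\int_{\mathbb D}|\varphi_a'|^2 u\,|dw|^2=\frac1\pi\int_{\mathbb D}(u\circ\varphi_a)\,|d\zeta|^2\ge u(a)=N_F(a)+\log|a|$. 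Subtracting the explicit value $\frac1\pi\int_{\mathbb D}|\varphi_a'(w)|^2\log|w|\,|dw|^2=\frac1\pi\int_{\mathbb D}\log|\varphi_a(\zeta)|\,|d\zeta|^2=\frac{|a|^2-1}{2}$ (which reduces to the logarithmic potential of normalized area measure on the disk) leaves $\frac1\pi\int_{\mathbb D}|\varphi_a'|^2 N_F\,|dw|^2\ge N_F(a)+\big(\log|a|+\frac{1-|a|^2}{2}\big)=N_F(a)-O\big((1-|a|^2)^2\big)$ with a nonnegative error term. Hence $\|C_F k_a\|_{H^2}^2\ge(1-|a|^2)+\frac{2|a|^2}{1-|a|^2}N_F(a)-O(1-|a|^2)$, and choosing $a=a_n\to\partial\mathbb D$ along which $N_F(a_n)/\log(1/|a_n|)\to\gamma$ and using $\log\frac1{|a|}\sim\frac12(1-|a|^2)$, the right-hand side tends to $\gamma$; together with the upper bound this yields $\|C_F\|_{e,H^2\to H^2}^2=\gamma$.

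I expect the lower bound to be the main obstacle. One genuinely needs the subharmonicity of $N_F$ off the origin to pass from ``$N_F$ is large at the single point $a_n$'' to ``$\frac1\pi\int_{\mathbb D}|\varphi_{a_n}'|^2 N_F$ is large'', since a priori the counting function could be carried on a set that the averaging kernel essentially ignores; and one must track the sharp constants — the factor $2$ in $\log\frac1{|a|}\sim\frac12(1-|a|^2)$ together with the exact value of $\frac1\pi\int_{\mathbb D}|\varphi_a'|^2\log|\cdot|$ — since only with these does the limit come out equal to $\gamma$ rather than to a constant multiple of it. The upper bound, by contrast, is a straightforward truncation argument once the Littlewood--Paley identity for $C_F$ is available.
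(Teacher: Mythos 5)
The paper does not prove this theorem; it is quoted directly from Shapiro's paper \cite{shapiro}, so there is no in-text argument to compare against. Your reconstruction is correct and follows Shapiro's original strategy faithfully — the upper bound via the finite-rank truncations $C_FP_N$ together with the Littlewood--Paley/non-univalent change-of-variables identity, and the lower bound via the weakly null normalized reproducing kernels combined with the sub-mean-value property of $N_F$ (your Möbius-invariant full-disk averaging, with the exact logarithmic potential $\frac{1}{\pi}\int_{\mathbb D}\log|\zeta-a|\,|d\zeta|^2=\frac{|a|^2-1}{2}$, is a clean and correct packaging of that step) — with the only unproved input, the subharmonicity of $N_F+\log|\cdot|$, properly attributed to \cite{shapiro}.
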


As a consequence of Lemma \ref{contraction-on-h2}, we see that the spectral properties of the composition operator $C_F$ on $H^2$ can detect whether or not a holomorphic self-map of the unit disk is an inner function:
\begin{corollary}
\label{spectral-gap-on-H^2}
Let $F$ be a holomorphic self-map of the unit disk with $F(0) = 0$. If $F$ is an inner function, then $C_F$ is an isometry on $H^2$. Otherwise, $$r_{e,\, H^2}(C_F) \, \le \, \| C_F \|_{e,\, H^2 \to H^2} \, < \, 1$$ and the eigenvalue 1 with the eigenfunction ${\bf 1}$ is isolated and simple.
\end{corollary}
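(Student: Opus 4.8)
The plan is to derive everything from Lemma~\ref{contraction-on-h2} by exploiting the orthogonal splitting $H^2 = \mathbb{C}{\bf 1} \oplus H^2_0$, where $H^2_0 = \{g \in H^2 : g(0) = 0\}$. The inner case is nothing but the first assertion of Lemma~\ref{contraction-on-h2}, so I would immediately pass to the case that $F$ is \emph{not} an inner function. The first thing to record is that $C_F$ respects the splitting: since $F(0) = 0$, we have $(g \circ F)(0) = g(0)$, so $C_F {\bf 1} = {\bf 1}$ and $C_F(H^2_0) \subseteq H^2_0$. Writing $P_1$ for the (rank-one) orthogonal projection onto the constants and $P_0 = I - P_1$, one then has $C_F = P_1 + C_F P_0$, with $C_F P_0 = P_0 C_F P_0$.

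Next I would bound the essential norm. Since $P_0$ is a norm-one projection with image $H^2_0$, the unit ball of $H^2_0$ is the image under $P_0$ of the unit ball of $H^2$, hence $\|C_F P_0\|_{H^2 \to H^2} = \|C_F\|_{H^2_0 \to H^2_0}$, and this quantity is some $c < 1$ by Lemma~\ref{contraction-on-h2}. As $P_1$ is compact, the essential norm seminorm gives $\| C_F \|_{e, H^2 \to H^2} = \| C_F P_0 \|_{e, H^2 \to H^2} \le \| C_F P_0 \|_{H^2 \to H^2} = c < 1$. Combined with the inequality $r_{e, H^2}(C_F) \le \| C_F \|_{e, H^2 \to H^2}$ already recorded above, this yields the displayed chain of inequalities. (One could instead invoke Shapiro's formula, Theorem~\ref{shapiro-thm}, but it is not needed for this argument.)

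It remains to check that $1$ is an isolated, simple eigenvalue with eigenfunction ${\bf 1}$. Because $C_F$ is block-diagonal for the decomposition $\mathbb{C}{\bf 1} \oplus H^2_0$, its spectrum is $\{1\} \cup \sigma(C_F|_{H^2_0})$, and by the spectral radius formula $\sigma(C_F|_{H^2_0}) \subseteq \overline{B(0,c)}$; since $c < 1$, the point $1$ is isolated in $\sigma(C_F)$ and $C_F|_{H^2_0} - I$ is invertible. Consequently the Riesz spectral projection attached to the eigenvalue $1$ equals $P_1$: the contour integral of the resolvent decomposes along the two blocks, the $H^2_0$-block contributes nothing, and the one-dimensional block contributes the identity there. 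As $P_1$ has rank one, $1$ is a simple eigenvalue, and ${\bf 1}$ spans its eigenspace.

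I expect the only genuinely delicate point to be this last identification of the Riesz projection of the eigenvalue $1$ with the projection onto constants --- equivalently, ruling out a non-semisimple eigenvalue at $1$ or an accumulation of spectrum there --- which is precisely what the block-diagonal structure together with the strict bound $c<1$ buys us. Everything else is routine bookkeeping with the definition of the essential norm and Lemma~\ref{contraction-on-h2}.
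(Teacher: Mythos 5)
Your proposal is correct. The core step --- passing from the strict contraction $\|C_F\|_{H^2_0\to H^2_0}<1$ of Lemma~\ref{contraction-on-h2} to $\|C_F\|_{e,H^2\to H^2}<1$ by subtracting the compact rank-one projection onto constants --- is exactly how the corollary is meant to follow from that lemma, and your bookkeeping ($C_F P_1=P_1$, $\|C_FP_0\|=\|C_F\|_{H^2_0\to H^2_0}$, invariance of the essential norm under compact perturbation) is all sound. Where you diverge is in the final step: the paper's pattern (made explicit in the proof of Corollary~\ref{spectral-gap-bergman-space}) is to conclude isolation and simplicity of the eigenvalue $1$ from $r_e<1$ together with Koenigs' Lemma~\ref{holomorphic-eigenvalues}, which pins down all possible eigenvalues and their multiplicities. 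You instead exploit the fact that $C_F$ is block-diagonal for the orthogonal splitting $H^2=\mathbb{C}{\bf 1}\oplus H^2_0$, so that $\sigma(C_F)=\{1\}\cup\sigma(C_F|_{H^2_0})$ with the second piece inside $\overline{B(0,c)}$, $c<1$, and the Riesz projection at $1$ is visibly $P_1$. Your route is self-contained and yields algebraic simplicity directly without appealing to the classification of eigenfunctions; the paper's route is shorter on the page but leans on Lemma~\ref{holomorphic-eigenvalues} (and, strictly speaking, on the standard fact that outside $\overline{B(0,r_e)}$ the spectrum consists of isolated eigenvalues of finite multiplicity). Both are valid; your identification of the Riesz projection with $P_1$ is the cleaner way to rule out a non-semisimple eigenvalue at $1$.
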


For a general exponent $1 \le p < \infty$, Bourdon and Shapiro  \cite{bourdon-shapiro} showed the following theorem which implies that Corollary \ref{spectral-gap-on-H^2} holds for all Hardy spaces $H^p$ with $1 \le p < \infty$:
\begin{theorem}
\label{bourdon-shapiro}
If $F$ is a holomorphic self-map of the unit disk, then
$$
r_{e, H^p}(C_F)^p \le r_{e, H^2}(C_F)^2, \qquad 1 \le p < \infty,
$$
with equality holding whenever $1 < p < \infty$.
\end{theorem}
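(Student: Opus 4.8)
The plan is to reduce the general $H^p$ statement to the $H^2$ computation of Shapiro (Theorem~\ref{shapiro-thm}), which we may treat as a black box. The key point is that the Nevanlinna counting function $N_F$, and in particular the quantity $\limsup_{|z|\to 1} N_F(z)/\log(1/|z|)$, is an invariant of $F$ that does not depend on the exponent $p$; so if we can express the essential spectral radius on $H^p$ in terms of the same geometric data, the inequality (and the equality in the reflexive range) will follow. Concretely, I would first recall that for $1<p<\infty$ the Hardy space $H^p$ is reflexive, so the weak compactness machinery used to characterize essential norms via Shapiro-type ``little-o'' conditions on $N_F$ applies uniformly in $p$, whereas for $p=1$ reflexivity fails and only the upper bound survives --- which matches exactly the asymmetry in the statement.

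For the upper bound $r_{e,H^p}(C_F)^p \le r_{e,H^2}(C_F)^2$, the cleanest route is via change-of-density / Riesz factorization. Given $g \in H^p$, one factors appropriately and passes to $H^2$ by writing $g = g_1 g_2$ with $\|g_i\|_{H^2}^2 = \|g\|_{H^p}^p$ when $p=1$, and more generally interpolates; composition respects this factorization because $C_F(g_1 g_2) = C_F(g_1)\,C_F(g_2)$. Iterating, $C_{F}^{\circ n} = C_{F^{\circ n}}$, and applying the $H^2$ essential-norm estimate to $F^{\circ n}$ gives $\|C_F^n\|_{e,H^p}^p \lesssim \|C_{F^{\circ n}}\|_{e,H^2}^2$; taking $n$-th roots and letting $n\to\infty$, the spectral radius formula $r_{e,H^p}(C_F) = \lim_n \|C_F^n\|_{e,H^p}^{1/n}$ (valid since the essential spectral radius equals the limit of essential norms of powers) yields the claimed inequality. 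The submultiplicativity of essential norms and the fact that $N_{F^{\circ n}}$ can be controlled via the chain rule / subordination for Nevanlinna counting functions are the routine but load-bearing ingredients here.

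For the reverse inequality when $1<p<\infty$, I would argue that the lower bound for $r_{e,H^p}(C_F)$ is governed by testing $C_F^n$ against normalized reproducing-kernel-type functions $k_a^{(p)}$ with $|a|\to 1$, whose behavior under $C_F$ is again dictated by $N_F$ near the boundary. Since these test functions and the resulting lower estimates scale with $p$ in exactly the way that makes $r_{e,H^p}(C_F)^p$ independent of $p$, combining with the upper bound forces equality. Here one uses that in the reflexive range, weakly null sequences converging to $0$ are genuinely detecting the essential (rather than just the full) spectrum, so the lower bound is sharp.

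The main obstacle is the reverse inequality in the reflexive range: establishing that the geometric lower bound coming from boundary test functions actually matches $r_{e,H^2}(C_F)^{2/p}$ rather than merely bounding it from below requires knowing that the $H^p$ essential spectral radius is \emph{exactly} $\bigl(\limsup_{|z|\to 1} N_F(z)/\log(1/|z|)\bigr)^{1/p}$, i.e.\ an $H^p$ analogue of Shapiro's theorem, which is precisely the content of \cite{bourdon-shapiro} and is not entirely formal --- it needs the full strength of the change-of-variable/Carleson-measure analysis adapted to $L^p$. In a self-contained write-up I would either cite \cite{bourdon-shapiro} for this equality directly, or prove the two inequalities separately as sketched, flagging that the $p=1$ case genuinely only gives ``$\le$'' because the weak-compactness argument underlying sharpness is unavailable in a non-reflexive space.
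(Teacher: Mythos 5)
The paper does not actually prove this statement: Theorem \ref{bourdon-shapiro} is quoted from Bourdon--Shapiro \cite{bourdon-shapiro} without proof, so there is nothing internal to compare your argument against. Judged on its own merits, your overall strategy --- reduce everything to the $p$-independent geometric quantity $\limsup_{|z|\to 1} N_F(z)/\log(1/|z|)$, using Riesz factorization for the upper bound and reproducing-kernel testing plus reflexivity for the equality when $1<p<\infty$ --- is the right one, and your diagnosis of why $p=1$ only yields an inequality (failure of the weak-compactness lower bound in a non-reflexive space) is correct.

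The genuine gap is in your upper-bound step. Riesz factorization $g=g_1g_2$ is a \emph{nonlinear} operation and does not interact with compact perturbations, so the claim $\|C_F^n\|_{e,H^p}^p\lesssim \|C_{F^{\circ n}}\|_{e,H^2}^2$ does not follow from ``composition respects the factorization'' together with ``submultiplicativity of essential norms'': given a compact $K$ on $H^2$, there is no induced compact operator on $H^p$ compatible with the factors. The standard repair is to first pass to essential norms via finite-rank truncation: with $R_ng=\sum_{k\ge n}\hat g(k)z^k$, the operator $C_G(I-R_n)$ is finite rank, so $\|C_G\|_{e,H^p}\le\|C_GR_n\|_{H^p\to H^p}$; one then writes $R_ng=z^nBW^{2/p}$ with $B$ inner and $W\in H^2$, and uses $|R_ng|^p\le |G|^{2\lfloor np/2\rfloor}|W\circ G|^2$ to dominate $\|C_GR_ng\|_{H^p}^p$ by $\|C_GR_{\lfloor np/2\rfloor}\|_{H^2\to H^2}^2\,\|R_ng\|_{H^p}^p$, and lets $n\to\infty$ before applying this to $G=F^{\circ k}$ and taking $k$-th roots. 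Your lower bound is closer to complete than you suggest: since $|k_a^{(p)}|^p=|k_a^{(2)}|^2$ exactly, one has $\|C_Gk_a^{(p)}\|_{H^p}^p=\|C_Gk_a^{(2)}\|_{H^2}^2$, and for $1<p<\infty$ the $k_a^{(p)}$ are weakly null as $|a|\to1$, so Shapiro's $H^2$ lower bound transfers verbatim; you do not need the full $H^p$ analogue of Theorem \ref{shapiro-thm} as a black box, which removes the circularity you flag at the end.
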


\subsection{Weighted Bergman Spaces}

While the results of the previous section are quite elegant, for the purposes of studying the dynamics of inner functions, they are slightly disappointing since the composition operators $C_F$ do not exhibit a spectral gap in this case. We therefore turn our attention to Bergman spaces.

For $\alpha > -1$, the weighted Bergman space $A^p_\alpha$ consists of holomorphic functions that satisfy
$$
\| g \|^p_{A^p_\alpha} \, = \,  \int_{\mathbb{D}} |g(z)|^p \, dA_\alpha \, < \, \infty, \qquad dA_\alpha = 
\frac{1}{\pi} (\alpha+1) (1-|z|^2)^\alpha |dz|^2.
$$
As in \cite{HKZ}, we normalize the area form $dA_\alpha$ so that $\| {\bf 1} \|^2_{A^p_\alpha} = 1$. When the exponent $p=2$,
the above condition can be rephrased in terms of the coefficients of the power series expansion at 0:
\begin{equation}
\label{eq:bergman-power-series}
\| g \|^2_{A^2_\alpha} \, = \, \sum_{n=0}^\infty \frac{n! \, \Gamma(2+\alpha)}{\Gamma(n+2+\alpha)} |a_n|^2
\, \asymp \, \sum_{n=0}^\infty \frac{|a_n|^2}{n^{1+\alpha}} \, < \, \infty.
\end{equation}

For weighted Bergman spaces, the spectral properties of $C_F$ are related to the behaviour of the weighted Nevanlinna counting function
 $$
N_{F, \gamma}(z) = \sum_{F(w) = z} \biggl ( \log \frac{1}{|z|} \biggr )^\gamma, \qquad \text{with }\gamma=\alpha+2.
$$
As explained in \cite[Section 6]{shapiro}, the proof of the upper bound in Theorem \ref{shapiro-thm} can be easily adapted to weighted Bergman spaces.
Several years later, P.~Poggi-Corradini \cite{pietro} showed that Shapiro's bound was sharp  for $\alpha = 0, 1$:
\begin{theorem}\label{t120230206}
If $F$ is a holomorphic self-map of the unit disk and $\alpha > -1$, then the essential spectral radius of the composition operator $C_F$ acting on $A^2_\alpha$ satisfies 
$$\| C_F \|^2_{e,\, A^2_\alpha \to A^2_\alpha} \le \limsup_{|z| \to 1} \frac{N_{F,\alpha+2}(z)}{\log(1/|z|)^{\alpha+2}}.$$
Equality holds for $\alpha = 0, 1$.
\end{theorem}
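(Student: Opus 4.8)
I would prove the two assertions separately: the inequality for all $\alpha>-1$ by adapting the scheme behind Theorem~\ref{shapiro-thm}, and the equality for $\alpha\in\{0,1\}$ by exhibiting near-extremal test functions in the manner of Poggi-Corradini \cite{pietro}. For the upper bound, start from a weighted analogue of the Littlewood-Paley identity (Lemma~\ref{littlewood-paley}): up to a normalizing constant, $\|g\|_{A^2_\alpha}^2$ equals $|g(0)|^2$ plus the integral of $|g'(z)|^2$ against a weight that is comparable, near $\partial\mathbb{D}$, to $\bigl(\log\frac1{|z|}\bigr)^{\alpha+2}$. Applying it to $g\circ F$ and performing the non-univalent change of variables $w=F(z)$, with $(g\circ F)'=(g'\circ F)\,F'$, converts the integral into one of $|g'(w)|^2$ against the weighted Nevanlinna counting function $N_{F,\alpha+2}(w)$, so that
$$
\|C_F g\|_{A^2_\alpha}^2 \;\asymp\; |g(0)|^2 + \int_{\mathbb{D}} |g'(w)|^2\,N_{F,\alpha+2}(w)\,|dw|^2 .
$$
Splitting the $w$-integral over $\{|w|\le r\}$ and $\{r<|w|<1\}$, the first piece is a compact operator in $g$ — it factors through the restriction of $g'$ to a relatively compact subdisk, hence through a space compactly embedded in $A^2_\alpha$ — while the second is at most $\bigl(\sup_{|w|>r} N_{F,\alpha+2}(w)\bigl(\log\frac1{|w|}\bigr)^{-(\alpha+2)}\bigr)\|g\|_{A^2_\alpha}^2$; letting $r\to1$ yields the claimed bound on $\|C_F\|_{e,A^2_\alpha\to A^2_\alpha}$. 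This is Shapiro's argument from \cite[Section 6]{shapiro} run with the weight $(1-|z|^2)^{\alpha+2}$, and it uses nothing special about $\alpha$.

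For the matching lower bound when $\alpha\in\{0,1\}$, fix a sequence $z_n\to\zeta\in\partial\mathbb{D}$ along which $N_{F,\alpha+2}(z_n)\bigl(\log\frac1{|z_n|}\bigr)^{-(\alpha+2)}$ tends to the value $L$ of the $\limsup$, and take $g_n$ to be the normalized reproducing kernels of $A^2_\alpha$ at $z_n$ — proportional to $(1-\overline{z_n}w)^{-(\alpha+2)}$ — which converge weakly to $0$ in $A^2_\alpha$, so that $\|K g_n\|\to0$ for every compact $K$ and it suffices to show $\liminf_n\|C_F g_n\|_{A^2_\alpha}^2\ge L$. By the change-of-variables identity, which in these cases can be taken exact, this reduces to $\liminf_n\int |g_n'(w)|^2\,N_{F,\alpha+2}(w)\,|dw|^2\ge L$; and since, up to bounded factors, $|g_n'(w)|^2\,|dw|^2$ is a probability measure concentrated on a pseudohyperbolic disk of fixed radius about $z_n$, this follows from a sub-mean-value inequality for $N_{F,\alpha+2}$ — that its value at $z_n$ is dominated by its average over such a disk. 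Establishing that inequality (via Jensen's formula and the Riesz decomposition, using that $\log\frac1{|z|}$ is the Green's function of $\mathbb{D}$) and carrying out the attendant bookkeeping so that no $\alpha$-dependent constant is lost in passing from $\|C_F g_n\|$ to $N_{F,\alpha+2}(z_n)$ is precisely what Poggi-Corradini accomplishes when $\alpha+2\in\{2,3\}$.

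The main obstacle, and the source of the restriction $\alpha\in\{0,1\}$, is this last step: one must localize the test functions $g_n$ finely enough around $z_n$ to turn the sub-mean-value estimate into a sharp lower bound on $\|C_F g_n\|$, while controlling the exceptional set on which the sub-mean-value property of $N_{F,\alpha+2}$ can fail — the set over which $g_n$ must be kept from concentrating. For general $\alpha$ the requisite regularity of $N_{F,\alpha+2}$ is not known, so only the inequality survives; everything else above — the weighted Littlewood-Paley identity, the change of variables, the weak nullity of normalized kernels, and the compact/outer split — is routine.
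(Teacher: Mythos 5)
The paper does not prove this theorem; it quotes it, attributing the upper bound to Shapiro \cite[Section 6]{shapiro} and the sharpness for $\alpha=0,1$ to Poggi-Corradini \cite{pietro}, and your outline is a faithful reconstruction of exactly those two arguments (weighted Littlewood--Paley plus the non-univalent change of variables and a compact/outer splitting for the inequality; normalized reproducing kernels and the sub-mean-value property of $N_{F,\gamma}$ for the equality). So your proposal is correct and takes essentially the same route as the sources the paper relies on, including the accurate identification of why the equality is only claimed for $\gamma=\alpha+2\in\{2,3\}$.
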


Using the above theorem, it is easy to see that if $F$ is not a rotation, then $C_F$ has a spectral gap:

\begin{corollary}
\label{spectral-gap-bergman-space}
Let $F$ be a holomorphic self-map of the unit disk with $F(0) = 0$, which is not a rotation. For $-1 < \alpha < \infty$, the essential spectral radius of the composition operator $C_F$ acting on $A^2_\alpha$ is strictly less than 1, while the eigenvalue 1 with eigenfunction ${\bf 1}$, is isolated and simple.
\end{corollary}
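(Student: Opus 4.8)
The plan is to bound the weighted Nevanlinna counting function of $F$ strictly below $\bigl(\log\frac1{|z|}\bigr)^{\alpha+2}$ near $\partial\mathbb D$, feed this into the upper estimate of Theorem~\ref{t120230206}, and then note that, once a spectral gap is in hand, the assertions about the eigenvalue $1$ are automatic. Writing $\gamma=\alpha+2>1$, I will establish
\[
\limsup_{|z|\to 1}\ \frac{N_{F,\gamma}(z)}{\bigl(\log\frac1{|z|}\bigr)^{\gamma}}\ <\ 1 ;
\]
together with the bound $\|C_F\|_{e,A^2_\alpha\to A^2_\alpha}\le \limsup_{|z|\to1} N_{F,\alpha+2}(z)/(\log(1/|z|))^{\alpha+2}$ from Theorem~\ref{t120230206} and the general inequality $r_{e,A^2_\alpha}(C_F)\le\|C_F\|_{e,A^2_\alpha\to A^2_\alpha}$, this yields $r_{e,A^2_\alpha}(C_F)<1$.

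The engine of the estimate is an \emph{effective Schwarz lemma}: there exist $c=c(F)>0$ and $r_0\in(0,1)$ such that $\log\frac{1}{|F(w)|}\ge (1+c)\log\frac{1}{|w|}$ for all $w$ with $r_0\le |w|<1$. I would prove this from the canonical factorization of the bounded function $F$. If $m\ge 1$ is the order of the zero of $F$ at the origin, then
\[
\log\frac{1}{|F(w)|}\ =\ m\log\frac1{|w|}\ +\ \sum_{j} m_j\, g_{\mathbb D}(w,a_j)\ +\ \int_{\partial\mathbb D}\frac{1-|w|^2}{|\zeta-w|^2}\,d\nu_F(\zeta),
\]
where $g_{\mathbb D}(w,a)=\log\bigl|\tfrac{1-\bar a w}{a-w}\bigr|$, the $a_j\ne0$ are the remaining zeros with multiplicities $m_j$, and $\nu_F\ge0$ is the boundary measure coming from the singular factor together with $\log\frac1{|F^{*}|}\,dm$ ($F^{*}$ the radial boundary function). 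Every term is nonnegative, and the correction $\log\frac1{|F(w)|}-\log\frac1{|w|}$ vanishes identically precisely when $F$ is a rotation; since $F$ is not a rotation, one of $\{m\ge2\}$, $\{\text{some }a_j\ne0\}$, $\{\nu_F\ne0\}$ holds. In the first case the correction already dominates $\log\frac1{|w|}$; in the other two I would use the pointwise lower bounds $g_{\mathbb D}(w,a)=\tfrac12\log\bigl(1+\tfrac{(1-|a|^2)(1-|w|^2)}{|w-a|^2}\bigr)\ge\tfrac1{16}(1-|a|^2)(1-|w|^2)$ and $\frac{1-|w|^2}{|\zeta-w|^2}\ge\tfrac14(1-|w|^2)$ — which use only $|w-a|\le2$, $|\zeta-w|\le2$ — together with $\log\frac1{|w|}\le 2(1-|w|^2)$ for $|w|\ge\tfrac12$, to manufacture a positive $c$. \textbf{This is the step I expect to be the main obstacle}: one has to see that even a single zero pressed against $\partial\mathbb D$, or an arbitrarily small singular mass, still forces a correction comparable to $\log\frac1{|w|}$ up to the boundary, the point being that these bounded denominators prevent the contribution from degenerating.

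Given the effective Schwarz lemma the counting estimate is immediate: if $|z|\ge r_0$ then every $w$ with $F(w)=z$ satisfies $|w|\ge|z|\ge r_0$, hence $\log\frac1{|w|}\le(1+c)^{-1}\log\frac1{|z|}$, and so, extracting one factor from each summand and using $N_F(z)\le\log\frac1{|z|}$ (Lemma~\ref{nevanlinna}(i)),
\[
N_{F,\gamma}(z)=\sum_{F(w)=z}\Bigl(\log\tfrac1{|w|}\Bigr)^{\gamma-1}\log\tfrac1{|w|}\ \le\ \bigl((1+c)^{-1}\log\tfrac1{|z|}\bigr)^{\gamma-1}N_F(z)\ \le\ (1+c)^{-(\gamma-1)}\Bigl(\log\tfrac1{|z|}\Bigr)^{\gamma}.
\]
Since $\gamma>1$ this gives $\limsup_{|z|\to1}N_{F,\gamma}(z)/(\log\frac1{|z|})^{\gamma}\le(1+c)^{-(\gamma-1)}<1$, hence $r_{e,A^2_\alpha}(C_F)<1$ as above. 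Finally, $1$ is an eigenvalue of $C_F$ with eigenfunction ${\bf 1}$, and as $|1|=1>r_{e,A^2_\alpha}(C_F)$ it is an isolated point of $\sigma(C_F)$ of finite multiplicity; its eigenspace is just the constants, because $g\circ F=g$ forces $g=g\circ F^{\circ n}\to g(0)$ locally uniformly (the iterates $F^{\circ n}\to0$ locally uniformly by Denjoy--Wolff, as $F$ fixes the interior point $0$ and is not an automorphism), and it carries no Jordan block — were $g\circ F-g$ a nonzero constant $h_0$, iteration would give $g\circ F^{\circ n}-g=nh_0$ and $\|C_F^{n}g\|_{A^2_\alpha}\to\infty$, contradicting that $C_F$ is a contraction on $A^2_\alpha$ (Littlewood's subordination principle, valid since $F(0)=0$). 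So $1$ is simple.
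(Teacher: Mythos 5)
Your proposal is correct, and its skeleton coincides with the paper's: bound the essential norm via Theorem~\ref{t120230206}, reduce to showing that every preimage $w$ of $z$ satisfies $\log\frac1{|w|}\le\theta\log\frac1{|z|}$ for a fixed $\theta<1$, pull one factor of $\theta\log\frac1{|z|}$ out of each summand of $N_{F,\gamma}$, and finish with $N_F(z)\le\log\frac1{|z|}$. Where you genuinely diverge is in the proof of the key sub-lemma (your ``effective Schwarz lemma'' $\log\frac1{|F(w)|}\ge(1+c)\log\frac1{|w|}$ near the boundary). The paper gets this from hyperbolic geometry: since $F$ is not an isometry, compactness of the hyperbolic circle $\partial B_{\hyp}(0,1)$ and the strict Schwarz--Pick inequality give a uniform defect $\gamma>0$ with $d_{\mathbb D}(0,F(w))\le d_{\mathbb D}(0,w)-\gamma$, which is soft, short, and needs no boundary theory. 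You instead invoke the Riesz--Smirnov factorization of $\log\frac1{|F|}$ into Green's functions over the zeros plus a Poisson integral of a nonnegative boundary measure, and check case by case that any non-rotation has at least one term contributing $\gtrsim(1-|w|^2)\gtrsim\log\frac1{|w|}$; your pointwise lower bounds for $g_{\mathbb D}(w,a)$ and the Poisson kernel are correct, so the step you flagged as the main obstacle does go through. This buys an explicit constant $c$ in terms of the zeros or the singular/outer mass, at the cost of importing $H^\infty$ factorization machinery; the one degenerate case it does not literally cover is $F\equiv 0$, where $C_F$ is rank one and the conclusion is trivial anyway. Finally, for the eigenvalue $1$ the paper simply cites Koenigs' Lemma~\ref{holomorphic-eigenvalues}, whereas you argue directly that the eigenspace is the constants (via $F^{\circ n}\to 0$) and exclude a Jordan block using the uniform bound $\|C_F^n\|\le 1$ from Littlewood subordination --- a worthwhile extra detail, since simplicity of an isolated spectral point requires controlling the algebraic, not just geometric, multiplicity.
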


\begin{proof}
The proof is based on the following elementary observation: if $\{a_i\}$ is a collection of positive real numbers with $\sum a_i \le x$, then 
$$
\sum a_i^\gamma \le x^\gamma, \qquad \gamma = \alpha + 2 > 1.
$$
In addition, equality holds if and only if one of the $a_i$'s is equal to $x$ and all the remaining $a_i$ are equal to $0$. If one can guarantee that $a_i \le \theta x$ for some $0 < \theta < 1$ and all $i\in\mathbb N$, then one has the stronger estimate 
$$
\sum_{i\in\mathbb N} a_i^\gamma\le \bigl \{ \theta^\gamma + (1-\theta)^\gamma \bigr \} x^\gamma.
$$

In light of this observation and Theorem~\ref{t120230206}, to show that $r_{e, A^2_\alpha}(C_F) < 1$, it suffices to prove that
\begin{equation}
\label{eq:poggi-gap}
\limsup_{|z| \to 1,\, F(w) = z} \frac{\log(1/|w|)}{\log(1/|z|)} < 1.
\end{equation}
Since $F$ is not a hyperbolic isometry,
$$
\gamma \, := \, 1 - \sup_{w_1 \in \partial B_{\hyp}(0,1)} \big\{d_{\mathbb{D}}(0,F(w_1))\big\} \, > \, 0,
$$
where $d_{\mathbb{D}}(\cdot, \cdot)$ denotes hyperbolic distance on ${\mathbb{D}}$. For a point $w \in \mathbb{D}$ with $d_{\mathbb{D}}(0,w) \ge 1$,
let $w_1$ be the point on the hyperbolic geodesic $[0, w]$ which is located a hyperbolic distance 1 from the origin.
By the triangle inequality and the Schwarz lemma,
\begin{align*}
d_{\mathbb{D}}(0,F(w)) & \le  d_{\mathbb{D}}(0, F(w_1)) + d_{\mathbb{D}}(F(w_1), F(w)) \\
& \le  (1-\gamma) +  ( d_{\mathbb{D}}(0,w) - 1) \\
& = d_{\mathbb{D}}(0,w) - \gamma,
\end{align*}
which implies that
$$
1-|F(w)| < c(1-|w|), \qquad w \notin B_{\hyp}(0,1),
$$
for some $0 < c < 1$. The inequality (\ref{eq:poggi-gap}) follows since $\log(1/|w|) \sim 1-|w|$ as $|w| \to 1$. 

Since we have proved that the essential spectral radius of $C_F$ is less than 1, the eigenvalue 1 is isolated and simple by Lemma \ref{holomorphic-eigenvalues}.
\end{proof}

A similar situation holds for the weighted Bergman spaces $A^p_\alpha$ with general exponents $p$.
In \cite[Theorem 5 and Proposition 7]{maccluer-saxe}, MacCluer and Saxe proved an analogue of Theorem \ref{bourdon-shapiro} for the unweighted Bergman spaces $A^p$. Their argument was generalized to the weighted case in \cite[Theorem 2.8]{powers-composition}:

\begin{theorem}
\label{maccluer-saxe}
Let $F$ be a holomorphic self-map of the unit disk. The essential spectral radius of $C_F$ acting on $A^p_\alpha$ satisfies
$$
r_{e,\, A^p_\alpha}(C_F)^p \le r_{e,\, A^2_\alpha}(C_F)^2, \qquad 1 \le p < \infty,
$$
with equality for $p > 1$. In particular, unless $F$ is a rotation, then
$$
r_{e,\, A^p_\alpha}(C_F) < 1,  \qquad 1 \le p < \infty.
$$
\end{theorem}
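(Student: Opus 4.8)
The plan is to follow MacCluer--Saxe \cite{maccluer-saxe} and its weighted refinement \cite{powers-composition}: attach to every holomorphic self-map $G$ of $\mathbb{D}$ a single geometric quantity that does not depend on $p$, estimate the essential norm of $C_G$ on $A^p_\alpha$ in terms of it, and then pass to the essential spectral radius by iterating with $G = F^{\circ n}$. The quantity is
$$
\rho_\alpha(G) \, := \, \limsup_{|z|\to1} \frac{\mu_{G,\alpha}(S(z))}{(1-|z|)^{2+\alpha}}, \qquad \mu_{G,\alpha}(E) := A_\alpha\bigl(G^{-1}(E)\bigr),
$$
where $S(z)$ is the Carleson box with vertex $z$ and $\mu_{G,\alpha}$ is the pushforward of $dA_\alpha$ under $G$. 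By the change of variables $\int_{\mathbb{D}}|f\circ G|^p\,dA_\alpha = \int_{\mathbb{D}}|f|^p\,d\mu_{G,\alpha}$, boundedness of $C_G$ on $A^p_\alpha$ is equivalent to $\mu_{G,\alpha}$ being a Carleson measure for $A^p_\alpha$, and compactness to $\mu_{G,\alpha}$ being a vanishing Carleson measure; by Luecking's theorem these conditions, and the attendant norm estimates, depend only on $\alpha$. (The quantity $\rho_\alpha(G)$ is comparable to $\limsup_{|z|\to1}N_{G,\alpha+2}(z)/(\log(1/|z|))^{\alpha+2}$, the quantity appearing in Theorem~\ref{t120230206}.)

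The key input is a two-sided control of the essential norm. First, for every $1\le p<\infty$ one has the upper bound $\|C_G\|_{e,A^p_\alpha}^p \le C(p,\alpha)\,\rho_\alpha(G)$, obtained by writing $C_G$, modulo compact operators, as the operator induced by the restriction of $\mu_{G,\alpha}$ to a boundary collar $\{|z|>r\}$, whose norm to the $p$-th power is at most Luecking's Carleson embedding constant times $\sup_{|z|>r}\mu_{G,\alpha}(S(z))/(1-|z|)^{2+\alpha}$, and letting $r\to1$. Second, for $1<p<\infty$ one has the matching lower bound $\|C_G\|_{e,A^p_\alpha}^p \ge c(p,\alpha)\,\rho_\alpha(G)$: test $C_G-K$ (for arbitrary compact $K$) against $f_w(z) = (1-|w|^2)^{b}(1-\overline{w}z)^{-c}$ with $c>(2+\alpha)/p$ and $b = c-(2+\alpha)/p$; these are normalized in $A^p_\alpha$ and tend weakly to $0$ as $|w|\to1$, so $\|(C_G-K)f_w\| = \|C_G f_w\| + o(1)$, while $\|C_G f_w\|^p = \int|f_w|^p\,d\mu_{G,\alpha} \gtrsim \mu_{G,\alpha}(S(w))/(1-|w|)^{2+\alpha}$ by a Carleson-box estimate, and one takes the $\limsup$ over $w$. (At $p=2$ the $f_w$ are the normalized reproducing kernels of $A^2_\alpha$.) The constants here depend only on $p$ and $\alpha$, never on $G$.

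Now put $G=F^{\circ n}$, so $C_{F^{\circ n}}=C_F^n$. Since the Calkin algebra is a Banach algebra, $r_{e,A^2_\alpha}(C_F) = \lim_n \|C_F^n\|_{e,A^2_\alpha}^{1/n}$, and by the $p=2$ case of the bounds above $r_{e,A^2_\alpha}(C_F)^2 = \lim_n\rho_\alpha(F^{\circ n})^{1/n}$ (in particular this limit exists). For $1<p<\infty$, using that the constants $c(p,\alpha),C(p,\alpha)$ are independent of $n$,
$$
r_{e,A^p_\alpha}(C_F)^p \, = \, \lim_{n\to\infty}\bigl(\|C_F^n\|_{e,A^p_\alpha}^p\bigr)^{1/n} \, = \, \lim_{n\to\infty}\rho_\alpha(F^{\circ n})^{1/n} \, = \, r_{e,A^2_\alpha}(C_F)^2 ,
$$
which is the asserted equality; for $p=1$ only the upper bound is available, so $r_{e,A^1_\alpha}(C_F) \le \lim_n\rho_\alpha(F^{\circ n})^{1/n} = r_{e,A^2_\alpha}(C_F)^2$, which is the stated inequality. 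Finally, when $F$ is not a rotation, Corollary~\ref{spectral-gap-bergman-space} (applied, if necessary, after conjugating the interior fixed point to the origin, which preserves the essential spectral radius) gives $r_{e,A^2_\alpha}(C_F)<1$, and the inequality just proved yields $r_{e,A^p_\alpha}(C_F)<1$ for every $1\le p<\infty$.

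I expect the main obstacle — and the precise reason $p=1$ is excluded from the equality — to be the lower bound $\|C_G\|_{e,A^p_\alpha}^p \gtrsim \rho_\alpha(G)$. It uses that the test functions $f_w$ converge weakly to $0$, so that $\|Kf_w\|\to0$ for every compact $K$; this is immediate for $1<p<\infty$ (polynomials are dense in the dual $A^{p'}_\alpha$ and each coefficient of $f_w$ vanishes as $|w|\to1$), but fails in $A^1_\alpha$, whose predual is a Bloch-type space against which $f_w$ need not pair to $0$. A secondary point requiring care is that Luecking's embedding constant and the Carleson-box comparisons are uniform in $G$, since that uniformity is exactly what makes the $n$-th root limits in the last step legitimate; apart from these two issues the argument is built from standard Bergman-space estimates.
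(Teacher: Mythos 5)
The paper does not actually prove this theorem---it is quoted from MacCluer--Saxe \cite{maccluer-saxe} and its weighted extension \cite{powers-composition}---and your reconstruction is precisely the argument of those sources: a two-sided, $G$-uniform comparison of $\|C_G\|_{e,A^p_\alpha}^p$ with a $p$-independent Carleson quantity, iterated via $C_{F^{\circ n}}=C_F^n$ and the spectral radius formula in the Calkin algebra, with the lower bound (hence the equality) failing only at $p=1$ for exactly the weak-convergence reason you identify. Two minor remarks: your reading of the display as $r_{e,A^p_\alpha}(C_F)^p\le r_{e,A^2_\alpha}(C_F)^2$ is the intended one (the right-hand side as printed is a typo), and the final conclusion $r_{e,A^p_\alpha}(C_F)<1$ genuinely requires $F$ to have an interior (Denjoy--Wolff) fixed point so that Corollary~\ref{spectral-gap-bergman-space} applies after conjugation---this is the paper's standing assumption but is not literally implied by ``not a rotation.''
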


From the above theorem, it follows that Corollary~\ref{spectral-gap-bergman-space} also holds for the weighted Bergman spaces $A^p_\alpha$ with $\alpha > -1$ and $1 < p < \infty$.
 
\subsection{Sobolev Spaces}
\label{sec:sobolev}

Below, we identify distributions on the unit circle with their harmonic extensions to the unit disk.
Since the composition operator $C_F$ is a {\em real} operator, it preserves the anti-holomorphic spaces $\overline{H^2}$ and $\overline{A^2_\alpha}$ in addition to their holomorphic counterparts. It follows that $C_F$ also preserves 
$L^2 = H^2 + \overline{H^2}$ and the Sobolev spaces $W^{-\beta,2} = (W^{\beta,2})^* = A^2_\alpha + \overline{A^2_\alpha}$ for any $\beta = (1+\alpha)/2 > 0$. The intersection $H^2 \cap \overline{H^2} = A^2_\alpha \cap \overline{A^2_\alpha} = \mathbb{C} \cdot {\bf 1}$ consists of the constant functions. One can check these decompositions by comparing power series expansions.
 
\section{Transfer Operators}
\label{sec:transfer-operators}

For a finite Blaschke product $F$ with $F(0) = 0$, the {\em transfer operator} with potential $-\log|F'|$ acts on functions on the unit circle by
\begin{equation}
\label{eq:classical-transfer}
(L_F g) (x) = \sum_{F(y) = x} |F'(y)|^{-1} g(y).
\end{equation}
Since the Lebesgue measure on the unit circle is invariant under $F$, 
$$
L_F {\bf 1} = {\bf 1}.
$$ 
The invariance of the Lebesgue measure also shows that
the transfer operator is the adjoint of the the composition operator with respect to the $L^2$-pairing on the unit circle:
\begin{equation}
\int_{\partial \mathbb{D}} L_F g(x) \overline{h(x)} dm(x) = \int_{\partial \mathbb{D}} g(x) \overline{h(F(x))} dm(x).
\end{equation}
For a general inner function $F$, we simply {\em define} the transfer operator as the adjoint of the composition operator.

In this section, we describe a number of spaces on which $L_F$ acts, represent $L_F$ in terms of Aleksandrov-Clark measures and discuss a few simple consequences of the spectral gap such as the exponential decay of correlations.

\subsection{Duals of some Function Spaces}
\label{sec:duals-of-some-function-spaces}

We now describe the duals of some function spaces of harmonic and holomorphic functions with respect to the $L^2$-pairing on the unit circle.
As in Section \ref{sec:sobolev}, we identify functions $f, g$ in the unit disk with their boundary distributions $\tilde f, \tilde g$ on the unit circle.
The $L^2$-pairing can be interpreted in terms of Fourier series expansions:
$$
\langle \tilde f, \tilde g \rangle \, = \, \Bigl \langle \sum a_n e^{in\theta}, \, \sum b_m e^{im\theta} \Bigr \rangle \,= \, \sum a_n \overline{b_n}. 
$$
Alternatively, one can dilate the functions $f, g$ and take the limit as $r \to 1$:
$$
\langle f, g \rangle = \lim_{r \to 1} \int_{|z|=1} f(rz) \overline{g(rz)} \, |dz|.
$$

\paragraph*{Hardy spaces.} Recall from Theorem \ref{bourdon-shapiro} that the composition operator preserves the Hardy space $H^p$ for any $1 \le p < \infty$. Therefore, the transfer operator preserves $(H^p)^* \cong H^q$ where $1/p + 1/q = 1$. Since the transfer operator is a {\em real} operator, it must preserve $\overline{H^q}$ as well. In particular, the transfer operator preserves $L^q = H^q + \overline{H^q}$ for $1 < q \le \infty$.

\paragraph*{Weighted Bergman spaces.} From Theorem \ref{maccluer-saxe}, it follows that the composition operator $C_F$ has a spectral gap on the weighted Bergman space $A^p_\alpha$ for any $1 \le p < \infty$ and $\alpha > -1$. As explained in \cite[Section 1.4]{HKZ}, for $p > 1$, the dual of $A^p_\alpha$ with respect to the pairing $\langle \cdot, \cdot \rangle_{A^p_\alpha}$ is $A^q_\alpha$ where $q$ is the conjugate exponent, while for $p = 1$, the dual is the Bloch space $\mathcal B$. To obtain the dual with respect to the $L^2$-pairing on the circle, one needs to take a fractional integral of order $1 + \alpha$, i.e.~
$$
(A^p_\alpha)^* \cong {\bf I}_{1 + \alpha} (A^q_\alpha) \quad \text{for }p > 1, \qquad 
(A^1_\alpha)^* \cong {\bf I}_{1 + \alpha}(\mathcal B) \quad \text{for }p =1.
$$
Below, we consider the cases when $p =1,2$ in detail, where the dual spaces admit particularly simple descriptions.

 \medskip

{\em $p=2$.} Recall that in terms of power series expansions, the space $A^2_\alpha$ consists of holomorphic functions on the unit disk such that
$$
\sum_{n=0}^\infty n^{-1-\alpha} |a_n|^2 < \infty.
$$
Its dual is therefore the Dirichlet-type space $\mathcal D_{1+\alpha}$ which consists of holomorphic functions on the unit disk such that
$$
\sum_{n=0}^\infty n^{1+\alpha} |a_n|^2 < \infty.
$$
Again, using the fact that the transfer operator is a real operator, we see that it preserves the Sobolev space $W^{(1+\alpha)/2,2} = \mathcal D_{1+\alpha} + \overline{{\mathcal D}_{1+\alpha}}$. Since adjoint operators have the same spectral and essential spectral radii, $L_F$ has a spectral gap on $\mathcal D_{1+\alpha}$  and $W^{(1+\alpha)/2,2}(\partial \mathbb{D})$.  
Furthermore, as the adjoint operator has the same isolated eigenvalues with the same multiplicity, 1 is a simple isolated eigenvalue with eigenfunction ${\bf 1}$ in each of those spaces.

\smallskip

\begin{remark} 
The standard Dirichlet space $\mathcal D = \mathcal D_1$ corresponds to the case when $\alpha = 0$ and consists of holomorphic functions such that
$$
\int_{\mathbb{D}} |f'(z)|^2 dA(z) < \infty.
$$
The associated Sobolev space is $W^{1/2,2} = \mathcal{D} + \overline{\mathcal D}$.
\end{remark}

\medskip

{\em $p =1$.} By  \cite[Theorem 7.6]{zhu}, for $0 < \beta < 1$, the dual space
 $$(A^1_{\beta-1})^* \cong \hol(\mathbb{D}) \cap C^{\beta}(\partial \mathbb{D})$$
consists of holomorphic functions on the unit disk that admit H\"older continuous extensions to the closed unit disk. The same theorem identifies
 $$(A^1)^* \cong \hol(\mathbb{D}) \cap Z(\partial \mathbb{D})$$
with the space of holomorphic functions on the unit disk that admit Zygmund extensions. More generally, if we take duals of $A^1_\alpha$ with $\alpha > 0$, we get the holomorphic functions in 
$C^{n,\beta}$ for some integer $n \ge 0$ and $0 < \beta < 1$, as well as in $C^{n, Z}$, the space of $n$-fold integrals of Zygmund functions.
As above, the transfer operator has a spectral gap when acting on these spaces and their real-valued counterparts.

\subsection{Aleksandrov-Clark Measures}

Let $F$ be a holomorphic self-map of the unit disk. 
For $\alpha \in \partial \mathbb{D}$, consider the function
$$
h_\alpha(z) = \frac{\alpha+F(z)}{\alpha-F(z)}, \qquad z \in \mathbb{D}.
$$
Since $h_\alpha$ is holomorphic and has positive real part, it can be represented as the Herglotz extension of a positive measure on the unit circle plus a constant term:
\begin{equation}
\label{eq:ac-def}
h_\alpha(z) = \frac{\alpha+F(z)}{\alpha-F(z)} = \int_{\partial \mathbb{D}} \frac{\zeta+z}{\zeta-z} d\mu_\alpha(\zeta) + i \cdot \im h_\alpha(0).
\end{equation}
The collection of measures $\{ \mu_\alpha \}_{\alpha \in \partial \mathbb{D}}$ are known as the {\em Aleksandrov-Clark measures} of $F$. For basic properties of Aleksandrov-Clark measures, we refer the reader to \cite{cima, PS, saksman}.

\begin{itemize}
\item With the normalization $F(0) = 0$, each measure $\mu_\alpha$ has unit mass. When $F$ is an inner function, for each $\alpha \in \partial \mathbb{D}$, $\re h_\alpha$ has radial limit zero a.e.~on the unit circle, which means that the measure $\mu_\alpha$ is singular. 

\item The Aleksandrov-Clark measure $\mu_\alpha$ is concentrated on the set of points on the unit circle where $F$ has radial limit $\alpha$. For instance, 
$\mu_\alpha$ has a point mass at $\zeta \in \partial \mathbb{D}$ if and only if $F$ has an angular derivative at $\zeta$ with $F(\zeta) = \alpha$, in which case, $$\mu_\alpha(\{\zeta\}) = |F'(\zeta)|^{-1}.$$
In general, $\mu_\alpha$ may not be a discrete measure, i.e.~be a countable sum of point masses. In fact, there are holomorphic self-maps of the unit disk, even inner functions, that do not possess an angular derivative at any point on the unit circle.
 
\item The Aleksandrov-Clark measures $\{ \mu_\alpha \}$ vary continuously in $\alpha$ in the weak-$*$ topology. More generally, if $F_n \to F$ locally uniformly and $\alpha_n \to \alpha$ on $\partial \mathbb{D}$, then $\mu_{F_n, \alpha_n} \to \mu_{F, \alpha}$ weak-$*$.

\item Aleksandrov's disintegration theorem says that for any continuous function $f(\zeta)$ on the unit circle,
$$
\int_{\partial \mathbb{D}} \biggl ( \int_{\partial \mathbb{D}}  f(\zeta) d\mu_\alpha(\zeta) \biggr ) dm(\alpha) = \int_{\partial \mathbb{D}} f(\zeta)dm(\zeta).
$$
Equivalently, the family $\{ \mu_\alpha \}_{\alpha \in \partial \mathbb{D}}$ constitutes the canonical disintegration of the Lebesgue measure $m$ with respect to the measurable partition $F^{-1}(\epsilon)$, where $\epsilon$ denotes the partition of $\partial \mathbb{D}$ into singletons.
\end{itemize}

With help of the Aleksandrov-Clark measures, one can write down an explicit formula for the adjoint of the composition operator:
\begin{equation}
\label{eq:modern-transfer}
(L_F g)(\alpha) = \int_{\partial \mathbb{D}} g(z) d\mu_\alpha(z).
\end{equation}
Indeed, the above formula simplifies to (\ref{eq:classical-transfer}) for finite Blaschke products, while the general case follows from approximation. 

\subsection{Comparing Definitions} 

In the following theorem, we characterize inner functions for which the {\em adjoint} definition of the transfer operator coincides with the {\em classical} definition of the transfer operator (\ref{eq:classical-transfer}) a.e.
  
    \begin{theorem}
  \label{finite-angular-derivative}
Suppose $F$ is a centered inner function. The Alexandrov-Clark measures $\{\mu_{\alpha}\}$ are discrete for almost every $\alpha \in \partial \mathbb{D}$ if and only if $F$ has an angular derivative at a.e.~point on the unit circle.
  \end{theorem}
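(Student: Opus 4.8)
The plan is to connect the discreteness of the Aleksandrov-Clark measures $\mu_\alpha$ to the pointwise mass of $\mu_\alpha$ via Aleksandrov's disintegration theorem, and then to recognize the total point-mass of $\mu_\alpha$ as exactly the sum appearing in the classical transfer operator formula $(L_F{\bf 1})(\alpha) = \sum_{F(\zeta)=\alpha}|F'(\zeta)|^{-1}$, which measures ``how much of the angular derivative is seen from $\alpha$.'' Concretely, for $\alpha\in\partial\mathbb{D}$ let $\mu_\alpha = \mu_\alpha^{\mathrm{d}} + \mu_\alpha^{\mathrm{c}}$ be the decomposition into its discrete (atomic) and continuous parts, and set $p(\alpha) := \mu_\alpha^{\mathrm{d}}(\partial\mathbb{D}) = \sum_{\zeta} \mu_\alpha(\{\zeta\})$. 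By the second bullet on Aleksandrov-Clark measures in the excerpt, $\mu_\alpha(\{\zeta\}) = |F'(\zeta)|^{-1}$ precisely when $F$ has a (finite) angular derivative at $\zeta$ with $F(\zeta)=\alpha$, so $p(\alpha)$ records the contribution of those preimages of $\alpha$ at which $F$ is angularly differentiable. The measure $\mu_\alpha$ is discrete for a given $\alpha$ if and only if $p(\alpha)=1$ (recall each $\mu_\alpha$ has unit mass), and is continuous, i.e.\ atomless, iff $p(\alpha)=0$.

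The key analytic input is a Fubini-type identity computing $\int_{\partial\mathbb{D}} p(\alpha)\, dm(\alpha)$. First I would verify that $\alpha \mapsto p(\alpha)$ is Borel measurable: the atoms of $\mu_\alpha$ are carried by the countable set of points where $F$ has an angular derivative, and on that set the atom sizes are $|F'(\zeta)|^{-1}$, which can be organized into a countable sum of measurable functions of $\alpha$ (grouping preimages by, say, dyadic ranges of $|F'|$ and using continuity of $\alpha\mapsto\mu_\alpha$ in weak-$*$ together with the fact that atoms of prescribed minimal size vary semicontinuously). Then I would apply Aleksandrov's disintegration theorem, or rather a monotone-class extension of it from continuous $f$ to bounded Borel $f$, to the indicator of the set $\mathcal{A} = \{\zeta\in\partial\mathbb{D} : F \text{ has an angular derivative at }\zeta\}$. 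This yields
$$
\int_{\partial\mathbb{D}} p(\alpha)\, dm(\alpha) = \int_{\partial\mathbb{D}} \Bigl(\int_{\partial\mathbb{D}} {\bf 1}_{\mathcal A}(\zeta)\, d\mu_\alpha(\zeta)\Bigr) dm(\alpha) = \int_{\partial\mathbb{D}} {\bf 1}_{\mathcal A}(\zeta)\, dm(\zeta) = m(\mathcal A),
$$
where the middle equality uses that the part of $\mu_\alpha$ living on $\mathcal A$ is exactly its atomic part $\mu_\alpha^{\mathrm d}$ — since $\mathcal A$ is a countable set and $\mu_\alpha$ restricted to $\mathcal A$ is by definition purely atomic, while conversely every atom of $\mu_\alpha$ lies in $\mathcal A$.

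The theorem then follows by a straightforward squeezing argument. Since $0 \le p(\alpha) \le 1$ for every $\alpha$, the identity $\int p\, dm = m(\mathcal A)$ forces: $p(\alpha)=1$ for a.e.\ $\alpha$ iff $m(\mathcal A)=1$, and $m(\mathcal A)=1$ is precisely the statement that $F$ has an angular derivative at a.e.\ point of $\partial\mathbb{D}$. As noted above, $p(\alpha)=1$ is equivalent to $\mu_\alpha$ being discrete. Hence $\{\mu_\alpha\}$ is discrete for a.e.\ $\alpha$ iff $F$ has an angular derivative a.e., as claimed. (One small point to handle in the forward direction: discreteness for a.e.\ $\alpha$ gives $p=1$ a.e., so $m(\mathcal A)=1$; in the reverse direction $m(\mathcal A)=1$ gives $\int p\, dm=1$ with $p\le 1$, so $p=1$ a.e.)

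The main obstacle is the measurability of $\alpha\mapsto p(\alpha)$ and the upgrade of Aleksandrov's disintegration theorem from continuous to Borel test functions — specifically, justifying that one may disintegrate against ${\bf 1}_{\mathcal A}$, where $\mathcal A$ is a Borel (indeed $F_{\sigma\delta}$ or $G_\delta$) set of full-or-null possibilities but in general neither open nor closed. I expect this to go through by a monotone class / Dynkin argument: the collection of bounded Borel $f$ for which Aleksandrov's identity holds is a vector space closed under bounded monotone limits and contains $C(\partial\mathbb{D})$, hence contains all bounded Borel functions; the only care needed is that $\int_{\partial\mathbb{D}} f\, d\mu_\alpha$ is a measurable function of $\alpha$ for each bounded Borel $f$, which again follows from the weak-$*$ continuity of $\alpha\mapsto\mu_\alpha$ together with the monotone class structure. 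Everything else — the characterization of atoms via angular derivatives, unit mass of $\mu_\alpha$, and the final squeezing — is immediate from the facts recalled in the excerpt.
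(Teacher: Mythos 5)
Your argument is correct, and it is genuinely different from the proof in the paper. The paper's proof goes through Craizer's approximation lemma: it picks finite Blaschke products $F_n \to F$ with $|F_n'| < 2|F'|$ a.e., shows that for most $\alpha$ most of the mass of $\mu_{F_n,\alpha}$ sits in atoms of size $\ge 1/N$, and transfers this to $\mu_{F,\alpha}$ via weak-$*$ convergence; moreover, as written it only establishes the implication from ``angular derivative a.e.''\ to ``discrete a.e.''\ and refers to Cima--Matheson--Ross for the rest. Your route --- extend Aleksandrov's disintegration to bounded Borel test functions by a monotone class argument, apply it to $\mathbf{1}_{\mathcal A}$, identify $\mu_\alpha(\mathcal A)$ with the atomic mass $p(\alpha)$, and squeeze using $0 \le p \le 1$ --- is shorter, avoids the approximation lemma entirely, and handles both implications symmetrically in one stroke. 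The monotone class extension is exactly the one the paper itself invokes in Section 3.5 for composition operators on measures, so nothing new is needed there. One sentence of your justification is literally wrong and should be repaired: $\mathcal A$ is not a countable set (it has full Lebesgue measure in the case of interest). What you need, and what makes the identification $\mu_\alpha|_{\mathcal A} = \mu_\alpha^{\mathrm d}$ work, is that $\mu_\alpha$ is carried by the radial preimage set $F^{-1}(\alpha)$ (stated as a basic property of Aleksandrov--Clark measures in the paper), so that $\mu_\alpha$-almost all of $\mathcal A$ is contained in $\mathcal A \cap F^{-1}(\alpha)$; every point of the latter set is an atom of $\mu_\alpha$ of mass $|F'(\zeta)|^{-1} > 0$, hence that set is countable and carries exactly the atomic part. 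With that substitution the proof is complete.
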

  
  For the proof, we will need the following lemma due to Craizer \cite[Lemma 5.4]{craizer}:
  
  \begin{lemma}
  \label{good-approximation}
If a centered inner function $F$ has an angular derivative at a.e.~point on the unit circle, then there exists a sequence of finite Blaschke products $\{ F_n \}_{n=1}^\infty$ converging to $F$ uniformly on compact subsets of the unit disk so that
$$
|F'_n(x)| < 2 |F'(x)|,
$$
for a.e.~$x\in \partial \mathbb{D}$ and all integers $n\ge 1$.
\end{lemma}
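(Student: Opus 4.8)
The plan is to produce the approximating sequence by composing $F$ with a small Frostman shift and then truncating the resulting infinite Blaschke product, controlling the boundary derivative at every stage; this is essentially Craizer's argument.

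\emph{Step 1 (Frostman shift).} For $a \in \mathbb{D}$ set $\tilde F_a := (F-a)/(1-\bar a F)$, which is again inner. A direct differentiation gives $\tilde F_a'(z) = (1-|a|^2)F'(z)/(1-\bar a F(z))^2$, and since $|F(\zeta)| = 1$ for a.e.\ $\zeta \in \partial \mathbb{D}$ we have $|1-\bar a F(\zeta)| \ge 1-|a|$, whence $|\tilde F_a'(\zeta)| \le \frac{1+|a|}{1-|a|}|F'(\zeta)| < 2|F'(\zeta)|$ as soon as $|a| < 1/3$, at a.e.\ point where $F$ has a finite angular derivative. One also checks that $\tilde F_a \to F$ uniformly on $\mathbb{D}$ as $a \to 0$, and that, being $m_a \circ F$ for a disk automorphism $m_a$, the function $\tilde F_a$ inherits a finite angular derivative a.e.\ (disk automorphisms distort $1-|\cdot|$ only by bounded factors near the boundary). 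By Frostman's theorem, $\tilde F_a$ is a Blaschke product for every $a$ outside a set of capacity zero, so I may pick $a_j \to 0$ with $|a_j| < 1/3$ for which each $B^{(j)} := \tilde F_{a_j}$ is a Blaschke product satisfying $|B^{(j)\prime}| < 2|F'|$ a.e.

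\emph{Step 2 (truncation).} Let $B$ be a Blaschke product with zeros $\{z_k\}$ and let $B_N = \prod_{k \le N}$ (the relevant Blaschke factors) be its partial products; these are finite Blaschke products with $B_N \to B$ locally uniformly. The key point is that $|B_N'(\zeta)| \le |B'(\zeta)|$ for a.e.\ $\zeta$. To see this, factor $B = B_N \cdot B_{>N}$ into inner functions and use the Julia--Carath\'eodory identity $|G'(\zeta)| = \lim_{z \to \zeta}(1-|G(z)|)/(1-|z|)$ (nontangential limit, valid wherever the angular derivative exists): from $1-|B| = (1-|B_N|) + (1-|B_{>N}|) - (1-|B_N|)(1-|B_{>N}|)$, the fact that both $1-|B_N|$ and $1-|B_{>N}|$ are nonnegative (so a finite angular derivative for $B$ forces one for each factor), and the cross term being of lower order, one gets $|B'(\zeta)| = |B_N'(\zeta)| + |B_{>N}'(\zeta)| \ge |B_N'(\zeta)|$ a.e. Alternatively, one may simply invoke the formula $|B'(\zeta)| = \sum_k (1-|z_k|^2)/|\zeta - z_k|^2$, valid wherever the series converges, which makes the truncation inequality immediate.

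\emph{Step 3 (diagonalization).} Fix an exhaustion $K_1 \subset K_2 \subset \cdots$ of $\mathbb{D}$ by compact sets. For each $j$ choose $N_j$ with $\sup_{K_j}|B^{(j)}_{N_j} - B^{(j)}| < 1/j$ and set $F_j := B^{(j)}_{N_j}$, a finite Blaschke product. Then $F_j \to F$ locally uniformly by the triangle inequality $|F_j - F| \le |B^{(j)}_{N_j} - B^{(j)}| + |B^{(j)} - F|$, and on the single full-measure set where $F$, and hence every $B^{(j)}$, has a finite angular derivative we have $|F_j'| = |B^{(j)\prime}_{N_j}| \le |B^{(j)\prime}| < 2|F'|$ for all $j$, which is the desired conclusion.

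The main obstacle is the derivative inequality of Step 2: everything else (the Frostman shift estimate, the application of Frostman's theorem, and the diagonal argument) is routine, but the additivity $|B'| = |B_N'| + |B_{>N}'|$ of angular derivatives under products of inner functions is the one genuinely technical input, resting on the Julia--Carath\'eodory characterization together with the propagation of a finite angular derivative from a product to each of its factors.
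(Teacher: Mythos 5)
Your proof is correct, and it supplies an argument where the paper gives none: the paper simply attributes the lemma to Craizer and cites \cite[Lemma 5.4]{craizer} without proof. Your route (Frostman shift followed by truncation of the resulting Blaschke product) is indeed the standard one and, as you note, essentially Craizer's. The one genuinely delicate point, the truncation inequality $|B_N'(\zeta)| \le |B'(\zeta)|$, is handled correctly by your second argument: since Frostman's theorem guarantees $\tilde F_{a}$ is a Blaschke product (no singular factor) for quasi-every $a$, the Ahern--Clark/Frostman formula $|B'(\zeta)| = \sum_k (1-|z_k|^2)/|\zeta - z_k|^2$ holds wherever the angular derivative is finite, and the partial sum is trivially dominated by the full sum. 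Your first argument via $1-|B| = 1-|B_N||B_{>N}|$ also goes through, since $1-|B| \ge 1-|B_N|$ pointwise forces a finite angular derivative for $B_N$ by Julia--Carath\'eodory, but it is redundant given the series formula. Two small points worth making explicit: the strict inequality $|F_n'| < 2|F'|$ at the end uses $|F'(\zeta)| > 0$, which holds because a centered inner function satisfies $|F'(\zeta)| \ge 1$ wherever the angular derivative exists; and the full-measure set on which the bound holds is indeed independent of $n$, since it is just the set where $F$ has a finite angular derivative, on which every $m_{a_j}\circ F$ does too (as $m_{a_j}$ is analytic across $\partial\mathbb{D}$) and the series formula converges.
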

  
\begin{proof}[Proof of Theorem   \ref{finite-angular-derivative}]
    Suppose $F$ has an angular derivative at a.e.~point on the unit circle.
    We want to show that for almost every $\alpha \in  \partial \mathbb{D}$,
      $$
  \sum_{\substack{\zeta \in F^{-1}(\alpha)\\  |F'(\zeta)| < \infty}} \frac{1}{|F'(\zeta)|} = 1.
  $$
Let $F_n \to F$ be a sequence of finite Blaschke products from Lemma  \ref{good-approximation}.  Fix an $\varepsilon > 0$.
  Since $F$ has angular derivatives a.e., we can choose $N > 0$ sufficiently large so that
  $$
  m \bigl ( \{ |F'| > N/2 \} \bigr ) \le \varepsilon^2,
  $$
  which in turn, implies that
   \begin{equation}
   \label{eq:good-points}
  m \bigl ( \{ |F_n'| > N \} \bigr ) \le \varepsilon^2,
  \end{equation}
  for all $n \ge 1$.
  
   Call a point $\alpha \in \partial \mathbb{D}$ $(\varepsilon, N)$-{\em good} for $F_n$ if most of the mass of  $\mu_{F_n, \alpha}$ comes from atoms of size $\ge 1/N$\,:
  \begin{equation}
  \label{eq:many-large-atoms}
  \sum_{\substack{\zeta \in F_n^{-1}(\alpha) \\ |F'_n(\zeta)| \le N}} \frac{1}{|F_n'(\zeta)|} \ge 1 - \varepsilon.
  \end{equation}  
It follows from (\ref{eq:good-points}) that  $F_n$ is $(\varepsilon, N)$-good on a set $\mathcal G_n$ of Lebesgue measure $\ge 1-\varepsilon$. 
Let $\mathcal G$ be the set of points on the unit circle that lie in infinitely many $\mathcal G_n$. As
     $\mu_{F_n,\alpha} \to \mu_{F, \alpha}$ weakly, $m(\mathcal G) \ge 1-\varepsilon$ and almost every $\alpha \in \mathcal G$ is $(\varepsilon, N)$-good for $F$. For these $\alpha \in \partial \mathbb{D}$, the discrete part of $\mu_{F,\alpha}$ has mass at least $\ge 1 - \varepsilon$. The lemma follows since $\varepsilon > 0$ was arbitrary.
    \end{proof}
    
    For another proof, see \cite[Theorem 9.6.1]{cima}.

\begin{corollary}
Suppose $F$ is a centered inner function. If $F$ has an angular derivative at a.e.~point on the unit circle, then $F$ is essentially countable-to-one, i.e.~the set $F^{-1}(\alpha)$ is countable for a.e. $\alpha \in \partial \mathbb{D}$.
\end{corollary}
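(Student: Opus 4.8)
The plan is to read off the corollary from the description of the point masses of the Aleksandrov--Clark measures, together with Theorem~\ref{finite-angular-derivative}. The only fact needed is that a probability measure carries at most countably many atoms, combined with the observation that a boundary point at which $F$ has a finite angular derivative is an atom of the Aleksandrov--Clark measure indexed by its image.

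Here is how I would organize the argument. Let $E \subset \partial \mathbb{D}$ be the set of points at which $F$ possesses a finite angular derivative; by hypothesis $m(E) = 1$. If $\zeta \in E$, then $F$ has an angular limit $F(\zeta) \in \partial \mathbb{D}$, and, as recalled in the discussion of Aleksandrov--Clark measures, $\mu_{F(\zeta)}$ has an atom at $\zeta$ of mass $|F'(\zeta)|^{-1} > 0$. Now fix $\alpha \in \partial \mathbb{D}$: since $\mu_\alpha$ is a probability measure it has at most countably many atoms, and the set $F^{-1}(\alpha) \cap E$ is contained in this countable set, hence is itself countable. As $E$ has full measure, this says exactly that $F$, viewed as a measurable self-map of the circle, is countable-to-one off a null set --- i.e.~$F$ is essentially countable-to-one. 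For those $\alpha$ at which $\mu_\alpha$ is purely atomic, which by Theorem~\ref{finite-angular-derivative} is a set of full $m$-measure, one moreover gets the sharper accounting that $\mu_\alpha$ is concentrated on the countable set $F^{-1}(\alpha) \cap E$ and $\sum_{\zeta \in F^{-1}(\alpha) \cap E} |F'(\zeta)|^{-1} = 1$; this last identity is already contained in the proof of Theorem~\ref{finite-angular-derivative}.

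The only subtlety, and it is a minor one, is that $F^{-1}(\alpha)$ might contain, besides $F^{-1}(\alpha) \cap E$, boundary points where $F$ has radial limit $\alpha$ but no finite angular derivative; such points lie in the null set $\partial \mathbb{D} \setminus E$. Since throughout the paper inner functions are treated as transformations of the circle modulo null sets --- as in the invariance, ergodicity and mixing statements recorded in the introduction --- these exceptional preimages are harmless, and ``$F^{-1}(\alpha)$ is countable for a.e.~$\alpha$'' is to be understood with this convention. I do not anticipate any real obstacle: the whole content of the corollary is the elementary atom count above, the hypothesis (equivalently, Theorem~\ref{finite-angular-derivative}) serving only to make the set $E$ of angular-derivative points conull.
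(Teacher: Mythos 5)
Your argument is correct and is essentially the argument the paper intends: the corollary is read off from the atom structure of the Aleksandrov--Clark measures, using that a point with finite angular derivative mapping to $\alpha$ is an atom of the probability measure $\mu_\alpha$ and that a probability measure has at most countably many atoms. Your version is in fact marginally sharper, since it yields countability of $F^{-1}(\alpha)\cap E$ for \emph{every} $\alpha$ without invoking the full discreteness conclusion of Theorem~\ref{finite-angular-derivative}, and you correctly flag that the residual preimages in the null set $\partial\mathbb{D}\setminus E$ are discarded under the paper's convention of working modulo null sets.
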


 \subsection{Equidistribution}
We keep assuming that $F$ is a centered inner function. Let $\alpha \in \partial \mathbb{D}$ be a point on the unit circle. The following lemma says that as $n \to \infty$, the pre-images $F^{-n}(\alpha)$ become equidistributed on the unit circle with respect to Lebesgue measure.
A similar statement holds for the periodic points $\{ x \in \partial \mathbb{D}: F^{\circ n}(x) = x \}$ of period $n$.

\begin{lemma}
\label{equidistribution-lemma}
For an inner function $F$ with $F(0) = 0$, we have:

{\em (i)} For every $\alpha \in \partial \mathbb{D}$ the sequence $\{ \mu_{F^{\circ{n}}, \alpha} \}_{n=1}^\infty$ converges weakly to the Lebesgue measure $m$  on the unit circle.

{\em (ii)} As $n \to \infty$, the measures $\{ \mu_{F^{\circ{n}}(z)/z, 1} \}_{n=1}^\infty$ converge weakly to the Lebesgue measure $m$ on the unit circle.
\end{lemma}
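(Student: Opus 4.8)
The plan is to prove both statements by the standard method of testing convergence against the family of reproducing kernels (harmonic/Poisson kernels), since weak-$*$ convergence of probability measures on $\partial\mathbb{D}$ is equivalent to convergence of their Herglotz (or Poisson) extensions pointwise on $\mathbb{D}$. For part (i), recall from (\ref{eq:ac-def}) that the Aleksandrov-Clark measure $\mu_{F^{\circ n},\alpha}$ has Herglotz extension
$$
h^{(n)}_\alpha(z) \, = \, \frac{\alpha+F^{\circ n}(z)}{\alpha-F^{\circ n}(z)}, \qquad z\in\mathbb{D}.
$$
First I would fix $z\in\mathbb{D}$ and invoke the Denjoy-Wolff/Schwarz lemma: since $F$ is a centered inner function and not a rotation (the rotation case being trivial, as $\mu_{F^{\circ n},\alpha}$ is then a single atom circling the circle and the statement must be handled separately or excluded), all orbits converge to the attracting fixed point $0$, so $F^{\circ n}(z)\to 0$ uniformly on compact subsets of $\mathbb{D}$. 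Consequently $h^{(n)}_\alpha(z)\to \frac{\alpha+0}{\alpha-0}=1$ for every $z\in\mathbb{D}$, and $1 = \int_{\partial\mathbb{D}}\frac{\zeta+z}{\zeta-z}\,dm(\zeta)$ is precisely the Herglotz extension of Lebesgue measure. Since the $\mu_{F^{\circ n},\alpha}$ are all probability measures (the normalization $F(0)=0$ forces unit mass, as noted after (\ref{eq:ac-def})), pointwise convergence of Herglotz extensions on $\mathbb{D}$ upgrades to weak-$*$ convergence of the measures. If one wishes to treat rotations $F(z)=e^{i\theta}z$ with $\theta/2\pi$ irrational, the statement still holds because $F^{\circ n}$ does not converge to $0$ but the single atom of $\mu_{F^{\circ n},\alpha}$ equidistributes by Weyl; for rational rotations it genuinely fails, so the hypothesis should implicitly exclude finite-order rotations — this is the one delicate point to flag.

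For part (ii), the argument is essentially the same once one identifies the relevant Herglotz extension. The Aleksandrov-Clark measure $\mu_{G,1}$ of $G(z) := F^{\circ n}(z)/z$ at the point $\alpha=1$ has Herglotz extension
$$
\frac{1+G(z)}{1-G(z)} \, = \, \frac{1+F^{\circ n}(z)/z}{1-F^{\circ n}(z)/z} \, = \, \frac{z+F^{\circ n}(z)}{z-F^{\circ n}(z)}.
$$
Here I would note that $G$ is again a holomorphic self-map of $\mathbb{D}$ (by the Schwarz lemma, $|F^{\circ n}(z)|\le|z|$, so $|G(z)|\le 1$; and $G$ is inner since $F^{\circ n}$ is inner and $|z|=1$ on the boundary) with $G(0)=(F^{\circ n})'(0)=F'(0)^n$, which need not be $0$, so $\mu_{G,1}$ need not be a probability measure a priori — but its total mass is $\re\frac{1+G(0)}{1-G(0)}$, which is finite and, more importantly, we only need weak-$*$ convergence, for which it suffices that the extensions converge pointwise and the masses converge. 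Fix $z\in\mathbb{D}$, $z\neq 0$: since $F^{\circ n}(z)\to 0$, the extension above tends to $\frac{z+0}{z-0}=1=\int\frac{\zeta+z}{\zeta-z}\,dm$, the Herglotz extension of $m$; and the total masses $\re\frac{z+F^{\circ n}(z)}{z-F^{\circ n}(z)}$ at any fixed $z$ also converge to $1$. This gives weak-$*$ convergence to $m$.

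The main obstacle — really the only one requiring care — is the rotation case and, relatedly, making sure the normalization bookkeeping is airtight: in part (i) the measures are automatically probability measures so weak-$*$ convergence is immediate from pointwise convergence of Herglotz extensions, but in part (ii) one must separately track that the total masses converge (they do, being values of the real part of the extension at a fixed base point such as a small $|z|$, or one can simply renormalize $G$ by its value at $0$). I would therefore organize the write-up as: (a) reduce weak-$*$ convergence of probability measures to pointwise convergence of Herglotz/Poisson extensions; (b) apply the Schwarz--Pick/Denjoy--Wolff lemma to get $F^{\circ n}\to 0$ locally uniformly (excluding or separately handling rotations); (c) compute the limiting extension in each case and recognize it as that of $m$; (d) in case (ii), check the total masses converge. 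The periodic-point statement mentioned in the preamble follows by the same template applied to $\mu_{F^{\circ n}(z)/z,\,1}$, whose atoms are exactly the fixed points of $F^{\circ n}$ weighted by $|(F^{\circ n})'|^{-1}$ via the angular-derivative description after (\ref{eq:ac-def}).
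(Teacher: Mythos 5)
Your proof is essentially the paper's: both parts are obtained by writing the Herglotz transform of $\mu_{F^{\circ n},\alpha}$ (resp.\ of $\mu_{F^{\circ n}(z)/z,\,1}$) via (\ref{eq:ac-def}) and letting $F^{\circ n}\to 0$ locally uniformly, and your extra bookkeeping in part (ii) (tracking the total mass $\re\frac{1+F'(0)^n}{1-F'(0)^n}\to 1$, since $G(0)=F'(0)^n\neq 0$) is exactly the detail the paper leaves to the reader with ``the proof of (ii) is similar.'' One correction to your side remark on rotations: for an irrational rotation the statement does \emph{not} hold --- $\mu_{F^{\circ n},\alpha}$ is the single Dirac mass $\delta_{e^{-in\theta}\alpha}$, and a sequence of Dirac masses can never converge weak-$*$ to $m$ (test against $f$ and $f^2$ for a non-constant continuous $f$); Weyl equidistribution concerns the Ces\`aro averages $\frac1N\sum_{n\le N}\delta_{e^{-in\theta}\alpha}$, not the individual terms. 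So rotations must simply be excluded (as the paper implicitly does by invoking that $0$ is an \emph{attracting} fixed point); otherwise your argument is correct.
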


\begin{proof}
We only prove (i) as the proof of (ii) is similar. According to the definition of the Aleksandrov-Clark measures,
$$
\frac{\alpha+F^{\circ n}(z)}{\alpha-F^{\circ n}(z)} = \int_{\partial \mathbb{D}} \frac{\zeta+z}{\zeta-z} \, d\mu_{F^{\circ{n}}, \alpha}(\zeta).
$$
Since 0 is an attracting fixed point, as $n \to \infty$, the left hand side tends to 1 for any $z \in \mathbb{D}$. In turn, this implies that $\mu_{F^{\circ{n}}, \alpha}$ converge weakly to the Lebesgue measure.
\end{proof} 

\begin{corollary}
If $F$ is a centered inner function, then the set of its periodic points is dense in the unit circle.
\end{corollary}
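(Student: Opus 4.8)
The plan is to read off density of the periodic points directly from part~(ii) of Lemma~\ref{equidistribution-lemma}, exploiting that the fixed-point set of $F^{\circ n}$ on $\partial\mathbb{D}$ is precisely the set carrying a suitable Aleksandrov--Clark measure. For $n \ge 1$, set $G_n(z) := F^{\circ n}(z)/z$; by the Schwarz lemma $|F^{\circ n}(z)| \le |z|$ on $\mathbb{D}$, so $G_n$ is a holomorphic self-map of $\mathbb{D}$, and since $|F^{\circ n}| = 1$ a.e.\ on $\partial\mathbb{D}$ its radial boundary values are unimodular a.e., so $G_n$ is inner. A point $x \in \partial\mathbb{D}$ at which $G_n$ has a radial limit satisfies $G_n(x) = 1$ exactly when $F^{\circ n}(x) = x$; hence, using the description of Aleksandrov--Clark measures recalled in Section~\ref{sec:transfer-operators} (namely that $\mu_{G_n,1}$ is singular and concentrated on the preimage set $\{G_n = 1\}$), the measure $\mu_{G_n,1} = \mu_{F^{\circ n}(z)/z,\, 1}$ is carried by a subset of the periodic set $\per(F) := \bigcup_{m \ge 1}\{x \in \partial\mathbb{D} : F^{\circ m}(x) = x\}$. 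Passing to closures, $\supp \mu_{G_n,1} \subseteq \overline{\per(F)}$ for every $n$.

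Next I would fix an arbitrary open arc $I \subseteq \partial\mathbb{D}$ and use that $\mu_{G_n,1} \to m$ weakly by Lemma~\ref{equidistribution-lemma}(ii). Since $m(I) > 0$, the portmanteau theorem yields $\liminf_{n\to\infty}\mu_{G_n,1}(I) \ge m(I) > 0$, so $\mu_{G_n,1}(I) > 0$ for all large $n$. For such $n$ the open set $I$ meets $\supp \mu_{G_n,1} \subseteq \overline{\per(F)}$, and an open set meeting the closure of a set meets the set itself, so $I \cap \per(F) \ne \emptyset$. As $I$ was arbitrary, $\per(F)$ is dense in $\partial\mathbb{D}$.

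The only delicate point is the claim that $\mu_{G_n,1}$ is carried by the genuine fixed-point set of $F^{\circ n}$ and not merely by a larger set of boundary cluster values; this, however, is exactly the standard statement --- quoted among the bulleted properties of Aleksandrov--Clark measures in Section~\ref{sec:transfer-operators} --- that for an inner function the Aleksandrov--Clark measure at $\alpha$ is singular and lives on $\{G_n = \alpha\}$, so no separate argument is needed. Everything else is a formal consequence of weak convergence together with the observation that an open set intersecting $\overline{S}$ intersects $S$.
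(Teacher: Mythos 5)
Your proof is correct and follows exactly the route the paper intends: the corollary is stated immediately after Lemma \ref{equidistribution-lemma} precisely so that one applies part (ii), notes that $\mu_{F^{\circ n}(z)/z,\,1}$ is carried by the fixed points of $F^{\circ n}$ (in the radial-limit sense), and concludes via weak convergence to $m$ that every arc meets the periodic set. The details you supply — that $G_n = F^{\circ n}(z)/z$ is inner by the Schwarz lemma, the portmanteau bound on open arcs, and the support argument — are all sound and are the same ingredients the authors leave implicit.
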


 \subsection{Consequences of the Spectral Gap}

We have seen in Section \ref{sec:duals-of-some-function-spaces} that if $F$ is a centered inner function, then $L_F$ has a spectral gap on the spaces $W^{\beta,2}$ with $\beta > 0$ and $C^\alpha$ with $\alpha > 0$ 

Let $R$ be the Riesz projection onto the one-dimensional eigenspace spanned by the eigenfunction {\bf 1} (corresponding to the eigenvalue $1$) and $S = \id - R$ be the complementary Riesz projection onto the rest of the spectrum. From the general properties of Riesz projections, we know that:
\begin{itemize}
\item The Riesz projections $R$ and $S$ commute with $L_F$.
\item $RS = SR = 0$.
\item $\sigma(L_F |_{\im R}) = \{1\}$ and $\sigma(L_F |_{\im S}) = \sigma(L_F) \setminus \{ 1 \}$.
\end{itemize}
We write $L_F = L_F (R + S) = R + \Delta$ where $\Delta = L_F S$. As the spectral radius $r(\Delta) < 1$, we have
$$
\| \Delta^n g \| \le C\theta^n,
$$ 
for some constants $\theta\in (0,1)$, $C \in (0, +\infty)$ and all integers $n \ge 1$. Integrating both sides of the equality
$$
L_F^n g = Rg + \Delta^n g
$$
over the unit circle, and then taking $n \to \infty$, we get
$$
\lim_{n \to \infty} \int_{\partial \mathbb{D}} L_F^n g \, dm = Rg.
$$
Since $\int_{\partial \mathbb{D}} L_F^n g \cdot {\bf 1} \, dm = \int_{\partial \mathbb{D}} g \cdot C_F^n {\bf 1} \, dm$, the left hand side is simply $\int_{\partial \mathbb{D}} g dm$. Therefore, the Riesz projection is given by
$$
Rg = \overline{g} \cdot {\bf 1}, \qquad \text{where } \overline{g} =  \int_{\partial \mathbb{D}} g dm.
$$
We conclude that on the set of mean zero functions, $L_F = \Delta$ is a contraction.

We state the following lemma for Sobolev spaces, but it is also true for H\"older continuous functions with the same proof:

\begin{lemma}
\label{L-contraction}
For a centered inner function $F$, we have:

{\em (i)} For any $g  \in W^{\beta,2}$, the sequence of functions $ \{ L^n_F g \}_{n=1}^\infty$ converges to the constant function $\int_{\partial \mathbb{D}} g(x) dm$ in $W^{\beta,2}$.

{\em (ii)}
 There exist  constants $\theta\in (0,1)$ and $C \in (0,+\infty)$ such that for any two functions $g,h \in W^{\beta,2}$  with $\int_{\partial \mathbb{D}} g dm = 0$ or $\int_{\partial \mathbb{D}} h dm = 0$,
$$
\biggl |\int_{\partial \mathbb{D}}  g (h\circ F^{\circ n}) dm \biggr | < C \theta^n.
$$
\end{lemma}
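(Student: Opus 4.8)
The plan is to derive Lemma~\ref{L-contraction} directly from the spectral gap for $L_F$ on $W^{\beta,2}$, which we already have from Section~\ref{sec:duals-of-some-function-spaces}, together with the explicit identification of the Riesz projection $R$ computed just above the statement. Write $L_F = R + \Delta$ where $Rg = \overline{g}\cdot \mathbf{1}$, $\Delta = L_F S$, and $r(\Delta) < 1$ on $W^{\beta,2}$. The spectral radius formula then gives constants $\theta \in (0,1)$ and $C \in (0,+\infty)$ with $\|\Delta^n\|_{W^{\beta,2}\to W^{\beta,2}} \le C\theta^n$ for all $n \ge 1$. Since $RS = SR = 0$ and both commute with $L_F$, one has $L_F^n = R + \Delta^n$.

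For part (i): apply the identity $L_F^n g = \overline{g}\cdot\mathbf{1} + \Delta^n g$ and note $\|L_F^n g - \overline{g}\cdot\mathbf{1}\|_{W^{\beta,2}} = \|\Delta^n g\|_{W^{\beta,2}} \le C\theta^n \|g\|_{W^{\beta,2}} \to 0$. That is all part (i) asks for.

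For part (ii): use that $L_F$ is the adjoint of $C_F$ with respect to the $L^2$-pairing on $\partial\mathbb{D}$, so that $\int_{\partial\mathbb{D}} g\,(h\circ F^{\circ n})\,dm = \int_{\partial\mathbb{D}} (L_F^n g)\, h \, dm$. Expand $L_F^n g = \overline{g}\cdot\mathbf{1} + \Delta^n g$. In the case $\int g\,dm = 0$ we have $\overline{g} = 0$, so the cross term vanishes and $\left|\int (L_F^n g)\,h\,dm\right| = \left|\int (\Delta^n g)\,h\,dm\right| \le \|\Delta^n g\|_{L^2}\|h\|_{L^2} \le C'\theta^n$ by Cauchy-Schwarz, $W^{\beta,2}\hookrightarrow L^2$, and the operator bound. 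In the case $\int h\,dm = 0$, instead move the projection onto $g$: since $R$ is self-adjoint for the $L^2$-pairing (as $Rg = \overline{g}\cdot\mathbf 1$, which is immediate from $\langle \overline{g}\,\mathbf 1, h\rangle = \overline{g}\,\overline{\langle \mathbf 1,\mathbf 1\rangle} = \langle g, \overline{h}\,\mathbf 1\rangle$ up to the usual conjugation bookkeeping), $\int (Rg)\,h\,dm = \overline{g}\int h\,dm = 0$, and one again reduces to $\left|\int (\Delta^n g)\,h\,dm\right| \le C'\theta^n$.

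The only mild subtlety — and the step I would write most carefully — is the bound $\left|\int (\Delta^n g)\,h\,dm\right| \le C'\theta^n$: one should note that $\Delta^n g \in W^{\beta,2}\subset L^2$ with $\|\Delta^n g\|_{L^2} \le \|\Delta^n g\|_{W^{\beta,2}} \le C\theta^n\|g\|_{W^{\beta,2}}$, and that the $L^2$-pairing of $\Delta^n g$ against $h$ is legitimate since $h\in W^{\beta,2}\subset L^2$; then Cauchy-Schwarz closes it with $C' = C\|g\|_{W^{\beta,2}}\|h\|_{L^2}$. Strictly the lemma wants $C$ uniform over all $g,h$ of norm-one (or the constant absorbs the norms), so I would phrase the conclusion as $\left|\int g\,(h\circ F^{\circ n})\,dm\right| \le C\theta^n \|g\|_{W^{\beta,2}}\|h\|_{W^{\beta,2}}$, which is what is meant. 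No genuine obstacle arises; this is a routine unwinding of the spectral gap, and the identical argument works verbatim for $C^\alpha$ with the $C^\alpha$-norm replacing the Sobolev norm and the pairing still being the $L^2$-pairing on $\partial\mathbb{D}$.
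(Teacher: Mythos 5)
Your proposal is correct and follows essentially the same route as the paper: the authors establish the decomposition $L_F = R + \Delta$ with $Rg = \overline{g}\cdot\mathbf{1}$, $R\Delta = \Delta R = 0$ and $\|\Delta^n\| \le C\theta^n$ in the discussion immediately preceding the lemma, and the lemma is then exactly the unwinding you carry out (with the adjoint relation $\int (L_F^n g)\,h\,dm = \int g\,(h\circ F^{\circ n})\,dm$ handling part (ii)). Your observation that the constant in (ii) must be read as absorbing $\|g\|_{W^{\beta,2}}\|h\|_{W^{\beta,2}}$ is a fair and correct reading of the statement.
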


Since $W^{\beta,2}$ is dense in $L^2(\partial \mathbb{D}, m)$, the above lemma implies that $F$ is mixing with respect to the Lebesgue measure $m$:

\begin{corollary}
\label{L-contraction2}
For centered inner function $F$, we have:

{\em (i)} For any $g  \in L^2(\partial \mathbb{D}, m)$, the sequence of functions $ \{ L^n_F g \}_{n=1}^\infty$ converges to the constant function $\int_{\partial \mathbb{D}} g(x) dm$ in $L^2(\partial \mathbb{D}, m)$.

{\em (ii)} For any $g,h \in L^2(\partial \mathbb{D}, m)$  with $\int_{\partial \mathbb{D}} g dm = 0$ or $\int_{\partial \mathbb{D}} h dm = 0$,
$$
\biggl |\int_{\partial \mathbb{D}}  g (h\circ F^{\circ n}) dm \biggr |  \to 0, \qquad \text{as }n \to \infty.
$$

\end{corollary}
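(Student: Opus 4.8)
The plan is to deduce the corollary from Lemma~\ref{L-contraction} by a straightforward density argument; the only point that needs care is a uniform control of the iterates $L_F^{\circ n}$ on all of $L^2$, which will prevent the approximation errors from accumulating.

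First I would record that $L_F$ is a contraction on $L^2(\partial\mathbb{D}, m)$. Since Lebesgue measure is invariant under $F$, the composition operator $C_F$ is an isometry of $L^2(\partial \mathbb{D}, m)$: indeed $\|g\circ F\|_{L^2}^2 = \int_{\partial\mathbb{D}} |g\circ F|^2\, dm = \int_{\partial\mathbb{D}} |g|^2\, dm$. As $L_F = C_F^*$ with respect to the $L^2$-pairing on the unit circle, it follows that $\|L_F\|_{L^2\to L^2} = \|C_F\|_{L^2\to L^2} = 1$, hence $\|L_F^{\circ n}\|_{L^2\to L^2}\le 1$ for every $n$. I would also note the elementary fact that $W^{\beta,2}(\partial\mathbb{D})$ embeds continuously (with norm one) into $L^2(\partial\mathbb{D})$, which is immediate from the Fourier-coefficient description of the Sobolev norm.

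For part (i), given $g\in L^2(\partial\mathbb{D}, m)$ and $\varepsilon>0$, I would choose $\tilde g\in W^{\beta,2}(\partial\mathbb{D})$ with $\|g-\tilde g\|_{L^2}<\varepsilon$. Writing $\overline g = \int_{\partial\mathbb{D}} g\, dm$ and $\overline{\tilde g} = \int_{\partial\mathbb{D}} \tilde g\, dm$, the triangle inequality gives
$$
\bigl\|L_F^{\circ n} g - \overline g\,{\bf 1}\bigr\|_{L^2} \le \bigl\|L_F^{\circ n}(g-\tilde g)\bigr\|_{L^2} + \bigl\|L_F^{\circ n}\tilde g - \overline{\tilde g}\,{\bf 1}\bigr\|_{L^2} + |\overline{\tilde g}-\overline g|.
$$
The first term is $\le\|g-\tilde g\|_{L^2}<\varepsilon$ by the contraction property; the last is $\le\|g-\tilde g\|_{L^2}<\varepsilon$ by Cauchy--Schwarz against ${\bf 1}$; and the middle term tends to $0$ as $n\to\infty$ by Lemma~\ref{L-contraction}(i), since convergence in $W^{\beta,2}$ implies convergence in $L^2$. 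Thus $\limsup_n\|L_F^{\circ n} g - \overline g\,{\bf 1}\|_{L^2}\le 2\varepsilon$, and letting $\varepsilon\to 0$ proves (i).

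For part (ii), I would use the iterated adjoint relation $\int_{\partial\mathbb{D}} g\,(h\circ F^{\circ n})\, dm = \langle L_F^{\circ n} g, h\rangle$. If $\int_{\partial\mathbb{D}} g\, dm = 0$, part (i) gives $L_F^{\circ n} g\to 0$ in $L^2$, so $|\langle L_F^{\circ n} g,h\rangle|\le\|L_F^{\circ n} g\|_{L^2}\,\|h\|_{L^2}\to 0$. If instead $\int_{\partial\mathbb{D}} h\, dm = 0$, I would decompose $g = (g-\overline g\,{\bf 1})+\overline g\,{\bf 1}$: the mean-zero summand is covered by the previous case, while $\int_{\partial\mathbb{D}} \overline g\,(h\circ F^{\circ n})\, dm = \overline g\int_{\partial\mathbb{D}} h\, dm = 0$ by the invariance of $m$. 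This completes the argument. There is no genuine obstacle here — the proof is routine once one observes that $L_F$ does not expand the $L^2$-norm; the statement simply records that the quantitative mixing of Lemma~\ref{L-contraction}, available on the dense subspace $W^{\beta,2}$, upgrades to qualitative mixing on all of $L^2(\partial\mathbb{D}, m)$.
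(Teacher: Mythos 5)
Your proof is correct and follows exactly the route the paper intends: the paper dispatches this corollary with the single remark that $W^{\beta,2}$ is dense in $L^2(\partial\mathbb{D},m)$, and your argument supplies the missing details (the $L^2$-contractivity of $L_F$ as the adjoint of the isometry $C_F$, plus the three-term triangle-inequality estimate). No issues.
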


\subsection{Composition Operators Acting on Measures}
\label{sec:composition-operators-on-measures}

We have seen that the composition operator $C_F$ acts isometrically on the space $L^2(\partial \mathbb{D}, m)$. We can extend $C_F$ to a continuous operator on the Borel probability measures on the unit circle equipped with the weak-$*$ topology by specifying the integrals of continuous functions:
\begin{equation}
\label{eq:composition-operator-on-measures}
\int_{\partial \mathbb{D}} \phi \, d(C_F\nu) \, = \, 
\int_{\partial \mathbb{D}} \biggl \{ \int_{F^{-1}(\alpha)} \phi d\mu_\alpha \biggr \} d\nu(\alpha) 
\, = \, \int_{\partial \mathbb{D}} L_F \phi \, d\nu.
\end{equation}
A monotone class argument similar to the one in \cite[Chapter 9.4]{cima} shows that the above formula is valid for any bounded Borel function $\phi$.

\begin{lemma}
\label{h-out}
For any bounded Borel function $h$ and Borel measure $\nu$ on $\partial \mathbb{D}$, we have $C_F(h \nu) = (h \circ F) \cdot C_F(\nu).$
\end{lemma}

\begin{proof}
By (\ref{eq:composition-operator-on-measures}), for any bounded Borel function $\phi$, we have
$$
\int_{\partial \mathbb{D}} \phi \cdot d C_F(h \nu) = \int_{\partial \mathbb{D}} L_F \phi \cdot h d\nu
$$
and
$$
\int_{\partial \mathbb{D}} \phi \cdot (h \circ F) \cdot dC_F(\nu) = \int_{\partial \mathbb{D}} L_F [\phi \cdot (h \circ F)] d\nu.
$$
Thus, to prove the lemma, it suffices to show that
$$
L_F [\phi \cdot (h \circ F)] = L_F \phi \cdot h.
$$
Indeed,
\begin{align*}
L_F [\phi \cdot (h \circ F)](\alpha) & = \int_{\partial \mathbb{D}} \phi(x) (h \circ F(x)) d\mu_\alpha(x) \\
& = h(\alpha)  \int_{\partial \mathbb{D}} \phi(x) d\mu_\alpha(x)  \\
& = (L_F \phi \cdot h)(\alpha),
\end{align*}
where in the second step, we used that the Aleksandrov-Clark measure $\mu_\alpha$ is supported on the set of points $x$ where $F(x) = \alpha$.
\end{proof}

Let $\mathcal R_F \subset \partial \mathbb{D}$ be the set of points where  $\lim_{r \to 1} F(r \zeta)$ exists and is unimodular. By \cite[Theorem 2.6]{CL},  $\mathcal R_F$ is an $F_{\sigma \delta}$ set and therefore Borel.
Plugging $\phi = \chi_{\mathcal R_F^c}$ into (\ref{eq:composition-operator-on-measures}) and using the fact that Aleksandrov-Clark measures do not charge $\mathcal R_F^c$ shows the following lemma:

\begin{lemma}
\label{composition-operators-and-radial-limits}
If $F$ a centered inner function, then
for any  Borel probability measure $\nu$ on the unit circle, the measure $C_F \nu$ is supported on $\mathcal R_F$.
\end{lemma}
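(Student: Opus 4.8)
The plan is to test the measure $C_F\nu$ against the indicator of the complement of $\mathcal R_F$ and show the integral vanishes. First I would observe that since $\mathcal R_F$ is an $F_{\sigma\delta}$ set by \cite[Theorem 2.6]{CL}, the function $\phi = \chi_{\mathcal R_F^c}$ is a bounded Borel function, so by the monotone class argument recalled just after (\ref{eq:composition-operator-on-measures}) we may substitute it into that identity to get
\begin{equation*}
(C_F\nu)(\mathcal R_F^c) \;=\; \int_{\partial \mathbb{D}} \chi_{\mathcal R_F^c}\, d(C_F\nu) \;=\; \int_{\partial \mathbb{D}} \biggl\{ \int_{F^{-1}(\alpha)} \chi_{\mathcal R_F^c}\, d\mu_\alpha \biggr\} d\nu(\alpha) \;=\; \int_{\partial \mathbb{D}} \mu_\alpha(\mathcal R_F^c)\, d\nu(\alpha).
\end{equation*}

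Next I would invoke the structure of Aleksandrov-Clark measures described in Section~\ref{sec:transfer-operators}: each $\mu_\alpha$ is concentrated on the set of pre-images $F^{-1}(\alpha) = \{\zeta \in \partial\mathbb{D} : \lim_{r\to 1} F(r\zeta) = \alpha\}$; in particular this carrier is contained in $\mathcal R_F$, so $\mu_\alpha(\mathcal R_F^c) = 0$ for every $\alpha \in \partial\mathbb{D}$. (If one wants to be self-contained, this follows from the Herglotz representation (\ref{eq:ac-def}) together with the fact that, $F$ being inner, $\re h_\alpha$ has radial limit $0$ a.e., so the singular measure $\mu_\alpha$ is carried exactly by the set where $F$ has radial value $\alpha$.) Feeding $\mu_\alpha(\mathcal R_F^c) = 0$ back into the displayed identity gives $(C_F\nu)(\mathcal R_F^c) = 0$, which is precisely the claim that $C_F\nu$ is supported on the set of points where $\lim_{r\to1} F(r\zeta)$ exists and is unimodular.

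The only points that need any care are routine: the $\nu$-measurability of $\alpha \mapsto \mu_\alpha(\mathcal R_F^c)$, which comes from the same monotone class extension of (\ref{eq:composition-operator-on-measures}) to bounded Borel integrands combined with the weak-$*$ continuity of $\alpha \mapsto \mu_\alpha$; and the precise assertion that $\mu_\alpha$ does not charge $\mathcal R_F^c$, which is the genuine input and is standard Aleksandrov-Clark theory (cf.\ \cite[Chapter 9]{cima}). I expect the main, and rather minor, obstacle to be the bookkeeping around the Borel structure of $\mathcal R_F$ and the measurability of the inner integral; once the identity (\ref{eq:composition-operator-on-measures}) is available for bounded Borel $\phi$ and the concentration property of $\mu_\alpha$ is in hand, the conclusion is immediate.
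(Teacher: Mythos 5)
Your proposal is correct and is essentially identical to the paper's own argument: the paper likewise plugs $\phi = \chi_{\mathcal R_F^c}$ into (\ref{eq:composition-operator-on-measures}) and invokes the fact that the Aleksandrov--Clark measures $\mu_\alpha$ do not charge $\mathcal R_F^c$. The extra care you take with the Borel structure of $\mathcal R_F$ and the measurability of $\alpha \mapsto \mu_\alpha(\mathcal R_F^c)$ is a reasonable elaboration of details the paper leaves implicit.
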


\begin{lemma}
\label{dynamically-meaningful}
Let $F$ be a centered inner function and $\nu$ be a probability measure on the unit circle such that $C_F \nu = \nu$. Then,
$\nu$ is an $F$-invariant measure supported on the set of points whose forward orbits lie on the unit circle.
\end{lemma}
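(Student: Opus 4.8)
The plan is to read two facts off the hypothesis $C_F\nu=\nu$: that $\nu$ is honestly $F$-invariant, and that $\nu$-a.e. point has a well-defined forward orbit lying on $\partial\mathbb D$. The conceptual engine is that, \emph{on bounded Borel functions}, the transfer operator $L_F$ is a left inverse of the boundary composition map $g\mapsto g\circ F$. Concretely: for bounded Borel $g$ on $\partial\mathbb D$, the function $g\circ F$ makes sense on the Borel set $\mathcal R_F$ where $F$ has a unimodular radial limit (set it $=0$ off $\mathcal R_F$), and since each Aleksandrov--Clark measure $\mu_\alpha$ is a probability measure concentrated on $F^{-1}(\alpha)\subseteq\mathcal R_F$, on which $g\circ F\equiv g(\alpha)$, one gets $(L_F(g\circ F))(\alpha)=\int_{F^{-1}(\alpha)}(g\circ F)\,d\mu_\alpha=g(\alpha)$, i.e. $L_F(g\circ F)=g$. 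I would state and check this small identity first; it is the only computation in the proof.

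Next I would deduce invariance. By Lemma~\ref{composition-operators-and-radial-limits}, $C_F\nu$, hence $\nu$, is supported on $\mathcal R_F$, so $F$ is defined $\nu$-a.e. and $g\circ F$ is unambiguous $\nu$-a.e. Feeding $\phi=g\circ F$ into the defining formula (\ref{eq:composition-operator-on-measures}) — which holds for bounded Borel integrands by the monotone-class argument already cited in the text — gives $\int(g\circ F)\,d(C_F\nu)=\int L_F(g\circ F)\,d\nu=\int g\,d\nu$; since $C_F\nu=\nu$, the left side equals $\int(g\circ F)\,d\nu$. Thus $\int(g\circ F)\,d\nu=\int g\,d\nu$ for every bounded Borel $g$, i.e. $F_*\nu=\nu$.

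Finally I would upgrade ``$\nu$ lives on $\mathcal R_F$'' to ``$\nu$ lives where the whole forward orbit stays on $\partial\mathbb D$'' by bootstrapping with invariance. Taking $g=\chi_{\mathcal R_F}$ in the invariance identity gives $\nu(F^{-1}(\mathcal R_F))=\nu(\mathcal R_F)=1$; an induction on $n$, each step using invariance with $g$ the indicator of $\bigcap_{k=0}^{n}F^{-k}(\mathcal R_F)$, yields $\nu\bigl(\bigcap_{k=0}^{n}F^{-k}(\mathcal R_F)\bigr)=1$ for every $n$, and letting $n\to\infty$, $\nu\bigl(\bigcap_{k\ge 0}F^{-k}(\mathcal R_F)\bigr)=1$. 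Since $\bigcap_{k\ge 0}F^{-k}(\mathcal R_F)$ is exactly the set of $\zeta$ for which $F^{\circ n}(\zeta)$ is defined and unimodular for all $n\ge1$, this is the claimed conclusion. I do not anticipate a real obstacle: the proof is a short dualization, and the only care needed is measure-theoretic hygiene around $F$ being defined only $m$-a.e. on $\partial\mathbb D$ — namely that $\mathcal R_F$ is Borel, that $\mu_\alpha$ does not charge $\mathcal R_F^c$ (legitimizing the identity $L_F(g\circ F)=g$), and that formula (\ref{eq:composition-operator-on-measures}) persists for bounded Borel $\phi$, all of which are recorded above.
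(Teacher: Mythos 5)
Your proposal is correct and follows essentially the same route as the paper: the paper's proof plugs $\phi=\chi_{F^{-1}(E)}=\chi_E\circ F$ into (\ref{eq:composition-operator-on-measures}) to get $C_F\nu(F^{-1}(E))=\nu(E)$ (which is exactly your identity $L_F(g\circ F)=g$ dualized), and then invokes the same induction to show $\nu$ does not charge $F^{-n}(\mathcal R_F^c)$ for any $n\ge 0$. Your write-up simply makes explicit the measure-theoretic details (Borel-ness of $\mathcal R_F$, that $\mu_\alpha$ does not charge $\mathcal R_F^c$, validity of the formula for bounded Borel integrands) that the paper leaves implicit.
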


\begin{proof}
Let $E \subset \partial \mathbb{D}$ be a Borel set and $\phi = \chi_{F^{-1}(E)}$. By the integral representation (\ref{eq:modern-transfer}),
$L_F \phi = \chi_E$. Plugging $\phi$ into (\ref{eq:composition-operator-on-measures}), we get
$$
\nu(F^{-1}(E)) \, = \, C_F \nu (F^{-1}(E)) \, = \, \nu(E),
$$ which means that $\nu$ is $F$-invariant. By Lemma \ref{composition-operators-and-radial-limits}, $\nu(\mathcal R_F^c) = C_F \nu(\mathcal R_F^c) = 0$. From $F$-invariance, it follows that $\nu(F^{-n}(\mathcal R_F^c)) = 0$ for any $n \ge 0$.
\end{proof}

\section{Perturbative  Thermodynamic Formalism and the Central Limit Theorem for Centered Inner Functions}
\label{sec:CLT}

In this section, we use weighted composition operators to establish the rudiments of thermodynamic formalism for inner functions:
\begin{enumerate}
\item For a $W^{1/2,2}$ or $C^\alpha$ potential of sufficiently small norm, we construct conformal and equilibrium measures on the unit circle. 

\item We then discuss analytic families of weighted composition operators and show that their eigenvalues vary analytically, from which we deduce the Central Limit Theorem. 

\item We then give a sufficient condition for the asymptotic variance to not vanish, thereby completing the proof of Theorem \ref{main-thm}.
\end{enumerate}

For concreteness, we work with Sobolev potentials and leave H\"older potentials to the reader.

\subsection{Weighted Composition Operators}
\label{sec:weighted-composition-operators}

Let $F$ be a centered inner function. The {\em weighted composition operator} $C_{F, \gamma}$ is defined as
$$
C_{F, \gamma}(g) = \gamma(z)\cdot(g \circ F(z)), \  \  z\in\mathbb D.
$$
We first examine the action of $C_{F, \gamma}$ on various spaces of holomorphic functions. When $\gamma \in H^\infty$ is a bounded analytic function, multiplication by $\gamma$ preserves the Hardy space $H^2$ and the Bergman space $A^2$, so that  $C_{F, \gamma}$ acts on $H^2$ and $A^2$.
Taking the adjoint with respect to the $L^2(\partial \mathbb{D}, m)$-pairing on the unit circle, we see that the {\em weighted transfer operator}
\begin{equation}
\label{eq:weighted-transfer-def}
(L_{F, \gamma} g) (\alpha) 
:= L_F(\gamma g) (\alpha) 
= \int_{\partial \mathbb{D}} \gamma\cdot g\, d\mu_\alpha
\end{equation}
acts on the Hardy space $H^2$ and the Dirichlet space $\mathcal D$. 

For applications to dynamical systems, we need to consider the action of weighted composition operators on real Banach spaces. If $\gamma$ is an $L^\infty$ function, then the operator $C_{F, \gamma}$ preserves $L^2$. 
In order for $C_{F, \gamma}$ to preserve $W^{-1/2,2}$, it is enough to take $\gamma \in \mathcal M(W^{-1/2,2})$, which is a more stringent condition.
    
 \begin{remark} 
(i) When $F(z) = z$ is the identity mapping, $C_{F, \gamma}$ is bounded on $W^{-1/2,2}$ if and only if $\gamma \in \mathcal M(W^{-1/2,2})$. However, for other inner functions $F$, the set of admissible $\gamma$ may be strictly larger.

(ii) A simple duality argument shows that $\mathcal M(W^{-1/2,2}) = \mathcal M(W^{1/2,2})$. Any multiplier of $W^{-1/2,2}$ is a bounded function, but the converse is not true.
 \end{remark}

For future reference, we also make the following simple observations:

\begin{lemma}
\label{gFh-transfer}
If $F$ is a centered inner function and $\gamma$ is a  weight in $L^\infty(\partial \mathbb{D}, m)$, then for any functions $g \in L^\infty(\partial \mathbb{D}, m)$ and $h \in L^2(\partial \mathbb{D}, m)$, we have
$$
L_{F, \gamma} (g \circ F \cdot h) = g \cdot L_{F, \gamma} (h).
$$
\end{lemma}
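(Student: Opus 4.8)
The plan is to verify the identity at the level of the integral representation (\ref{eq:weighted-transfer-def}), exploiting the fact that the Aleksandrov--Clark measure $\mu_\alpha$ is carried by the fibre $F^{-1}(\alpha)$, on which the function $g\circ F$ is constantly equal to $g(\alpha)$.

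First I would dispense with the well-definedness issues. Since $F$ preserves $m$, composition with $F$ is a well-defined map on $L^\infty(\partial\mathbb D,m)$ and on $L^2(\partial\mathbb D,m)$, so $g\circ F\cdot h\in L^2$ and $\Phi:=\gamma\cdot(g\circ F)\cdot h\in L^2\subset L^1$; thus both sides of the asserted equality lie in $L^2$ (recall $L_F$ is bounded on $L^2$, being the adjoint of the isometry $C_F$). Aleksandrov's disintegration theorem, combined with the monotone-class argument of Section~\ref{sec:composition-operators-on-measures}, shows that $L_F$ extends to $L^1(\partial\mathbb D,m)$ with $\int L_F\Phi\,dm=\int\Phi\,dm$, and that the pointwise formula $L_F\Phi(\alpha)=\int_{\partial\mathbb D}\Phi\,d\mu_\alpha$ holds for $m$-a.e.\ $\alpha$, for every $\Phi\in L^1$; in particular the function $\alpha\mapsto\int\Phi\,d\mu_\alpha$ is, up to $m$-null sets, independent of the Borel representative chosen for the class $\Phi$. (For continuous $\Phi$ this formula is immediate; one passes to $L^1$ by approximation, using that $\Phi\mapsto(\alpha\mapsto\int\Phi\,d\mu_\alpha)$ is an $L^1(m)\to L^1(m)$ contraction by the disintegration identity.)

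Next, I would fix Borel representatives $\tilde g,\tilde h,\tilde\gamma$ of $g,h,\gamma$ and let $\mathcal R_F$ be the Borel set, introduced in Section~\ref{sec:composition-operators-on-measures}, of points where the radial limit of $F$ exists and is unimodular; put $E_\alpha=\{\zeta\in\mathcal R_F:F(\zeta)=\alpha\}$. Each $\mu_\alpha$ is concentrated on $E_\alpha$, since it is supported on the preimages of $\alpha$ and does not charge $\mathcal R_F^c$. Since $(\tilde g\circ F)(\zeta)=\tilde g(\alpha)$ for every $\zeta\in E_\alpha$ by the very definition of $E_\alpha$, for each $\alpha\in\partial\mathbb D$ one has
\[
\int_{\partial\mathbb D}\tilde\gamma\,(\tilde g\circ F)\,\tilde h\,d\mu_\alpha
=\tilde g(\alpha)\int_{\partial\mathbb D}\tilde\gamma\,\tilde h\,d\mu_\alpha .
\]
Identifying the left-hand side with $L_{F,\gamma}(g\circ F\cdot h)(\alpha)$ and the integral on the right with $L_{F,\gamma}h(\alpha)$ through the pointwise formula above, and using $\tilde g=g$ $m$-a.e., this gives $L_{F,\gamma}(g\circ F\cdot h)=g\cdot L_{F,\gamma}h$ as elements of $L^2(\partial\mathbb D,m)$.

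The step requiring the most care --- really the only subtlety --- is the measure-theoretic bookkeeping in the previous paragraph: because the $\mu_\alpha$ are singular with respect to $m$, the symbol $\int\Phi\,d\mu_\alpha$ carries no meaning for a fixed $\alpha$ when $\Phi$ is only an $L^1$-class, and one must lean on the disintegration identity to make ``$L_F\Phi(\alpha)=\int\Phi\,d\mu_\alpha$ for $m$-a.e.\ $\alpha$'' a legitimate statement. If one prefers to avoid pointwise formulas altogether, an alternative route is to test against an arbitrary $\psi\in L^\infty(\partial\mathbb D,m)$: using the adjoint relation $\int (L_F\Phi)\,\psi\,dm=\int\Phi\,(\psi\circ F)\,dm$ and the identity $(g\psi)\circ F=(g\circ F)(\psi\circ F)$,
\[
\int L_{F,\gamma}(g\circ F\cdot h)\,\psi\,dm
=\int \gamma\,h\,\bigl((g\psi)\circ F\bigr)\,dm
=\int \bigl(g\cdot L_{F,\gamma}h\bigr)\,\psi\,dm ,
\]
and the lemma follows since $\psi$ was arbitrary; this version packages all of the delicate measure theory into the previously established adjoint property of $L_F$.
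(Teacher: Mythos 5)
Your main argument is exactly the paper's proof: since each $\mu_\alpha$ is concentrated on the fibre where the radial limit of $F$ equals $\alpha$, the factor $g\circ F$ comes out of the integral as $g(\alpha)$, and the identity follows from the representation (\ref{eq:weighted-transfer-def}). The extra measure-theoretic bookkeeping (Borel representatives, the a.e.\ validity of the pointwise formula) and the alternative duality argument are sound refinements, but the core approach coincides with the paper's.
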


\begin{proof} 
Since for any $\alpha\in \partial \mathbb{D}$, the Aleksandrov-Clark measure $\mu_\alpha$ is supported on the set of points $\zeta \in \partial \mathbb D$ for which $\lim_{r \to 1} F(r\zeta) = \alpha$, we have:
\begin{align*}
L_{F, \gamma} (g \circ F \cdot h)(\alpha) 
& = \int_{\partial \mathbb{D}} \gamma h \cdot g\circ F \, d\mu_\alpha  \\
& = g(\alpha) \int_{\partial \mathbb{D}} \gamma h \, d\mu_\alpha \\
& = g(\alpha) \cdot L_{F, \gamma} (h)(\alpha),
\end{align*}
as desired.
\end{proof}

\begin{lemma}
\label{iterated-AC}
If $F$ is a centered inner function and $\gamma$ is a  weight in $L^\infty(\partial \mathbb{D}, m)$, then for any function $h \in L^2(\partial \mathbb{D}, m)$ and $\alpha \in \partial \mathbb{D}$, we have
$$
L^n_{F, \gamma}(h)(\alpha) = \int_{\partial \mathbb{D}} \Pi_n \gamma \cdot h \, d\mu_{F^{\circ n}, \alpha},
$$
where $$\Pi_n \gamma(x) = \gamma(x) \gamma(F(x)) \dots \gamma(F^{\circ (n-1)}(x)).$$
\end{lemma}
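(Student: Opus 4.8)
The plan is to induct on $n$. The base case $n=1$ is just the definition \eqref{eq:weighted-transfer-def} of the weighted transfer operator, $L_{F,\gamma}(h)(\alpha) = \int_{\partial \mathbb{D}} \gamma \cdot h \, d\mu_\alpha = \int_{\partial \mathbb{D}} \Pi_1\gamma \cdot h \, d\mu_{F,\alpha}$, since $\Pi_1 \gamma = \gamma$ and $\mu_{F,\alpha} = \mu_\alpha$. For the inductive step, assume the formula holds for $n$. First I would write $L^{n+1}_{F,\gamma}(h) = L^n_{F,\gamma}\bigl(L_{F,\gamma}(h)\bigr)$ and apply the inductive hypothesis to the function $L_{F,\gamma}(h) \in L^2(\partial \mathbb{D})$ (it is in $L^2$ since $L_{F,\gamma}$ is bounded on $L^2$ when $\gamma \in L^\infty$), obtaining
\begin{equation*}
L^{n+1}_{F,\gamma}(h)(\alpha) = \int_{\partial \mathbb{D}} \Pi_n \gamma(y) \cdot \bigl(L_{F,\gamma}(h)\bigr)(y) \, d\mu_{F^{\circ n}, \alpha}(y).
\end{equation*}

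The main point is then to unfold $L_{F,\gamma}(h)(y) = \int_{\partial \mathbb{D}} \gamma(x) h(x) \, d\mu_{F,y}(x)$ and substitute, which produces an iterated integral over $x$ and $y$; the key identity needed is that integrating $d\mu_{F,y}(x)$ against $d\mu_{F^{\circ n},\alpha}(y)$ reproduces $d\mu_{F^{\circ(n+1)},\alpha}(x)$ in the appropriate sense, together with the cocycle identity $\Pi_{n+1}\gamma(x) = \Pi_n\gamma(F(x)) \cdot \gamma(x)$. Rather than manipulate the measures directly, the cleanest route is operator-theoretic: I would instead prove the identity in the form $L^{n+1}_{F,\gamma} = L_{F, \Pi_{n+1}\gamma} \circ (\text{something})$ — more precisely, use Lemma \ref{gFh-transfer} repeatedly. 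Observe that $L_{F,\gamma}(\Pi_n\gamma \circ F \cdot (h')) = \Pi_n\gamma \cdot L_{F,\gamma}(h')$ is not quite what appears; instead one checks directly that $L^n_{F,\gamma}(h)(\alpha) = L^n_{F}\bigl(\Pi_n\gamma \cdot h\bigr)(\alpha)$ by induction using $L_{F,\gamma}(g) = L_F(\gamma g)$ and the relation $L_F^n(g)(\alpha) = \int g \, d\mu_{F^{\circ n},\alpha}$, which is the unweighted case of the claimed identity applied to the $n$-th iterate $F^{\circ n}$ (itself a centered inner function, so \eqref{eq:modern-transfer} applies to it).

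Concretely: by \eqref{eq:weighted-transfer-def}, $L_{F,\gamma}(h) = L_F(\gamma h)$, and more generally $L^n_{F,\gamma}(h) = L_F\bigl(\gamma \cdot L^{n-1}_{F,\gamma}(h)\bigr)$. An induction shows $L^n_{F,\gamma}(h) = L_F\bigl(\gamma \cdot L_F(\gamma \cdot L_F(\cdots))\bigr)$, and using Lemma \ref{gFh-transfer} in the form $L_F(\gamma \cdot (g\circ F) \cdot h') = ?$ — here one needs the variant with the weight, namely $L_{F}\bigl((g \circ F)\cdot \gamma \cdot h'\bigr) = g \cdot L_F(\gamma h')$, which is exactly Lemma \ref{gFh-transfer} with weight $1$ applied after pulling $\gamma$ inside — one peels off factors of $\gamma \circ F^{\circ k}$ one at a time, each application of $L_F$ turning a $\gamma \circ F^{\circ k}$ into a $\gamma \circ F^{\circ (k-1)}$ outside, until one is left with $L^n_F(\Pi_n\gamma \cdot h)(\alpha)$. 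Then \eqref{eq:modern-transfer} applied to the centered inner function $F^{\circ n}$ gives $L^n_F(\Pi_n\gamma \cdot h)(\alpha) = \int_{\partial\mathbb{D}} \Pi_n\gamma \cdot h \, d\mu_{F^{\circ n},\alpha}$, which is the claim. The main obstacle is bookkeeping: making sure that $L^n_F = (L_{F^{\circ n}})$ as operators — i.e. that the adjoint of $C_F^n = C_{F^{\circ n}}$ is computed by the Aleksandrov–Clark measures of $F^{\circ n}$ — which follows since $C_F^n = C_{F^{\circ n}}$ and \eqref{eq:modern-transfer} holds for every centered inner function, $F^{\circ n}$ included; and checking that all integrands lie in $L^1$ of the relevant measures, which holds because $\gamma \in L^\infty$, each $\mu_{F^{\circ n},\alpha}$ is a probability measure, and $h \in L^2(\partial\mathbb{D}) \subset L^1(\mu_{F^{\circ n},\alpha})$ after the disintegration.
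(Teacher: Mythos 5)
Your proposal is correct, but it takes a different (and somewhat longer) route than the paper. The paper's proof is a one-liner on the composition-operator side: one observes directly that $C_{F,\gamma}^n = C_{F^{\circ n},\,\Pi_n\gamma}$ (the cocycle identity $\Pi_{n+1}\gamma = \gamma\cdot(\Pi_n\gamma\circ F)$ is automatic when you compose weighted composition operators on functions), takes adjoints once to get $L^n_{F,\gamma} = L_{F^{\circ n},\,\Pi_n\gamma}$, and then invokes the Aleksandrov--Clark representation (\ref{eq:weighted-transfer-def}) for the inner function $F^{\circ n}$. You instead stay on the transfer-operator side and run the induction there, which forces you to use Lemma \ref{gFh-transfer} at each step to peel off a factor of $\gamma\circ F^{\circ k}$; your key identity $L_F(\Pi_{n+1}\gamma\cdot h) = L_F\bigl((\Pi_n\gamma\circ F)\cdot\gamma h\bigr) = \Pi_n\gamma\cdot L_F(\gamma h)$ is exactly the adjoint of the paper's trivial computation upstairs. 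Both arguments ultimately rest on the same two facts — $C_F^n = C_{F^{\circ n}}$ and the validity of (\ref{eq:modern-transfer}) for the centered inner function $F^{\circ n}$ — and your integrability checks ($\gamma\in L^\infty$, $\mu_{F^{\circ n},\alpha}$ a probability measure) are the right ones. The only criticism is stylistic: the first half of your write-up sketches a disintegration-of-measures approach that you then abandon; the operator-theoretic induction in the second half is the part that constitutes a complete proof, and dualizing it to the composition-operator side would have made it immediate.
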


\begin{proof}
For weighted composition operators, it is clear that $C_{F,\gamma}^n = C_{F^{\circ n}, \Pi_n\gamma}$. Taking adjoints, we get
$$
L^n_{F, \gamma}(h)(\alpha) = L_{F^{\circ n}, \Pi_n\gamma}(h)(\alpha).
$$
The lemma follows after expressing the weighted transfer operator on the right in terms of Aleksandrov-Clark measures as in  (\ref{eq:weighted-transfer-def}).
\end{proof}

In order to study stochastic laws via thermodynamic formalism, we need to work with families of weighted transfer operators. Suppose $s \to \gamma_s$ is a family of weights in $\mathcal M(W^{1/2,2})$, which depend analytically on a parameter $s \in U$, where $U \subset \mathbb{C}$ is an open set containing the origin and $\gamma_0 = {\bf 1}$. 
From the definition of the weighted transfer operators (\ref{eq:weighted-transfer-def}), it is not difficult to see that $s \to  L_{F, \gamma_s}$ defines an analytic function from $U$ to the space of bounded linear operators on $W^{1/2,2}$ with derivative $(d/ds) \, L_{F, \gamma_s} = L_{F, \dot \gamma_s}$.

In Section \ref{sec:duals-of-some-function-spaces}, we saw that $L_{F, \gamma_0}$ has a spectral gap on $W^{1/2,2}$\,: it has a simple isolated eigenvalue $\lambda_0 = 1$, while the rest of the spectrum is contained in a ball $B(0, r)$ with $r < 1$. From the Kato-Rellich Perturbation Theorem, it follows that for any complex number $s$ sufficiently close to $0$, $L_{F, \gamma_s}$ also has a spectral gap: it possesses a simple isolated eigenvalue $\lambda_s \in \mathbb{C}$, while the rest of the spectrum is contained in $B(0, r_s)$ with $r_s < |\lambda_s|$. Moreover, perturbation theory tells us that in the presence of a spectral gap, both the eigenvalue $\lambda_s$ and the Riesz projection $R_s$ onto the one-dimensional eigenspace associated to the eigenvalue $\lambda_s$ vary analytically in $s$. 

For $s$ close to 0, the function $\rho_s = R_s {\bf 1}$ is not identically zero and therefore spans the $\lambda_s$-eigenspace of $L_{F, \gamma_s}$.
The number 
$$
P(s) \, = \, \log r_{W^{1/2,2}}(L_{F, \gamma_s}) \, = \, \log \lambda_s
$$ 
is called the {\em topological pressure} of the weight $\gamma_s \in \mathcal M(W^{1/2,2})$, where the branch of the logarithm is chosen so that $P(0)=0$.
 
\subsection{Stochastic Laws with respect to the Lebesgue Measure}
\label{sec:stochastic-lebesgue}

We now change the notation and write $\mathcal L_{-\log |F'| + g} := L_{F, e^{g}}$, as is standard in thermodynamic formalism. In other words, we specify
the transfer operator by the potential  $-\log |F'|+g$ rather than by the weight $e^g$.

It is easy to see that if $g \in \mathcal M(W^{1/2,2})$ is a Sobolev multiplier, then for any complex number $s \in \mathbb{C}$,
$$
e^{sg} = 1 + sg + \frac{s^2}{2!} g^2 + \frac{s^3}{3!} g^2 + \dots
$$ is also a multiplier with 
$$
\|e^{sg}\|_{\mathcal M(W^{1/2,2})} \le \exp \bigl (|s| \cdot \| g\|_{\mathcal M(W^{1/2,2})} \bigr ).
$$
In this case, the weighted transfer operator 
$\mathcal L_s := \mathcal L_{-\log|F'| + sg}$ 
is bounded on $W^{1/2,2}$ and varies analytically with $s$.
From the discussion in the previous section, there exists a Riesz projection $R_s: W^{1/2,2} \to \mathbb{C} \cdot \rho_s$, defined for all complex numbers $s$ sufficiently close to $0$, which satisfies
$$
\mathcal L_{s} R_s = R_s \mathcal L_{s}
\qquad \text{and} \qquad \mathcal L_{s} \rho_s = e^{P(s)} \rho_s,
$$
where $\rho_s = R_s {\bf 1}$.

The following lemma \cite[Propositions 4.10 and 4.11]{PP} describes the first two derivatives of the topological pressure:

\begin{lemma}
\label{derivatives-of-pressure}
Let $g \in \mathcal M(W^{1/2,2})$ be a Sobolev multiplier. Then,
$$
\dot P(0) = \int_{\partial \mathbb{D}} g dm.
$$
If in addition $g$ has mean 0, i.e.~$\int_{\partial \mathbb{D}} g dm = 0$, then
$$
\ddot P(0) \, = \, \sigma^2(g) \, = \, \lim_{n \to \infty} \frac{1}{n} \int_{\partial \mathbb{D}}  (S_n g)^2 dm.
$$
\end{lemma}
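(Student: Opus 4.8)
The plan is to follow the standard perturbation-theoretic computation of the derivatives of the topological pressure $P(s) = \log\lambda_s$, as in \cite[Ch.~4]{PP}, but carried out in the Banach space $W^{1/2,2}$ rather than in a H\"older space. The starting point is the eigenvalue equation $\mathcal L_s\rho_s = e^{P(s)}\rho_s$ together with the fact, recalled in the previous section, that $s\mapsto\mathcal L_s$, $s\mapsto\rho_s$ and $s\mapsto P(s)$ are all analytic near $s=0$, with $\mathcal L_0 = L_F$, $\rho_0 = \mathbf 1$, $P(0)=0$. I would also use the dual eigendata: the adjoint $\mathcal L_s^*$ has a simple eigenvalue $e^{P(s)}$ with eigenfunctional $\nu_s$, normalized so that $\nu_s(\rho_s)=1$ and $\nu_0 = m$ (the Lebesgue measure), since $L_F^* m = m$ from Section \ref{sec:transfer-operators}. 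These vary analytically as well.

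For the first derivative, differentiate $\mathcal L_s\rho_s = e^{P(s)}\rho_s$ at $s=0$ to get $\dot{\mathcal L}_0\mathbf 1 + L_F\dot\rho_0 = \dot P(0)\mathbf 1 + \dot\rho_0$. Now $\dot{\mathcal L}_0 = \mathcal L_{-\log|F'| + sg}\big|'_{s=0}$, and from the identification $\mathcal L_s = L_{F,e^{sg}}$ together with $(d/ds)L_{F,\gamma_s} = L_{F,\dot\gamma_s}$, one has $\dot{\mathcal L}_0 h = L_F(gh)$, so $\dot{\mathcal L}_0\mathbf 1 = L_F(g)$. Apply the functional $m = \nu_0$ to both sides: since $m\circ L_F = m$, the terms $m(L_F\dot\rho_0) = m(\dot\rho_0)$ and $m(\dot\rho_0)$ cancel, and $m(L_F g) = m(g) = \int_{\partial\mathbb D} g\,dm$, leaving $\dot P(0) = \int_{\partial\mathbb D} g\,dm$, as claimed.

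For the second derivative, assume now $\int g\,dm = 0$, so $\dot P(0)=0$. Differentiate the eigenvalue equation twice at $s=0$, apply $m$, and use $m\circ L_F = m$ and the normalization to cancel the terms involving $\ddot\rho_0$; this yields $\ddot P(0) = m\big(\ddot{\mathcal L}_0\mathbf 1\big) + 2\,m\big(\dot{\mathcal L}_0\dot\rho_0\big)$. Here $\ddot{\mathcal L}_0 h = L_F(g^2 h)$, so $m(\ddot{\mathcal L}_0\mathbf 1) = \int g^2\,dm$. To handle $\dot\rho_0$, solve the first-order relation $(\id - L_F)\dot\rho_0 = L_F(g) - \dot P(0)\mathbf 1 = L_F(g)$ on the mean-zero subspace, where $\id - L_F$ is invertible by the spectral gap (the complementary Riesz projection $S$ from Section \ref{sec:transfer-operators}); thus $\dot\rho_0 = (\id - L_F)^{-1}L_F(g) = \sum_{k\ge 1} L_F^k g$ (plus a constant fixed by normalization, which is killed by $\dot{\mathcal L}_0$ composed with $m$ after using $m\circ\dot{\mathcal L}_0 = m\cdot g$ in the appropriate sense). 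Then $m(\dot{\mathcal L}_0\dot\rho_0) = m(g\cdot\dot\rho_0) = \sum_{k\ge 1}\int g\cdot(L_F^k g)\,dm = \sum_{k\ge 1}\int g\cdot(g\circ F^{\circ k})\,dm$ by the adjunction $\int g\,(L_F^k h)\,dm = \int (g\circ F^{\circ k})\,h\,dm$. Combining, $\ddot P(0) = \int g^2\,dm + 2\sum_{k\ge 1}\int g\,(g\circ F^{\circ k})\,dm$, which is the Green--Kubo formula; expanding $\int (S_n g)^2\,dm$, collecting diagonal and off-diagonal terms, and using Lemma \ref{L-contraction} to control the tail sums (exponential decay of correlations guarantees absolute convergence and that the Ces\`aro average converges to the same limit), one identifies this with $\lim_{n\to\infty}\frac1n\int (S_n g)^2\,dm = \sigma^2(g)$.

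The main obstacle is purely bookkeeping of the perturbation series in the Banach space $W^{1/2,2}$: one must check that the analytic dependence of $\rho_s$ and $\nu_s$ furnished by Kato--Rellich is strong enough to justify differentiating the eigenvalue equation under the pairing, and that $\dot\rho_0$ genuinely lies in $W^{1/2,2}$ (which it does, being $(\id-L_F)^{-1}$ applied to $L_F g\in W^{1/2,2}$ since $g\in\mathcal M(W^{1/2,2})\subset W^{1/2,2}$). Once the algebraic identities are in hand, the convergence of the correlation sum is immediate from the exponential decay in Lemma \ref{L-contraction}, so there is no analytic difficulty beyond the spectral gap already established.
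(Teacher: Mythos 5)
Your argument is correct, and for the first derivative it is essentially the adjoint-side version of the paper's computation: the paper differentiates the pointwise eigenvalue equation $e^{sg}\,f_s\circ F = e^{P(s)}f_s$ for the weighted \emph{composition} operator and integrates against $m$, using $F$-invariance of $m$ to cancel the $\dot f_0$ terms --- exactly dual to your cancellation of $m(L_F\dot\rho_0)$ against $m(\dot\rho_0)$. For the second derivative the two routes genuinely diverge. The paper never solves for the derivative of the eigenfunction: it differentiates the eigenvalue equation for the $n$-th iterate, obtains $\int(S_ng)^2\,dm + 2\int S_ng\cdot\dot f_0\,dm = n\,\ddot P(0)$, divides by $n$, and kills the cross term with the ergodic theorem, landing directly on $\ddot P(0)=\lim_{n\to\infty}\frac1n\int(S_ng)^2\,dm$ without any closed form for the limit. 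You instead solve $(\id-L_F)\dot\rho_0 = L_Fg$ on the mean-zero subspace via the Neumann series $\sum_{k\ge1}L_F^kg$ (legitimate by the spectral gap, with the additive constant harmless since $\int g\,dm=0$), arrive at the Green--Kubo formula $\ddot P(0)=\int g^2\,dm+2\sum_{k\ge1}\int g\,(g\circ F^{\circ k})\,dm$, and then need the extra bookkeeping of expanding $\int(S_ng)^2\,dm$ and invoking the exponential decay of correlations (Lemma \ref{L-contraction}) to match the Ces\`aro limit. Your route costs one more step but buys an absolutely convergent series representation of $\sigma^2(g)$ and, as a byproduct, an independent proof that the limit defining $\sigma^2(g)$ exists; the paper's route is shorter but leans on the ergodic theorem. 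Both are sound.
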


\begin{proof}
(i) Let $f_s$ be the eigenfunction of the weighted composition operator $h \to e^{sg} h\circ F$ with eigenvalue $e^{P(s)}$. When $s = 0$, the eigenfunction $f_0 = {\bf 1}$ and $P(0) = 0$.
 Differentiating 
\begin{equation}
\label{eq:fs-equation}
 e^{sg(z)} f_s(F(z)) = e^{P(s)} f_s(z)
 \end{equation}
 with respect to $s$ at $0$, gives
$$
g(z) + \dot f_0(F(z)) = \dot P + \dot f_0(z).
$$
Integrating with respect to $m$, we get
$$
\int_{\partial \mathbb{D}} g(z) dm(z) = \dot P + \biggl \{ \int_{\partial \mathbb{D}} \dot f_0(z) dm(z) - \int_{\partial \mathbb{D}} \dot f_0(F(z)) dm(z) \biggr \}.
$$
Since $m$ is $F$-invariant, the two integrals cancel out.

(ii)
Differentiating (\ref{eq:fs-equation}) twice at $s = 0$ gives
$$
g(z)^2 + 2g(z) \dot f_0(F(z)) + \ddot f_0(F(z)) = \ddot P(0) + \ddot f_0(z).
$$
Integrating with respect to $m$, we get
$$
\int_{\partial \mathbb{D}}  g^2 dm + 2 \int_{\partial \mathbb{D}}  g (\dot f_0 \circ F)  dm = \ddot P(0).
$$
By instead differentiating the $n$-th iterate, we obtain
$$
\int_{\partial \mathbb{D}}  (S_n g)^2 dm + 2 \int_{\partial \mathbb{D}}  S_n g  \cdot (\dot f_0 \circ F) dm = n \ddot P(0).
$$
The statement follows after dividing by $n$, taking $n \to \infty$ and applying the ergodic theorem.
\end{proof}

From here, one can prove the Central Limit Theorem for real-valued observables in $\mathcal M(W^{1/2,2})$ as in \cite[Proposition 4.13]{PP}. Alternatively, one can appeal to the work of S.~Gou\"ezel \cite{gouezel} which shows that the process $\{ g \circ F^n : n \ge 1\}$ satisfies an {\em almost sure invariance principle} (ASIP). Loosely speaking, the ASIP says that one can redefine the process
$\{ g \circ F^n : n \ge 1\}$ on some probability space with a Brownian motion $B_t$, so that $S_n g$ is close to $B_{\sigma^2 n}$. 
With the help of the ASIP, one can obtain stochastic laws such as the Central Limit Theorem and the Law of the Iterated Logarithm 
for $g \circ F^n$ from the corresponding facts for Brownian motion, essentially for free.

\subsection{When is the Variance Non-Zero?}

To complete the proof of Theorem \ref{main-thm}, we need a mechanism for showing that the asymptotic variance of an observable $g \in \mathcal M(W^{1/2,2})$ with respect to the Lebesgue measure is not equal to zero:

\begin{theorem}\label{t120230828}
\label{no-eigenfunctions-thm2}
Suppose $F$ is a centered  inner function that is not a finite Blaschke product. If $g \in \mathcal M(W^{1/2,2})$ is a real-valued non-constant function of mean zero, then
$\sigma^2(g) \ne 0$.
\end{theorem}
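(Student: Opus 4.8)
The plan is to argue by contraposition: I will show that if $\sigma^2(g) = 0$, then $g$ must be cohomologous to a constant, i.e.\ $g = u \circ F - u$ (plus a constant) for some function $u$, and then exploit the fact that $F$ is \emph{not} a finite Blaschke product to force $u$, and hence $g$, to be constant. The starting point is Lemma~\ref{derivatives-of-pressure}, which gives
$$
\sigma^2(g) = \lim_{n \to \infty} \frac{1}{n} \int_{\partial \mathbb{D}} (S_n g)^2 \, dm.
$$
Since $g$ has mean zero, the standard variational characterization of the asymptotic variance (via the Green--Kubo formula, or equivalently the expansion of $\ddot P(0)$ in terms of the resolvent $(\id - \mathcal L_0|_{\ker R_0})^{-1}$) shows that $\sigma^2(g) = 0$ if and only if $g$ admits a \emph{transfer operator coboundary} representation: there exists $v \in W^{1/2,2}$ (or $L^2$) with $\mathcal L_0 v = v + $ something, more precisely the Gordin/Leonov decomposition forces $g = h - h \circ F$ in $L^2(m)$ for some $h \in L^2(\partial \mathbb{D}, m)$. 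This equivalence is classical (see \cite{PP, gordin}); I would cite it rather than reprove it.

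The substance of the argument is then to rule out nontrivial $L^2$-coboundaries for centered inner functions that are not finite Blaschke products. Here is where I would use the spectral results of Part~\ref{GCIF}. Suppose $g = h \circ F - h$ with $h \in L^2(\partial\mathbb{D}, m)$, $h$ real-valued, and (WLOG) mean zero. Decompose $h = h_+ + \overline{h_-}$ into its analytic and anti-analytic parts using $L^2 = H^2 + \overline{H^2_0}$; since $g$ is real it suffices to analyze the analytic part. The equation $g + h = h \circ F$ says $C_F h = h + g$. Now apply $C_F^n$: telescoping gives $C_F^n h = h + S_n g$, so $\|S_n g\|_{L^2}^2 = \|C_F^n h - h\|_{L^2}^2 \le 4\|h\|_{L^2}^2$ because $C_F$ is an $L^2$-isometry (Lemma~\ref{contraction-on-h2} and its real counterpart); this is consistent with $\sigma^2 = 0$ but not yet a contradiction. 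To extract a contradiction I would instead look at the \emph{analytic} component and use that $C_F$ restricted to $H^2_0$ is a \emph{strict contraction} precisely when $F$ is not inner --- but $F$ \emph{is} inner here, so that lever is unavailable. The correct lever is different: project $h_+$ onto $H^2_0$ and use that $C_F$ on $H^2_0$ has $1$ as a non-eigenvalue (Lemma~\ref{holomorphic-eigenvalues}: $C_F$ has no eigenfunctions in $\hol(\mathbb{D})$ other than constants since $F(0)=0$ forces the relevant case), so $\id - C_F$ is injective on $H^2_0$; the equation then pins down $h_+$ uniquely in terms of $g_+$, and the point is to show this forced $h_+$ fails to lie in $L^2$ unless $g_+ = 0$.

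The cleanest route to the contradiction, I expect, is to count preimages or use the transfer operator on the anti-analytic side. Write the coboundary equation as $\mathcal L_0(g + h) = \mathcal L_0(h \circ F)$; by Lemma~\ref{gFh-transfer} with $\gamma = |F'|^{-1}$ implicit, $\mathcal L_0(h \circ F) = h \cdot \mathcal L_0 {\bf 1} = h$, while $\mathcal L_0(g+h) = \mathcal L_0 g + \mathcal L_0 h$. Hence $\mathcal L_0 g = h - \mathcal L_0 h = (\id - \mathcal L_0) h$. Combined with $C_F h = h + g$, i.e.\ $(\id - C_F)h = -g$, and taking the $L^2$-inner product with $h$: $\langle g, h\rangle = \langle (C_F - \id)h, h\rangle = \langle h, C_F h \rangle - \|h\|^2$; since $C_F$ is a nonunitary isometry (its range $C_F(L^2)$ is a proper closed subspace because $F$ has degree $> 1$ in a measure-theoretic sense --- pullbacks of functions constant on $F$-fibers generate a proper subalgebra whenever $F$ is not a rotation), the Wold decomposition of the isometry $C_F$ on $L^2_0$ has a nontrivial shift part, and $\sum_n \|C_F^{n} h - C_F^{n+1}\Pi h\|^2$-type estimates force $h$ into the shift part, which contradicts boundedness of $\|S_n g\|$ unless $h$ lies entirely in the unitary part --- but the unitary part of $C_F$ on $L^2_0(m)$ is trivial precisely when $F$ is exact/mixing, which is Corollary~\ref{L-contraction2}. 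Therefore $h = 0$ and $g = 0$.

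\textbf{Main obstacle.} The delicate point is the last paragraph: correctly identifying the structure of the isometry $C_F$ acting on $L^2_0(\partial\mathbb{D}, m)$ and showing its unitary part is trivial (equivalently, that $F$ is a \emph{K-automorphism}-like endomorphism / exact) for every centered inner function that is not a finite Blaschke product --- and, crucially, handling the borderline case of \emph{finite} Blaschke products of degree $\ge 2$, where $C_F$ is still a nonunitary isometry but $\sigma^2$ \emph{can} vanish for special $g$ (coboundaries genuinely exist, e.g.\ for $F(z) = z^d$). So the hypothesis ``$F$ is not a finite Blaschke product'' must be used in an essential, non-spectral way: the right statement is that for infinite-degree inner functions the Aleksandrov--Clark fiber measures $\mu_\alpha$ are non-atomic (or infinitely supported) on a positive-measure set of $\alpha$, which makes the ``fiber'' $\sigma$-algebra $F^{-1}(\mathcal{B})$ of infinite codimension and kills all nonzero $L^2$-coboundaries by a pigeonhole/entropy argument on $S_n g$. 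I would expect to spend most of the proof making precisely this dichotomy rigorous, probably via the disintegration $\int (\int g \, d\mu_\alpha)\,dm(\alpha)$ and a second-moment computation showing $\frac1n \int (S_n g)^2 \,dm \to \sum_{k \in \mathbb{Z}} \int g \cdot (g \circ F^{|k|})\, dm$ and that this sum is $> 0$ whenever $g \ne 0$, using the mixing from Corollary~\ref{L-contraction2} to control the tail and a spectral-gap lower bound on the $k=0$ term relative to the tail.
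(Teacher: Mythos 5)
Your reduction is the right one: by Lemma~\ref{zero-asymptotic-variance}, $\sigma^2(g)=0$ forces the coboundary equation $g = \zeta\circ F - \zeta$ with $\zeta \in L^2(\partial\mathbb{D},m)$, and the whole theorem amounts to ruling this out for non-constant $g \in \mathcal M(W^{1/2,2})$ when $F$ has infinite degree. But none of the mechanisms you propose for the contradiction actually works. The Wold-decomposition argument fails at its final step: if the unitary part of the isometry $C_F$ on $L^2_0$ is trivial, then \emph{all} of $L^2_0$ is the shift part, so "$h$ lies in the shift part" carries no information; and a pure shift admits plenty of nontrivial coboundaries with $L^2$ transfer function (take $h=e_0$ for the unilateral shift: $g=Se_0-e_0\ne 0$ and $\|S_ng\|$ is bounded, so $\sigma^2(g)=0$). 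Boundedness of $\|S_n g\|_{L^2}$ is a \emph{consequence} of the coboundary equation, never a contradiction to it. Likewise, the claim in your last paragraph that the Green--Kubo sum $\sum_k \int g\,(g\circ F^{|k|})\,dm$ is positive whenever $g\ne 0$ is exactly the statement $\sigma^2(g)\ne 0$ being proved; mixing only says the correlations tend to zero, not that their sum is positive. The regularity of $g$ must enter, since for generic $L^2$ observables coboundaries abound.

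The paper closes the gap with two ingredients absent from your proposal. First, a regularity bootstrap (Lemma~\ref{smoothness-of-eigenfunctions4}): the coboundary equation makes $w=e^{i\zeta}$ an $L^2$ eigenfunction of the weighted composition operator $e^{-ig}C_F$, and the spectral gap of the adjoint $\mathcal L_{-\log|F'|+ig}$ on $W^{1/2,2}$, together with compactness of $W^{1/2,2}\hookrightarrow L^2$, upgrades $w$ to a $W^{1/2,2}$ eigenfunction. Second, the essential use of ``not a finite Blaschke product'' is Lemma~\ref{dirichlet-composition}: the Dirichlet integral $\int_{\mathbb D}|(w\circ F)'|^2$ counts the area of the image of $w$ with multiplicity, and an infinite-degree inner function covers a.e.\ value infinitely often, so $w\circ F\notin\mathcal D$ unless $w$ is constant. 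Comparing the two sides of $e^{-ig}\,w\circ F=\lambda w$ (using that $e^{ig}$ is a multiplier) then yields the contradiction. Your instinct that the infinite-degree hypothesis must be exploited through the fiber structure is correct, but the valence/area argument on the Dirichlet seminorm is the concrete device, and without it (or a substitute) the proof does not close.
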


Before providing the proof of the above theorem, we make the following preliminary observation:

\begin{lemma}
\label{boundedness-of-the-iterates-of-L}
Suppose $g \in \mathcal M(W^{1/2,2})$ is a real-valued function on the unit circle.
If $\| g \|_{\mathcal M(W^{1/2,2})}$ is sufficiently small, then for any $u \in W^{1/2,2}$, the sequence 
$\bigl \{ \mathcal L_{-\log|F'|+ig}^n (u) \bigr \}_{n=0}^\infty$ is bounded in $W^{1/2,2}$.
\end{lemma}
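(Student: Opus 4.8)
The plan is to combine a soft perturbation-theoretic decomposition of $\mathcal L_{-\log|F'|+ig}$, coming from the spectral gap of $L_F$ on $W^{1/2,2}$ established in Section~\ref{sec:duals-of-some-function-spaces}, with an a priori $L^2$-bound on the iterates that is essentially free from the fact that $F$ is inner. First I would record the $L^2$-bound. Since $g$ is real-valued, $|e^{ig}|\equiv 1$, so the weighted composition operator $h\mapsto e^{ig}\cdot(h\circ F)$ is an isometry of $L^2(\partial\mathbb D,m)$: by the $F$-invariance of $m$,
$$
\|e^{ig}\cdot(h\circ F)\|_{L^2(\partial\mathbb D,m)}=\|h\circ F\|_{L^2(\partial\mathbb D,m)}=\|h\|_{L^2(\partial\mathbb D,m)}.
$$
Its $L^2$-adjoint is $\mathcal L_{-\log|F'|+ig}=L_{F,e^{ig}}$, which is therefore an $L^2$-contraction, so $\|\mathcal L_{-\log|F'|+ig}^{\,n}v\|_{L^2(\partial\mathbb D,m)}\le\|v\|_{L^2(\partial\mathbb D,m)}$ for every $v\in L^2(\partial\mathbb D,m)$ and every $n\ge 0$; in particular for every $v\in W^{1/2,2}\subset L^2$.

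Next I would invoke perturbation theory on $W^{1/2,2}$. Since $L_{F,\gamma}u=L_F(\gamma u)$, one has $\|L_{F,\gamma}\|_{W^{1/2,2}\to W^{1/2,2}}\le \|L_F\|_{W^{1/2,2}\to W^{1/2,2}}\,\|\gamma\|_{\mathcal M(W^{1/2,2})}$, and, as the multiplier norm is submultiplicative, $\|e^{ig}-{\bf 1}\|_{\mathcal M(W^{1/2,2})}\le e^{\|g\|_{\mathcal M(W^{1/2,2})}}-1$. Hence, provided $\|g\|_{\mathcal M(W^{1/2,2})}$ is small enough, $\mathcal L_{-\log|F'|+ig}$ lies within any prescribed operator-norm distance of $L_F$. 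As $L_F$ has a spectral gap on $W^{1/2,2}$, the Kato--Rellich theorem (used exactly as in Section~\ref{sec:weighted-composition-operators}) shows that $\mathcal L_{-\log|F'|+ig}$ does too: it has a simple isolated eigenvalue $\lambda$ with a Riesz projection $R$ onto the corresponding one-dimensional eigenspace, and, setting $\Delta=\mathcal L_{-\log|F'|+ig}(\id-R)$, the remainder satisfies $r(\Delta)<|\lambda|$, so $\|\Delta^n\|_{W^{1/2,2}\to W^{1/2,2}}\le C\theta^n$ for some $\theta\in(0,1)$ and $C<\infty$; moreover $\mathcal L_{-\log|F'|+ig}^{\,n}=\lambda^n R+\Delta^n$.

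Finally I would combine the two inputs. The eigenfunction $\rho=R{\bf 1}$ is nonzero and lies in $W^{1/2,2}\subset L^2$, so applying the bound from the first step to $\mathcal L_{-\log|F'|+ig}^{\,n}\rho=\lambda^n\rho$ gives $|\lambda|^n\|\rho\|_{L^2}\le\|\rho\|_{L^2}$ for all $n$, whence $|\lambda|\le 1$. Therefore, for any $u\in W^{1/2,2}$,
$$
\|\mathcal L_{-\log|F'|+ig}^{\,n}u\|_{W^{1/2,2}}\le |\lambda|^n\|R\|_{W^{1/2,2}\to W^{1/2,2}}\|u\|_{W^{1/2,2}}+C\theta^n\|u\|_{W^{1/2,2}}\le\bigl(\|R\|_{W^{1/2,2}\to W^{1/2,2}}+C\bigr)\|u\|_{W^{1/2,2}}
$$
uniformly in $n$, which is exactly the claim. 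The crux --- and the only place where the inner-function hypothesis is used essentially --- is the bound $|\lambda|\le 1$: without it the leading term $\lambda^n Ru$ could grow geometrically, and it is the $L^2$-isometry of $C_F$ together with the unimodularity of $e^{ig}$ that rules this out. Everything else is a standard perturbation of the already-established spectral gap of $L_F$ on $W^{1/2,2}$.
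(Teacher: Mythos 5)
Your proof is correct and follows essentially the same route as the paper: perturbative control of the $W^{1/2,2}$ spectrum of $\mathcal L_{-\log|F'|+ig}$ for small $\|g\|_{\mathcal M(W^{1/2,2})}$, combined with the $L^2$-contraction property of the transfer operator (the adjoint of the isometric, unimodularly weighted composition operator) to exclude spectrum of modulus greater than one. If anything, your explicit Riesz decomposition $\mathcal L^n_{-\log|F'|+ig}=\lambda^n R+\Delta^n$ is slightly more complete than the paper's version, which passes from ``no eigenvalues of modulus $>1$'' to boundedness of the iterates without explicitly ruling out unimodular eigenvalues with nontrivial Jordan structure.
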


\begin{proof}
When $\| g \|_{\mathcal M(W^{1/2,2})}$ is small, we have $\sigma_{e, W^{1/2,2}}(\mathcal L_{-\log|F'|+ig}) < 1$.
Outside the essential spectral radius, 
 $\mathcal L_{-\log|F'|+ig}$ may only have countably many isolated eigenvalues of finite multiplicity. Therefore, to prove the lemma, it suffices to show that the operator $\mathcal L_{-\log|F'|+ig}$ acting on $W^{1/2,2}$ does not possess any eigenvalues with modulus greater than $1$. However, this is easy: as $\mathcal L_{-\log|F'|+ig}$ is an isometry on $L^2(\partial \mathbb{D}, m)$, it cannot have any $L^2(\partial \mathbb{D}, m)$ eigenvalues outside the closed unit disk, let alone any $W^{1/2,2}$ eigenvalues.
  \end{proof}

For the remainder of this section, we assume that $\| g \|_{\mathcal M(W^{1/2,2})}$ is sufficiently small so that the assumptions of Lemma \ref{boundedness-of-the-iterates-of-L} are satisfied.
The proof of Theorem \ref{no-eigenfunctions-thm2} relies on  \cite[Propositions 4.12 and 4.2]{PP}:

\begin{lemma}
\label{zero-asymptotic-variance}
 If $\sigma^2(g) = 0$, then $g$ is cohomologous to zero, that is, one can write
\begin{equation}
\label{eq:zero-av}
g(x) = \zeta(F(x)) - \zeta(x),
\end{equation}
for some real-valued function $\zeta \in L^2(\partial \mathbb{D}, m)$.
\end{lemma}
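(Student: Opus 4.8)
The plan is to establish the equivalence between vanishing asymptotic variance and the coboundary equation by a standard martingale/transfer-operator argument, adapted to the present setting where $\mathcal{L} := \mathcal{L}_{-\log|F'|}$ has a spectral gap on $W^{1/2,2}$ and is an isometry on $L^2(\partial\mathbb{D},m)$. First I would recall from Lemma \ref{derivatives-of-pressure} that, for a real-valued mean-zero $g\in\mathcal{M}(W^{1/2,2})$,
\[
\sigma^2(g) = \lim_{n\to\infty}\frac{1}{n}\int_{\partial\mathbb{D}}(S_n g)^2\,dm.
\]
The key identity is the following: expanding $(S_n g)^2$ and using $F$-invariance of $m$ together with the duality $\int (h\circ F)\,k\,dm = \int h\,(\mathcal{L}k)\,dm$, one gets
\[
\frac{1}{n}\int (S_n g)^2\,dm \;=\; \int g^2\,dm \;+\; \frac{2}{n}\sum_{k=1}^{n-1}(n-k)\int g\cdot \mathcal{L}^k g\,dm,
\]
and since the spectral gap forces $\|\mathcal{L}^k g\|_{W^{1/2,2}} \to 0$ geometrically for mean-zero $g$ (Lemma \ref{L-contraction}(i) applied to $g$, whose transfer-operator iterates converge to $\int g\,dm = 0$), the series $\sum_{k\ge 1}\int g\cdot\mathcal{L}^k g\,dm$ converges absolutely and
\[
\sigma^2(g) = \int g^2\,dm + 2\sum_{k=1}^\infty \int g\cdot\mathcal{L}^k g\,dm.
\]

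Next I would introduce the candidate for $\zeta$. Set
\[
\zeta := \sum_{k=1}^\infty \mathcal{L}^k g,
\]
which converges in $W^{1/2,2}$ (hence in $L^2$) by the geometric decay above; $\zeta$ is real-valued since $\mathcal{L}$ is a real operator, and $\int\zeta\,dm = 0$. Now I compute $\mathcal{L}g + \mathcal{L}\zeta - \zeta = \sum_{k\ge 1}\mathcal{L}^k g - \sum_{k\ge 1}\mathcal{L}^k g = 0$ wait — more carefully, $\mathcal{L}\zeta = \sum_{k\ge 2}\mathcal{L}^k g = \zeta - \mathcal{L}g$, so $\mathcal{L}(g + \zeta) = \zeta$. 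The plan is then to show that $\sigma^2(g) = 0$ forces the $L^2$-function $h := g + \zeta$ to satisfy $\mathcal{L}h = \zeta$ together with $\|h\|_{L^2}^2 = \|\mathcal{L}h\|_{L^2}^2$, i.e. $h$ has the same $L^2$-norm as its image under the isometry-adjoint $\mathcal{L}$; concretely, one checks the telescoping identity
\[
\int (S_n g)^2\,dm = n\,\sigma^2(g) + \int \zeta^2\,dm - \int (\zeta\circ F^{\circ n})^2\,dm + (\text{cross terms that vanish}),
\]
or more transparently, that $g - (\zeta - \zeta\circ F)$ has zero asymptotic variance AND is a coboundary trivially, reducing to showing $g - (\zeta\circ F - \zeta)$ is $m$-a.e.\ zero. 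The cleanest route: from $\sigma^2(g)=0$ one derives $\int h^2\,dm = \int (\mathcal{L}h)^2\,dm$ where $\mathcal{L}$ restricted to mean-zero functions is a co-isometry adjoint to the $L^2$-isometry $C_F$; writing $h = \sum_j h_j$ in the orthogonal decomposition into $C_F^j(\ker C_F^*)$-type pieces and using that $C_F$ is an isometry (Lemma \ref{contraction-on-h2}, via $L^2 = H^2 + \overline{H^2}$ and Corollary \ref{L-contraction2}), the norm equality propagates to give $C_F h = h\circ F$ accounting for $h$ exactly, i.e. $g = h\circ F - h = \zeta\circ F - \zeta$ with $\zeta := h - g \in L^2$, wait, I should just take $\zeta := h = g + \sum_{k\ge1}\mathcal{L}^k g$ adjusted — the bookkeeping is: $g = \zeta\circ F - \zeta$ with $\zeta = -\sum_{k\ge 0}\mathcal{L}^k g$ if that sum makes sense, but it doesn't since $\mathcal{L}^0 g = g$ need not decay — rather $\zeta\circ F - \zeta = g$ is solved by $\zeta = -\sum_{k\ge 1}\mathcal{L}^{k}g$ composed appropriately via the adjoint relation, which is exactly what the cited Propositions 4.12 and 4.2 of \cite{PP} furnish.

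The main obstacle I anticipate is the passage from the \emph{scalar} identity $\sigma^2(g) = 0$ to the \emph{functional} coboundary equation: the transfer-operator computation gives $\sigma^2(g) = \lim_n \frac1n\|S_n g\|_2^2$, and one must show boundedness of $\|S_n g - (\text{coboundary part})\|_2$ uniformly in $n$, which is where the martingale-coboundary (Gordin) decomposition enters — one writes $g = \phi + \psi\circ F - \psi$ with $\{\phi\circ F^n\}$ a reverse-martingale-difference sequence and $\psi\in L^2$, notes $\sigma^2(g) = \int\phi^2\,dm$, so $\sigma^2(g)=0$ gives $\phi \equiv 0$ $m$-a.e., hence $g = \psi\circ F - \psi$; this is precisely the content of \cite[Propositions 4.12 and 4.2]{PP} in the symbolic setting, and the only thing to verify is that their hypotheses (summability of $\|\mathcal{L}^k g\|$, which we have from the spectral gap on $W^{1/2,2}$, plus $g\in L^2$) are met here. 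I would therefore present the proof as: invoke the spectral gap to get $\sum_k\|\mathcal{L}^k g\|_{W^{1/2,2}} < \infty$, define $\psi := \sum_{k\ge1}\mathcal{L}^k g \in L^2$, set $\phi := g + \psi - \psi\circ F$ and verify $\mathcal{L}\phi = 0$ (a one-line computation using $\mathcal{L}(\psi\circ F) = \psi\mathcal{L}\mathbf{1} = \psi$ from Lemma \ref{gFh-transfer} with $h=\mathbf{1}$, $\gamma$ the weight of $\mathcal{L}$, and $\mathcal{L}\psi = \psi - \mathcal{L}g$), deduce $\{\phi\circ F^n\}$ are orthogonal in $L^2$ with $\sigma^2(g) = \|\phi\|_2^2$, and conclude $\phi = 0$, i.e. $g = \psi\circ F - \psi$ with $\zeta := \psi$.
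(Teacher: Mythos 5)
Your final paragraph contains a correct and essentially complete argument, but note that the paper itself does not prove this lemma: it explicitly defers to \cite[Propositions 4.12 and 4.2]{PP} and only writes out the proof of the companion Lemma \ref{smoothness-of-eigenfunctions4}. So there is no in-text proof to compare against, and your contribution is a self-contained Gordin-style martingale--coboundary argument. The logic is sound: $g$ is mean-zero and lies in $W^{1/2,2}$ (since $\mathcal M(W^{1/2,2})\subset W^{1/2,2}$), the spectral gap of $L_F$ on mean-zero functions gives geometric decay of $\|L_F^k g\|_{W^{1/2,2}}$, so $\zeta:=\sum_{k\ge1}L_F^k g$ converges in $W^{1/2,2}\subset L^2$ and is real-valued; setting $\phi:=g+\zeta-\zeta\circ F$ one computes $L_F\phi=0$, hence $\phi\perp C_F(L^2)$ and the functions $\phi\circ F^{\circ n}$ are pairwise orthogonal; since $S_ng=S_n\phi+\zeta\circ F^{\circ n}-\zeta$ with the last two terms bounded in $L^2$, one gets $\sigma^2(g)=\|\phi\|_{L^2}^2$, and $\sigma^2(g)=0$ forces $\phi=0$ a.e., i.e.\ $g=\zeta\circ F-\zeta$. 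Two cleanups are needed in the write-up. First, the identity $L_F(\zeta\circ F)=\zeta$ should be derived directly from the adjoint relation $\int L_F u\cdot \overline{v}\,dm=\int u\cdot\overline{v\circ F}\,dm$ together with $F$-invariance of $m$, since Lemma \ref{gFh-transfer} as stated requires the outer factor to be in $L^\infty$, which $\zeta$ need not be. Second, the entire middle paragraph (the ``co-isometry''/orthogonal-decomposition detour, with its several false starts) should be deleted; it is superseded by, and partly contradicts, the final paragraph. A pleasant byproduct of your construction, worth recording, is that the transfer function $\zeta$ actually lies in $W^{1/2,2}$, which is stronger than the $L^2$ membership asserted in the lemma.
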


\begin{lemma}
\label{smoothness-of-eigenfunctions4}
Any $L^2(\partial \mathbb{D}, m)$ eigenfunction of $e^{-ig(x)} C_F$ belongs to $W^{1/2,2}$.
\end{lemma}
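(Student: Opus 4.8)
The plan is to transfer the statement to the dual (transfer‑operator) side and then run an approximation‑plus‑smoothing argument powered by the spectral gap of $L_F$ on $W^{1/2,2}$. Since $C_F$ is an isometry of $L^2(\partial\mathbb{D},m)$ and multiplication by the unimodular weight $e^{-ig}$ is unitary, the weighted composition operator $C_{F,e^{-ig}}=e^{-ig}C_F$ is an isometry of $L^2$; in particular a nonzero eigenfunction $u\in L^2$ with $C_{F,e^{-ig}}u=\lambda u$ has $|\lambda|=1$. Its $L^2$‑adjoint is the weighted transfer operator $\mathcal L_{-\log|F'|+ig}=L_{F,e^{ig}}$, and $\mathcal L_{-\log|F'|+ig}\,C_{F,e^{-ig}}=\id$ on $L^2$ because $C_{F,e^{-ig}}$ is isometric; applying $\mathcal L_{-\log|F'|+ig}$ to the eigenequation gives $\mathcal L_{-\log|F'|+ig}u=\lambda^{-1}u=\overline\lambda u$. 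It therefore suffices to prove that every $L^2$‑eigenfunction of $\mathcal L_{-\log|F'|+ig}$ with a unimodular eigenvalue belongs to $W^{1/2,2}$.

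\emph{A Lasota--Yorke inequality.} For $h\in W^{1/2,2}$, Lemma~\ref{iterated-AC} together with $L_{F^{\circ n}}=L_F^{\,n}$ gives $\mathcal L_{-\log|F'|+ig}^{\,n}h=L_F^{\,n}\!\left(e^{iS_n g}h\right)$. Using the spectral gap of $L_F$ on $W^{1/2,2}$ from Section~\ref{sec:duals-of-some-function-spaces}, write $L_F^{\,n}=R+\Delta^n$, where $Rk=\left(\int_{\partial\mathbb{D}}k\,dm\right){\bf 1}$ and $\|\Delta^n\|_{W^{1/2,2}\to W^{1/2,2}}\le C\theta_0^{\,n}$ for some $\theta_0<1$. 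This yields
$$
\|\mathcal L_{-\log|F'|+ig}^{\,n}h\|_{W^{1/2,2}}\;\le\;C\|h\|_{L^2}\;+\;C\theta_0^{\,n}\,\|e^{iS_n g}\|_{\mathcal M(W^{1/2,2})}\,\|h\|_{W^{1/2,2}}.
$$
Because composition by $F$ does not inflate multiplier norms, $\|e^{iS_n g}\|_{\mathcal M(W^{1/2,2})}\le\exp\!\left(C_0 n\,\|g\|_{\mathcal M(W^{1/2,2})}\right)$, so when $\|g\|_{\mathcal M(W^{1/2,2})}$ is small enough $\theta_0^{\,n}\|e^{iS_n g}\|_{\mathcal M(W^{1/2,2})}\le\theta^n$ for some $\theta<1$, giving the Lasota--Yorke bound $\|\mathcal L_{-\log|F'|+ig}^{\,n}h\|_{W^{1/2,2}}\le C\|h\|_{L^2}+C\theta^n\|h\|_{W^{1/2,2}}$ for all $h\in W^{1/2,2}$ and $n\ge1$.

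\emph{Conclusion by approximation.} Let $u\in L^2$ satisfy $\mathcal L_{-\log|F'|+ig}u=\overline\lambda u$ with $|\lambda|=1$, and pick trigonometric polynomials $u_k\in W^{1/2,2}$ with $u_k\to u$ in $L^2$. For each $k$ choose $N_k$ so large that $C\theta^{N_k}\|u_k\|_{W^{1/2,2}}\le1$, and put $v_k:=\mathcal L_{-\log|F'|+ig}^{\,N_k}u_k\in W^{1/2,2}$. The Lasota--Yorke inequality gives $\|v_k\|_{W^{1/2,2}}\le C\|u_k\|_{L^2}+1$, so $\{v_k\}$ is bounded in the Hilbert space $W^{1/2,2}$; and since $\|\mathcal L_{-\log|F'|+ig}\|_{L^2\to L^2}\le1$ and $\mathcal L_{-\log|F'|+ig}^{\,N_k}u=\overline\lambda^{\,N_k}u$, we have $\|v_k-\overline\lambda^{\,N_k}u\|_{L^2}\le\|u_k-u\|_{L^2}\to0$. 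Passing to a subsequence along which $\overline\lambda^{\,N_k}\to\mu$ with $|\mu|=1$ and $v_k\rightharpoonup v$ weakly in $W^{1/2,2}$, weak convergence in $W^{1/2,2}$ forces weak convergence in $L^2$, so $v=\mu u$; hence $u=\overline\mu\,v\in W^{1/2,2}$. The same argument applies with $C^\alpha$ in place of $W^{1/2,2}$.

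\emph{Main obstacle.} Everything except the Lasota--Yorke estimate is bookkeeping; the genuine point is the control of the multiplier norms of the Birkhoff sums $S_n g$ — equivalently, the uniform boundedness of the composition operators $C_{F^{\circ j}}$ on $\mathcal M(W^{1/2,2})$ — so that the smallness of $\|g\|_{\mathcal M(W^{1/2,2})}$ really does beat the geometric factor $\theta_0^{\,n}$ coming from the spectral gap of $L_F$. Alternatively, one may simply transplant the proof of the analogous smoothness‑of‑eigenfunctions statement \cite[Proposition 4.2]{PP} to the present functional‑analytic framework.
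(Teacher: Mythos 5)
There is a genuine gap, and it sits exactly where you flag it: the ``Lasota--Yorke'' step. You need the bound $\|e^{iS_n g}\|_{\mathcal M(W^{1/2,2})} \le \exp\bigl(C_0 n \|g\|_{\mathcal M(W^{1/2,2})}\bigr)$, which you justify by asserting that ``composition by $F$ does not inflate multiplier norms,'' i.e.\ that the operators $C_{F^{\circ j}}$ are uniformly bounded on $\mathcal M(W^{1/2,2})$. This is false for the inner functions at stake. Since ${\bf 1}\in W^{1/2,2}$, every multiplier lies in $W^{1/2,2}$; but by Lemma \ref{dirichlet-composition} (and the holomorphic/anti-holomorphic decomposition used in the proof of Theorem \ref{no-eigenfunctions-thm2}), if $F$ is an inner function of infinite degree then $u\circ F\notin W^{1/2,2}$ for \emph{any} non-constant $u\in W^{1/2,2}$ — composition with $F$ does not merely inflate the norm, it exits the space entirely. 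So $e^{ig\circ F^{\circ j}}$ need not be a multiplier at all, $e^{iS_n g}h$ need not lie in $W^{1/2,2}$, and the term $\Delta^n(e^{iS_n g}h)$ in your inequality is not even defined as an element of $W^{1/2,2}$. The identity $\mathcal L^n_{-\log|F'|+ig}h = L_F^n(e^{iS_n g}h)$ from Lemma \ref{iterated-AC} is correct as an Aleksandrov--Clark integral, but you cannot feed its argument into the spectral decomposition $R+\Delta^n$ of $L_F^n$ on $W^{1/2,2}$.

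The correct mechanism for the uniform bound $\sup_n\|\mathcal L^n_{-\log|F'|+ig}h\|_{W^{1/2,2}}<\infty$ is spectral, not via a Lasota--Yorke inequality: for $\|g\|_{\mathcal M(W^{1/2,2})}$ small, $\mathcal L_{-\log|F'|+ig}$ is a small perturbation of $L_F$, hence has essential spectral radius $<1$ on $W^{1/2,2}$; outside that radius there are only isolated eigenvalues of finite multiplicity, and none can have modulus $>1$ because the operator is the $L^2$-adjoint of an isometry. This is Lemma \ref{boundedness-of-the-iterates-of-L}, and it is the ingredient your argument is missing. Once you have that uniform bound, your reduction to the transfer-operator side, the weak-compactness extraction, and the identification of the limit with a unimodular multiple of $u$ are all sound and close to the argument given here, which instead multiplies $w(F^{\circ n}(x))\overline{w(x)}=\lambda^n e^{iS_n g(x)}$ by a test function $h$, integrates against $\mu_{F^{\circ n},\alpha}$ to get $w\cdot\mathcal L^n_{-\log|F'|}(h\overline w)=\lambda^n\,\mathcal L^n_{-\log|F'|+ig}h$, and combines the compact embedding $W^{1/2,2}\hookrightarrow L^2$ with mixing (Corollary \ref{L-contraction2}) to conclude that $w$ is a scalar multiple of a $W^{1/2,2}$ function.
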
 

While the two lemmas above are stated for different function spaces in \cite{PP}, the proofs are similar to the ones given in  \cite{PP}.
Below, we present the proof of Lemma \ref{smoothness-of-eigenfunctions4} as there are additional technicalities arising from working with Aleksandrov-Clark measures and
allow the interested reader to consult \cite{PP} for the proof of Lemma \ref{zero-asymptotic-variance}.
 
 \begin{proof}[Proof of Lemma \ref{smoothness-of-eigenfunctions4}]
Suppose $w(x)$ is an $L^2$ eigenfunction of the weighted composition operator $e^{-ig(x)} C_F$:
$$
e^{-ig(x)} w(F(x)) = \lambda w(x).
$$
As the Lebesgue measure is ergodic with respect to $F$, $|\lambda| = 1$ and $|w|$ is constant a.e. Since $w$ is not identically 0, by scaling it appropriately, we can make its absolute value 1 a.e. Multiplying the identity
$$
w(F^{\circ n}(x)) \cdot \overline{w(x)} = \lambda^n e^{iS_n g(x)}
$$
by a function $h \in C^\infty$, integrating with respect to the Aleksandrov-Clark measure $\mu_{F^{\circ n}, \alpha}$ and using Lemma \ref{iterated-AC}, we get
\begin{equation}
\label{eq:smoothness-eigenfunctions}
w(\alpha) \cdot \bigl [ \mathcal L_{-\log|F'|}^n (h \overline{w}) \bigr ](\alpha) = \lambda^n \cdot \bigl [ \mathcal L^n_{-\log|F'|+ig} h \bigr ] (\alpha).
\end{equation}
As the sequence of functions $\bigl \{ \mathcal L_{-\log|F'|+ig}^n h \bigr \}_{n=0}^\infty$ is bounded in $W^{1/2,2}$ by Lemma \ref{boundedness-of-the-iterates-of-L} and the inclusion 
$W^{1/2,2} \subset L^2$ is compact, we can pass to a subsequence so that 
$$
\mathcal L_{-\log|F'|+ig}^{n_k} h \to h^* \text{ in }L^2, \qquad \text{for some }h^* \in W^{1/2,2}.
$$
We pass to a further subsequence so that $\lambda^{n_k} \to \lambda^* \in \partial \mathbb{D}$.
In view of Lemma \ref{L-contraction2}, taking the $L^2$-limit as $n \to \infty$ in (\ref{eq:smoothness-eigenfunctions}) gives
$$
w\int_{\partial \mathbb{D}} \overline{w} h \, dm = \lambda^* h^*.
$$
Since $w$ is assumed to be non-zero, there exists an $h \in C^\infty$ with
$$\int_{\partial \mathbb{D}} \overline{w}h \, dm \ne 0,$$ from which it follows that $w$ is a scalar multiple of $h^*$ a.e.
\end{proof}

By Lemma \ref{zero-asymptotic-variance}, if $\sigma^2(g) = 0$,
then the weighted composition operator $e^{- i g} C_F$ possesses 
an $L^\infty(\partial \mathbb{D}, m)$ eigenfunction with eigenvalue 1, namely $e^{i \zeta(x)}$.
As $L^\infty(\partial \mathbb{D}, m) \subset L^2(\partial \mathbb{D}, m)$, Lemma \ref{smoothness-of-eigenfunctions4} tells us that $e^{i \zeta(x)}$ is actually a $W^{1/2,2}$ eigenfunction.
Therefore, to prove Theorem \ref{no-eigenfunctions-thm2}, we may show that $e^{-ig} C_F$ has no $W^{1/2,2}$ eigenfunctions.

\begin{lemma}
\label{dirichlet-composition}
Suppose $h \in \mathcal D$ is not constant and $F$ is an inner function that is not a finite Blaschke product. Then $h \circ F \notin \mathcal D$.
\end{lemma}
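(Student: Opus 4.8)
The plan is to use the Littlewood–Paley/Nevanlinna machinery already recalled in Lemmas~\ref{littlewood-paley} and \ref{nevanlinna}, but at the level of the Dirichlet space rather than $H^2$. Recall that for $h \in \mathcal D$ one has $\|h\|_{\mathcal D}^2 - |h(0)|^2 = \frac{1}{\pi}\int_{\mathbb D} |h'(z)|^2\,|dz|^2$, and more generally there is a weighted identity
\[
\int_{\mathbb D} |(h\circ F)'(z)|^2 \, w(z)\,|dz|^2 \;=\; \int_{\mathbb D} |h'(z)|^2 \, \Bigl( \sum_{F(w)=z} w(w) \Bigr) |dz|^2
\]
obtained by the change of variables $w \mapsto z = F(w)$ counted with multiplicity (this is just the non-univalent substitution formula). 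The point is to choose the weight so that the pushed-forward weight $\sum_{F(w)=z} w(w)$ is comparable to a weight adapted to the Dirichlet space, and to exploit that for a genuine inner function the associated Nevanlinna-type counting function is \emph{equal} to the model weight a.e., whereas for finite Blaschke products it is strictly smaller near the boundary. Concretely, I would run the argument with the area measure $|dz|^2$ itself: since $F$ is inner of infinite degree, the preimage multiplicity $\#F^{-1}(z)$ is infinite for a.e.\ $z$, so $\sum_{F(w)=z} 1 = \infty$ a.e., and the substitution formula forces $\int_{\mathbb D}|(h\circ F)'|^2\,|dz|^2 = \int_{\mathbb D} |h'(z)|^2\,(\#F^{-1}(z))\,|dz|^2 = \infty$ unless $h' \equiv 0$.

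First I would make the substitution formula precise: for $F$ inner with $F(0)=0$, partition $\mathbb D$ (off a set of measure zero) into Borel pieces on which $F$ restricts to a biholomorphism onto its image, which is possible since $F$ is a.e.\ locally injective where $F' \neq 0$ and the critical set is discrete; summing the ordinary change of variables over these pieces yields
\[
\int_{\mathbb D} |(h\circ F)'(z)|^2 \,|dz|^2 \;=\; \int_{\mathbb D} |h'(\zeta)|^2 \, n_F(\zeta)\,|dz(\zeta)|^2,
\]
where $n_F(\zeta) = \#\{w : F(w)=\zeta\}$ counted with multiplicity. Second, I would argue that for an inner function that is not a finite Blaschke product, $n_F(\zeta) = +\infty$ for a.e.\ $\zeta \in \mathbb D$: an inner function has a factorization $F = B\cdot S\cdot (\text{outer})$ with the outer part a unimodular constant, and either the Blaschke part $B$ has infinitely many zeros or the singular inner factor $S$ is nonconstant; in the first case $n_F \geq n_B = \infty$ a.e.\ by standard value-distribution theory for infinite Blaschke products, and in the second case one uses that a nonconstant singular inner function omits no value in $\mathbb D$ with finite multiplicity a.e.\ — more cleanly, one can invoke Lemma~\ref{nevanlinna}(ii), which gives $N_F(\zeta) = \log(1/|\zeta|)$ off a polar set, combined with the fact that for a finite Blaschke product of degree $d$ the Frostman-type bound $N_F(\zeta) \le d\log(1/|\zeta|)$ with a definite gap near $\partial\mathbb D$, whereas the infinite-degree case forces $N_F(\zeta)/\log(1/|\zeta|) \to \infty$, hence $n_F = \infty$ a.e. Third, plugging a nonconstant $h \in \mathcal D$ into the substitution formula, the right-hand side is $\int_{\mathbb D}|h'|^2 \cdot (+\infty) = +\infty$ (as $h' \not\equiv 0$ on a set of positive area), so $h\circ F \notin \mathcal D$.

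The main obstacle is making rigorous the claim that $n_F = +\infty$ a.e.\ in $\mathbb D$ for every inner function that is not a finite Blaschke product, and cleanly justifying the non-univalent substitution formula in a form valid for $h \in \mathcal D$ (where $h'$ is only in the Bergman space $A^2$, not bounded). For the substitution formula one should either approximate $h$ by polynomials in $\mathcal D$-norm and pass to the limit — using that both sides are monotone in $|h'|^2$ and lower semicontinuous, so Fatou suffices for the direction $\|h\circ F\|_{\mathcal D} = \infty$ — or phrase everything through the area-counting identity $\int_{\mathbb D}|(h\circ F)'|^2 = \int_{\mathbb D}|h'|^2\,n_F$ which holds verbatim by Tonelli once $F$ is decomposed into injectivity pieces. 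For the infiniteness of $n_F$, the slickest route is: $F$ inner $\Rightarrow$ $N_F(\zeta) = \log\frac1{|\zeta|}$ q.e.\ (Lemma~\ref{nevanlinna}(ii)); if $n_F(\zeta) < \infty$ on a set of positive measure, then on that set $F$ behaves like a finite Blaschke product in a neighborhood of those fibers, and a normal-families/localization argument (or Frostman's theorem on exceptional sets for inner functions) forces $F$ to be a finite Blaschke product globally — contradiction. I would present the Frostman-theorem route as the cleanest: for an inner function $F$ that is not a finite Blaschke product, $F_\alpha := (F-\alpha)/(1-\bar\alpha F)$ is an inner function of infinite degree for every $\alpha$ outside a polar (hence measure-zero) set, so $n_F(\alpha) = \#F^{-1}(\alpha) = \infty$ for a.e.\ $\alpha \in \mathbb D$, which is exactly what is needed.
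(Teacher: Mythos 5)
Your proposal is correct and follows essentially the same route as the paper: the non-univalent change of variables identifies the Dirichlet seminorm of $h\circ F$ with $\int_{\mathbb D}|h'(\zeta)|^2\,n_F(\zeta)\,|d\zeta|^2$, and Frostman's theorem gives $n_F=\infty$ a.e.\ when $F$ is not a finite Blaschke product, so the integral diverges unless $h'\equiv 0$ a.e., i.e.\ unless $h$ is constant. One caveat: the side remark that infinite degree ``forces $N_F(\zeta)/\log(1/|\zeta|)\to\infty$'' is not right --- $N_F(\zeta)=\log(1/|\zeta|)$ quasi-everywhere for \emph{every} inner function regardless of degree, since the weights $\log(1/|w|)$ decay at the boundary --- but you correctly discard that route in favor of the Frostman argument, which is exactly the fact the paper invokes.
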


In the proof below, we will use the fact that if  $F$ is not a finite Blaschke product, then for a.e.~point $w \in \mathbb{D}$, the pre-image $F^{-1}(w)$ is an infinite set. Actually, Frostman's theorem \cite[Theorem 2.5]{mashreghi} says that the set of exceptional points has logarithmic capacity zero, but we will not need this stronger conclusion.

\begin{proof} The Dirichlet seminorm
$$
\int_{\mathbb D} |(h \circ F)'(z)|^2 \, |dz|^2
$$
computes the weighted area of the image of $h \circ F$. Since $F$ has infinite degree, almost every point of $h(\mathbb{D})$ is covered infinitely often. As non-constant holomorphic maps are open, $h(\mathbb{D})$ has positive area and so the Dirichlet seminorm of $h \circ F$ is infinite.
\end{proof}

In particular, the above lemma implies that if an inner function belongs to the Dirichlet space, then it is a finite Blaschke product.

\begin{proof}[Proof of Theorem \ref{no-eigenfunctions-thm2}]
As noted above, if $\sigma^2(g) = 0$, then the weighted composition operator $e^{-ig} C_F$
has a non-trivial eigenfunction in $W^{1/2,2}$ with eigenvalue 1:
$$
e^{-ig(x)} w(F(x)) = w(x).
$$
Since the right hand side is in $W^{1/2,2}$, to obtain a contradiction, we show that the left hand side is not in $W^{1/2,2}$.
Since $g \in \mathcal M(W^{1/2,2})$ implies that $e^{ig} \in \mathcal M(W^{1/2,2})$, this reduces our task to showing that $w(F(x)) \notin W^{1/2,2}$.
By decomposing $w$ into holomorphic and anti-holomorphic parts, its enough to consider the case when $w$ is a holomorphic function in $W^{1/2, 2}$, i.e.~an element of  the Dirichlet space 
$\mathcal D$. The existence of such a $w$ is ruled out by Lemma \ref{dirichlet-composition}.
\end{proof}

\subsection{Conformal and Equilibrium Measures}\label{CEM}

We now switch to H\"older potentials to avoid subtle issues arising from the fact that functions in the Sobolev space $W^{1/2,2}$  may be not be bounded. (One can resolve this technicality by working in the space $W^{\beta,2}$ for some $\beta > 1/2$, but by the Sobolev embedding theorem, these functions have some H\"older regularity, so we might as well work in $C^\alpha$.)

\begin{lemma}
\label{gamma-spectral-gap}
Suppose $\gamma: \partial \mathbb{D} \to (0, \infty)$ is a positive function of class $C^\alpha$ for some $0 < \alpha < 1$. When $\| \gamma - 1 \|_{C^\alpha}$ is sufficiently small, $L_{F, \gamma}$ has a spectral gap on $C^\alpha$ and the eigenspace associated to the dominant eigenvalue $\lambda > 0$ is spanned by a strictly positive function $\rho_\gamma$.
\end{lemma}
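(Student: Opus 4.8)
The plan is to treat $L_{F,\gamma}$ as a small perturbation of $L_{F,\mathbf 1} = L_F$ and invoke the Kato--Rellich perturbation machinery exactly as in Section~\ref{sec:weighted-composition-operators}, the only new ingredients being (a) the function space $C^\alpha(\partial\mathbb D)$ rather than $W^{1/2,2}$, and (b) the positivity of the leading eigenfunction. First I would record that $C_{F,\gamma}$, hence its $L^2$-adjoint $L_{F,\gamma}$, acts boundedly on $C^\alpha$: since $\gamma\in C^\alpha$ and $C^\alpha$ is an algebra, multiplication by $\gamma$ is bounded on $C^\alpha$, and from Section~\ref{sec:duals-of-some-function-spaces} (the $p=1$ Bergman duality discussed there) $L_F$ preserves $C^\alpha$ with a spectral gap; composing the two, $L_{F,\gamma} = L_F\circ(\gamma\,\cdot)$ is bounded on $C^\alpha$. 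The map $\gamma\mapsto L_{F,\gamma}$ is moreover affine, so $\|L_{F,\gamma}-L_F\|_{C^\alpha\to C^\alpha}\le \|L_F\|\cdot\|\gamma-1\|_{C^\alpha}\cdot(\text{algebra constant})$, which is small when $\|\gamma-1\|_{C^\alpha}$ is small.

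Next I would quote the spectral gap for $L_F$ on $C^\alpha$: a simple isolated eigenvalue $\lambda_0 = 1$ with eigenfunction $\mathbf 1$, and the rest of the spectrum inside $B(0,r)$ with $r<1$. By upper semicontinuity of the spectrum under operator-norm perturbations (Kato--Rellich), for $\|\gamma-1\|_{C^\alpha}$ small enough $L_{F,\gamma}$ has exactly one eigenvalue $\lambda$ near $1$, simple and isolated, with the remainder of the spectrum inside some $B(0,\rho)$, $\rho<|\lambda|$; and the corresponding Riesz projection $R_\gamma$ and eigenvalue $\lambda$ depend analytically (in particular continuously) on $\gamma$. This is literally the argument already used for the family $L_{F,\gamma_s}$, so I would cite that discussion rather than repeat it. The leading eigenfunction is $\rho_\gamma = R_\gamma\mathbf 1$; since $R_\gamma\to R_0$ in operator norm and $R_0\mathbf 1 = \mathbf 1$, we get $\rho_\gamma\to\mathbf 1$ in $C^\alpha$, hence uniformly, so $\rho_\gamma$ is strictly positive once $\|\gamma-1\|_{C^\alpha}$ is small. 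Finally, $\lambda>0$: $\lambda$ is real because it is the perturbation of the real simple eigenvalue $1$ and complex eigenvalues of a real operator come in conjugate pairs (a simple eigenvalue near a real one must itself be real), and it is positive since it is close to $1$.

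The only genuine point requiring care is the strict positivity of $\rho_\gamma$ and the fact that $\rho_\gamma$ spans the whole dominant eigenspace. Spanning is automatic from simplicity of $\lambda$ together with $R_\gamma\mathbf 1\ne 0$ (which holds because $R_\gamma$ depends continuously on $\gamma$ and $R_0\mathbf 1=\mathbf 1\ne 0$). Strict positivity I would get in the soft way just described --- uniform closeness of $\rho_\gamma$ to $\mathbf 1$ --- which suffices for the statement as phrased; if one wanted positivity for all $\gamma>0$ with $L_{F,\gamma}$ merely having a spectral gap (not close to $L_F$), one would instead invoke a Krein--Rutman / Perron--Frobenius argument using that $L_{F,\gamma}$ is a positive operator (it integrates against the positive Aleksandrov--Clark measures $\mu_\alpha$ weighted by $\gamma>0$), but that is not needed here.

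I expect the main obstacle to be purely bookkeeping: making sure the spectral gap for $L_F$ on $C^\alpha$ is actually available with the strong (simple dominant eigenvalue, rest strictly inside a smaller ball) formulation, and that the Kato--Rellich hypotheses (the perturbation being small in the \emph{operator norm} on $C^\alpha$, not merely strongly) are met --- both of which were already set up earlier in the paper, so the proof is short. There is no hard analytic step.
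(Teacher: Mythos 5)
Your argument for the spectral gap is exactly the paper's: the operator-norm estimate $\| L_{F,\gamma_1} - L_{F,\gamma_2} \|_{\mathcal B(C^\alpha)} \lesssim \| \gamma_1 - \gamma_2\|_{C^\alpha}$ (coming from $C^\alpha$ being an algebra), applied to the affine family $s \mapsto L_{F, 1+s(\gamma-1)}$ via Kato--Rellich, starting from the spectral gap of $L_F$ on $C^\alpha$ established by duality in Section~\ref{sec:duals-of-some-function-spaces}; the realness of $\lambda$ via conjugation symmetry also matches the paper's parenthetical remark. Where you genuinely diverge is the strict positivity of $\rho_\gamma$. You take the soft perturbative route: $R_\gamma \to R_{\bf 1}$ in operator norm, so $\rho_\gamma = R_\gamma{\bf 1} \to {\bf 1}$ in $C^\alpha$ and hence uniformly, forcing $\rho_\gamma > 0$ for small perturbations. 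The paper instead argues dynamically: the real eigenfunction $\rho_\gamma$ is continuous, its zero set is carried into itself by taking $F$-preimages, and iterated preimages equidistribute (Lemma~\ref{equidistribution-lemma}), so a single zero would force $\rho_\gamma \equiv 0$; one then flips the sign if necessary. Your argument is shorter and entirely adequate under the smallness hypothesis in the statement, whereas the paper's argument is the one that survives when $\gamma$ is not close to ${\bf 1}$ but $L_{F,\gamma}$ still has a spectral gap --- a distinction you yourself flag by mentioning the Krein--Rutman alternative. Both proofs are correct.
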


\begin{proof}
In Section \ref{sec:duals-of-some-function-spaces}, we used a duality argument to show that $L_{F}$ has a spectral gap on $C^\alpha$. It is not difficult to see the spectral gap persists for $L_{F, \gamma}$ when  $\| \gamma - 1 \|_{C^\alpha}$ is small by applying the Kato-Rellich Perturbation Theorem to the analytic family of operators $s \to L_{F, 1 + s(\gamma - 1)}$ and using the operator norm estimate $\| L_{F, \gamma_1} - L_{F, \gamma_2} \|_{\mathcal B(C^\alpha)} \lesssim \| \gamma_1 - \gamma_2 \|_{C^\alpha}$.

Since $L_{F, \gamma}$ is a real operator, the one-dimensional eigenspace associated to the dominant eigenvalue $\lambda$ is spanned by a real-valued function $\rho_\gamma$. 
(As $\overline{\rho_\gamma}$ is an eigenfunction with eigenvalue $\overline{\lambda}$, the spectral gap implies that $\overline{\rho_\gamma} = \rho_\gamma$ and $\overline{\lambda} = \lambda$.) Since the zero set of $\rho_\gamma$ is $F$-invariant and $\rho_\gamma$ is continuous, $\rho_\gamma$ never vanishes by Lemma \ref{equidistribution-lemma}. Multiplying $\rho_\gamma$ by $-1$ if necessary, we may assume that the eigenfunction $\rho_\gamma$ is strictly positive.
\end{proof}

Below, we construct conformal and equilibrium measures associated to a weight $\gamma$ satisfying the hypotheses of the above lemma.
We start with the conformal measure:

\begin{lemma}[Conformal measure]
\label{CEM-conformal}
There exists a unique probability measure $m_\gamma$ on $\partial \mathbb{D}$ such that $C_{F, \gamma} m_\gamma = \lambda  m_\gamma$.
\end{lemma}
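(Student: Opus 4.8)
The plan is to obtain the conformal measure $m_\gamma$ as an eigenmeasure of the \emph{dual} of the weighted composition operator $C_{F,\gamma}$ acting on the space of Borel measures, exactly as in Subsection~\ref{sec:composition-operators-on-measures}. Recall that $C_{F,\gamma}$ is the adjoint of $L_{F,\gamma}$ with respect to the $L^2$-pairing on the unit circle, so by the same monotone class argument used for $C_F$ in \eqref{eq:composition-operator-on-measures} one extends $C_{F,\gamma}^*$ to act on finite Borel measures by
\begin{equation*}
\int_{\partial \mathbb{D}} \phi \, d\bigl( C_{F,\gamma}^* \nu \bigr) = \int_{\partial \mathbb{D}} L_{F,\gamma} \phi \, d\nu = \int_{\partial \mathbb{D}} \biggl( \int_{\partial \mathbb{D}} \gamma \cdot \phi \, d\mu_\alpha \biggr) d\nu(\alpha),
\end{equation*}
valid for all bounded Borel $\phi$. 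Since $\gamma > 0$, this operator maps probability measures to positive measures, and normalizing gives a continuous self-map of the (weak-$*$ compact, convex) space of Borel probability measures on $\partial \mathbb{D}$. I would then invoke the Schauder--Tychonoff fixed point theorem to produce a probability measure $m_\gamma$ and a scalar $\lambda > 0$ with $C_{F,\gamma}^* m_\gamma = \lambda m_\gamma$, which is precisely the statement $C_{F,\gamma} m_\gamma = \lambda m_\gamma$ in the notation of the lemma.

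Next I would identify the eigenvalue $\lambda$ with the dominant eigenvalue of $L_{F,\gamma}$ on $C^\alpha$ from the preceding lemma. Pairing the eigenmeasure equation against the strictly positive eigenfunction $\rho_\gamma$ gives
\begin{equation*}
\lambda \int_{\partial \mathbb{D}} \rho_\gamma \, dm_\gamma = \int_{\partial \mathbb{D}} L_{F,\gamma} \rho_\gamma \, dm_\gamma = \lambda_{\mathrm{dom}} \int_{\partial \mathbb{D}} \rho_\gamma \, dm_\gamma,
\end{equation*}
and since $\rho_\gamma$ is bounded below by a positive constant the integral $\int \rho_\gamma \, dm_\gamma$ is strictly positive, forcing $\lambda = \lambda_{\mathrm{dom}}$; in particular the eigenvalue in the lemma statement is the same $\lambda$ appearing in the eigenfunction lemma, which is the consistency one wants.

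For uniqueness, suppose $m_\gamma'$ is another probability measure with $C_{F,\gamma} m_\gamma' = \lambda m_\gamma'$. Iterating the eigenmeasure relation and using Lemma~\ref{iterated-AC} gives, for any $\phi \in C^\alpha$,
\begin{equation*}
\lambda^n \int_{\partial \mathbb{D}} \phi \, dm_\gamma' = \int_{\partial \mathbb{D}} L_{F,\gamma}^n \phi \, dm_\gamma'.
\end{equation*}
The spectral gap decomposition $L_{F,\gamma}^n \phi = c_\phi \lambda^n \rho_\gamma + O(\theta^n)$ with $\theta < \lambda$ (where $c_\phi$ is the coefficient of $\phi$ on the dominant eigenline, a bounded linear functional of $\phi$) then yields $\int \phi \, dm_\gamma' = c_\phi \int \rho_\gamma \, dm_\gamma'$ after dividing by $\lambda^n$ and letting $n \to \infty$. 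The same computation applies to $m_\gamma$, so $\int \phi \, dm_\gamma$ and $\int \phi \, dm_\gamma'$ are both equal to $c_\phi$ times a normalizing constant; since both measures are probabilities, taking $\phi = {\bf 1}$ fixes the constant and we conclude $m_\gamma' = m_\gamma$ on $C^\alpha$, hence everywhere by density.

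The main obstacle I anticipate is technical rather than conceptual: justifying that $C_{F,\gamma}^*$ genuinely maps the space of \emph{probability} measures continuously into itself in the weak-$*$ topology, given that $\gamma$ is merely H\"older (hence bounded) and the Aleksandrov-Clark kernels $\alpha \mapsto \mu_\alpha$ vary only weak-$*$ continuously. One must check that $\alpha \mapsto \int \gamma \phi \, d\mu_\alpha$ is continuous for $\phi \in C(\partial \mathbb{D})$ — which follows since $\gamma \phi$ is continuous and $\mu_\alpha$ is weak-$*$ continuous in $\alpha$ — and that total mass is preserved, which is \eqref{eq:ac-def} with $F(0)=0$ giving $\mu_\alpha(\partial \mathbb{D}) = 1$. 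Once these points are nailed down, the fixed point argument and the spectral identification are routine.
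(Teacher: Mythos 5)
Your proposal is correct and follows essentially the same route as the paper: Schauder--Tychonoff applied to the normalized dual action on probability measures for existence, then uniqueness from the spectral gap via $\lambda^{-n}L_{F,\gamma}^n\phi \to c_\phi\,\rho_\gamma$. The only (harmless) variation is in identifying the eigenvalue: you pair the eigenmeasure equation against $\rho_\gamma$ directly, whereas the paper compares $\tilde\lambda^n = \int L_{F,\gamma}^n{\bf 1}\,dm_\gamma$ with $\lambda^n\rho_\gamma$ using the two-sided bound $C_1{\bf 1}\le\rho_\gamma\le C_2{\bf 1}$ --- your version is slightly cleaner.
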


In the lemma above, the action of the weighted composition operator on Borel measures on the unit circle is given by 
$
C_{F, \gamma} \nu := \gamma \, d(C_F \nu). 
$
The definition is chosen so that the following duality relation holds: for every bounded Borel function $\phi \in L^\infty(\partial \mathbb{D}, m)$ and Borel measure $\nu$,
\begin{equation}
\label{eq:composition-operator-on-measures2}
\int_{\partial \mathbb{D}} \phi \, d(C_{F, \gamma} \nu) = \int_{\partial \mathbb{D}} \gamma \phi \, d(C_{F} \nu)  = \int_{\partial \mathbb{D}} L_F(\gamma \phi) d\nu = \int_{\partial \mathbb{D}} L_{F, \gamma} \phi \, d\nu.
\end{equation}
The second equality above follows from the identity (\ref{eq:composition-operator-on-measures}).
The proof of Lemma~\ref{CEM-conformal} relies on two auxiliary lemmas:

\begin{lemma}
\label{l1Fg}
The operator $\lambda^{-1} L_{F, \gamma}$ is the adjoint of the composition operator $C_F$ in the Hilbert space $L^2(\partial \mathbb{D}, m_\gamma)$, i.e.~
$$
\int_{\partial \mathbb{D}} g \cdot  C_F h \, dm_\gamma = \int_{\partial \mathbb{D}} \lambda^{-1} L_{F, \gamma} g \cdot h \, dm_\gamma, \qquad g, h \in L^2(\partial \mathbb{D}, m_\gamma).
$$
\end{lemma}

\begin{proof}
By the duality relation (\ref{eq:composition-operator-on-measures2}) and Lemma \ref{h-out}, we have
\begin{align*}
\int_{\partial \mathbb{D}} L_{F,\gamma} g \cdot h \, dm_\gamma
&= \int_{\partial \mathbb{D}} g \, dC_{F,\gamma}(h m_\gamma) \\
&= \int_{\partial \mathbb{D}} g \cdot \gamma \, dC_F(h m_\gamma) \\
&= \int_{\partial \mathbb{D}} g \cdot \gamma \, (h \circ F) \, dC_F m_\gamma \\
&= \int_{\partial \mathbb{D}} g \cdot (h \circ F) \, dC_{F,\gamma} m_\gamma \\
&= \lambda \int_{\partial \mathbb{D}} g \cdot C_F h \, dm_\gamma,
\end{align*}
as desired.
\end{proof}

\begin{lemma}
\label{Lgamma-lemma}
Suppose $m_\gamma$ is a conformal measure on the unit circle. For any $g \in C^\alpha(\partial \mathbb{D})$, we have
$$
\lambda^{-n} L_{F,\gamma}^n g \ \xlongrightarrow[C^\alpha] \ \Biggl( \int_{\partial \mathbb{D}} g \, dm_\gamma \Biggr) \rho_\gamma
\qquad \text{ as } n \to \infty.
$$
where the eigenfunction $\rho_\gamma$ of $L_{F,\gamma}$ with eigenvalue $\lambda$ is normalized so that $\int_{\partial \mathbb{D}} \rho_\gamma dm_\gamma = 1$.
\end{lemma}

\begin{proof}
Taking $\nu = m_\gamma$ in the duality relation (\ref{eq:composition-operator-on-measures2}), we see that for any bounded Borel function $\phi \in L^\infty(\partial \mathbb{D}, m)$, we have
$$
\int_{\partial \mathbb{D}} L_{F, \gamma} \phi \, dm_\gamma = \lambda  \int_{\partial \mathbb{D}} \phi \, dm_\gamma.
$$ 
Since $L_{F,\gamma}$ has a spectral gap on $C^\alpha$ by Lemma \ref{gamma-spectral-gap}, it follows that
$$
\lambda^{-n} L_{F,\gamma}^n g \, \to_{C^\alpha} \, C_g \cdot \rho_\gamma
$$
for some constant $C_g \in \mathbb{C}$. To evaluate $C_g$, we integrate both sides against $m_\gamma$\,:
$$
\lim_{n \to \infty} \int_{\partial \mathbb{D}} \lambda^{-n} L_{F,\gamma}^n g dm_\gamma \, = \,
 C_g \int_{\partial \mathbb{D}} \rho_\gamma dm_\gamma \, = \, C_g.
$$
However, by Lemma \ref{l1Fg}, for any $n \ge 0$,
$$
\int_{\partial \mathbb{D}} \lambda^{-n} L_{F,\gamma}^n g dm_\gamma = \int_{\partial \mathbb{D}} g dm_\gamma.
$$
Hence, $C_g = \int_{\partial \mathbb{D}} g dm_\gamma$ as desired.
\end{proof}

\begin{proof}[Proof of Lemma \ref{CEM-conformal}]
{\em Step 1.} Let $\mathcal P(\partial \mathbb{D})$ denote the collection of Borel probability measures on the unit circle endowed with topology of weak convergence. Consider the map $T: \mathcal P(\partial \mathbb{D}) \to \mathcal P(\partial \mathbb{D})$ given by
$$
T\nu := \frac{ C_{F, \gamma}\nu}{(C_{F, \gamma}\nu)(\partial \mathbb{D})}.
$$
By the Schauder-Tychonoff Fixed Point Theorem, $T$ has a fixed point. Thus, there exists a measure $m_\gamma \in \mathcal P(\partial \mathbb{D})$ such that
$$
C_{F, \gamma} m_\gamma = \tilde \lambda  m_\gamma, \qquad \text{for some } \tilde \lambda > 0.
$$

{\em Step 2.} Together with (\ref{eq:composition-operator-on-measures2}), the equation
$C_{F, \gamma}^n m_\gamma = \tilde \lambda^n  m_\gamma$
shows that
\begin{equation}
\label{eq:comparing-lambdas}
 \tilde \lambda^n = \int_{\partial \mathbb{D}} L_{F,\gamma}^n {\bf 1} \, dm_\gamma.
\end{equation}
As $\gamma > 0$, $L_{F, \gamma}$ is a positive operator.
Since the eigenfunction $\rho_\gamma$ is pinched between two positive constants, i.e.~
$$
C_1 \cdot {\bf 1} \, \le \, {\rho_\gamma} \, \le \, C_2 \cdot {\bf 1},
$$
we have
$$
C_1 \cdot L_{F,\gamma}^n {\bf 1} \, \le \, \lambda^n {\rho_\gamma} \, \le \, C_2 \cdot L_{F,\gamma}^n {\bf 1},
$$
so the right hand side of (\ref{eq:comparing-lambdas}) is comparable to $\lambda^n$. As a result, $\tilde \lambda = \lambda$.

\medskip

{\em Step 3.} Let $m_\gamma$ be a probability measure on the unit circle which satisfies $C_{F, \gamma} m_\gamma = \lambda  m_\gamma$. By Lemma \ref{Lgamma-lemma}, for any $g \in C^\alpha$, the  sequence of functions $\{ \lambda^{-n} L_{F, \gamma}^n g\}$ converges in $C^\alpha$ to $(\int_{\partial \mathbb{D}} g dm_\gamma) \rho_\gamma$. As $C^\alpha$ is dense in the continuous functions on the unit circle, this determines the measure $m_\gamma$ uniquely.
\end{proof}

We now turn to the construction of the equilibrium measure.

\begin{lemma}
The measure $\mu_\gamma = \rho_\gamma m_\gamma$ is mixing.
\end{lemma}

\begin{proof}
Applying Lemma \ref{l1Fg} $n$ times gives
$$
\int_{\partial \mathbb{D}} g \cdot  C^n_F h \, dm_\gamma = \int_{\partial \mathbb{D}} \lambda^{-n} L_{F, \gamma}^n g \cdot h \, dm_\gamma.
$$
By Lemma \ref{Lgamma-lemma}, it follows that
$$
\int_{\partial \mathbb{D}} g \cdot  C^n_F h \, dm_\gamma \, \to \, \int_{\partial \mathbb{D}} g dm_\gamma  \int_{\partial \mathbb{D}} h \rho_\gamma dm_\gamma
\, = \, \int_{\partial \mathbb{D}} g dm_\gamma  \int_{\partial \mathbb{D}} h d\mu_\gamma,
$$
as $n \to \infty$.
Replacing $g$ by $\rho_\gamma g$ yields
$$
\int_{\partial \mathbb{D}} g \cdot  C^n_F h \, d\mu_\gamma \to  \int_{\partial \mathbb{D}} g d\mu_\gamma  \int_{\partial \mathbb{D}} h d\mu_\gamma, \qquad \text{as }n \to \infty,
$$
as desired.
\end{proof}

\begin{lemma}[Equilibrium measure]
 There exists a unique $F$-invariant probability measure $\mu_\gamma$ on $\partial \mathbb{D}$ which is absolutely continuous with respect to $m_\gamma$.
The Radon-Nikodym derivative is $d\mu_\gamma/dm_\gamma = \rho_\gamma$.
\end{lemma}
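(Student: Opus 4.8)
The plan is to verify directly that $\mu_\gamma := \rho_\gamma \, dm_\gamma$ is $F$-invariant and then to show uniqueness using the spectral gap. First I would record the key reformulation: a probability measure $\nu$ on $\partial\mathbb D$ is $F$-invariant precisely when $\int \phi\circ F \, d\nu = \int \phi \, d\nu$ for every bounded Borel $\phi$, which by the adjoint relation $\int L_F(\psi) \, d\nu = \int \psi \, d(C_F\nu)$ amounts to $C_F\nu = \nu$ in the sense of Section~\ref{sec:composition-operators-on-measures}. More generally, writing a measure as $\nu = u \, dm_\gamma$ for a density $u$, one computes using the conformality $C_{F,\gamma}m_\gamma = \lambda m_\gamma$ and Lemma~\ref{gFh-transfer} that $C_F(u\,dm_\gamma) = \lambda^{-1}\gamma^{-1}\cdot\bigl(\text{something involving }L_{F,\gamma}\bigr)$; more precisely, for bounded Borel $\phi$,
\begin{equation*}
\int_{\partial\mathbb D}\phi\, d\bigl(C_F(u\,dm_\gamma)\bigr) = \int_{\partial\mathbb D} L_F(\phi u)\, dm_\gamma = \lambda^{-1}\int_{\partial\mathbb D} L_{F,\gamma}(\phi u/\gamma)\, dm_\gamma'
\end{equation*}
— I would instead phrase this cleanly by noting that pairing with $m_\gamma$ turns $C_F$ on densities into the operator $u \mapsto \lambda^{-1} L_{F,\gamma}(u)$ up to the $\gamma$-weight bookkeeping, so that $C_F\nu = \nu$ translates into $L_{F,\gamma}(\text{density of }\nu \text{ against } m_\gamma) = \lambda\cdot(\text{density})$, i.e. the density is a $\lambda$-eigenfunction of $L_{F,\gamma}$. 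Since the $\lambda$-eigenspace is spanned by $\rho_\gamma$, the only candidate invariant density against $m_\gamma$ is a multiple of $\rho_\gamma$, and normalizing $\int\rho_\gamma\,dm_\gamma = 1$ makes $\mu_\gamma = \rho_\gamma\,dm_\gamma$ a probability measure.

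Next I would verify that $\mu_\gamma$ is actually $F$-invariant, which is the substantive computation. Using $L_{F,\gamma}\rho_\gamma = \lambda\rho_\gamma$ together with Lemma~\ref{gFh-transfer} (the identity $L_{F,\gamma}(g\circ F\cdot h) = g\cdot L_{F,\gamma}(h)$), for any bounded Borel $\phi$ one has
\begin{align*}
\int_{\partial\mathbb D}\phi\circ F\, d\mu_\gamma &= \int_{\partial\mathbb D}(\phi\circ F)\,\rho_\gamma\, dm_\gamma = \lambda^{-1}\int_{\partial\mathbb D}(\phi\circ F)\,\bigl(L_{F,\gamma}\rho_\gamma\bigr)\, dm_\gamma,
\end{align*}
and I would then push the $L_{F,\gamma}$ outward via the conformality of $m_\gamma$ and contract $(\phi\circ F)$ against the Aleksandrov--Clark measures, obtaining $\lambda^{-1}\int \lambda\,\phi\,\rho_\gamma\,dm_\gamma = \int\phi\,d\mu_\gamma$. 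Concretely, $\int (\phi\circ F)(L_{F,\gamma}\rho_\gamma)\,dm_\gamma = \int L_{F,\gamma}\bigl((\phi\circ F)\rho_\gamma\bigr)\,dm_\gamma / (\text{weight})$ — here I would use \eqref{eq:composition-operator-on-measures2} with $\nu = m_\gamma$ and Lemma~\ref{gFh-transfer} to move $\phi\circ F$ through, landing on $\phi\cdot L_{F,\gamma}\rho_\gamma = \lambda\phi\rho_\gamma$, and the $\lambda$'s cancel. This gives exactly $C_F\mu_\gamma = \mu_\gamma$, hence $F$-invariance by Lemma~\ref{dynamically-meaningful}.

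For uniqueness among measures absolutely continuous with respect to $m_\gamma$, suppose $\mu = u\,dm_\gamma$ is $F$-invariant with $u \in L^1(m_\gamma)$. Running the first paragraph's argument in reverse, $F$-invariance of $\mu$ forces $L_{F,\gamma}u = \lambda u$ as an $L^1(m_\gamma)$-identity. I would then upgrade this: iterating and using that $\lambda^{-n}L_{F,\gamma}^n$ converges (on $C^\alpha$, and by density/boundedness on a suitable $L^1$-type space) to the rank-one projection $g \mapsto \bigl(\int g\, dm_\gamma\bigr)\rho_\gamma$, one concludes $u = \bigl(\int u\, dm_\gamma\bigr)\rho_\gamma = \rho_\gamma$ a.e., since $\int u\,dm_\gamma = \mu(\partial\mathbb D) = 1 = \int\rho_\gamma\,dm_\gamma$. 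The main obstacle I anticipate is the functional-analytic care needed in this last step: the spectral gap for $L_{F,\gamma}$ was established on $C^\alpha$, not on $L^1(m_\gamma)$, so I would either restrict attention to densities that can be approximated appropriately, or — more cleanly — test the identity $L_{F,\gamma}u = \lambda u$ against $C^\alpha$ functions: for $g\in C^\alpha$, $\int g\,d\mu = \int (g\circ F)\,d\mu = \cdots = \lambda^{-n}\int (L_{F,\gamma}^n g)\,u\,dm_\gamma \to \bigl(\int g\,dm_\gamma\bigr)\int\rho_\gamma\,u\,dm_\gamma$, wait — I would instead pair $\lambda^{-n}L_{F,\gamma}^n g \to (\int g\,dm_\gamma)\rho_\gamma$ in $C^\alpha\subset C(\partial\mathbb D)$ against the finite measure $\mu$ itself, yielding $\int g\,d\mu = \bigl(\int g\,dm_\gamma\bigr)\int\rho_\gamma\,d\mu$; evaluating at $g\equiv 1$ gives $\int\rho_\gamma\,d\mu = 1$, so $\mu = m_\gamma$-absolutely-continuous with the measure determined by $\int g\,d\mu = \int g\rho_\gamma\,dm_\gamma$ for all $g\in C^\alpha$, i.e. $\mu = \mu_\gamma$. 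This circumvents the need for an $L^1$ spectral gap and is the route I would write up.
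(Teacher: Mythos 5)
Your verification that $\mu_\gamma=\rho_\gamma\,dm_\gamma$ is $F$-invariant is essentially the paper's argument (conformality of $m_\gamma$ plus Lemma \ref{gFh-transfer} to pull $\phi\circ F$ through $L_{F,\gamma}$), and that part is fine. The problem is in your uniqueness step. Your ``clean route'' rests on the identity $\int g\,d\mu=\lambda^{-n}\int L_{F,\gamma}^{n}g\,d\mu$ for an $F$-invariant $\mu\ll m_\gamma$, so that you can pass to the limit $\lambda^{-n}L_{F,\gamma}^n g\to(\int g\,dm_\gamma)\rho_\gamma$ under the integral. But by (\ref{eq:composition-operator-on-measures2}), $\int L_{F,\gamma}g\,d\mu=\int g\,d(C_{F,\gamma}\mu)$, so the identity you need says $C_{F,\gamma}\mu=\lambda\mu$, i.e.\ that $\mu$ is \emph{conformal}, not invariant. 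It fails even for $\mu=\mu_\gamma$ itself when $\gamma$ is non-constant: invariance gives $C_F\mu_\gamma=\mu_\gamma$, hence $C_{F,\gamma}\mu_\gamma=\gamma\,\mu_\gamma\ne\lambda\mu_\gamma$. If you run the duality correctly starting from $\int g\,d\mu=\int(g\circ F^{\circ n})u\,dm_\gamma$, the transfer operator lands on the density: $\int g\,d\mu=\lambda^{-n}\int g\cdot L_{F,\gamma}^{n}u\,dm_\gamma$ with $u\in L^1(m_\gamma)$ only --- which is exactly the $L^1$-versus-$C^\alpha$ obstacle you were trying to sidestep, so the workaround does not close the gap.

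The paper avoids this entirely: from the spectral gap it first deduces exponential decay of correlations for $C^\alpha$ observables with respect to $\mu_\gamma$, hence (by density of $C^\alpha$ in $L^2$) that $F$ is mixing and in particular ergodic for $\mu_\gamma$. Since $\rho_\gamma$ is pinched between positive constants, any $F$-invariant $\tilde\mu\ll m_\gamma$ is also $\ll\mu_\gamma$, and its Radon--Nikodym derivative $d\tilde\mu/d\mu_\gamma$ is an $F$-invariant function, hence constant $\mu_\gamma$-a.e.\ by ergodicity. If you want to keep an operator-theoretic flavour, you could normalize to $\widehat L h=\lambda^{-1}\rho_\gamma^{-1}L_{F,\gamma}(\rho_\gamma h)$ and argue about its fixed points in $L^1(\mu_\gamma)$, but you will still need ergodicity of $\mu_\gamma$ (or some substitute) as an input; the $C^\alpha$ spectral gap alone, paired against $\mu$ as you propose, does not deliver uniqueness.
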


\begin{proof}
{\em Step 1.} It is easy to see that the {\em equilibrium measure}
$\mu_\gamma = \rho_\gamma dm_\gamma$ is invariant under $F$: if $g \in C(\partial \mathbb{D})$, then
\begin{align*}
\int_{\partial \mathbb{D}} g \circ F \, d\mu_\gamma 
& = \int_{\partial \mathbb{D}} g \circ F \cdot \rho_\gamma \, dm_\gamma \\
& = \lambda^{-1} \int_{\partial \mathbb{D}} g \circ F \cdot \rho_\gamma \, d (C_{F,\gamma} m_\gamma) \\
& =  \lambda^{-1}  \int_{\partial \mathbb{D}} L_{F, \gamma}(g \circ F \cdot \rho_\gamma) \, dm_\gamma \\
& =  \lambda^{-1} \int_{\partial \mathbb{D}} g  L_{F, \gamma} \rho_\gamma \, dm_\gamma \\
& = \int_{\partial \mathbb{D}} g \, d\mu_\gamma,
\end{align*}
where we have used the duality relation (\ref{eq:composition-operator-on-measures2}) and Lemma \ref{gFh-transfer}. 

\medskip

{\em Step 2.}
The duality relation (\ref{eq:composition-operator-on-measures2}) implies an exponential decay of correlations for $C^\alpha$ observables with respect to the measures $m_\gamma$ and $\mu_\gamma = \rho_\gamma \, dm_\gamma$ as in Lemma \ref{L-contraction}. From the density of $C^\alpha(\partial \mathbb{D}) \subset L^2(\partial \mathbb{D})$, it follows $F$ is mixing with respect to $\mu_\gamma$.

\medskip

{\em Step 3.}
If there was another $F$-invariant measure $\tilde \mu$ absolutely continuous with respect to $m_\gamma$, then the Radon-Nikodym derivative
$$
g = \frac{d\tilde \mu}{d\mu_\gamma}
$$
would be an $F$-invariant function. Since $F$ is ergodic with respect to $\mu_\gamma$ (being mixing), $g$ must be constant $\mu_\gamma$-a.e. This proves uniqueness.
\end{proof}

\begin{remark}
We use the term equilibrium measure to give $\mu_\gamma$ a name. We do not claim that the equilibrium measure maximizes $h_{ \mu}(F) + \int_{\partial \mathbb{D}} \gamma \, d\mu$ over all $F$-invariant probability measures. Nor is it clear that the maximizer is unique as $h_{ \mu_\gamma}(F) + \int_{\partial \mathbb{D}} \gamma \, d\mu_\gamma$ could be infinite.
\end{remark}

Previously in Section \ref{sec:stochastic-lebesgue}, we have established the Central Limit Theorem and other stochastic laws with respect to the Lebesgue measure. Similar arguments show that the Central Limit Theorem also holds with respect to the equilibrium measures constructed above.

\begin{remark}When $F$ is an inner function that is not a finite Blaschke product and $\gamma$ is non-constant, it is not possible to normalize the transfer operator by setting
$$
\widehat{L}_{F, \gamma} g = \frac{1}{\lambda_\gamma \rho_\gamma} \cdot L_{F, \gamma} (\rho_\gamma g),
$$
so that $\widehat{L}_{F, \gamma} {\bf 1} = {\bf 1}$, since this amounts to replacing $\gamma$ with
$$
\widehat{\gamma}(x) = \frac{\gamma(x)}{\lambda_\gamma} \cdot \frac{\rho_\gamma(x)}{\rho_\gamma(F(x))},
$$
which is not in $C^\alpha$. While this is not an essential obstruction, it does make computing the derivatives of pressure and proving the CLT slightly more cumbersome. We leave the details to the reader.
 \end{remark}
 
 \addtocontents{toc}{\protect\newpage}
 
\part{Life on the Shift Space}
\label{shift-space}

\section{Refined Thermodynamic Formalism and Stochastic Laws for Countable Alphabet Subshifts of Finite Type}
\label{sec:SFTTFSL}

In this section, we recall the rudimentary notions of thermodynamic formalism on the shift space from \cite{PP, MU, URM22}. We then define the classes of {\em normal} and {\em robust} potentials that mimic the properties of the potential $- \log |F'|$ associated to a one component inner function $F$. One advantage of this somewhat abstract approach of working on the shift space, rather than directly on the unit circle, is that the arguments are applicable to a plethora of other dynamical systems, not just to inner functions.

A normal or a robust potential can be put into a holomorphic family of potentials indexed by the right half-plane $\mathbb{C}_1^+ = \{ s\in\C:\re s > 1 \}$. In order to study the differentiability properties of the transfer operator $\mathcal L_{s\psi}$ near the line $s = 1$, we introduce the modified transfer operators $\mathcal L_{s,p}$ and develop some of their basic properties.

\subsection{Shift Space}
\label{sec:shift-shape}

Let $E$ be a countable (finite or infinite) alphabet and $A: E \times E \to \mathbb{R}$ be a matrix whose entries are either 0 or 1.
The {\em shift space} $E_A^\infty \subset E^\infty$ consists of all infinite words $\xi = \xi_0 \xi_1 \xi_2 \dots$ with $\xi_i \in E$ such that $A(\xi_i, \xi_{i+1}) = 1$. We call words in  $E_A^\infty$ {\em admissible}\/. The shift map $\sigma: E_A^\infty \to E_A^\infty$ takes a word and removes its first letter.

We let $E_A^*$ denote the collection of all finite admissible words in the alphabet $E$. We denote the length of $\tau \in E_A^*$ by $|\tau|$. For $\tau \in E_A^*$, we let $[\tau]$ denote the {\em cylinder} of all infinite words that begin with $\tau$.

Below, we will always assume that the incidence matrix $A$ is {\em finitely primitive}\/, i.e.~there exists a finite collection of words $\Lambda \subset E_A^*$ of the same length such that for any two letters $a$ and $b$ in $E$ there exists $\tau\in\Lambda$ such that the word $a\tau b$ is admissible.

We endow the shift space $E_A^\infty$ with the metric $$d(\xi, \xi') = 2^{-|\xi \wedge \xi'|},$$ where $|\xi \wedge \xi'|$ denotes
the length of the common prefix of $\xi, \xi'$. We denote the space of (complex-valued) continuous functions on $E_A^\infty$ by  $C(E_A^\infty)$ and the space of bounded continuous functions by $C_{\bounded}(E_A^\infty)$.

\begin{remark}
If the alphabet is infinite, then the two spaces are usually not the same, i.e.~$C_{\bounded}(E_A^\infty) \ne C(E_A^\infty)$. For instance, if $E = \mathbb{N}$ and $A(i, j) = 1$ for all $i, j \in \mathbb{N}$, then the function $f(\xi) = \xi_1$ is continuous but not bounded.
\end{remark}

 For $\alpha > 0$, we endow the space of H\"older continuous functions $C^\alpha(E_A^\infty)$ with the norm
$$
\| f \|_\alpha := \| f \|_\infty + v_\alpha(f),
$$
where 
$$
v_\alpha(f):= \sup\left\{\frac{|f(\xi') - f(\xi)|}{d(\xi, \xi')^\alpha}:\xi\ne\xi'\in E_A^\infty\right\}.
$$

\begin{lemma}
\label{basic-holder}
The space of H\"older continuous functions $C^\alpha(E_A^\infty)$ forms an algebra. In fact, 
$$
\|f g \|_\alpha \le 3 \| f \|_\alpha \cdot \| g\|_\alpha.
$$
\end{lemma}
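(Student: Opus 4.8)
The plan is to prove the two claims separately: first the sup-norm bound $\|fg\|_\infty \le \|f\|_\infty \|g\|_\infty$, which is immediate from the definition of the supremum norm, and second the H\"older seminorm bound $v_\alpha(fg) \le \|f\|_\infty v_\alpha(g) + \|g\|_\infty v_\alpha(f)$, which follows from the standard ``add and subtract'' trick. Concretely, for $\xi \ne \xi' \in E_A^\infty$ one writes
$$
f(\xi')g(\xi') - f(\xi)g(\xi) = f(\xi')\bigl(g(\xi') - g(\xi)\bigr) + \bigl(f(\xi') - f(\xi)\bigr)g(\xi),
$$
then applies the triangle inequality, bounds $|f(\xi')| \le \|f\|_\infty$ and $|g(\xi)| \le \|g\|_\infty$, divides by $d(\xi,\xi')^\alpha$, and takes the supremum over all pairs $\xi \ne \xi'$.

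Next I would assemble these into the claimed inequality. Using the definition $\|fg\|_\alpha = \|fg\|_\infty + v_\alpha(fg)$ together with the two bounds above gives
$$
\|fg\|_\alpha \le \|f\|_\infty\|g\|_\infty + \|f\|_\infty v_\alpha(g) + \|g\|_\infty v_\alpha(f).
$$
Now each of the three terms on the right is bounded by $\|f\|_\alpha \|g\|_\alpha$, since $\|f\|_\infty \le \|f\|_\alpha$, $v_\alpha(f) \le \|f\|_\alpha$, and likewise for $g$. Hence $\|fg\|_\alpha \le 3\|f\|_\alpha\|g\|_\alpha$, which also shows in particular that $fg \in C^\alpha(E_A^\infty)$, so $C^\alpha(E_A^\infty)$ is closed under multiplication. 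Since it is clearly a vector space and multiplication is associative, commutative, and distributive (being pointwise), it is an algebra.

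There is essentially no obstacle here — this is a routine verification. The only minor point worth a sentence is checking that the product of two H\"older functions is genuinely H\"older (not merely that the formal bound holds), but this is subsumed in the computation: the seminorm bound $v_\alpha(fg) \le \|f\|_\infty v_\alpha(g) + \|g\|_\infty v_\alpha(f)$ is finite precisely because $f,g$ are bounded with finite seminorms. One could also remark that the constant $3$ is not optimal — a more careful bookkeeping gives $\|fg\|_\alpha \le \|f\|_\alpha\|g\|_\alpha$ if one instead worked with the equivalent norm $\max(\|f\|_\infty, v_\alpha(f))$ or absorbed constants differently — but the constant $3$ suffices for all later applications, so I would not pursue sharper estimates.
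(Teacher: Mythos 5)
Your proof is correct and is exactly the standard argument the authors have in mind (the paper leaves this lemma as an exercise): the add-and-subtract decomposition for the seminorm plus the trivial sup-norm bound. As a minor aside, your three terms are already three of the four terms in the expansion of $\|f\|_\alpha\|g\|_\alpha = (\|f\|_\infty + v_\alpha(f))(\|g\|_\infty + v_\alpha(g))$, so the argument in fact yields $\|fg\|_\alpha \le \|f\|_\alpha\|g\|_\alpha$ with constant $1$; the stated constant $3$ is of course also implied.
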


We leave the proof as an exercise for the reader. In particular, the above lemma implies that the exponential of a H\"older continuous function is H\"older continuous.

\subsection{Transfer Operators}

To a potential $\psi: E_A^\infty \to \mathbb{C}$, one can associate the Perron-Frobenius or {\em transfer} operator
$$
\mathcal L_{\psi} g(\omega) = \sum_{a \in E} e^{\psi(a \omega)} \cdot g(a \omega),
$$
where we use the convention that a term is 0 if it features an inadmissible word.
We say that a potential $\psi$ is {\em summable} if $$\sum_{a \in E}\, \sup_{[a]} |e^{\psi}|,$$
and {\em level 1 H\"older continuous}\footnote{The term ``H\"older continuous on cylinders'' is also used.} with exponent $\alpha > 0$ if
$$
\sup_{a \in E} v_\alpha(\psi|_{[a]}) < \infty.
$$ 

It is explained in \cite[Chapter 2.3]{MU} or \cite[Lemma 17.6.2]{URM22} that if $\psi$ is a real-valued summable potential, then $\mathcal L_\psi$ preserves the space $C_{\bounded}(E_A^\infty)$. Furthermore, according to \cite[Chapter 2.4]{MU}, comp.~also \cite[Theorems~8.1.12 and 18.1.9]{URM22}, if one additionally assumes that $\psi$ is level 1 H\"older continuous, then $\mathcal L_\psi$ also preserves $C^\alpha(E_A^\infty)$, where it has a spectral gap:

\begin{theorem}[Ruelle-Perron-Frobenius]
\label{rpf-symbolic-dynamics}
For a real-valued  summable level 1 H\"older continuous potential $\psi$ on $E_A^\infty$, we have:

{\em (i)} The operator $\mathcal L_\psi: C^\alpha(E_A^\infty) \to C^\alpha(E_A^\infty)$ has a maximal positive eigenvalue
$\lambda_\psi$.

{\em (ii)} The eigenvalue $\lambda_\psi$ is simple and the rest of the spectrum of
$\mathcal L_\psi$ is contained in a ball centered at the origin of radius strictly less than $\lambda_\psi$.

{\em (iii)} The dual operator  $\mathcal L^*_\psi = C_{\sigma, \psi}: \mathcal M(E_A^\infty) \to \mathcal M(E_A^\infty)$ has a unique probability eigenmeasure $m_\psi$ and the corresponding eigenvalue is also 
$\lambda_\psi$.

{\em (iv)} The eigenspace of the eigenvalue $\lambda_\psi$ is generated by a strictly positive H\"older continuous eigenfunction $\rho_\psi$ with $m_\psi(\rho_\psi)=1$.

{\em (v)} The Borel probability measure $\mu_\psi = \rho_\psi \, dm_\psi$ is invariant under the shift map $\sigma: E_A^\infty \to E_A^\infty$.

{\em (vi)} For any $g \in L^1(E_A^\infty)$, we have
\begin{equation}
\label{eq:conformal-measures-and-transfers}
\int_{E_A^\infty} \mathcal Lg\, dm = \lambda_\psi \int_{E_A^\infty} g dm_\psi.
\end{equation}

{\em (vii)} For any function $g \in C^\alpha(E_A^\infty)$,
\begin{equation}
\label{eq:conformal-measures-and-transfers2}
\lambda_\psi^{-n} \int_{E_A^\infty} \mathcal L^n g \,dm \longrightarrow \biggl ( \int_{E_A^\infty} g dm_\psi \biggr )\cdot \rho_\psi.
\end{equation}

\end{theorem}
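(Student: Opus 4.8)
The plan is to run the classical Ruelle--Perron--Frobenius argument, in the countable-alphabet form of \cite[Chapter 2]{MU} and \cite[Chapters 8 and 17--18]{URM22}. I would begin with the eigenmeasure, item (iii). Normalizing the dual $\mathcal L_\psi^* = C_{\sigma,\psi}$ to the map $\nu \mapsto \mathcal L_\psi^*\nu/(\mathcal L_\psi^*\nu)(E_A^\infty)$ on Borel probability measures, I would look for a fixed point. When $E$ is finite this is a direct Schauder--Tychonoff argument as in Lemma \ref{CEM-conformal}; for infinite $E$ the shift space is not compact, so I would instead use the summability hypothesis $\sum_{a\in E}\sup_{[a]}|e^\psi|<\infty$ to exhibit a weak-$*$ compact, convex, forward-invariant set of measures whose mass near infinity is uniformly small, and apply the fixed point theorem there. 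This produces $m_\psi$ with $\mathcal L_\psi^* m_\psi = \lambda_\psi m_\psi$ for some $\lambda_\psi>0$, and finite primitivity forces $m_\psi([\tau])>0$ for every admissible $\tau$.

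Next I would produce the eigenfunction and the spectral gap, items (i), (ii) and (iv). Working with the normalized operator $\widehat{\mathcal L}:=\lambda_\psi^{-1}\mathcal L_\psi$, the level 1 H\"older hypothesis yields a bounded-distortion estimate: the iterates $\widehat{\mathcal L}^n{\bf 1}$ are uniformly bounded above and below and equi-H\"older on cylinders. A compactness argument (Arzel\`a--Ascoli plus a diagonal extraction applied to the Ces\`aro averages $\frac1n\sum_{k<n}\widehat{\mathcal L}^k{\bf 1}$, or, more efficiently, a Birkhoff--Hilbert projective-metric contraction on a cone of positive functions with a uniform log-H\"older bound) then yields a strictly positive H\"older eigenfunction $\rho_\psi$, which I normalize by $m_\psi(\rho_\psi)=1$. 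The same cone contraction --- equivalently, a Lasota--Yorke/Doeblin--Fortet inequality $\|\widehat{\mathcal L}^n f\|_\alpha \le C\theta^n\|f\|_\alpha + C'\|f\|_\infty$ combined with Hennion's quasi-compactness criterion --- shows that $\widehat{\mathcal L}$ has essential spectral radius strictly below $1$. To see that $1$ is the only peripheral eigenvalue and is simple, I would argue that any eigenfunction $f$ whose eigenvalue has modulus $\lambda_\psi$ satisfies $|f|\le c\,\rho_\psi$ by a maximum-principle estimate on $\widehat{\mathcal L}^n|f|$, and then that $f/\rho_\psi$ must be constant by the mixing coming from finite primitivity.

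Items (v), (vi) and (vii) are then formal consequences. For (v), invariance of $\mu_\psi=\rho_\psi\,dm_\psi$ is the one-line computation
$$
\int (g\circ\sigma)\,d\mu_\psi = \lambda_\psi^{-1}\!\int \mathcal L_\psi\bigl((g\circ\sigma)\rho_\psi\bigr)dm_\psi = \lambda_\psi^{-1}\!\int g\,\mathcal L_\psi\rho_\psi\,dm_\psi = \int g\,d\mu_\psi,
$$
using the eigenmeasure identity $\int\mathcal L_\psi h\,dm_\psi=\lambda_\psi\int h\,dm_\psi$ for the outer steps, the identity $\mathcal L_\psi((g\circ\sigma)h)=g\,\mathcal L_\psi h$ (the symbolic analogue of Lemma \ref{gFh-transfer}) for the middle step, and $\mathcal L_\psi\rho_\psi=\lambda_\psi\rho_\psi$. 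Item (vi) merely records the eigenmeasure property $\mathcal L_\psi^* m_\psi = \lambda_\psi m_\psi$, extended from $C_b(E_A^\infty)$ to $L^1(E_A^\infty)$ by monotone convergence. For (vii), the spectral gap supplies a Riesz projection onto the top eigenspace equal to $g\mapsto \bigl(\int g\,dm_\psi\bigr)\rho_\psi$, so decomposing $g=\bigl(\int g\,dm_\psi\bigr)\rho_\psi + r$ with $m_\psi(r)=0$ and noting that $r$ lies in the complementary spectral subspace, we get $\lambda_\psi^{-n}\mathcal L_\psi^n r\to 0$ geometrically in $C^\alpha$, which gives the asserted convergence.

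The step I expect to be the main obstacle is the first one: obtaining the eigenmeasure and the uniform bounded-distortion and equicontinuity estimates in the presence of a non-compact alphabet. This is exactly where summability and level 1 H\"older continuity --- as opposed to plain continuity --- are indispensable, and it is the part of the argument that occupies most of \cite[Chapter 2]{MU} and \cite[Chapters 8, 17--18]{URM22}; everything after the spectral gap is routine bookkeeping.
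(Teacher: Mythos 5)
The paper does not prove this theorem; it simply cites \cite[Chapter 2]{MU} and \cite[Theorems 8.1.12 and 18.1.9]{URM22}, and your sketch is a faithful outline of exactly the argument carried out in those references (eigenmeasure via a tightness/fixed-point argument using summability, eigenfunction and quasi-compactness via bounded distortion and a Lasota--Yorke inequality made possible by level 1 H\"older continuity and finite primitivity, with (v)--(vii) as formal consequences). So your proposal is correct and takes essentially the same route as the paper's source; no discrepancies to flag.
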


We refer to $m_\psi$ as the {\em conformal measure} associated to $\psi$, while $\mu_\psi$ is known as the
{\em invariant Gibbs state} or {\em equilibrium measure}\/ for $\psi$. From property (iv), it follows that 
the spaces $L^p(E_A^\infty, m_\psi) = L^p(E_A^\infty, \mu_\psi)$ are equal for any $1 \le p \le \infty$.
The {\em topological pressure} $P(\psi)$ of $\psi$ is defined as $\log \lambda_\psi$.

The following theorem describes the spectrum of the transfer operator associated to a complex-valued potential:

\begin{theorem}
\label{complex-potentials}
Let $\psi = u + iv$ be a complex-valued summable level 1 H\"older continuous potential on $E_A^\infty$ with $L_u {\bf 1} = {\bf 1}$.
The spectrum of $\mathcal L_\psi$ acting on $C^\alpha(E^\infty_A)$ is contained in the closed unit disk. On the unit circle, $\mathcal L_\psi$
may have at most one simple eigenvalue. Apart from this possible eigenvalue, the rest of the spectrum is contained in a disk of strictly smaller radius.
The following conditions are equivalent:

{\em (i)} $e^{-iv} C_\sigma$ does not have an $L^2(m_u)$ eigenvalue.

{\em (ii)} $e^{-iv} C_\sigma$ does not have a $C^\alpha$ eigenvalue.

{\em (iii)}  the spectral radius of $\mathcal L_\psi$ is less than
 $\lambda_{u}$.

{\em (iv)} $v$ is cohomologous to a potential $v_1$ which takes values in $2\pi \mathbb{Z}$, that is,
$$v - v_1 = w - w \circ \sigma, \qquad \text{for some } w \in C^\alpha(E^\infty_A).$$

{\em (v)} the {\em length spectrum} of $v$
$$
\bigl \{ S_n v(\xi) : \sigma^n(\xi) = \xi \text{ for some }n \ge 1 \bigr \} \subset 2\pi \mathbb{Z}.
$$
\end{theorem}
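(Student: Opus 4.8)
The plan is to follow the classical Ruelle–Perron–Frobenius machinery for complex potentials, adapted to the countable-alphabet setting, and then to establish the equivalence of (i)–(v) by a cyclic implication. The overall architecture mirrors the treatment in \cite{PP} and \cite[Chapter~2]{MU}, but with extra care about summability and boundedness since the alphabet may be infinite.

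\textbf{Step 1: localizing the spectrum.} First I would observe that since $L_u{\bf 1}={\bf 1}$, the operator $\mathcal L_\psi$ is, pointwise, a weighted average: $|\mathcal L_\psi g(\omega)| \le \sum_{a} e^{u(a\omega)}|g(a\omega)| = \mathcal L_u(|g|)(\omega)$, so $\|\mathcal L_\psi^n g\|_\infty \le \|\mathcal L_u^n(|g|)\|_\infty = \||g|\|_\infty$. Hence the spectral radius of $\mathcal L_\psi$ on $C_{\bounded}$ — and a fortiori any eigenvalue realized on $C^\alpha$ — has modulus at most $1$, so $\sigma(\mathcal L_\psi)\subset\overline{\mathbb D}$. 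To get the ``at most one simple eigenvalue on the unit circle, rest in a strictly smaller disk'' dichotomy, I would invoke a Ionescu-Tulcea–Marinescu / Hennion-type quasicompactness argument: $\mathcal L_u$ (hence its small complex perturbation $\mathcal L_\psi$, viewed via the family $\mathcal L_{u+itv}$) satisfies a Lasota–Yorke inequality on $C^\alpha$ because $u$ is level-1 H\"older and summable, which gives quasicompactness with essential spectral radius $<1$. Unimodular eigenvalues are therefore isolated of finite multiplicity; that there is at most one and that it is simple follows from a standard argument: if $\mathcal L_\psi w = \lambda w$ with $|\lambda|=1$, then $|w|\le \mathcal L_u|w|$ pointwise, and since $m_u$ is the conformal measure for $u$ with $\mathcal L_u^*m_u=m_u$, integrating gives $\int|w|\,dm_u \le \int \mathcal L_u|w|\,dm_u = \int|w|\,dm_u$, forcing equality $|w| = \mathcal L_u|w|$, hence (by the strict positivity/ergodicity coming from finite primitivity and Theorem \ref{rpf-symbolic-dynamics}) $|w|$ is a positive multiple of $\rho_u={\bf 1}$, so $|w|\equiv\mathrm{const}$; writing $w = \mathrm{const}\cdot e^{i\phi}$ then shows $e^{-iv}C_\sigma$ has $\bar\lambda$ as an eigenvalue with unimodular eigenfunction $e^{i\phi}$, and two such eigenfunctions would have constant-modulus ratio that is again an eigenfunction, pinning multiplicity and uniqueness.

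\textbf{Step 2: the equivalences.} The implications (iii)$\Rightarrow$(i) and (iii)$\Rightarrow$(ii) are immediate: if there were an $L^2(m_u)$ or $C^\alpha$ eigenfunction $w$ of $e^{-iv}C_\sigma$ with eigenvalue $\lambda$, then (by the $L^2$-version of the pinching argument in Step 1, exactly as in the proof of Lemma \ref{smoothness-of-eigenfunctions4}) $|\lambda|=1$, and $\rho_u w = w$ would furnish $\mathcal L_\psi(\bar\lambda w) = w\cdot\mathcal L_u{\bf 1}\cdot$(regularity bookkeeping) realizing a unimodular eigenvalue of $\mathcal L_\psi$, contradicting that its spectral radius is $<\lambda_u=1$; note the smoothing argument of Lemma \ref{smoothness-of-eigenfunctions4} is precisely what upgrades an $L^2(m_u)$ eigenfunction to a $C^\alpha$ one, giving (i)$\Leftrightarrow$(ii). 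For the contrapositives (i)$\Rightarrow$(iii) (or (ii)$\Rightarrow$(iii)): if $\mathcal L_\psi$ has a unimodular eigenvalue $\lambda$ with eigenfunction $w$, Step 1 shows $|w|\equiv c>0$, and then $e^{-iv(x)}(w/c)(\sigma x) = \bar\lambda\,(w/c)(x)$ exhibits $w/c$ as a $C^\alpha$ (hence $L^2(m_u)$) eigenfunction of $e^{-iv}C_\sigma$. The equivalence (ii)$\Leftrightarrow$(iv) is the cohomology dichotomy: if $e^{-iv}C_\sigma$ has a $C^\alpha$ eigenfunction $e^{i\phi}$ with eigenvalue $e^{i\theta}$, then $v(x) = \theta + \phi(\sigma x) - \phi(x) + 2\pi k(x)$ with $k$ forced to be continuous, hence (cylinders being clopen and $k$ integer-valued) locally constant, and absorbing $\theta$ into the coboundary after a rotation shows $v$ is cohomologous, via $w=\phi$, to the $2\pi\mathbb Z$-valued potential $2\pi k$; conversely a $2\pi\mathbb Z$-valued $v_1$ makes $e^{-iv_1}C_\sigma = C_\sigma$, which has eigenfunction ${\bf 1}$, and transporting back along the coboundary $w-w\circ\sigma$ gives $e^{i w}$ as eigenfunction for $e^{-iv}C_\sigma$. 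Finally (iv)$\Leftrightarrow$(v) is the Livšic-type periodic-orbit criterion: summing the cohomology equation around a periodic orbit $\sigma^n\xi=\xi$ kills the coboundary, so (iv)$\Rightarrow$(v) is immediate, while (v)$\Rightarrow$(iv) is the closing-lemma/Livšic argument — finite primitivity gives enough periodic orbits and enough shadowing that a H\"older function with integer-valued periodic sums is cohomologous to an integer-valued one; I would cite \cite[Chapter 2]{MU} or \cite{URM22} for the countable-alphabet Livšic theorem rather than reprove it.

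\textbf{Main obstacle.} The delicate point is Step 1 in the infinite-alphabet regime: one must make sure the Lasota–Yorke / quasicompactness estimate for $\mathcal L_u$ genuinely holds on $(C^\alpha(E_A^\infty),\|\cdot\|_\alpha)$ under only summability plus level-1 H\"older continuity — this is where the hypothesis $L_u{\bf 1}={\bf 1}$ (equivalently $\lambda_u=1$ and $\rho_u={\bf 1}$) is doing real work, because it is what makes the sup-norm contraction estimate above clean — and to check that the perturbation $t\mapsto\mathcal L_{u+itv}$ stays within the class of operators to which the abstract spectral-perturbation theorem applies (boundedness on $C^\alpha$ requires $v$ level-1 H\"older, which is part of the standing hypothesis on $\psi$). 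The rest is bookkeeping, but the interplay between $C_{\bounded}$ (where positivity arguments live) and $C^\alpha$ (where the spectral gap lives) must be handled carefully, exactly as in the passage from Lemma \ref{L-contraction} to the eigenfunction-smoothing Lemma \ref{smoothness-of-eigenfunctions4}.
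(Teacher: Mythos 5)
Your sketch follows the same route as the paper, which in fact offers no proof at all: it states that ``the proofs of the equivalences are scattered in Sections 4 and 5 of \cite{PP}'' and that they carry over verbatim to countable alphabets. What you have written is a faithful reconstruction of that Parry--Pollicott argument (quasicompactness of $\mathcal L_u$ on $C^\alpha$ via a Lasota--Yorke inequality, the modulus-pinching argument $|w|\le \mathcal L_u|w|$ forcing $|w|$ constant, the smoothing step identifying $L^2(m_u)$ and $C^\alpha$ eigenfunctions, and Liv\v{s}ic for the periodic-orbit criterion), so in substance you and the authors are doing the same thing.

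One point deserves explicit attention, because your own argument exposes it: as literally stated, conditions (iv) and (v) are the \emph{negations} of (i)--(iii), not their equivalents. You show that if $v = v_1 + w - w\circ\sigma$ with $v_1$ valued in $2\pi\mathbb Z$, then $\mathcal L_\psi e^{-iw} = e^{-iw}\mathcal L_u \mathbf{1} = e^{-iw}$, so $\mathcal L_\psi$ has a unimodular eigenvalue and its spectral radius equals $\lambda_u$ --- i.e.\ (iv) contradicts (iii) rather than following from it. Your ``(ii)$\Leftrightarrow$(iv)'' paragraph actually proves that $e^{-iv}C_\sigma$ \emph{has} a $C^\alpha$ eigenfunction iff $v$ is cohomologous to a lattice-valued potential, which is the correct statement (and the one consistent with the paper's later definition of D-genericity and with \cite{PP}), but it is $\neg$(ii)$\Leftrightarrow$(iv). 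You should either correct the statement or at least say that you are proving (i)$\Leftrightarrow$(ii)$\Leftrightarrow$(iii)$\Leftrightarrow\neg$(iv)$\Leftrightarrow\neg$(v). Two smaller gaps: the ``at most one simple unimodular eigenvalue'' claim needs weak mixing of $\mu_u$ (so that the constant-modulus ratio of two eigenfunctions, being a $C_\sigma$-eigenfunction with unimodular eigenvalue, is constant) --- you gesture at this but should state it; and in passing from the eigenfunction equation $e^{-iv}w\circ\sigma = e^{i\theta}w$ to (iv) the constant $\theta$ is not absorbed for free --- it contributes $n\theta$ to every period-$n$ sum, which is why the length-spectrum condition (v) forces $\theta\in 2\pi\mathbb Z$; this must be run through the argument rather than waved away as ``a rotation.''
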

The proofs of the equivalences are scattered in Sections 4 and 5 of \cite{PP}. While \cite{PP} deals with finite alphabets, the proofs carry over to countable alphabets.

 We say that a potential $\psi = u + iv$ is {\em D-generic} if $e^{-iav} C_\sigma$ fails to have a $C^\alpha$ eigenvalue for any $a \ne 0$, or alternatively, if the length spectrum of $\psi$ is not contained in a discrete subgroup of $\mathbb{R}$.

\subsection{Families of Potentials}

We say that a potential $\psi: E_A^\infty \to \mathbb{R}$  is {\em normal} if it is 
\begin{itemize}
\item level 1 H\"older continuous for some exponent $\alpha > 0$,
\item summable,
\item negative: $\sup \psi < 0$,
\item centered: $P(\psi) = 0$.
\end{itemize}

Recall that $\mathbb{C}_1^+$ is the right half-plane $\{s\in\C: \re s > 1 \}$ and $\overline{\mathbb{C}_1^+}$ is its closure. Given a normal potential $\psi$, we embed $\mathcal L_\psi$ in a holomorphic family of operators $\mathcal L_{s\psi}$ indexed by $s \in \mathbb{C}_1^+$.
When the potential is clear from context, we write $\mathcal L_s$ instead of $\mathcal L_{s\psi}$.
The following result can be found in \cite[Chapter 2.6]{MU} or \cite[Lemma~20.2.2, Theorems~20.2.3 and 20.1.12]{URM22}:
  
\begin{theorem}
\label{continuity-and-holomorphy-Ls}
If $\psi: E_A^\infty \to \mathbb{R}$ is a normal potential, then the function 
$$
s \to \mathcal L_{s}, \qquad \overline{\mathbb{C}_1^+} \to  \mathcal B(C^\alpha(E_A^\infty))
$$ 
is continuous on $\overline{\mathbb{C}_1^+}$ and holomorphic on  $\mathbb{C}^+_1$. Moreover, there exists an open set $U \subset \mathbb{C}_1^+$ containing $(1,+\infty)$ such that for all $s \in U$, the operator $\mathcal L_{s}$ has a simple isolated  eigenvalue $\lambda_s$ whose modulus is equal to the spectral radius of $\mathcal L_{s}$, and $\lambda_s$ depends holomorphically on $s \in U$.
\end{theorem}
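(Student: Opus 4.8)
The plan is to prove the two assertions separately. First I would establish that $s\mapsto\mathcal L_s$ is a holomorphic map $\mathbb{C}_1^+\to\mathcal B(C^\alpha(E_A^\infty))$ directly from the series defining $\mathcal L_s$. Then, feeding this holomorphy into the Ruelle--Perron--Frobenius theorem (Theorem \ref{rpf-symbolic-dynamics}) together with analytic perturbation theory, I would show that the spectral gap enjoyed by $\mathcal L_{t\psi}$ for real $t>1$ propagates to a complex neighborhood of the ray $(1,+\infty)$, with the leading eigenvalue varying holomorphically there; throughout, the incidence matrix $A$ is finitely primitive, as assumed.

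For the holomorphy, I would work with
$$
\mathcal L_{s}g(\omega)=\sum_{a\in E}e^{s\psi(a\omega)}\,g(a\omega),
$$
whose formal derivative in $s$ has the extra factor $\psi(a\omega)$ in each term. The goal is to show that both this series and its formal derivative converge in the $C^\alpha$-norm, uniformly for $s$ in a compact set $K\subset\mathbb{C}_1^+$; since each partial sum is a finite sum of $C^\alpha$-valued entire functions of $s$, holomorphy of the limit follows, and the same estimates give local uniform boundedness of $\mathcal L_s$ on $C^\alpha$, upgrading strong holomorphy to norm-holomorphy. For the termwise bounds, write $\sigma=\re s$ and $\epsilon=\sigma-1$, so $\epsilon\ge\epsilon_0>0$ on $K$. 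Since $\psi$ is negative with $\kappa:=-\sup\psi>0$, the elementary inequality $x\,e^{-\epsilon x}\le\max(1/(e\epsilon_0),\kappa)$ for $x\ge\kappa$ gives $|\psi(a\omega)|\,e^{\epsilon\psi(a\omega)}\le C(\epsilon_0,\kappa)$, while $\sigma\ge1$ and $\sup_{[a]}\psi<0$ give $e^{\sigma\psi(a\omega)}\le e^{\sup_{[a]}\psi}$; hence the tails of both series are dominated in sup-norm by a constant times $\|g\|_\infty\sum_{a}e^{\sup_{[a]}\psi}$, which is finite by summability. The H\"older seminorms of the tails are controlled by exactly the same scheme, now invoking level~1 H\"older continuity of $\psi$ to bound $v_\alpha\big((e^{s\psi}g)|_{[a]}\big)$ and $v_\alpha\big((\psi e^{s\psi}g)|_{[a]}\big)$ termwise; this is the computation carried out in \cite[Chapter~2.6]{MU} and \cite[Lemma~20.2.2]{URM22}, whose routine details I would simply cite.

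For the spectral gap, fix $t\in(1,+\infty)$. As above $t\psi$ is summable and level~1 H\"older continuous (its level~1 seminorm being $t$ times that of $\psi$), and it is real-valued and negative, so Theorem \ref{rpf-symbolic-dynamics} gives that $\mathcal L_{t\psi}$ has a simple isolated eigenvalue $\lambda_{t\psi}>0$ equal to its spectral radius, with the rest of the spectrum in a strictly smaller disk; at $t=1$ this is the normality hypothesis $P(\psi)=0$, giving $\lambda_\psi=1$. Since $s\mapsto\mathcal L_s$ is holomorphic and a simple isolated eigenvalue together with its Riesz projection are stable under small perturbations (Kato--Rellich), there is an open disk $U_t\subset\mathbb{C}_1^+$ about $t$ on which $\mathcal L_s$ has a simple isolated eigenvalue $\lambda_s$, holomorphic in $s$ and equal to $\lambda_{s\psi}$ for real $s$; by upper semicontinuity of the spectrum and the strict separation at $s=t$, after shrinking $U_t$ the remainder of the spectrum of $\mathcal L_s$ stays in a disk $B(0,r)$ with $r<|\lambda_s|$, so $|\lambda_s|$ equals the spectral radius of $\mathcal L_s$ on $U_t$. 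Then $U=\bigcup_{t>1}U_t$ is an open subset of $\mathbb{C}_1^+$ containing $(1,+\infty)$ with all the stated properties.

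The main obstacle is the holomorphy step, and within it the uniform control of the H\"older seminorms of the series tails when $\psi$ is allowed to be unbounded: the interplay of negativity, summability and level~1 H\"older continuity is exactly what makes the termwise domination work, while the constants degenerate as $\re s\to1^{+}$, which is precisely why this statement stays inside the open half-plane. The complementary point, that $U$ really covers all of $(1,+\infty)$, is easy at both ends: near $s=1$ the gap survives because $P(\psi)=0$, and for large real $s$ it is automatic since $\|\mathcal L_s\|_{\mathcal B(C^\alpha)}\to0$ as $\re s\to\infty$, again because $\psi\le-\kappa<0$. The finer questions of differentiability and holomorphy up to and on the line $\{\re s=1\}$, which require the modified operators $\mathcal L_{s,p}$, are outside the scope of this theorem.
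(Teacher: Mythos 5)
Your proposal is correct. The paper itself offers no proof of this statement, citing \cite[Chapter 2.6]{MU} and \cite[Lemma 20.2.2, Theorems 20.2.3 and 20.1.12]{URM22} instead, and your argument is essentially the standard one found there: termwise domination of the defining series (and its formal $s$-derivative) using negativity, summability and level~1 H\"older continuity, followed by Kato--Rellich perturbation of the Ruelle--Perron--Frobenius eigenvalue off the real ray. The only stylistic difference from the paper's own parallel treatment of the modified operators $\mathcal L_{s,q}$ in Theorem \ref{continuity-and-holomorphy-Lsp} is that there the authors deduce norm-holomorphy from continuity plus Morera's theorem applied termwise, whereas you use uniform convergence of a series of norm-holomorphic partial sums (Weierstrass); both routes rest on the same tail estimates, which you correctly identify as the only real work and are entitled to cite.
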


\subsection{Summability conditions}

Before continuing, we prove the following simple lemma:

\begin{lemma}
\label{sp-summability-lemma}

Suppose $\psi$ is a negative level 1 H\"older continuous potential. For any fixed $s, p \ge 0$, the quantities
$$
\sum_{a \in E} \, \inf_{[a]} |\psi^p e^{s\psi}|, \qquad \sum_{a \in E}\, \sup_{[a]} |\psi^p e^{s\psi}|, \qquad \int_{E_A^\infty} |\psi^p| dm_{s \psi}
$$
are comparable. In particular, if one of the quantities is finite, then they all are.
\end{lemma}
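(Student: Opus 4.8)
The plan is to show the three quantities are comparable by a chain of two comparisons: first between the infimum-sum and the supremum-sum, and then between the supremum-sum and the integral against $m_{s\psi}$.

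\emph{Step 1: comparing the infimum and supremum sums.} Fix $a \in E$ and two words $\xi, \xi' \in [a]$. On the cylinder $[a]$ we have $|\xi \wedge \xi'| \ge 1$, so $d(\xi,\xi') \le 1/2$. Level 1 H\"older continuity gives $|\psi(\xi) - \psi(\xi')| \le v_\alpha(\psi|_{[a]}) \cdot d(\xi,\xi')^\alpha \le C_0$ for a constant $C_0 = \sup_{a} v_\alpha(\psi|_{[a]})$ independent of $a$. Since $\psi < 0$, writing $M_a = \sup_{[a]}|\psi| = -\inf_{[a]}\psi$ and $m_a = \inf_{[a]}|\psi| = -\sup_{[a]}\psi$, we get $M_a - m_a \le C_0$, and also $e^{s(\psi(\xi)-\psi(\xi'))}$ lies between $e^{-sC_0}$ and $e^{sC_0}$. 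Hence $\sup_{[a]}|\psi^p e^{s\psi}| \le e^{sC_0}(m_a + C_0)^p \, \inf_{[a]} e^{s\psi} \cdot$ (a routine manipulation), and more simply, for each $a$,
\[
\inf_{[a]} |\psi^p e^{s\psi}| \;\le\; \sup_{[a]} |\psi^p e^{s\psi}| \;\le\; e^{sC_0}\Bigl(1 + \tfrac{C_0}{m_a}\Bigr)^p \inf_{[a]} |\psi^p e^{s\psi}|.
\]
When $m_a$ is bounded below this ratio is uniformly bounded; the letters with $m_a$ small are the only issue, but there one uses that $\psi < 0$ forces $m_a \le M_a \le m_a + C_0$, so $(M_a/m_a)^p \le (1 + C_0/m_a)^p$ — this blows up, so instead I would split $E$ into the finitely-or-infinitely many letters with $m_a \ge 1$ (ratio $\le (1+C_0)^p e^{sC_0}$, fine) and those with $m_a < 1$, where $M_a < 1 + C_0$ is bounded, so both $\inf_{[a]}|\psi^p|$ and $\sup_{[a]}|\psi^p|$ are comparable to each other via the bounded factor $e^{s\psi}$ which is itself pinched between $e^{-s(1+C_0)}$ and $1$; summing over such $a$ only finitely much can be lost per term and the number of such letters with a given lower bound on $e^{s\psi}$ is controlled by summability. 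This is the step I expect to require the most care.

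\emph{Step 2: comparing the supremum sum with the integral.} By Theorem~\ref{rpf-symbolic-dynamics}(iii), $m_{s\psi}$ is the eigenmeasure of $\mathcal L_{s\psi}^*$ with eigenvalue $\lambda_{s\psi}$, so for any bounded $g$, $\int \mathcal L_{s\psi} g\, dm_{s\psi} = \lambda_{s\psi} \int g\, dm_{s\psi}$; in particular, taking $g = {\bf 1}$ and unwinding, $m_{s\psi}([a]) \asymp \sup_{[a]} e^{s\psi}$ up to a factor involving $\lambda_{s\psi}$ and the bounded distortion from Step 1 (this is the standard Gibbs property of $m_{s\psi}$, cf.~\cite[Chapter 2.3]{MU}). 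Then
\[
\int_{E_A^\infty} |\psi^p| \, dm_{s\psi} \;=\; \sum_{a \in E} \int_{[a]} |\psi^p|\, dm_{s\psi},
\]
and on each $[a]$ the integrand $|\psi^p|$ lies between $m_a^p$ and $M_a^p$, which are comparable by the bounded-distortion estimate of Step 1 (again splitting off the small-$m_a$ letters). Combining with $m_{s\psi}([a]) \asymp \sup_{[a]} e^{s\psi}$ shows $\int_{[a]} |\psi^p|\, dm_{s\psi} \asymp \sup_{[a]}|\psi^p e^{s\psi}|$ with uniform constants, and summing over $a$ finishes the comparison. The finiteness statement is then immediate from the comparability of the three series.

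Throughout, I would keep the dependence of constants on $s$ and $p$ explicit but harmless (they enter only through $e^{sC_0}$-type factors and polynomial-in-$p$ factors on the finitely many bad letters), since the lemma is applied for fixed $s,p$. The essential input is just the uniform level 1 H\"older bound $C_0 = \sup_a v_\alpha(\psi|_{[a]})$ together with $\psi < 0$, which together give bounded distortion of $\psi^p e^{s\psi}$ on cylinders $[a]$; everything else is bookkeeping.
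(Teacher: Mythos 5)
Your overall route is the same as the paper's: bounded distortion of $\psi^p e^{s\psi}$ on level-one cylinders from the level~1 H\"older bound, plus the Gibbs property $m_{s\psi}([a]) \asymp \sup_{[a]} e^{s\psi}$ coming from the eigenmeasure equation. Step~2 and the easy half of Step~1 are fine. The gap is in your treatment of the letters with $m_a = \inf_{[a]}|\psi|$ small. First, that case does not occur: in this paper ``negative'' means $\sup\psi < 0$ (see the definition of a normal potential, and the paper's own proof begins ``as $\psi < -C_1$ is bounded above by a negative constant''), so $m_a \ge C_1 > 0$ uniformly and your ratio $(1+C_0/m_a)^p \le (1+C_0/C_1)^p$ is uniformly bounded --- that one line finishes Step~1. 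Second, and more importantly, your attempted workaround for small $m_a$ is not a valid argument: if $m_a \to 0$ while $M_a - m_a \le C_0$, then $M_a^p/m_a^p$ is genuinely unbounded, the pinching of $e^{s\psi}$ between $e^{-s(1+C_0)}$ and $1$ does nothing to repair this, and invoking ``summability'' is circular since the equivalence of the summability conditions is exactly what the lemma asserts. Indeed the statement is false under mere pointwise negativity (take $\psi$ nearly constant $\approx -\epsilon_a$ on $[a]$ with $\epsilon_a \to 0$ and oscillation $C_0$: then $\sum_a \inf_{[a]}|\psi|^p$ can converge while $\sum_a \sup_{[a]}|\psi|^p$ diverges), so the stronger reading of ``negative'' is forced and must be used.

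With that correction your proof coincides with the paper's: oscillation of $\psi$ on $[a]$ at most $v_\alpha^{(1)}(\psi)$ together with $|\psi| \ge C_1$ gives $\inf_{[a]}|\psi|^p \asymp \sup_{[a]}|\psi|^p$ and $\inf_{[a]}e^{s\psi} \asymp \sup_{[a]}e^{s\psi}$, hence the first two sums are comparable; then $m_{s\psi}([a]) \asymp \sup_{[a]}e^{s\psi}$ (via $\lambda_{s\psi}\, m_{s\psi}([a]) = \int \mathcal L_{s\psi}\chi_{[a]}\, dm_{s\psi}$, with the lower bound using finite primitivity) converts the integral into the supremum sum.
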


\begin{proof}
As $\psi < -C_1$ is bounded above by a negative constant and $\sup_{[a]} \psi - \inf_{[a]} \psi \le v_\alpha^{(1)}(\psi)$ for any $a \in E$, 
$$
\inf_{[a]} |\psi^p| \asymp \sup_{[a]} |\psi^p| \qquad \text{and} \qquad \inf_{[a]} |e^{s\psi}| \asymp \sup_{[a]} |e^{s\psi}|,
$$
which shows that the first two quantities are comparable. Since
$$
m_{s\psi}([a]) \, 
= \, \int_{E_A^\infty} \chi_{[a]} dm_{s\psi} \, 
= \, \lambda_{s\psi}^{-1}\int_{E_A^\infty} \mathcal L_{s\psi}  \chi_{[a]}  dm_{s\psi} \, \asymp \, \sup_{[a]} |e^{s\psi}|,
$$
the integral
$$
\int_{E_A^\infty} |\psi^p| dm_{s \psi} \, \asymp \, \sum_{a \in E} \sup_{[a]} |\psi^p| \cdot m_{s\psi}([a])
\, \asymp \,  \sum_{a \in E} \sup_{[a]} |\psi^p| \cdot \sup_{[a]} |e^{s\psi}|
$$
as desired. Here, we have used that the incidence matrix is finitely primitive to guarantee that $m_{s\psi} \bigl ( \{ \tau \in E^\infty_A : a \tau \text{ is admissible}\} \bigr )$  is bounded below by a positive constant, independent of $a \in E$.
\end{proof}

If one (and hence all) of the quantities in the above lemma is finite, then we say that the potential $\psi: E_A^\infty \to \mathbb{R}$ is {\em $(s,p)$-summable}\/. We will often write $p = n+\varepsilon$ where $n = \lfloor p \rfloor$ and $0 \le \varepsilon < 1$.

\subsection{Modified Transfer Operators}
\label{sec:sp-transfer}

In order to study the regularity of the operators $\mathcal L_s$ near the vertical line $\{\re s = 1\}$, we introduce the (modified) transfer operators
\begin{equation}
\label{eq:Lsp-def}
\mathcal L_{s,p} g \, = \, \mathcal L_{s} (\psi^p g), \qquad s \in \overline{\mathbb{C}_1^+}, \qquad p \ge 0.
\end{equation}
Using Lemma \ref{sp-summability-lemma}, it is not difficult to show that for a normal $(s, p)$-summable potential, the operator $\mathcal L_{s,p}$ is bounded on $ C_{\bounded}(E_A^\infty)$.
The following theorem describes the behaviour of $\mathcal L_{s,p}$ on the space of H\"older continuous functions $C^\alpha(E_A^\infty)$:

\begin{theorem}
\label{continuity-and-holomorphy-Lsp}
Suppose $\psi: E_A^\infty \to \mathbb{R}$ is a normal $(1, n+\varepsilon)$-summable potential.
Then, $s \to \mathcal L_s$ is

{\em (i)} a $C^{n+\varepsilon}_{\loc}$ mapping from $\overline{\mathbb{C}^+_1}$ to $\mathcal B(C^\alpha(E_A^\infty))$.

{\em (ii)} a holomorphic mapping from $\mathbb{C}_1^+$ to $\mathcal B(C^\alpha(E_A^\infty))$,
with 
\begin{equation}
\label{eq:differentiating-mto}
\frac{d^k}{ds^k}\, \mathcal L_{s} =  \mathcal L_{s, k}, \qquad k \in \mathbb{N}.
\end{equation}
\end{theorem}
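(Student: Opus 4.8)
The plan is to establish holomorphy on $\mathbb{C}_1^+$ first and then bootstrap to $C^{n+\varepsilon}$ regularity up to the boundary line $\{\re s = 1\}$, treating the boundary case as a limiting version of the interior case. Throughout, the guiding principle is that formal differentiation of $\mathcal L_s g = \sum_{a} e^{s\psi(a\omega)} g(a\omega)$ in $s$ produces a factor of $\psi$ per derivative, so $\tfrac{d^k}{ds^k}\mathcal L_s$ \emph{should} equal $\mathcal L_{s,k}$; the work is in justifying that the term-by-term differentiated series converges in the operator norm on $C^\alpha(E_A^\infty)$ and represents the derivative. The key enabling estimate is Lemma \ref{sp-summability-lemma}: $(1,n+\varepsilon)$-summability is equivalent to $\sum_a \sup_{[a]} |\psi^{n+\varepsilon} e^{\psi}| < \infty$, and since $\sup\psi < -C_1 < 0$, for any $\re s \ge 1$ and any $0 \le q \le n+\varepsilon$ we have $|\psi^q e^{s\psi}| \le |\psi|^q e^{C_1(1 - \re s)}\cdot e^{\re s \cdot \psi} \lesssim |\psi|^{n+\varepsilon} e^{\psi}$ uniformly (using that $|\psi|^q e^{-\delta|\psi|}$ is bounded and $e^{(\re s - 1)\psi} \le 1$), so \emph{all} the series $\sum_a \sup_{[a]}|\psi^q e^{s\psi}|$ are dominated by the single summable series, uniformly on $\overline{\mathbb{C}_1^+}$. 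This is what makes the boundary behave like the interior.

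First I would prove part (ii). Fix $s_0 \in \mathbb{C}_1^+$ and write, for small $h$, $\mathcal L_{s_0+h}g - \mathcal L_{s_0}g = \sum_a (e^{h\psi(a\omega)}-1)e^{s_0\psi(a\omega)}g(a\omega)$. Expanding $e^{h\psi}-1 = \sum_{k\ge 1} \tfrac{h^k}{k!}\psi^k$ and using the domination above (with $q = k$, noting $|\psi|^k/k! \le e^{|\psi|}$ absorbed into the gap between $\re s_0$ and $1$), one gets absolute convergence of the double series, hence $\mathcal L_{s_0+h} = \sum_{k\ge 0}\tfrac{h^k}{k!}\mathcal L_{s_0,k}$ as a norm-convergent power series in $\mathcal B(C^\alpha)$, which simultaneously gives holomorphy and formula (\ref{eq:differentiating-mto}). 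Here I must also check that each $\mathcal L_{s_0,k}$ is a \emph{bounded} operator on $C^\alpha$ — not just on $C_{\bounded}$ — which requires controlling the Hölder seminorm $v_\alpha$ of $\psi^k e^{s_0\psi}g|_{[a]}$ using level-1 Hölder continuity of $\psi$ together with the standard contraction factor $d(\sigma(\xi),\sigma(\xi')) = 2\,d(\xi,\xi')$ on cylinders; this is the routine but slightly fiddly estimate of the type carried out in \cite[Chapter 2.4]{MU}, and I would cite it rather than redo it.

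For part (i), the issue is that on the boundary line $\{\re s = 1\}$ the full Taylor expansion to all orders need not converge (we only have $(1,n+\varepsilon)$-summability, not exponential summability), so I would only expand to order $n$. Write $s = 1 + it + \tau$ with $\re s \ge 1$ and estimate the order-$n$ Taylor remainder of $s \mapsto \mathcal L_s$ at a nearby point using the integral form of the remainder together with the uniform domination of $\mathcal L_{s,n+\varepsilon}$-type series established above; the point is that $\|\mathcal L_{s,n} - \mathcal L_{s',n}\| \lesssim |s-s'|^\varepsilon$, which follows because $|e^{s\psi} - e^{s'\psi}| \le |s-s'|^\varepsilon \cdot |\psi|^\varepsilon \cdot \max(e^{\re s\,\psi}, e^{\re s'\psi})$ (interpolating between the Lipschitz bound $|s-s'|\,|\psi|$ and the trivial bound $2$), and then multiplying by the extra $|\psi|^n$ and summing against the dominating $(1,n+\varepsilon)$-summable series. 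This yields that $\mathcal L_{s,n}$ is $\varepsilon$-Hölder in $s$ on $\overline{\mathbb{C}_1^+}$ locally, hence $s \mapsto \mathcal L_s$ is $C^{n,\varepsilon}_{\loc}$, with the $k$-th derivative extending continuously to the boundary as $\mathcal L_{s,k}$ for $k \le n$.

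The main obstacle I anticipate is the boundary continuity of the Hölder seminorm part of $\mathcal L_{s,p}$: showing $v_\alpha(\mathcal L_{s,p}g)$ is controlled and varies continuously in $s$ up to $\{\re s = 1\}$ is more delicate than the sup-norm part, because one cannot simply appeal to $\re s > 1$ to gain an exponentially decaying factor that tames $v_\alpha(\psi^p|_{[a]})$; instead one must see that level-1 Hölder continuity of $\psi$ plus negativity of $\psi$ already force $v_\alpha(\psi^p e^{s\psi}|_{[a]})$ to be comparable to $\sup_{[a]}|\psi^p e^{s\psi}|$ up to the universal constant $v_\alpha^{(1)}(\psi)$, uniformly over the closed half-plane. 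Once that uniform comparison is in hand — essentially an elaboration of the computation in the proof of Lemma \ref{sp-summability-lemma} — everything else is a dominated-convergence argument. I would organize the proof so that this uniform two-sided comparison of norms is isolated as the single technical input, then derive (i) and (ii) from it mechanically.
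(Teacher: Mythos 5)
Your proposal is correct in substance and, for part (i), follows essentially the same architecture as the paper's Appendix: the same domination coming from Lemma \ref{sp-summability-lemma}, the same interpolation $\min(1,|(s-t)\psi|)\le |(s-t)\psi|^\varepsilon$ to extract the $C^\varepsilon$ modulus on the closed half-plane, and the same identification of the per-cylinder H\"older-seminorm bound (the paper's Lemma \ref{spe-holder-estimate}) as the delicate input. For part (ii) you take a genuinely different route: you expand $\mathcal L_{s_0+h}=\sum_{k\ge 0}\frac{h^k}{k!}\mathcal L_{s_0,k}$ as a norm-convergent power series on the disk $|h|<\re s_0-1$, which yields holomorphy and (\ref{eq:differentiating-mto}) in one stroke; the paper instead verifies holomorphy by Morera's theorem (the loop integral of $\mathcal L_{s,q}$ vanishes termwise by Cauchy's theorem) and then observes that the Banach-space derivative must coincide with the pointwise, term-by-term derivative. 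Your route is self-contained and arguably cleaner, at the cost of needing the full exponential series to converge, which it does only strictly inside $\mathbb{C}_1^+$ where the gap $\re s_0-1>0$ absorbs $e^{|h|\,|\psi|}$; the paper's route needs only continuity of $s\mapsto\mathcal L_{s,q}$, which it imports from the appendix.

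One step is under-specified. You reduce part (i) to the uniform comparison $v_\alpha(\psi^p e^{s\psi}\circ a)\lesssim\sup_{[a]}|\psi^p e^{s\psi}|$ plus a dominated-convergence argument, but dominated convergence only yields \emph{continuity} of $s\mapsto\mathcal L_{s,n}$, not the $\varepsilon$-H\"older rate for the seminorm part of the operator norm. To bound $v_\alpha\bigl(\psi^n(e^{s\psi}-e^{t\psi})\circ a\bigr)$ by $|s-t|^{\varepsilon}\,\sup_{[a]}|\psi^{n+\varepsilon}e^{\sigma_{\min}\psi}|$ one needs a mixed second-difference estimate, gaining the increment $|\psi(a\omega)-\psi(a\tau)|$ (from level-1 H\"older continuity) and the factor $\min\bigl(1,|s-t|\sup_{[a]}|\psi|\bigr)$ simultaneously; this is exactly what the paper isolates as Corollary \ref{exp-bounds2}(iii) and deploys in the $B$-term of the splitting in Section \ref{sec:continuity-II}. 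Your interpolation device does extend to this mixed difference, so the omission is one of bookkeeping rather than substance, but it is the one estimate that your single ``uniform two-sided comparison of norms'' does not by itself deliver.
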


\begin{proof}
The proof of (i) is quite cumbersome and is deferred to Appendix \ref{sec:continuity-of-sp-transfer-operators}. Fix a real number $q \ge 0$. Below, we show that the operator
$$
\mathcal L_{s,q}: \mathbb{C}_1^+ \to \mathcal B(C^\alpha(E_A^\infty))
$$ is holomorphic and $\mathcal L'_{s,q} = \mathcal L_{s,q+1}$.
As the series 
$$
\mathcal L_{s,q} g(\omega) \, =  \, \sum_{a \in E} (\psi^q e^{s\psi}) (a \omega) \cdot g(a \omega)
$$
 converges locally uniformly on the open half-plane $\mathbb{C}^+_1$, it can be differentiated term-by-term for any $s \in \mathbb{C}^+_1$ and $g \in C^\alpha(E_A^\infty)$. Hence, the {\em pointwise derivative}
 $$
\frac{d}{ds} \bigl ( \mathcal L_{s,q} g(\omega) \bigr ) = \mathcal L_{s,q+1} g(\omega).
$$

By Morera's theorem, to verify that the operator
$\mathcal L_{s,q}$ depends holomorphically on $s \in  \mathbb{C}_1^+$, it suffices to check that for every loop $\gamma \subset \mathbb{C}_1^+$, the integral
$
\int_\gamma \mathcal L_{s,q} ds
$
is the zero operator\footnote{In Appendix  \ref{sec:continuity-of-sp-transfer-operators}, we will see that
$s \to \mathcal L_{s,q}$ is continuous on $ \mathbb{C}_1^+$, so the operator-valued integral is well-defined.}.
 This is easy: as $\mathcal L_{s,q}$ is linear and bounded,
 $$
\biggl [
 \biggl ( \int_\gamma \mathcal L_{s,q} ds \biggr ) (g) 
 \biggr ] (\omega) \, = \,  \sum_{a \in E} \biggl ( \int_\gamma \psi^q e^{s\psi} ds \biggr ) (a \omega) \cdot g(a \omega) \, = \, 0,
 $$
by Cauchy's theorem. Having satisfied ourselves that $s \to \mathcal L_{s,q}$ is holomorphic as a mapping valued in $\mathcal B(C^\alpha(E_A^\infty))$,
it has a {\em Banach space derivative}\/, which must coincide with the pointwise derivative computed above.
\end{proof}

\begin{remark}
In general, the existence of the pointwise derivative does not imply the existence of a Banach space derivative, which is a much more stringent condition.
\end{remark}

\subsection{Projections and Eigenvalues}

Let $\psi: E_A^\infty \to \mathbb{R}$ be a normal potential. By Theorem \ref{continuity-and-holomorphy-Ls}, the operator $\mathcal L_1$ has a maximal positive eigenvalue $\lambda_1$. Since $\mathcal L_s$ is continuous in $s\in \overline{\mathbb{C}_1^+}$, simple isolated eigenvalues vary continuously in $s$. Therefore, we can follow $\lambda_s$ in a half-ball $\overline{\mathbb{C}_1^+} \cap B(1,\delta)$ for some $\delta > 0$.
For these parameters $s$, we can define the Riesz projection $R_s$ onto the corresponding 1-dimensional eigenspaces. We normalize the eigenfunction $\rho_1$ of $\mathcal L_1$ so that $\int_{E_A^\infty} \rho_1dm= 1$ and define 
$$
\rho_s:= R_s \rho_1.
$$

\begin{lemma}
\label{resolvent-diff-lemma}
If $\psi$ is a normal $(1, n+\varepsilon)$-summable potential, then $s \to R_s$ is a $C^{n+\varepsilon}$ mapping from $\overline{\mathbb{C}_1^+} \cap B(1, \delta)$ to $\mathcal B(C^\alpha(E_A^\infty))$.
\end{lemma}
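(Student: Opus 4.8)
The statement is a standard consequence of the Riesz projection formula combined with the regularity of $s \mapsto \mathcal L_s$ established in Theorem~\ref{continuity-and-holomorphy-Lsp}(i). Recall that since $\lambda_1$ is a simple isolated eigenvalue of $\mathcal L_1$ acting on $C^\alpha(E_A^\infty)$, one can choose $\delta > 0$ and a positively oriented circle $\Gamma \subset \mathbb{C}$ enclosing $\lambda_1$ but no other point of the spectrum of $\mathcal L_1$, so that for every $s$ in the closed half-ball $\overline{\mathbb{C}_1^+}\cap B(1,\delta)$, the curve $\Gamma$ separates $\lambda_s$ from the rest of the spectrum of $\mathcal L_s$. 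The Riesz projection then admits the contour-integral representation
$$
R_s = \frac{1}{2\pi i} \oint_\Gamma \bigl( \zeta I - \mathcal L_s \bigr)^{-1} \, d\zeta.
$$
The plan is to show that the integrand is a $C^{n+\varepsilon}$ function of $s$, uniformly for $\zeta \in \Gamma$, and then differentiate under the integral sign.

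First I would fix a slightly shrunk half-ball and verify, using the continuity part of Theorem~\ref{continuity-and-holomorphy-Lsp}(i), that the resolvent $(\zeta I - \mathcal L_s)^{-1}$ exists and is uniformly bounded in operator norm for $(\zeta,s)\in \Gamma \times (\overline{\mathbb{C}_1^+}\cap B(1,\delta'))$; this follows from the fact that $\Gamma$ is disjoint from $\sigma(\mathcal L_1)$, that resolvents depend continuously on the operator where they exist, and a compactness argument on $\Gamma$. Next I would establish $C^{n+\varepsilon}$ dependence of $s \mapsto (\zeta I - \mathcal L_s)^{-1}$: for the holomorphic interior $\mathbb{C}_1^+$ this is immediate from Theorem~\ref{continuity-and-holomorphy-Lsp}(ii) and the resolvent identity, with derivatives given by the usual products of resolvents and the operators $\mathcal L_{s,k}$; for the boundary behaviour one propagates the $C^{n+\varepsilon}$ regularity of $s\mapsto \mathcal L_s$ through the algebraic identity
$$
(\zeta I - \mathcal L_s)^{-1} - (\zeta I - \mathcal L_{s'})^{-1} = (\zeta I - \mathcal L_s)^{-1} (\mathcal L_s - \mathcal L_{s'}) (\zeta I - \mathcal L_{s'})^{-1},
$$
which shows that finite differences and divided differences of the resolvent inherit the corresponding control of $\mathcal L_s$, with the uniform boundedness from the previous step supplying the needed constants. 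Since $C^{n+\varepsilon}$ regularity is preserved under composition of multilinear bounded operations and under uniform limits of the relevant divided-difference quotients, one concludes $(\zeta I - \mathcal L_s)^{-1} \in C^{n+\varepsilon}$ with bounds uniform in $\zeta \in \Gamma$.

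Finally I would integrate over the compact curve $\Gamma$: uniform $C^{n+\varepsilon}$ bounds on the integrand pass through the (finite-length) contour integral, so $R_s \in C^{n+\varepsilon}(\overline{\mathbb{C}_1^+}\cap B(1,\delta); \mathcal B(C^\alpha(E_A^\infty)))$, shrinking $\delta$ once more if necessary. The main obstacle is the boundary regularity step: on the line $\{\re s = 1\}$ the map $s\mapsto \mathcal L_s$ is merely $C^{n+\varepsilon}$ and not holomorphic, so one cannot simply differentiate the Neumann/resolvent series; one must carefully bookkeep divided differences of fractional order $\varepsilon$ and check that the nonlinear operation of inversion does not destroy the Hölder-type modulus of continuity of the highest derivative — but this is exactly the content of the statement that $C^{n+\varepsilon}$ is an algebra-type class stable under smooth (here, analytic, namely inversion) nonlinearities on the open set where the resolvent is bounded, and the bounds are all furnished by the uniform resolvent estimate on $\Gamma$.
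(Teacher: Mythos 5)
Your proposal is correct and follows essentially the same route as the paper: both express $R_s$ as a contour integral of the resolvent, use the resolvent identity $(zI-\mathcal L_t)^{-1}-(zI-\mathcal L_s)^{-1}=(zI-\mathcal L_t)^{-1}(\mathcal L_t-\mathcal L_s)(zI-\mathcal L_s)^{-1}$ to transfer the $C^{n+\varepsilon}$ regularity of $s\mapsto\mathcal L_s$ from Theorem~\ref{continuity-and-holomorphy-Lsp} to the resolvent uniformly on the contour, and then integrate. The only difference is presentational: the paper carries out the divided-difference bookkeeping explicitly for $p\in[1,2]$ and asserts the general case is similar, whereas you argue the general case schematically via stability of $C^{n+\varepsilon}$ under inversion.
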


\begin{proof}
We explicitly show the lemma for $p = n + \varepsilon \in (0, 2]$, which suffices for Orbit Counting. The general case is similar but more computationally involved.

\medskip

($p = 1$)  Since $\psi$ is centered, $\lambda_1 = 1$. After decreasing $\delta > 0$ if necessary, we may choose $\eta > 0$ so that  
for every $s \in \overline{\mathbb{C}_1^+} \cap B(1, \delta)$, 
\begin{equation}
\label{120251114}
\sigma(\mathcal L_s) \cap \overline{B(1, \eta)} = \{ \lambda_s \}
\end{equation} 
consists of a simple isolated eigenvalue.
Set $\gamma = \partial B(1, \eta)$. For any $s \in \overline{\mathbb{C}_1^+} \cap B(1, \delta)$, the Riesz projection onto the $1$-dimensional eigenspace of $\lambda_s$ is given by
$$
R_s := \frac{1}{2\pi i} \int_{\gamma} (z \id - \mathcal L_s)^{-1} dz,
$$
and this definition is independent of the choice of $\eta$ as long as (\ref{120251114}) holds. By the second resolvent formula,
\begin{equation}
\label{eq:Rts-difference}
R_t - R_s = \frac{1}{2\pi i} \int_{\gamma} (z \id - \mathcal L_t)^{-1} (\mathcal L_t - \mathcal L_s) (z \id - \mathcal L_s)^{-1} dz,
\end{equation}
for all $s, t \in \overline{\mathbb{C}_1^+} \cap B(1, \delta)$. Using Theorem \ref{continuity-and-holomorphy-Lsp} to justify differentiation under the integral sign, we get
\begin{equation}
\label{eq:first-derivative-resolvent}
R'_s \, := \, \lim_{t \to s} \frac{R_t - R_s}{t - s} \, = \, \frac{1}{2\pi i} \int_{\gamma} (z \id - \mathcal L_s)^{-1}  \mathcal L'_s (z \id - \mathcal L_s)^{-1} dz.
\end{equation}
Since $s \to \mathcal L_s$ is a $C^1$ mapping from $\overline{\mathbb{C}_1^+}$ to $\mathcal B(C^\alpha(E_A^\infty))$ by Theorem \ref{continuity-and-holomorphy-Lsp}, the expression above shows that the mapping
$s \to R'_s$
from $\overline{\mathbb{C}_1^+} \cap B(1, \delta)$ to $\mathcal B(C^\alpha(E_A^\infty))$ is continuous. 

\medskip

($0 < p < 1$) 
Using the representation (\ref{eq:Rts-difference}) for $R_t - R_s$, we can write
$$
\frac{R_t - R_s}{(t - s)^\varepsilon} = \frac{1}{2\pi i} \int_{\gamma} (z \id - \mathcal L_t)^{-1} \, \frac{\mathcal L_t - \mathcal L_s}{(t-s)^\varepsilon} \, (z \id - \mathcal L_s)^{-1} dz.
$$
By virtue of Theorem \ref{continuity-and-holomorphy-Lsp}, the integrand is uniformly bounded in $\mathcal B(C^\alpha(E_A^\infty))$ and converges to $0$ in $\mathcal B(C^\alpha(E_A^\infty))$ as $t\to s$. Consequently, the mapping $s\to R_s$ from $\overline{\mathbb{C}_1^+} \cap B(1, \delta)$ to $\mathcal B(C^\alpha(E_A^\infty))$ is  $C^\varepsilon$.


\medskip

$(p=2)$ 
By (\ref{eq:first-derivative-resolvent}) and the second resolvent identity, we can write
\begin{align*}
R'_t - R'_s 
&= \frac{1}{2\pi i} \int_{\gamma} (z \id - \mathcal L_t)^{-1} \mathcal L'_t 
      \bigl[ (z \id - \mathcal L_t)^{-1} - (z \id - \mathcal L_s)^{-1} \bigr] dz
\notag\\[2mm]
&\quad + \frac{1}{2\pi i} \int_{\gamma} (z \id - \mathcal L_t)^{-1} 
      (\mathcal L'_t - \mathcal L'_s) (z \id - \mathcal L_s)^{-1} dz
\notag\\[2mm]
&\quad + \frac{1}{2\pi i} \int_{\gamma} 
      \bigl[ (z \id - \mathcal L_t)^{-1} - (z \id - \mathcal L_s)^{-1} \bigr] 
      \mathcal L'_s (z \id - \mathcal L_s)^{-1} dz
\notag\\[1mm]
&= \frac{1}{2\pi i} \int_{\gamma} (z \id - \mathcal L_t)^{-1} \mathcal L'_t 
      (z \id - \mathcal L_t)^{-1} (\mathcal L_t - \mathcal L_s) (z \id - \mathcal L_s)^{-1} dz
\notag\\[1mm]
&\quad + \frac{1}{2\pi i} \int_{\gamma} (z \id - \mathcal L_t)^{-1} 
      (\mathcal L'_t - \mathcal L'_s) (z \id - \mathcal L_s)^{-1} dz
\notag\\[1mm]
&\quad + \frac{1}{2\pi i} \int_{\gamma} (z \id - \mathcal L_t)^{-1} 
      (\mathcal L_t - \mathcal L_s) (z \id - \mathcal L_s)^{-1} 
      \mathcal L'_s (z \id - \mathcal L_s)^{-1} dz.
\end{align*}
Dividing by $t-s$ and using Theorem \ref{continuity-and-holomorphy-Lsp} to justify taking the limit as $t \to s$, we obtain
\begin{equation}
\label{eq:second-derivative-resolvent}
\begin{aligned}
R''_s  
&= \frac{2}{2\pi i} \int_{\gamma} 
      (z \id - \mathcal L_s)^{-1} \, \mathcal L'_s \, 
      (z \id - \mathcal L_s)^{-1} \, \mathcal L'_s \, (z \id - \mathcal L_s)^{-1} \, dz
\notag\\[1mm]
&\quad + \frac{1}{2\pi i} \int_{\gamma} 
      (z \id - \mathcal L_s)^{-1} \, \mathcal L''_s \, (z \id - \mathcal L_s)^{-1} \, dz.
\end{aligned}
\end{equation}
Applying Theorem \ref{continuity-and-holomorphy-Lsp} again, we conclude that the mapping  $s\to R_s''$ from $\overline{\mathbb{C}_1^+} \cap B(1, \delta)$ to $\mathcal B(C^\alpha(E_A^\infty))$ is continuous.


\medskip

$(1 < p < 2)$ Using the formula for $R'_t - R'_s$ above, we can express
$$
\frac{R'_t - R'_s}{(t - s)^\varepsilon} 
$$
as an integral whose integrand is uniformly bounded in $\mathcal B(C^\alpha(E_A^\infty))$ and converges to $0$ in $\mathcal B(C^\alpha(E_A^\infty))$ as $t\to s$. Therefore, the mapping $s\to R_s$ from $\overline{\mathbb{C}_1^+} \cap B(1, \delta)$ to $\mathcal B(C^\alpha(E_A^\infty))$ is  $C^{1+\varepsilon}$.
\end{proof}

\begin{corollary}
\label{eigenvalue-diff-lemma}
If $\psi$ is a normal $(1, n+\varepsilon)$-summable potential, then $s \to \lambda_s$ is a $C^{n+\varepsilon}$ function on $\overline{\mathbb{C}_1^+} \cap B(1,\delta)$.
\end{corollary}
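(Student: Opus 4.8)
The plan is to recover $\lambda_s$ from the Riesz projection and the transfer operator by an explicit formula that visibly inherits the regularity already established for $s\mapsto\mathcal L_s$ and $s\mapsto R_s$, so that the statement reduces to bookkeeping with the class $C^{n+\varepsilon}$. Concretely, I would use that $\rho_s=R_s\rho_1$ spans the $\lambda_s$-eigenspace of $\mathcal L_s$, so $\mathcal L_s\rho_s=\lambda_s\rho_s$; integrating this identity against the conformal measure $m_\psi$ of $\psi$ (which satisfies $\int\rho_1\,dm_\psi=1$) gives
\[
\lambda_s\int_{E_A^\infty}\rho_s\,dm_\psi=\int_{E_A^\infty}\mathcal L_s\rho_s\,dm_\psi .
\]
Since $s\mapsto R_s$ is $C^{n+\varepsilon}$ into $\mathcal B(C^\alpha(E_A^\infty))$ by Lemma \ref{resolvent-diff-lemma}, the map $s\mapsto\rho_s=R_s\rho_1$ is $C^{n+\varepsilon}$ into $C^\alpha(E_A^\infty)$, and in particular $\rho_s\to\rho_1$ in $C^\alpha$ as $s\to1$; because $\int\rho_1\,dm_\psi=1\neq0$, after shrinking $\delta>0$ the denominator $\int\rho_s\,dm_\psi$ stays bounded away from $0$ on $\overline{\mathbb C_1^+}\cap B(1,\delta)$, and hence
\[
\lambda_s=\frac{\displaystyle\int_{E_A^\infty}\mathcal L_s\rho_s\,dm_\psi}{\displaystyle\int_{E_A^\infty}\rho_s\,dm_\psi}\qquad\text{on }\overline{\mathbb C_1^+}\cap B(1,\delta).
\]

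From here the argument is routine. The denominator is the composition of the $C^{n+\varepsilon}$ map $s\mapsto\rho_s$ with the bounded linear functional $g\mapsto\int g\,dm_\psi$ on $C^\alpha(E_A^\infty)$, hence $C^{n+\varepsilon}$. For the numerator I would combine Theorem \ref{continuity-and-holomorphy-Lsp}(i), which gives that $s\mapsto\mathcal L_s$ is $C^{n+\varepsilon}$ into $\mathcal B(C^\alpha(E_A^\infty))$, with the bounded bilinear evaluation map $\mathcal B(C^\alpha)\times C^\alpha\to C^\alpha$, $(T,g)\mapsto Tg$: by the Leibniz rule $s\mapsto\mathcal L_s\rho_s$ is then $C^{n+\varepsilon}$ into $C^\alpha(E_A^\infty)$, and composing once more with $g\mapsto\int g\,dm_\psi$ yields a scalar $C^{n+\varepsilon}$ function. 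A quotient of two $C^{n+\varepsilon}$ functions on the bounded set $\overline{\mathbb C_1^+}\cap B(1,\delta)$ with non-vanishing denominator is again $C^{n+\varepsilon}$, which is the assertion; one should also check that the branch of $\lambda_s$ produced by this formula agrees with the one followed continuously from $(1,+\infty)$, but this is immediate since both are continuous and coincide at real $s>1$, where $\lambda_s$ is the unique maximal eigenvalue.

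The only points requiring genuine care — the ``main obstacle'', such as it is — are the two closure properties of the class $C^{n+\varepsilon}$ for Banach-space-valued (and scalar) maps on a domain in $\mathbb C\cong\mathbb R^2$: (a) that applying a bounded bilinear map to a pair of $C^{n+\varepsilon}$ maps again produces a $C^{n+\varepsilon}$ map, which is a Leibniz-rule computation in which the top-order term is a finite sum of bounded multilinear forms evaluated on one Hölder-continuous factor together with several Lipschitz/bounded factors, hence $\varepsilon$-Hölder; and (b) that the reciprocal of a non-vanishing $C^{n+\varepsilon}$ scalar function is $C^{n+\varepsilon}$. Both are standard, but since the paper works throughout with fractional smoothness they are worth isolating explicitly; all the genuinely dynamical content has already been absorbed into Lemma \ref{resolvent-diff-lemma} and Theorem \ref{continuity-and-holomorphy-Lsp}, so nothing further is needed here.
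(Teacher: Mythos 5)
Your argument is correct and follows essentially the same route as the paper: both recover $\lambda_s$ as a quotient $F(\mathcal L_s R_s\rho_1)/F(R_s\rho_1)$ for a bounded linear functional $F$ not vanishing on $\rho_1$ (you take $F(g)=\int g\,dm_\psi$, the paper an arbitrary such $F$) and then invoke the $C^{n+\varepsilon}$ regularity of $s\mapsto\mathcal L_s$ and $s\mapsto R_s$ together with the algebra/quotient closure properties of the class $C^{n+\varepsilon}$. The extra care you take in isolating the Leibniz-rule and reciprocal lemmas is sound and matches the paper's appeal to ``$C^\varepsilon$ functions and operators form an algebra.''
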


\begin{proof}
We explicitly show the corollary for when $p = n$ is an integer, and when $p = n + \varepsilon \in (1, 2)$, which suffices for Orbit Counting. The general case follows the same pattern.

($\varepsilon = 0$) Let $F: C^\alpha(E_A^\infty) \to \mathbb{C}$ be a bounded linear functional for which $F(\rho_1) \ne 0$.
The equality $\mathcal L_s R_s \rho_1 = \lambda_s R_s \rho_1$ tells us that
$$
\lambda_s = \frac{F(\mathcal L_s R_s \rho_1)}{F(R_s \rho_1)}.
$$
The corollary now follows from differentiability properties of $\mathcal L_s$ and $R_s$ discussed in Theorem \ref{continuity-and-holomorphy-Lsp} and Lemma \ref{resolvent-diff-lemma}.

($1 < n+\varepsilon < 2$) To see that $s \to \lambda_s$ is $C^{1+\varepsilon}$ function on $\overline{\mathbb{C}_1^+} \cap B(1, \delta)$, it suffices to show that the derivative is
$$
\lambda'_s = \frac{F(\mathcal L'_s (R_s \rho_1) \cdot R'_s \rho_1) F(R_s \rho_1) - F(\mathcal L_s R_s \rho_1) F(R'_s \rho_1)}{F(R_s \rho_1)^2}.
$$
a $C^\varepsilon$ function. This follows from the fact that $C^\varepsilon$ functions and operators form an algebra.
\end{proof}

\subsection{Stochastic Laws}

We compute the one-sided derivative of pressure at $s=1$ by following the argument in  \cite[Proposition 4.10]{PP}:

\begin{lemma}
\label{first-derivative-of-pressure}
If $\psi$ is a normal $(1, 1)$-summable potential, then
\begin{equation}
\label{eq:dlambda-1}
\lambda'(1) = P'(1) := \lim_{s \to 1, \, s \in \mathbb{C}_1^+} P'(s) = \int_{E_A^\infty}  \psi d\mu_1.
\end{equation}
\end{lemma}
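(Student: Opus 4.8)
\emph{Proof proposal.} The plan is to differentiate the eigenvalue identity $\mathcal L_s \rho_s = \lambda_s \rho_s$ at $s = 1$ and then integrate against the conformal measure $m = m_1 = m_\psi$. Since $\psi$ is normal it is centered, so $\lambda_1 = e^{P(\psi)} = 1$, the dual relation $\mathcal L_1^* m_1 = m_1$ holds by Theorem \ref{rpf-symbolic-dynamics}, and $\mu_1 = \rho_1\, dm_1$ with $\int_{E_A^\infty}\rho_1\, dm_1 = 1$ after the chosen normalization. By Theorem \ref{continuity-and-holomorphy-Lsp} (applied with $k=1$, using $(1,1)$-summability) the map $s \mapsto \mathcal L_s$ is $C^1$ near $1$ in $\overline{\mathbb{C}_1^+}$ with $\mathcal L_s' = \mathcal L_{s,1}$; Lemma \ref{resolvent-diff-lemma} gives that $s \mapsto R_s$, hence $s \mapsto \rho_s = R_s\rho_1$, is $C^1$ there; and Corollary \ref{eigenvalue-diff-lemma} gives that $s \mapsto \lambda_s$ is $C^1$. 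Thus I may differentiate $\mathcal L_s \rho_s = \lambda_s \rho_s$ with respect to $s$ at $s=1$ to obtain, as an identity in $C^\alpha(E_A^\infty)$,
$$
\mathcal L_1(\psi \rho_1) + \mathcal L_1 \rho_1' = \lambda'(1)\,\rho_1 + \rho_1',
$$
where I used $\mathcal L_1'\rho_1 = \mathcal L_{1,1}\rho_1 = \mathcal L_1(\psi\rho_1)$ (definition (\ref{eq:Lsp-def})) and $\lambda_1 = 1$.

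Next I would integrate both sides against $m_1$. The relation $\mathcal L_1^* m_1 = m_1$ gives $\int_{E_A^\infty}\mathcal L_1 g\, dm_1 = \int_{E_A^\infty} g\, dm_1$ for every $g \in L^1(m_1)$; applying this to $g = \psi\rho_1$ and to $g = \rho_1'$ yields
$$
\int_{E_A^\infty}\psi\rho_1\, dm_1 + \int_{E_A^\infty}\rho_1'\, dm_1 = \lambda'(1)\int_{E_A^\infty}\rho_1\, dm_1 + \int_{E_A^\infty}\rho_1'\, dm_1.
$$
The two copies of $\int\rho_1'\, dm_1$ cancel and $\int\rho_1\, dm_1 = 1$, so $\lambda'(1) = \int_{E_A^\infty}\psi\rho_1\, dm_1 = \int_{E_A^\infty}\psi\, d\mu_1$, which is the asserted formula. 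For the identification $\lambda'(1) = P'(1)$: since $P(s) = \log\lambda_s$ with $s \mapsto \lambda_s$ a $C^1$, positive (hence nonvanishing near $s=1$) function on $\overline{\mathbb{C}_1^+}\cap B(1,\delta)$, the pressure is $C^1$ there with $P'(s) = \lambda_s'/\lambda_s$; letting $s \to 1$ through $\mathbb{C}_1^+$ (or along $(1,+\infty)$) gives $P'(1) = \lambda'(1)/\lambda_1 = \lambda'(1)$, which is exactly (\ref{eq:dlambda-1}).

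The heavy analytic input — continuity/holomorphy of $\mathcal L_s$ and differentiability of $R_s$ and $\lambda_s$ — is already in hand, so the only real obstacle is the integrability bookkeeping that makes the above manipulations legitimate. Because $\psi$ is $(1,1)$-summable, Lemma \ref{sp-summability-lemma} gives $\int_{E_A^\infty}|\psi|\, dm_1 < \infty$; as $\rho_1 \in C^\alpha(E_A^\infty)$ is bounded, $\psi\rho_1 \in L^1(m_1)$, so $\mathcal L_1(\psi\rho_1)$ is well defined and the duality relation applies to it. Likewise $\rho_1' \in C^\alpha(E_A^\infty)$ is bounded, hence in $L^1(m_1)$, and $\mathcal L_1\rho_1' \in C^\alpha(E_A^\infty)$, so every integral above is finite and the cancellation is valid. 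One should also record that the eigenfunction identity and its $s$-derivative hold in the Banach space $C^\alpha(E_A^\infty)$ (not merely pointwise), which is precisely what the cited $C^1$-dependence results supply, so that differentiating under the pairing with $m_1$ is justified.
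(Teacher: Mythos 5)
Your proposal is correct and follows essentially the same route as the paper: differentiate the eigenvalue equation $\mathcal L_s\rho_s=\lambda_s\rho_s$ at $s=1$ using the $C^1$-dependence of $\mathcal L_s$, $R_s$ and $\lambda_s$, integrate against the conformal measure $m_1$ so the $\rho_1'$ terms cancel, and use $\lambda_1=1$ to identify $P'(1)$ with $\lambda'(1)$. The only difference is cosmetic (the paper writes the eigenvalue as $e^{P(s)}$ and phrases the derivative as a one-sided limit from $\mathbb{C}_1^+$), and your added integrability bookkeeping is a harmless elaboration of what the paper leaves implicit.
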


\begin{proof}
Since $\rho_s = R_s \rho_1$, we can use Lemma \ref{resolvent-diff-lemma} to conclude the existence of the one-sided derivative
$$
\rho'_1(\xi) := \lim_{s \to 1, \, s \in \mathbb{C}_1^+} \rho'_s(\xi).
$$
Differentiating $\mathcal L_s \rho_s = e^{P(s)} \rho_s$ at $s \in \mathbb{C}_1^+$ and taking $s \to 1$, we get
$$
\mathcal L'_1 \rho_1 + \mathcal L \rho'_1 = P'(1) \rho_1 + \rho'_1.
$$
After integrating the above equation with respect to $m_1$ and using (\ref{eq:conformal-measures-and-transfers}), we obtain
$$
\int_{E_A^\infty}  \mathcal L'_1 \rho_1 dm_1 = P'(1) \int_{E_A^\infty}  \rho_1 dm_1.
$$
By (\ref{eq:differentiating-mto}) and the fact that $\mu_1 = \rho_1 dm_1$ was designed to be a probability measure, the above equality simplifies to
$$
\int_{E_A^\infty}  \mathcal L_1 (\psi \rho_1) dm_1  = P'(1).
$$
Applying the identity (\ref{eq:conformal-measures-and-transfers}) one more time, we end up with
$$
\int_{E_A^\infty}  \psi d\mu_1 \, = \, \int_{E_A^\infty}  \psi \rho_1 dm_1 \, = \,  P'(1),
$$
which is what we wanted to show.

The equality $\lambda'(1) = P'(1)$ follows from the definition of pressure as $\log \lambda$ and the normalization $P(1) = 0$.
\end{proof}

In a similar fashion, one can follow the proof of \cite[Proposition 4.11]{PP} to compute the one-sided second derivative of pressure at $s=1$:

\begin{lemma}
\label{second-derivative-of-pressure}
Suppose $\psi$ is a normal $(1, 2)$-summable potential on $E_A^\infty$. Let $\overline{\psi} = \int_{E_A^\infty} \psi d\mu_1$
and $\phi = \psi - \overline{\psi}$.
Consider the family of potentials $\psi_s = \psi + (s-1) \phi$ defined on $\overline{\mathbb{C}^+_1}$. The one-sided second
derivative of pressure is
\begin{equation}
\label{eq:dlambda-2}
P''(1) := \lim_{s \to 1, \, s \in \mathbb{C}_1^+} P''(s) = \sigma^2(\phi),
\end{equation}
where
$$
\sigma^2(\phi) = \lim_{n \to \infty} \frac{1}{n} \int_{\partial \mathbb{D}}  (S_n \phi)^2 d\mu
$$
is the {\em asymptotic variance} of $\phi$.
\end{lemma}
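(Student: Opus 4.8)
\emph{The plan is to mimic the proof of \cite[Proposition 4.11]{PP}:} differentiate the eigenvalue equation for the iterated transfer operator twice at $s=1$ and integrate against the conformal measure. First, arguing as in Lemma~\ref{first-derivative-of-pressure} for the family $\psi_s = \psi + (s-1)\phi$ (whose $s$-derivative is the $s$-independent function $\phi$), one gets $P'(1) = \int_{E_A^\infty}\phi\,d\mu_1 = 0$; this vanishing is precisely why one perturbs in the mean-zero direction $\phi$ rather than in the direction $\psi$. Since $\lambda_s = e^{P(s)}$, $\psi_1 = \psi$ is normal and $\psi$ is $(1,2)$-summable, Corollary~\ref{eigenvalue-diff-lemma} guarantees that the one-sided derivatives exist, and $\lambda_1 = 1$, $\lambda'(1) = \lambda_1 P'(1) = 0$, $\lambda''(1) = \lambda_1\bigl(P''(1)+P'(1)^2\bigr) = P''(1)$. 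By the chain rule, for each fixed $n \ge 1$ we have $\tfrac{d}{ds}\lambda_s^n\big|_{s=1} = 0$ and $\tfrac{d^2}{ds^2}\lambda_s^n\big|_{s=1} = nP''(1)$.

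Next, fix $n$ and differentiate $\mathcal{L}_s^n\rho_s = \lambda_s^n\rho_s$ twice in $s$ at $s=1$. Here $\rho_s = R_s\rho_1$ is a $C^2$ function of $s$ by Lemma~\ref{resolvent-diff-lemma}, while $s \mapsto \mathcal{L}_s^n$ is $C^2$ into $\mathcal{B}(C^\alpha(E_A^\infty))$ with
$$
\frac{d^k}{ds^k}\,\mathcal{L}_s^n\, g \;=\; \mathcal{L}_s^n\bigl((S_n\phi)^k\, g\bigr), \qquad k \in \{1,2\},
$$
which follows by combining Theorem~\ref{continuity-and-holomorphy-Lsp} with the product rule (this is the step that uses $(1,2)$-summability). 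Carrying out the differentiation and evaluating at $s=1$ yields
$$
\mathcal{L}_1^n\bigl((S_n\phi)^2\rho_1\bigr) + 2\,\mathcal{L}_1^n\bigl(S_n\phi\cdot\rho_1'\bigr) + \mathcal{L}_1^n\rho_1'' \;=\; nP''(1)\,\rho_1 + \rho_1''.
$$
Integrating against the conformal measure $m_1$ (which satisfies $\mathcal{L}_1^* m_1 = m_1$, so $\int_{E_A^\infty}\mathcal{L}_1^n g\,dm_1 = \int_{E_A^\infty} g\,dm_1$) cancels the two $\rho_1''$ terms, and using $\mu_1 = \rho_1\,dm_1$ together with $\int_{E_A^\infty}\rho_1\,dm_1 = 1$ we obtain, for every $n \ge 1$,
$$
P''(1) \;=\; \frac1n\int_{E_A^\infty}(S_n\phi)^2\,d\mu_1 \;+\; \frac2n\int_{E_A^\infty} S_n\phi\cdot\rho_1'\,dm_1.
$$

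Finally, I would show the cross term vanishes as $n\to\infty$. Rewriting it as $\tfrac2n\int_{E_A^\infty}(S_n\phi)\,(\rho_1'/\rho_1)\,d\mu_1$, the factor $\rho_1'/\rho_1$ lies in $L^\infty$ because $\rho_1' \in C^\alpha(E_A^\infty)$ is bounded (Lemma~\ref{resolvent-diff-lemma}) and $\rho_1$ is bounded away from $0$ and $\infty$; moreover $\phi = \psi - \overline{\psi} \in L^1(\mu_1)$ with $\int_{E_A^\infty}\phi\,d\mu_1 = 0$ (by $(1,2)$-summability and Lemma~\ref{sp-summability-lemma}), and $\mu_1$ is ergodic. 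Hence $\tfrac1n\|S_n\phi\|_{L^1(\mu_1)} \to 0$ by the $L^1$ ergodic theorem, so the cross term tends to $0$ (using the spectral gap one in fact gets $O(1/n)$). Since the left-hand side $P''(1)$ is independent of $n$, letting $n\to\infty$ shows simultaneously that $\lim_{n\to\infty}\tfrac1n\int_{E_A^\infty}(S_n\phi)^2\,d\mu_1 =: \sigma^2(\phi)$ exists and equals $P''(1)$, which is the assertion. The main obstacle is not this last elementary estimate but rather arranging all the differentiations so that they take place in $\mathcal{B}(C^\alpha(E_A^\infty))$ uniformly up to the boundary line $\{\re s = 1\}$; this regularity up to the boundary is exactly what the $(1,2)$-summability hypothesis secures, through Theorem~\ref{continuity-and-holomorphy-Lsp} and Lemma~\ref{resolvent-diff-lemma}.
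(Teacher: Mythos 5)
Your proposal is correct and takes essentially the same route as the paper's proof (following \cite[Proposition 4.11]{PP}): reduce to $P'(1)=0$ via the mean-zero perturbation $\phi$, differentiate the eigenvalue equation twice at $s=1$ using the boundary regularity supplied by $(1,2)$-summability, integrate against $m_1$, and kill the cross term with the ergodic theorem. Differentiating the iterated equation $\mathcal L_s^n\rho_s=\lambda_s^n\rho_s$ directly, as you do, yields exactly the identity the paper reaches by its shorthand ``replace $\phi$ by $S_n\phi$,'' and is if anything the cleaner way to justify that step.
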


\begin{proof}
Arguing as in the proof of Lemma \ref{first-derivative-of-pressure} shows that $P'(1) = \int_{E_A^\infty}  \phi d\mu_1 = 0$.
In view of the $(1,2)$-summability hypothesis,  Lemma \ref{resolvent-diff-lemma} implies the existence of the one-sided derivatives
$$
\rho'_1(\xi) := \lim_{s \to 1, \, s \in \mathbb{C}_1^+} \rho'_s(\xi), \qquad \rho''_1(\xi) := \lim_{s \to 1, \, s \in \mathbb{C}_1^+} \rho''_s(\xi).
$$
Taking the second derivative of  $\mathcal L_s \rho_s = e^{P(s)} \rho_s$ and tending $s \to 1$ in $\mathbb{C}_1^+$ gives
$$
\mathcal L''_1 \rho_1 + 2 \mathcal L'_1 \rho'_1 + \mathcal L_1 \rho''_1 = P''(1) \rho_1 + \rho''_1.
$$
Integrating with respect to $m_1$, we obtain
$$
\int_{E_A^\infty}  \bigl \{ \mathcal L''_1 \rho_1 + 2 \mathcal L'_1 \rho'_1  \bigr \} dm_1 = P''(1),
$$
which simplifies to
$$
\int_{E_A^\infty} \phi^2 d\mu_1 + 2 \int_{E_A^\infty} \rho'_1 \cdot  \phi dm_1 = P''(1).
$$
Replacing $\phi$ by $S_n \phi$ and dividing by $n$, we get
$$
\frac{1}{n} \int_{E_A^\infty} (S_n \phi)^2 d\mu_1 + 2 \int_{E_A^\infty} \frac{\rho'_1}{\rho_1} \cdot  \frac{S_n \phi}{n} d\mu_1 = P''(1).
$$
The result follows after taking $n \to \infty$ and applying the ergodic theorem.
\end{proof}

Once we know the existence of the second derivative of the pressure along the imaginary axis, one can conclude the Central Limit Theorem for $\phi$ as in \cite[Theorem 13.9.15]{URM22}. For a $(1,3)$-summable potential, one could follow the argument of \cite[Theorem 4.13]{PP} to obtain the Central Limit Theorem for $\phi$ with an $O(1/\sqrt{n})$ error term.

\section{Orbit Counting in Symbolic Dynamics}
\label{sec:orbit-counting}

In this section, we prove our main results in symbolic dynamics, Theorems \ref{orbit-counting-tdf} and \ref{orbit-counting-tdf2}.
We follow the approach in \cite{PU}, paying closer to attention to the behaviour of the Poincar\'e series near the line $\re s = 1$.

\subsection{Poincar\'e Series}
\label{sec:poincare}

For a normal potential $\psi: E^\infty_A \to (-\infty, 0)$ and a point $\xi \in E^\infty_A$, the {\em counting function with offset $\varphi  \in C^\alpha(E^\infty_A)$} is given by
$$
N^{\varphi, \psi}_\xi(T) := \# \bigl \{ \omega \in E^*_\xi \, :\, - S_{|\omega|} \psi (\omega \xi) - \varphi(\omega \xi) \le T \bigr \},
$$
where $E^*_\xi \subset E^*_A$ is the set of finite words $\omega$ such that $\omega \xi$ is admissible. We will often suppress $\varphi$ and $\psi$ from the notation and abbreviate $f = f_1 = e^\varphi$ and $f_s = e^{s\varphi}$.

\begin{example}
(i) When the offset $\varphi = {\bf 0}$, $N^{\varphi, \psi}_\xi(T)$ counts the pre-images of $\xi$ under the shift map $\sigma: E_A^\infty \to E_A^\infty$:
$$
N_\xi(T) := \# \bigl \{ \omega \in E^*_\xi \, : \, S_{|\omega|} (-\psi) (\omega \xi) \le T \bigr \}.
$$
(ii)
Let $\tau \in E_A^*$ be a finite word. Taking $\varphi(\zeta) = S_{|\tau|}\psi ( \tau \zeta)$ in the definition above, we get
 $$
 N^{[\tau]}_\xi(T) := \# \bigl \{ \omega \in E^*_\xi \text{ starting with } \tau \, : \, S_{|\omega|} (-\psi) (\omega \xi) \le T \bigr \}.
 $$
Up to  bounded error, this counts the number of pre-images of $\xi$ under the shift map $\sigma: E_A^\infty \to E_A^\infty$ that lie in $[\tau]$:
 $$
 \# \bigl \{ \omega \in E^*_\xi \, : \, \omega \xi \in [\tau] \text{ and } S_{|\omega|} (-\psi) (\omega \xi) \le T \bigr \}.
 $$
(Since $\psi$ is negative and summable, only finitely many shorts words $\omega \in E^*_\xi$ of length less than $|\tau|$ satisfy $S_{|\omega|} (-\psi) (\omega \xi) \le T$.)
\end{example}

Following \cite{PU}, to study the asymptotics of $N_\xi(T)$ as $T \to \infty$, we examine the  {\em Poincar\'e series}
$$
\eta(s) \, := \, \sum_{n=0}^\infty \mathcal L_s^n (e^{s\varphi}) 
\, = \,  \int_0^\infty e^{-sT} dN_\xi(T)
\, = \, \sum_{ \omega \in E^*_A} \exp \bigl ( s ( S_{|\omega|} \psi + \varphi ) \bigr ) (\omega \xi).
$$

\begin{lemma}
\label{behaviour-of-poincare-series}
Suppose $\psi$ is a robust $(1,\,1+\varepsilon)$-summable potential, with $0 \le \varepsilon \le 1$. For any $\xi \in E^\infty_A$, the following statements hold:

{\em (1)} The series defining $\eta(s)(\xi)$ converges uniformly and absolutely on compact subsets of $\mathbb{C}_1^+$ and defines a holomorphic function.

{\em (2)} The function $s \to\eta(s)(\xi)\in \mathbb{C}$ has a continuous extension to 
$\overline{\mathbb{C}_1^+} \setminus \{1 \}$.

{\em (3, $\varepsilon = 0$)} As $s \to 1$ in $\mathbb{C}_1^+$,
$$
\eta(s)(\xi) - \frac{(-\lambda'_1)^{-1} R_1f(\xi) }{s-1} = o(|s-1|^{-1}).
$$

{\em (3, $0 < \varepsilon < 1$)} As $s \to 1$ in $\mathbb{C}_1^+$,
$$
\eta(s)(\xi) - \frac{(-\lambda'_1)^{-1} R_1f(\xi) }{s-1} = O(|s-1|^{\varepsilon-1}),
$$

{\em (3, $\varepsilon = 1$)} As $s \to 1$ in $\mathbb{C}_1^+$,
$$
\eta(s)(\xi) - \frac{(-\lambda'_1)^{-1} R_1f(\xi) }{s-1}
$$
tends to a finite limit.
\end{lemma}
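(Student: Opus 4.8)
The plan is to study $\eta(s)(\xi)=\sum_{n\ge 0}\mathcal{L}_s^n(f_s)(\xi)$, where $f_s=e^{s\varphi}\in C^\alpha(E_A^\infty)$ and evaluation at $\xi$ is a bounded functional on $C^\alpha(E_A^\infty)$, through the spectral decomposition of $\mathcal{L}_s$ near $s=1$. For (1), I would combine the pointwise domination $|\mathcal{L}_s^n(f_s)(\xi)|\le\|f_s\|_\infty\,\mathcal{L}_{\re s}^n\mathbf{1}(\xi)$ with the fact that $\lambda_\sigma<1$ for every $\sigma>1$ (the pressure $P(\sigma\psi)=\log\lambda_\sigma$ is strictly decreasing in $\sigma$ because $\psi<0$) and the standard Ruelle--Perron--Frobenius bound $\|\mathcal{L}_\sigma^n\mathbf{1}\|_\infty\le C_\sigma\lambda_\sigma^n$; this gives absolute, locally uniform convergence on $\mathbb{C}_1^+$, and holomorphy follows since each term is holomorphic (Theorem \ref{continuity-and-holomorphy-Lsp}(ii), holomorphy of $s\mapsto f_s$, and composition with the evaluation functional). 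For (2), on any compact $K\subset\overline{\mathbb{C}_1^+}\setminus\{1\}$ one has $\sup_{s\in K}r(\mathcal{L}_s)<1$: for $\re s>1$ this is $\le\lambda_{\re s}<1$, and for $s=1+it$, $t\ne 0$, it holds by Theorem \ref{complex-potentials} together with the $D$-genericity that a robust potential enjoys; since $s\mapsto\mathcal{L}_s$ is continuous on $\overline{\mathbb{C}_1^+}$ (Theorem \ref{continuity-and-holomorphy-Lsp}(i)), a routine compactness argument upgrades this to $\|\mathcal{L}_s^n\|\le C_K\theta_K^n$ with $\theta_K<1$, so $\eta(\cdot)(\xi)$ is a locally uniform limit of continuous functions.

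The heart of the matter is the behaviour near $s=1$. I would fix $\delta>0$ so small that on $\overline{\mathbb{C}_1^+}\cap B(1,\delta)$ the operator $\mathcal{L}_s$ carries a simple isolated eigenvalue $\lambda_s$ with rank-one Riesz projection $R_s$ (Lemma \ref{resolvent-diff-lemma}, Corollary \ref{eigenvalue-diff-lemma}), set $\mathcal{N}_s=\mathcal{L}_s-\lambda_sR_s=\mathcal{L}_s(\id-R_s)$ so that $\mathcal{L}_s^n=\lambda_s^nR_s+\mathcal{N}_s^n$ for $n\ge 1$, and -- shrinking $\delta$ via $r(\mathcal{N}_1)<\lambda_1=1$ and upper semicontinuity of the spectral radius -- arrange $r(\mathcal{N}_s)\le\theta<1$ uniformly. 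For $s\in\mathbb{C}_1^+\cap B(1,\delta)$ we have $|\lambda_s|\le\lambda_{\re s}<1$, so both $\sum_n\lambda_s^n$ and $\sum_n\|\mathcal{N}_s^n\|$ converge, and rearranging the Poincar\'e series gives
\[
\eta(s)(\xi)=\frac{R_sf_s(\xi)}{1-\lambda_s}+\bigl[(\id-\mathcal{N}_s)^{-1}(\id-R_s)f_s\bigr](\xi).
\]
The second summand extends continuously to $s=1$ (continuity of $\mathcal{N}_s$ and $R_s$ there, and $r(\mathcal{N}_s)<1$), hence is $O(1)$ near $s=1$ and tends to a finite limit; since $\lambda_s\ne 1$ on the punctured half-ball, the same formula also delivers the continuity asserted in (2) near $s=1$.

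It remains to extract the pole from $R_sf_s(\xi)/(1-\lambda_s)$. We have $\lambda_1=1$ and the one-sided derivative $\lambda_1':=\lim_{s\to 1,\,\re s>1}\lambda_s'$ exists and equals $P'(1)=\int_{E_A^\infty}\psi\,d\mu_1<0$ (Lemma \ref{first-derivative-of-pressure}), so $-\lambda_1'>0$; moreover $s\mapsto\lambda_s$ is $C^1$ when $\varepsilon=0$, $C^{1+\varepsilon}$ when $0<\varepsilon<1$, and $C^2$ when $\varepsilon=1$ on the half-ball (Corollary \ref{eigenvalue-diff-lemma}). A Taylor expansion at $s=1$ gives $1-\lambda_s=-\lambda_1'(s-1)+E(s)$, with $E(s)$ equal to $o(|s-1|)$, resp.\ $O(|s-1|^{1+\varepsilon})$, resp.\ $\tfrac12\lambda_1''(s-1)^2+o(|s-1|^2)$; together with $|1-\lambda_s|\gtrsim|s-1|$ this forces
\[
\frac1{1-\lambda_s}-\frac1{-\lambda_1'(s-1)}=\frac{-E(s)}{(1-\lambda_s)(-\lambda_1'(s-1))}
\]
to be $o(|s-1|^{-1})$, resp.\ $O(|s-1|^{\varepsilon-1})$, resp.\ convergent to $-\tfrac12\lambda_1''/(\lambda_1')^2$. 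Writing $R_sf_s(\xi)=R_1f(\xi)+\delta(s)$ with $\delta(s)=O(|s-1|)$ (as $s\mapsto R_sf_s(\xi)$ is $C^1$, and $C^2$ when $\varepsilon=1$, by Lemma \ref{resolvent-diff-lemma}), I would conclude
\[
\eta(s)(\xi)-\frac{(-\lambda_1')^{-1}R_1f(\xi)}{s-1}=R_1f(\xi)\Bigl(\frac1{1-\lambda_s}-\frac1{-\lambda_1'(s-1)}\Bigr)+\frac{\delta(s)}{1-\lambda_s}+\bigl[(\id-\mathcal{N}_s)^{-1}(\id-R_s)f_s\bigr](\xi).
\]
On the right the first term carries the asserted order, the second is $O(1)$ and converges when $\varepsilon=1$ (since $\delta$ is then differentiable at $1$ and $1-\lambda_s\sim-\lambda_1'(s-1)$), and the third is $O(1)$ with a finite limit; this is exactly the three cases $\varepsilon=0$, $0<\varepsilon<1$, $\varepsilon=1$ of part (3), the last being the statement at hand.

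The genuinely hard inputs are not in this assembly but in the cited regularity theory: the continuity of $s\mapsto\mathcal{L}_s$ on $\overline{\mathbb{C}_1^+}$ and the one-sided $C^{1+\varepsilon}$-differentiability of $\mathcal{L}_s$, $\lambda_s$ and $R_s$ up to the line $\{\re s=1\}$ (Theorem \ref{continuity-and-holomorphy-Lsp}, Lemma \ref{resolvent-diff-lemma}, Corollary \ref{eigenvalue-diff-lemma}), which themselves depend on the $(1,1+\varepsilon)$-summability and on the analysis of the modified operators $\mathcal{L}_{s,p}$. Within the present argument, the points that require real care are the uniform persistence of the spectral gap (that $r(\mathcal{N}_s)<1$ and $\lambda_s\ne 1$ throughout the punctured half-ball), the global statement $r(\mathcal{L}_{1+it})<1$ for every $t\ne 0$ needed for (2) -- precisely where $D$-genericity of the robust potential is used -- and the bookkeeping of the error orders near $s=1$, where the factor $|s-1|^{-1}$ must not be lost prematurely.
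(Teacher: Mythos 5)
Your proposal is correct and follows essentially the same route as the paper's proof: the spectral splitting $\mathcal L_s^n=\lambda_s^nR_s+\Delta_s^n$ with a uniformly contracting remainder on a half-ball around $1$, the identity $\eta(s)=(1-\lambda_s)^{-1}R_sf_s+(\text{continuous part})$, and the extraction of the pole via the one-sided Taylor expansion of $\lambda_s$ furnished by Corollary \ref{eigenvalue-diff-lemma}, with D-genericity handling $\overline{\mathbb{C}_1^+}\setminus\{1\}$. Your split of the remaining term into $R_1f(\xi)\bigl(\tfrac{1}{1-\lambda_s}-\tfrac{1}{-\lambda_1'(s-1)}\bigr)$ plus $\delta(s)/(1-\lambda_s)$ is exactly the paper's $A_3$ plus $A_1+A_2$, so the two arguments coincide up to bookkeeping.
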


\begin{remark}As $\psi$ is a negative potential, $\lambda'_1 \ne 0$ by Lemma \ref{first-derivative-of-pressure}, so the expression in (3) is well-defined.
\end{remark}

We temporarily assume Lemma \ref{behaviour-of-poincare-series} and prove Theorem \ref{orbit-counting-tdf}:

\begin{proof}[Proof of Theorem \ref{orbit-counting-tdf} assuming Lemma \ref{behaviour-of-poincare-series}]

By Lemma \ref{behaviour-of-poincare-series} and the hypotheses of the theorem, for any $\xi \in E_A^\infty$,
$$
\eta(s)(\xi) - \frac{(-\lambda'_1)^{-1} R_1f(\xi) }{s-1}
$$
 extends to an $L^1_{\loc}$ function on the vertical line $\{\re s=1\}$.
By the Wiener-Ikehara Tauberian theorem (Theorem \ref{wiener-ikehara}),
\begin{equation*}
\lim_{T \to \infty} \frac{N_\xi(T)}{e^T} = -(\lambda'_1)^{-1} R_1f(\xi).
\end{equation*}
Inserting the expression for $\lambda'_1$ from Lemma \ref{first-derivative-of-pressure}, we get
\begin{equation}
\label{eq:orbit-counting-proof}
\lim_{T \to \infty} \frac{N_\xi(T)}{e^T}  = 
\frac{ \rho_\psi(\xi)}{\int_{E_A^\infty} (-\psi) d\mu_\psi } \cdot \int_{E_A^\infty} e^{\varphi (\zeta)} dm_\psi(\zeta).
\end{equation}
Setting $\varphi = {\bf 0}$ in (\ref{eq:orbit-counting-proof}) shows that the theorem holds for $B = E_A^\infty$:
$$
\lim_{T \to \infty} \frac{N_\xi(T)}{e^T} = \frac{ \rho_\psi(\xi) }{\int_{E_A^\infty} (-\psi) d\mu_\psi}.
$$
More generally, setting $\varphi(\zeta) = S_{|\tau|}\psi ( \tau \zeta)$ into (\ref{eq:orbit-counting-proof}), shows that the theorem holds for cylinders $B = [\tau]$:
$$
\lim_{T \to \infty} \frac{N^{[\tau]}_\xi(T)}{e^T} =  \frac{ \rho_\psi(\xi) }{\int_{E_A^\infty} (-\psi) d\mu_\psi} \cdot m_\psi([\tau]),
$$

Clearly, the theorem  is also valid when $B  \subset E_A^\infty$ is a finite union of disjoint cylinders. We next consider the case when $B$ is an open set with $m _\psi(\partial B) = 0$. For any $\varepsilon > 0$, we can find a set $B_1 \subset B$ which is a finite union of disjoint cylinders with
$m_\psi(B_1) \ge m_\psi(B) - \varepsilon$. Clearly,
$$
\lim_{T \to \infty} \frac{N^{B}_\xi(T)}{e^T} \ge  \frac{ \rho_\psi(\xi) }{\int_{E_A^\infty} (-\psi) d\mu_\psi} \cdot (m_\psi(B) - \varepsilon).
$$
Taking $\varepsilon \to 0$ shows the lower bound
$$
\lim_{T \to \infty} \frac{N^{B}_\xi(T)}{e^T} \ge  \frac{ \rho_\psi(\xi) }{\int_{E_A^\infty} (-\psi) d\mu_\psi} \cdot m_\psi(B).
$$
The above formula for the open set $E_A^\infty \setminus \overline{B}$ gives the corresponding upper bound.

Since complements of open sets are closed sets, the theorem also holds for closed sets $B$ with $m_\psi(\partial B) = 0$.
From here, it is easy to see that the theorem holds for an arbitrary measurable set $B \subset E_A^\infty$ with $m_\psi(\partial B) = 0$ by squeezing $B$ between its interior and closure.
\end{proof}

The argument above also shows Theorem \ref{orbit-counting-tdf2} for D-generic potentials, provided that one uses Hardy-Littlewood Tauberian theorem (Theorem \ref{hardy-littlewood}) instead of the Wiener-Ikehara Tauberian theorem (Theorem \ref{wiener-ikehara}). Note that the $\varepsilon = 0$ case of Lemma \ref{behaviour-of-poincare-series} implies that
$$
\eta(s)(\xi) \sim \frac{(-\lambda'_1)^{-1} R_1f(\xi) }{s-1}, 
$$
as the numerator is non-zero.

\subsection{Orbit Counting for D-Generic Potentials}
\label{sec:behaviour-near-1}

To complete the proof of Theorem \ref{orbit-counting-tdf}, it remains to prove Lemma \ref{behaviour-of-poincare-series}.

\begin{proof}[Proof of Lemma \ref{behaviour-of-poincare-series}]
By Theorem~\ref{complex-potentials} and the D-genericity of $\psi$, the spectral radius of $\mathcal L_s$ is less than $1$ for every $s \in \overline{\mathbb{C}_1^+} \setminus \{1 \}$. Since parts (1) and (2) follow immediately, we focus on part (3).
 
 \medskip
  
{\em Step 1.} Consider the operator
$$
\Delta_s = \mathcal L_s - \lambda_s R_s.
$$
Since $\mathcal L_s R_s = R_s \mathcal L_s$ and $R_s$ is the Riesz projection onto $\ker(\lambda_s \id - \mathcal L_s)$,
$$
R_s \Delta_s = \Delta_s R_s = 0.
$$
Consequently,
$$
\mathcal L_s^n = \lambda_s^n R_s + \Delta_s^n, \qquad n \ge 0.
$$
Since $  \| \Delta_1 \|_\alpha < 1$, after shrinking $\delta > 0$ if necessary, we  may assume that $ \|  \Delta_s \|_\alpha < 1$ for all $s \in \overline{\mathbb{C}^+_1} \cap B(1, \delta)$. This ensures that the series
$$
s \to \Delta_\infty(s) :=   \sum_{n=0}^\infty \Delta_s^n (f_s)
$$
converges uniformly in $\mathcal B(C^\alpha(E^\infty_A))$ and thus defines a continuous function from $\overline{\mathbb{C}^+_1} \cap B(1, \delta)$ to $\mathcal B(C^\alpha(E^\infty_A))$.

\medskip

{\em Step 2.}
Since $|\lambda_s| < 1$ for all $\mathbb{C}^+_1 \cap B(1, \delta)$, we can write
$$
   \eta(s) = \sum_{n=0}^\infty \lambda_s^n R_s f_s + \sum_{n=0}^\infty \Delta_s^n(f_s) = (1-\lambda_s)^{-1} R_sf_s + \Delta_\infty(s).
   $$
A computation shows that
   \begin{align*}
   \eta(s) - \frac{(-\lambda'_1)^{-1} R_1f_1}{s-1} & = \frac{(1-\lambda_s)^{-1} R_sf_s(s-1) + \Delta_\infty(s)(s-1) + (\lambda'_1)^{-1} R_1f_1}{s-1} \\
 & = \frac{(1-\lambda_s)^{-1}R_sf_s(s-1) + (\lambda'_1)^{-1}R_1f_1}{s-1} + \Delta_\infty(s).
   \end{align*}
 We denote the first summand in this formula by $A(s)$. Since $\Delta_\infty(s)$ is continuous at $s =1$, to prove the lemma, 
 we only need to analyze $A(s)$.
  
  \medskip
  
  {\em Step 3.}
 We compute:
$$
  \begin{aligned}
A(s) & = \frac{(R_s f_s - R_1 f_ 1)(s-1) + R_1f_1 \bigl ((s-1) + (\lambda'_1)^{-1}(1-\lambda_s) \bigr )}{(s-1)(1-\lambda_s)} 
\\
  & = \frac{R_s(f_s-f_1)}{s-1} \cdot \frac{s-1}{1-\lambda_s} + 
  \frac{(R_s-R_1)f_1}{s-1} \cdot \frac{s-1}{1-\lambda_s}
  \\
& \  \  \  \  \  \  \  \  \  \  \  \  \  \  \ + R_1f_1 \cdot \frac{s-1+(\lambda'_1)^{-1}(1-\lambda_s)}{(s-1)(1-\lambda_s)} \\
& = A_1(s) + A_2(s) + A_3(s).
  \end{aligned}
$$
In view of Lemmas \ref{resolvent-diff-lemma} and \ref{eigenvalue-diff-lemma}, we have
  $$
  \lim_{\overline{\mathbb{C}_1^+} \cap B(1,\delta_1) \ni s \to 1} \frac{R_s(f_s-f_1)}{s-1} = R_1(f_1 \log f_1) \in C^\alpha(E^\infty_A),
  $$
$$
  \lim_{\overline{\mathbb{C}_1^+} \cap B(1,\delta_1) \ni s \to 1}  \frac{(R_s - R_1)f_1}{s-1} = R_1' f_1 \in C^\alpha(E^\infty_A)
$$
and
$$
  \lim_{\overline{\mathbb{C}_1^+} \cap B(1,\delta_1) \ni s \to 1}  \frac{s-1}{1-\lambda_s} = - \frac{1}{\lambda'_1},
$$
so that $A_1(s)$ and $A_2(s)$ are continuous at $s=1$.
Hence,
$$
 \eta(s) - \frac{(-\lambda'_1)^{-1} R_1f_1}{s-1} =  R_1f_1 \cdot \frac{s-1+(\lambda'_1)^{-1}(1-\lambda_s)}{(s-1)(1-\lambda_s)}
 + B(s),
 $$
 for some function $B(s)$ defined on $\overline{\mathbb{C}^+_1} \cap B(1, \delta)$ which is continuous at $s = 1$.

\medskip

{\em Step 4.} Finally, we use Lemma \ref{eigenvalue-diff-lemma} to expand $\lambda_s$ near $s = 1$: 

\medskip

$(\varepsilon = 0)$ If $\psi$ is $(1,1)$-summable, then
$$
\lambda_s = 1 + \lambda'_1(s-1) + o(|s-1|), \qquad \text{as }s \to 1 \text { in } \mathbb{C}^+_1,
$$
and
$$
\frac{s-1+(\lambda'_1)^{-1}(1-\lambda_s)}{(s-1)(1-\lambda_s)} = o(|s-1|^{-1}).
$$

$(0 < \varepsilon < 1)$ If $\psi$ is $(1, 1+\varepsilon)$-summable for some $0 < \varepsilon < 1$, then
$$
\lambda_s = 1 + \lambda'_1(s-1) + O(|s-1|^{1+\varepsilon}),  \qquad \text{as }s \to 1 \text { in } \mathbb{C}^+_1,
$$
and
$$
\frac{s-1+(\lambda'_1)^{-1}(1-\lambda_s)}{(s-1)(1-\lambda_s)} = O(|s-1|^{\varepsilon-1}).
$$

$(\varepsilon = 1)$
If $\psi$ is (1, 2)-summable, then
$$
\lambda_s = 1 + \lambda'_1(s-1) + \frac{1}{2}\bigl ( \lambda''_1 + o(1) \bigr ) (s-1)^2,  \qquad \text{as }s \to 1 \text { in } \mathbb{C}^+_1,
$$
and
$$
\lim_{\overline{\mathbb{C}_1^+} \cap B(1,\delta_1) \ni s \to 1}  \frac{s-1+(\lambda'_1)^{-1}(1-\lambda_s)}{(s-1)(1-\lambda_s)} = \frac{1}{2} \cdot \frac{\lambda''_1}{(\lambda'_1)^2}.
$$
The proof is complete.
\end{proof}

\subsection{Orbit Counting for Non-D-Generic Potentials}

We now turn to the proof of Theorem \ref{orbit-counting-tdf2}. Since we have already established the theorem for D-generic potentials in Section \ref{sec:poincare}, we now address the situation when $\psi$ is not D-generic.

Suppose $\psi$ is a normal (1,\,1)-summable potential. If $\psi$ is not D-generic, then the limit
 $$
 \lim_{T \to \infty} \frac{N^{B}_\xi(T)}{e^T}
 $$
 may not exist in the traditional sense. Below, we show that the limit exists after Ces\`aro averaging as in Theorem \ref{orbit-counting-tdf2}. 
 
According to Theorem \ref{complex-potentials},
$$
\psi = \phi + w - w \circ \sigma,
$$
for some function $\phi$ which takes values in a discrete subgroup $a\mathbb{Z} \subset \mathbb{R}$. We assume that $a$ has been chosen so that $\phi$ does not take values in any proper subgroup of  $a\mathbb{Z}$.
Since $\psi$ is negative, $\phi$ cannot be identically zero and hence $a \ne 0$.
Theorem \ref{complex-potentials} also tells us that the operator $\mathcal L_\psi$ has spectral radius less than 1 for all $s \in \overline{\mathbb{C}_1^+} \setminus \{1 + (2\pi i/a) \mathbb{Z} \}$.

Arguing as in the proof of Lemma \ref{behaviour-of-poincare-series} shows:

\begin{lemma}
\label{behaviour-of-poincare-series2}
Suppose $\psi$ is a normal $(1,\,1)$-summable potential, which is cohomologous to a function $\phi$ which takes values in a discrete subgroup $a\mathbb{Z} \subset \mathbb{R}$ (and in no smaller discrete subgroup). For any $\xi \in E^\infty_A$, the following statements hold:

{\em (1)} The series defining $\eta(s)(\xi)$ converges uniformly and absolutely on compact subsets of $\mathbb{C}_1^+$ and defines a holomorphic function.

{\em (2)} The function $s \to\eta(s)(\xi)\in \mathbb{C}$ has a continuous extension to 
$\overline{\mathbb{C}_1^+} \setminus \{1 + (2\pi i /a)k : k \in \mathbb{Z} \}$.

{\em (3)} As $s \to 1$ in $\mathbb{C}_1^+$,
$$
\eta(s)(\xi) - \frac{(-\lambda'_1)^{-1} R_1f(\xi) }{s-1} = o(|s-1|^{-1}).
$$
\end{lemma}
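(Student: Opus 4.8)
The plan is to follow the proof of Lemma \ref{behaviour-of-poincare-series} nearly verbatim, adjusting only the bookkeeping that reflects the periodicity introduced by the discrete length spectrum. For parts (1) and (2): by Theorem \ref{complex-potentials}, since $\psi$ is cohomologous to $\phi$ taking values in $a\mathbb{Z}$ (and no smaller subgroup), the operator $\mathcal{L}_{s\psi}$ has spectral radius strictly less than $1$ precisely for $s \in \overline{\mathbb{C}_1^+} \setminus \{1 + (2\pi i/a)k : k \in \mathbb{Z}\}$; indeed $e^{-i\,\mathrm{Im}(s)\cdot(-\psi)}C_\sigma$ has a $C^\alpha$ eigenvalue if and only if $\mathrm{Im}(s)\cdot(-\psi)$ is cohomologous to a $2\pi\mathbb{Z}$-valued potential, which by the choice of $a$ happens exactly when $\mathrm{Im}(s) \in (2\pi/a)\mathbb{Z}$ and $\mathrm{Re}(s)=1$. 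Away from those points, $\|\mathcal{L}_s^n\|_\alpha$ decays geometrically, so $\eta(s)(\xi) = \sum_n \mathcal{L}_s^n(f_s)(\xi)$ converges absolutely and locally uniformly on $\mathbb{C}_1^+$, giving a holomorphic function there, and extends continuously to $\overline{\mathbb{C}_1^+}$ minus the stated exceptional set. The only change from Lemma \ref{behaviour-of-poincare-series} is that the continuity extension now excludes a discrete vertical string of points rather than the single point $s=1$; this is a cosmetic modification of the same argument.

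For part (3), I would localize near $s = 1$ exactly as in Steps 1--4 of the proof of Lemma \ref{behaviour-of-poincare-series}. Since $s=1$ is still an isolated point of the exceptional set, one can choose $\delta>0$ small enough that $\overline{\mathbb{C}_1^+}\cap B(1,\delta)$ meets the exceptional set only in $\{1\}$ itself, and such that $\|\Delta_s\|_\alpha < 1$ there, where $\Delta_s = \mathcal{L}_s - \lambda_s R_s$. Then the decomposition
\[
\eta(s) = (1-\lambda_s)^{-1}R_s f_s + \Delta_\infty(s), \qquad \Delta_\infty(s) := \sum_{n=0}^\infty \Delta_s^n(f_s),
\]
holds with $\Delta_\infty$ continuous at $s=1$, and the algebraic manipulation reducing the difference $\eta(s) - (-\lambda_1')^{-1}R_1 f_1/(s-1)$ to the single term $R_1 f_1 \cdot \frac{(s-1)+(\lambda_1')^{-1}(1-\lambda_s)}{(s-1)(1-\lambda_s)}$ plus a term continuous at $s=1$ goes through unchanged (it uses only Lemmas \ref{resolvent-diff-lemma} and \ref{eigenvalue-diff-lemma}, which apply since $\psi$ is $(1,1)$-summable). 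Finally, $(1,1)$-summability gives $\lambda_s = \lambda_1 + \lambda_1'(s-1) + o(|s-1|)$ as $s \to 1$ in $\mathbb{C}_1^+$, so the leftover term is $o(|s-1|^{-1})$, yielding (3).

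The main point requiring care — though it is not really an obstacle — is verifying that Lemmas \ref{resolvent-diff-lemma}, \ref{eigenvalue-diff-lemma}, and \ref{first-derivative-of-pressure} still apply: these were proved for a \emph{normal} $(1,1)$-summable potential, and here the hypothesis is exactly that $\psi$ is normal and $(1,1)$-summable, so nothing is lost. One should also note $\lambda_1' \neq 0$ (hence the expression in (3) is well-defined): since $\psi$ is normal, $\sup\psi < 0$, so by Lemma \ref{first-derivative-of-pressure} $\lambda_1' = \int_{E_A^\infty}\psi\,d\mu_1 < 0$. Everything else is a transcription of the earlier proof with ``$\{1\}$'' replaced by ``$\{1 + (2\pi i/a)k : k \in \mathbb{Z}\}$'' in the statements about where the spectral radius drops below $1$.
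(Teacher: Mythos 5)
Your proposal is correct and follows essentially the same route as the paper, which itself offers no details beyond the remark that one argues as in the proof of Lemma \ref{behaviour-of-poincare-series} after using Theorem \ref{complex-potentials} to locate the exceptional set $\{1 + (2\pi i/a)k : k \in \mathbb{Z}\}$. Your identification of that set, the observation that $s=1$ remains isolated in it so the localization of Steps 1--4 applies verbatim, and the check that Lemmas \ref{resolvent-diff-lemma}, \ref{eigenvalue-diff-lemma} and \ref{first-derivative-of-pressure} only require normality and $(1,1)$-summability are exactly the points the paper leaves to the reader.
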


From here, one can deduce Theorem \ref{orbit-counting-tdf2} from the Hardy-Littlewood Tauberian theorem as in the D-generic case. We leave the details to the reader.

\part{One Component Inner Functions}
\label{COCIFSP}

\section{Fundamental Properties and Examples}

Following W.~Cohn \cite{cohn}, an inner function $F$ is a {\em one component inner function} if the set $\{ z \in \mathbb{D} : |F(z)| < r \}$ is connected for some $0 < r < 1$.

\begin{figure}[h]
\centering
\includegraphics[scale=0.65]{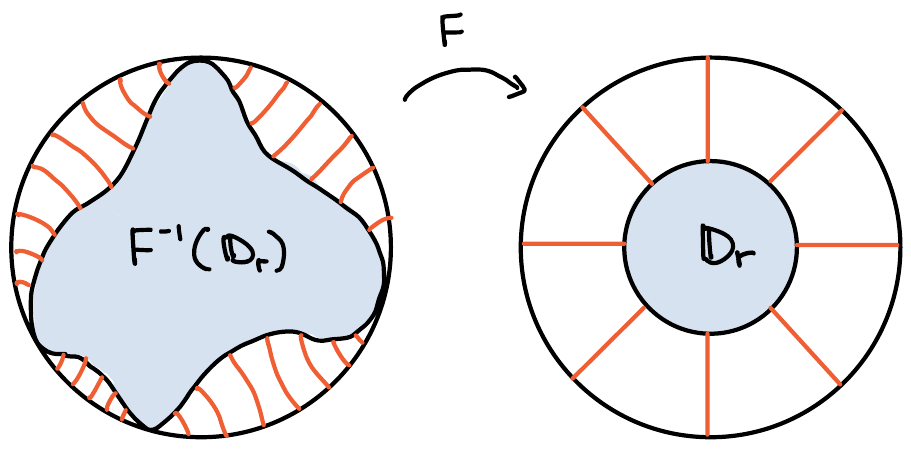}
 \caption{Anatomy of a one component inner function.}
\label{fig:one-component-anatomy}
\end{figure}

In his paper, Cohn gave a geometric description of one component inner functions, which is more suitable for applications in dynamical systems, see Fig.~\ref{fig:one-component-anatomy} above:
 
\begin{lemma}[Cohn]
\label{one-component-anatomy}
Let $F$ be a one component inner function and assume that $0 < r < 1$ is chosen so that  $F^{-1}(\mathbb{D}_r)$ is connected.
Then the region $F^{-1}(\mathbb{D}_r)$ is a Jordan domain. If $F^{-1}(\mathbb{D}_r)$ is compactly contained in the unit disk, then $F$ is a finite Blaschke product. Otherwise, $\mathbb{D} \setminus F^{-1}(\mathbb{D}_r)$ consists of countably many simply-connected regions $\{\Omega_i\}$. On each region $\Omega_i$,  $F$ acts as a universal covering map to the annulus $A(0; r, 1)$. The map $F$ is also a universal covering map from $\widetilde{\Omega}_i$ to the annulus $A(0; r, 1/r)$, where $\widetilde{\Omega}_i$ is the double of $\Omega_i$ across the unit circle.
\end{lemma}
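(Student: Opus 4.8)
The plan is to fix a convenient value of $r$, split into two cases by a maximum-principle dichotomy, and then read off the covering structure; the workhorse throughout is the minimum principle for $|F|$ away from the zeros of $F$. The first observation I would make is that connectedness propagates upward in $r$: if $F^{-1}(\mathbb D_{r_0})$ is connected and $r_0<r_1<1$, then $F^{-1}(\mathbb D_{r_1})$ is connected, because a component $W$ of $\{|F|<r_1\}$ disjoint from $F^{-1}(\mathbb D_{r_0})$ would be zero-free with $r_0\le|F|<r_1$ on it, and the bounded harmonic function $\log|F|$ on $W$ has boundary values identically $\log r_1$ on $\partial W\cap\mathbb D$ and, a.e., equal to $0>\log r_1$ on $\partial W\cap\partial\mathbb D$, so the minimum principle forces $\log|F|\ge\log r_1$ on $W$, a contradiction. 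Since the moduli of the critical values of $F$ form a countable set, and since for a one component inner function $\sing(F)\Subset\mathbb D$ (the equivalence of the two definitions recalled above; if preferred, the needed consequences can be extracted from the same maximum-principle arguments), I would fix an $r$ with $F^{-1}(\mathbb D_r)$ connected and $\sup\{|v|:v\in\sing(F)\}<r<1$, and set $\Omega:=F^{-1}(\mathbb D_r)$.

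Next I would record two consequences of the minimum principle: first, $\overline\Omega\cap\mathbb D=\{|F|\le r\}\cap\mathbb D$, because no point of $\{|F|=r\}$ is a strict local minimum of $|F|$; second, no component $\Omega_i$ of $\{|F|\ge r\}$ is relatively compact in $\mathbb D$, since otherwise $\log|F|$ would be harmonic on its interior with constant boundary value $\log r$, forcing $F$ constant. From these I would derive the dichotomy: $\overline\Omega\Subset\mathbb D$ if and only if $F$ is a finite Blaschke product. One direction is immediate, since a finite Blaschke product extends continuously to $\overline{\mathbb D}$ with unimodular boundary values, so $\overline\Omega\subseteq\{|F|\le r\}\Subset\mathbb D$; conversely, a non-Blaschke $F$ has infinitely many zeros or a nontrivial singular inner factor, hence $|F|$ tends to $0$ along a sequence approaching $\partial\mathbb D$ — a sequence of zeros, or a radius to a point where the singular measure has infinite symmetric derivative — so $\overline\Omega$ reaches $\partial\mathbb D$. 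When $\overline\Omega$ reaches $\partial\mathbb D$, so does the closure of each $\Omega_i$, whence $\widehat{\mathbb C}\setminus\Omega=(\widehat{\mathbb C}\setminus\mathbb D)\cup\bigcup_i\Omega_i$ and $\widehat{\mathbb C}\setminus\interior(\Omega_i)=(\widehat{\mathbb C}\setminus\mathbb D)\cup\Omega\cup\bigcup_{j\ne i}\Omega_j$ are connected; since a domain in $\widehat{\mathbb C}$ with connected complement is simply connected, both $\Omega$ and every $\interior(\Omega_i)$ are simply connected.

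In the compact case $\overline\Omega\Subset\mathbb D$, the map $F|_\Omega:\Omega\to\mathbb D_r$ is proper — preimages of compact subsets of $\mathbb D_r$ are closed in $\overline\Omega$ and bounded away from $\partial\Omega$ — hence a branched cover of some finite degree $d$, so $F$ has exactly $d$ zeros; factoring $F=cBS$ with $B$ the degree-$d$ Blaschke product on those zeros and $S$ a zero-free singular inner function, the dichotomy forces $S$ constant, so $F$ is a Blaschke product of degree $d$, and since $r$ avoids the critical values, $\{|F|=r\}$ is a finite union of analytic circles, of which the minimum principle together with simple connectivity of $\Omega$ leaves exactly one — a Jordan curve bounding $\Omega$. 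In the non-compact case, since $r$ lies beyond every critical and asymptotic value of $F$, the restriction $F:\{|F|>r\}=F^{-1}(A(0;r,1))\to A(0;r,1)$ is an unbranched holomorphic covering; its restriction to a component $\interior(\Omega_i)$, which is simply connected, is therefore the universal covering of $A(0;r,1)$ — equivalently, lifting through the exponential universal cover $\{\log r<\im\zeta<0\}\to A(0;r,1)$ yields a proper holomorphic bijection onto the strip. Identifying $\interior(\Omega_i)$ with that strip pins down the boundary of $\Omega_i$ (one analytic arc along $\{|F|=r\}$ in $\mathbb D$, one arc of $\partial\mathbb D$ along $\{|F|=1\}$, meeting at the two ``ends''), so $\Omega_i$ is a Jordan domain, and these inner arcs together with the intervening arcs of $\partial\mathbb D$ assemble, using the simple connectivity of $\Omega$, into a single Jordan curve bounding $\Omega$. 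Finally, $F$ extends holomorphically with unimodular values across the $\partial\mathbb D$-arc of $\partial\Omega_i$, so Schwarz reflection $F(1/\bar z)=1/\overline{F(z)}$ gives $F:\widetilde\Omega_i\to A(0;r,1/r)$, again a universal covering because the double of the strip across one side is the strip $\{\log r<\im\zeta<\log(1/r)\}$.

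The main obstacle is the boundary analysis near $\partial\mathbb D$ in the non-compact case: one must show that each bay $\interior(\Omega_i)$ genuinely meets $\partial\mathbb D$ along an open arc of points at which $F$ extends holomorphically with $|F|=1$, so that the Schwarz reflection is legitimate and $\partial\Omega$, $\partial\Omega_i$ are honest Jordan curves rather than topologically pathological. This is precisely where the one component hypothesis is essential: it is the compactness of $\sing(F)$ in $\mathbb D$ that makes $F$ a covering over the \emph{entire} annulus $A(0;r,1)$, which — through the identification of $\interior(\Omega_i)$ with the universal-cover strip — forces the clean boundary picture; without it, components of $\{|F|>r\}$ could fail to be properly covered and their closures could approach $\partial\mathbb D$ in an uncontrolled way.
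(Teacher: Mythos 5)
Your overall architecture (minimum-principle dichotomy, simple connectivity via connected complements in $\hat{\mathbb{C}}$, then covering-space theory) is reasonable and genuinely different from the paper's route, but it has a circularity at its crux. Your proof of the universal covering property rests on choosing $r$ beyond all singular values of $F$, i.e.\ on the fact that for a one component inner function (in Cohn's sense, $F^{-1}(\mathbb{D}_r)$ connected) the singular values are compactly contained in $\mathbb{D}$. That equivalence is Theorem \ref{characterizations-of-one-component-functions}, and its proof of (a) $\Rightarrow$ (b) \emph{is} essentially this lemma: the statement ``$F$ is an unbranched covering over $A(0;r,1)$'' that you invoke to make $F|_{\interior(\Omega_i)}$ a universal covering of the annulus is exactly condition (b), which the paper derives \emph{from} the covering structure on the $\Omega_i$'s. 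Your parenthetical claim that the needed consequences ``can be extracted from the same maximum-principle arguments'' is where the real work lives and is not substantiated: from connectivity of $F^{-1}(\mathbb{D}_\rho)$ alone you must exclude both critical values \emph{and} asymptotic values of $F$ in the annulus $\{r<|w|<1\}$ (a locally injective holomorphic map onto the annulus need not be a covering if path lifting fails at an asymptotic value), and the minimum principle does not obviously deliver either exclusion. The paper avoids this entirely: it proves the universal covering property directly by lifting $F|_{\Omega_i}$ to the universal cover of $A(0;r,1)$ and showing the lift is a M\"obius transformation via Lemma \ref{covering-lemma}, using only the continuous extension of $F$ across the circular arc $I_i$ (which follows from the cluster set Lemma \ref{cluster-set}, since the cluster set at points of $I_i$ avoids $\mathbb{D}_r$).

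A secondary gap: your derivation of the Jordan property of $\partial\Omega$ and $\partial\Omega_i$ in the infinite-degree case is only sketched. Passing from the conformal identification of $\interior(\Omega_i)$ with a strip to the assertion that the two ``ends'' of the strip correspond to genuine single boundary points of $\overline{\mathbb{D}}$ (rather than nondegenerate prime ends), and then assembling infinitely many arcs $\gamma_i$ --- which accumulate on $\Sigma$ --- into one Jordan curve bounding $\Omega$, both require argument. The paper outsources exactly this to Lemma \ref{preimage-components}, quoted from \cite{IK22}. If you repair the covering step along the paper's lines (or give an honest direct proof that $F$ has no critical or asymptotic values over the annulus), the rest of your proposal goes through, but as written the central claim is assumed rather than proved.
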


We say that an inner function $F$ is {\em singular} at a point $\zeta \in \partial \mathbb{D}$ if it does not admit an analytic extension to a neighbourhood of $\zeta$. 
Let $\Sigma \subset \partial \mathbb{D}$ be the set of analytic singularities of $F$. It is clear from this definition that $\Sigma$ is a closed set.
While one usually thinks of inner functions as holomorphic self-maps of the unit disk, one may also view $F$ as a meromorphic function 
on $ \hat{\mathbb{C}} \setminus \Sigma$.

The following theorem provides a converse to Lemma \ref{one-component-anatomy}, as well as a number of other characterizations of one component inner functions:

\begin{theorem}
\label{characterizations-of-one-component-functions}
Let $F: \mathbb{D} \to \mathbb{D}$ be an inner function. The following conditions are equivalent:

{\em (a)} The set $F^{-1}(B(0,r))$ is connected for some $0 < r < 1$.

{\em (b)} There is an annulus $A = A(0; \rho, 1)$ such that $F: \mathbb{D} \to \mathbb{D}$ is a covering map over $A$.

{\em (c)} There is an annulus $\widetilde{A} = A(0; \rho, 1/\rho)$ such that $F: \sph \setminus \Sigma \to \sph$ is a covering map over $\widetilde{A}$.

{\em (d)} The singular set $\Sigma \subset \partial \mathbb{D}$ has Lebesgue measure $0$, the derivative $F'(\zeta) \to \infty$ as $\zeta \in \partial \mathbb{D}\setminus \Sigma$ approaches $\Sigma$, and
\begin{equation}
\label{eq:analytic-definition}
\biggl | \frac{F''(\zeta)}{F'(\zeta)^2} \biggr | \le C, \qquad \zeta \in \partial \mathbb{D} \setminus \Sigma,
\end{equation}
for some constant $C > 0$.

If any of the above conditions hold, then $\Sigma$ is equal to the set of points on the unit circle where the radial limit of $F$ does not exist or is not unimodular.
\end{theorem}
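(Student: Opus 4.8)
I would prove the four conditions equivalent through the cycle $(a)\Rightarrow(b)\Rightarrow(c)\Rightarrow(d)\Rightarrow(a)$. Here $(a)\Rightarrow(b)$ is essentially Cohn's Lemma~\ref{one-component-anatomy}: if $F^{-1}(\mathbb{D}_r)$ is connected and compactly contained then $F$ is a finite Blaschke product and, taking $\rho$ above the moduli of its finitely many critical values, $F$ covers $A(0;\rho,1)$; otherwise Cohn's description gives that $F$ restricts to a covering of $A(0;r,1)$ on each component of $\mathbb{D}\setminus F^{-1}(\mathbb{D}_r)$, hence on $F^{-1}(A(0;r,1))$. For $(b)\Rightarrow(c)$ one reflects: $F(1/\bar z):=1/\overline{F(z)}$ extends $F$ holomorphically to $\sph\setminus\Sigma$ and the reflected copy of the covering in $(b)$ glues with the original to a covering of $\widetilde{A}=A(0;\rho,1/\rho)$. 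For $(c)\Rightarrow(d)$, a holomorphic covering map has nonvanishing derivative on $F^{-1}(\widetilde{A})$, which contains all of $\partial\mathbb{D}\setminus\Sigma$; the blow-up $F'(\zeta)\to\infty$ at $\Sigma$ and the bound on $F''/(F')^2$ both come from the explicit form of $F$ near $\Sigma$ furnished by the half-strip model below, while $\Sigma$ is null because a.e.\ $\zeta$ has $|F(r\zeta)|\to1$, so its radius enters a Cohn component $\Omega_i$ and $\zeta\in\overline{J_i}$ with $J_i:=\widetilde{\Omega}_i\cap\partial\mathbb{D}$; thus $\Sigma\subseteq\partial\mathbb{D}\setminus\bigcup_iJ_i$ differs from a full-measure set by the countable set of endpoints of the $J_i$. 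The remaining implication $(d)\Rightarrow(a)$, the converse to Cohn's lemma, is the main obstacle and is discussed last.

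\textbf{Identifying $\Sigma$.} Let $N\subseteq\partial\mathbb{D}$ denote the set of points at which the radial limit of $F$ does not exist or is not unimodular; we want $\Sigma=N$. The inclusion $N\subseteq\Sigma$ holds for every inner function: if $\zeta\notin\Sigma$ then $F$ is analytic near $\zeta$, $|F|=1$ a.e.\ on the boundary arc through $\zeta$ hence everywhere there by continuity, so the radial limit at $\zeta$ equals $F(\zeta)$ and is unimodular. For the reverse inclusion $\Sigma\subseteq N$ — where one-componentness is indispensable — suppose for contradiction that $\zeta\in\Sigma$ but $F(r\zeta)\to\alpha$ with $|\alpha|=1$. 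Fix $r<1$ in the range where $F^{-1}(\mathbb{D}_r)$ is connected; since $|F(r\zeta)|\to1>r$, the radial segment $\gamma(t)=t\zeta$ eventually lies in $\{\,|F|>r\,\}$ and, being connected, inside a single Cohn component $\Omega_i$, so $\zeta\in\overline{\Omega_i}\cap\partial\mathbb{D}=\overline{J_i}$; as $\zeta\in\Sigma$ while $J_i\subseteq\partial\mathbb{D}\setminus\Sigma$, the point $\zeta$ is an endpoint of $J_i$. Now uniformize: $F\colon\Omega_i\to A(0;r,1)$ is a universal covering and $z=e^{iw}$ uniformizes $A(0;r,1)$ by the half-strip $S^{+}=\{\,0<\im w<\log(1/r)\,\}$, so there is a conformal $\Phi\colon\Omega_i\to S^{+}$ with $F=e^{i\Phi}$ carrying $J_i$ to $\mathbb{R}$, the curve $\{|F|=r\}\cap\partial\Omega_i$ to the line $\{\im w=\log(1/r)\}$, and the endpoints of $J_i$ to the two ends $\re w\to\pm\infty$. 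Because $\Phi^{-1}$ extends continuously to the interior and to both boundary lines of $S^{+}$, the curve $\Phi(\gamma(t))$ admits no bounded subsequential limit — any such limit would force $t\zeta$ to converge to a point of $\Omega_i$, of the open arc $J_i$, or of $\{|F|=r\}\subset\mathbb{D}$, each impossible since $t\zeta\to\zeta\in\Sigma$. Hence $|\Phi(\gamma(t))|\to\infty$; since $|F(\gamma(t))|=e^{-\im\Phi(\gamma(t))}\to1$ forces $\im\Phi(\gamma(t))\to0$, we get $\re\Phi(\gamma(t))\to\pm\infty$, so the argument of $F(t\zeta)$, which is $\re\Phi(\gamma(t))$ up to multiples of $2\pi$, winds without bound and $F(t\zeta)$ cannot converge — contradicting $\zeta\notin N$. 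Therefore $\Sigma=N$.

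\textbf{The main obstacle.} The hard step is $(d)\Rightarrow(a)$: reconstructing connectedness of $F^{-1}(\mathbb{D}_r)$ for some $r$ from the analytic data that $\Sigma$ is null, $F'\to\infty$ at $\Sigma$, and $|F''/(F')^2|\le C$ on $\partial\mathbb{D}\setminus\Sigma$. The plan is to use $F''/(F')^2=-(1/F')'$ — so that $1/F'$ is Lipschitz on $\partial\mathbb{D}\setminus\Sigma$ and vanishes at $\Sigma$ — to control the level curves $\{|F|=r\}$ near $\partial\mathbb{D}$, ruling out pathological accumulation and showing that for small $r$ they form a single Jordan curve, so $\{|F|<r\}$ is connected. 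A subsidiary and easier point, already used above, is to check from Cohn's anatomy that $\overline{\Omega_i}\cap\partial\mathbb{D}=\overline{J_i}$ and that $\Sigma=\partial\mathbb{D}\setminus\bigcup_iJ_i$, using that $F$ is inner so $|F|=1$ exactly on each $J_i$.
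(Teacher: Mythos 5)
Your cycle $(a)\Rightarrow(b)\Rightarrow(c)$ and your identification of $\Sigma$ with the set of bad radial limits are essentially sound and close to the paper's treatment (the paper gets $\Sigma\subseteq N$ more quickly from Lemma~\ref{cluster-set} and a Koebe argument, but your half-strip winding argument is a legitimate alternative). The problem is how you close the loop.

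\textbf{The gap: $(d)\Rightarrow(a)$ is not proved, and the sketched plan would not work as stated.} You leave the "main obstacle" as a plan: make $1/F'$ Lipschitz on $\partial\mathbb{D}\setminus\Sigma$ via $F''/(F')^2=-(1/F')'$ and use this to show that "for small $r$" the level set $\{|F|=r\}$ is a single Jordan curve. Two objections. First, the direction of $r$ is backwards: for $F^{-1}(\mathbb{D}_r)$ to be connected one needs $r$ close to $1$, not small ($\{|F|<r\}$ for small $r$ is a union of small neighbourhoods of the zeros of $F$ and is typically disconnected). Second, and more seriously, hypotheses $(d)$ only constrain $F'$ and $F''$ \emph{on the unit circle}, while the level curves $\{|F|=r\}$ live in the interior of the disk; passing from boundary derivative bounds to control of interior level sets is precisely the hard analytic content, and no mechanism for it is offered. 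The paper avoids $(d)\Rightarrow(a)$ entirely: it proves $(d)\Rightarrow(c)$ and separately $(c)\Rightarrow(a)$. For $(d)\Rightarrow(c)$, the bound $|F''/(F')^2|\le C$ plus Gronwall gives $|F'(\omega)|<2|F'(\zeta)|$ on an arc of length $\asymp 1/|F'(\zeta)|$ about $\zeta$; the blow-up condition $F'\to\infty$ at $\Sigma$ forces this arc to avoid $\Sigma$, so $F$ extends meromorphically to a ball $B(\zeta,c/|F'(\zeta)|)$; a rescaling/normal-families argument gives injectivity on a slightly smaller ball, and Koebe's Quarter Theorem then shows $F$ covers a ball of \emph{definite} radius $c_3$ about $F(\zeta)$, i.e.\ $F$ is a covering over $A(0;1-c_3,1/(1-c_3))$. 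Then $(c)\Rightarrow(a)$ is a short topological step: the components of $F^{-1}(A(0;r,1/r))$ are Jordan domains symmetric about the circle whose closures can only meet inside $\Sigma$, so $\mathbb{D}\setminus F^{-1}(A(0;r,1/r))$ is connected. You should restructure your proof along these lines.

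\textbf{A secondary circularity.} In your $(c)\Rightarrow(d)$ step you prove $m(\Sigma)=0$ by sending radii into "Cohn components $\Omega_i$", but the Cohn decomposition is the content of $(a)$/$(b)$, which in your cycle $(a)\Rightarrow(b)\Rightarrow(c)\Rightarrow(d)\Rightarrow(a)$ are not available when proving $(c)\Rightarrow(d)$. This is easily repaired: from $(c)$ alone, at a.e.\ $\zeta$ the radial limit exists and is unimodular, so the inverse branch of $F$ on $B(F(\zeta),1-\rho)$ together with Koebe yields a ball about $\zeta$ disjoint from $\Sigma$; hence $\Sigma$ is null and $F'\to\infty$ at $\Sigma$. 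This is also exactly the argument the paper uses.
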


\begin{remark}
(i) We say that  $v \in \mathbb{D}$ is a {\em regular value} of an inner function $F: \mathbb{D} \to \mathbb{D}$ if $F$ is a covering map over some open neighbourhood of $v$.
A point in the unit disk which is not a regular value is called a {\em singular value}\/. 

(ii) The equivalence (a) $\Leftrightarrow$ (d)  is due to Aleksandrov  \cite{aleksandrov}. 
 For additional characterizations of one component inner functions in terms of Carleson boxes and Aleksandrov-Clark measures, we refer the reader to \cite{NR}
and \cite{bessonov} respectively.
\end{remark}

To prove Theorem \ref{characterizations-of-one-component-functions}, we will use several facts about inner functions and cluster sets.
We begin by recalling \cite[Lemma 1.3]{IK22}:

\begin{lemma}
\label{preimage-components}
Let $F$ be an inner function. If $V$ is a Jordan domain compactly contained in the unit disk, then any connected
component of $F^{-1}(V)$ is a Jordan domain.
\end{lemma}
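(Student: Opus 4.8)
The plan is to reduce the statement to a local lifting argument using the covering-space structure of inner functions away from their singular set. First I would recall that an inner function $F$ is a proper holomorphic map (in the measure-theoretic sense) which, crucially, extends meromorphically to $\sph \setminus \Sigma$ where $\Sigma \subset \partial \mathbb{D}$ is the analytic singular set. Since $V$ is compactly contained in $\mathbb{D}$, the preimage $F^{-1}(V)$ is a relatively compact open subset of $\mathbb{D}$, hence stays a definite distance from $\partial\mathbb{D}$ (this uses that $|F| \to 1$ radially a.e.\ together with the maximum principle, or simply that $V \Subset \mathbb{D}$ forces points of $F^{-1}(V)$ to have $|F| < 1 - \delta$, and such points lie in a compact subset of $\mathbb{D}$ by the hyperbolic contraction property). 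Consequently $F$ restricted to a neighborhood of any component $U$ of $F^{-1}(V)$ is an honest holomorphic map between planar domains with no boundary pathology to worry about.

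Next I would argue that $F: U \to V$ is a proper map, and in fact a branched covering of some finite degree $d$. Properness follows because $U$ is a connected component of $F^{-1}(V)$: if $z_n \to \partial U$ then, since $\overline{U}$ is compact in $\mathbb{D}$ and $U$ is a full component of the preimage, $F(z_n)$ must approach $\partial V$. A proper holomorphic map between planar domains is a branched cover of finite degree, and since $V$ is a Jordan domain (hence simply connected), the Riemann–Hurwitz relation can be invoked, but the cleaner route is: $U$ is a planar domain, $V$ is simply connected, and a degree-$d$ branched cover over a simply connected surface from a connected surface forces $U$ to be simply connected as well — more precisely, $U$ is a $d$-fold branched cover of the disk, so $\pi_1(U)$ is trivial, hence $U$ is simply connected.

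Then, to upgrade "simply connected" to "Jordan domain," I would analyze the boundary behavior of $F$ on $\partial U$. Since $V \Subset \mathbb{D}$ and $\overline{U} \Subset \mathbb{D}$, the map $F$ is holomorphic in a full neighborhood of $\overline{U}$; the boundary $\partial V$ is a Jordan curve; and $\partial U \subset F^{-1}(\partial V)$ is a compact set on which $F$ is a local homeomorphism except at the finitely many critical points of $F$ lying over the (finitely many, though here $\partial V$ being a curve there are none on $\partial V$ as critical values are isolated and can be assumed off $\partial V$, or one perturbs $r$). At a non-critical boundary point $F$ is locally biholomorphic, so $\partial U$ is locally a real-analytic arc there; a standard argument — lifting the Jordan curve $\partial V$ through the covering $F: \partial U \to \partial V$ — shows each component of $\partial U$ is itself a Jordan curve, and simple connectivity of $U$ forces $\partial U$ to be connected, hence a single Jordan curve. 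Therefore $U$ is a Jordan domain.

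The main obstacle is controlling the boundary $\partial U$: one must rule out that it fails to be locally connected or that distinct arcs of $F^{-1}(\partial V)$ pinch together. I expect this to be handled by noting that $F$ is a genuine branched covering over a neighborhood of the \emph{closed} Jordan domain $\overline{V} \Subset \mathbb{D}$ (shrinking or perturbing $V$ slightly to avoid the isolated critical values if necessary, which is harmless since the lemma is applied with $V = \mathbb{D}_r$ and one may vary $r$), so that $F: \overline{U} \to \overline{V}$ is a proper branched cover of a closed disk, from which the Jordan structure of $\overline{U}$ is immediate by lifting. Since the lemma is quoted from \cite[Lemma 1.3]{IK22}, I would at this point simply cite that reference for the detailed boundary analysis rather than reproduce it, but the covering-space skeleton above is the essential content.
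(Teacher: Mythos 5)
Your argument breaks down at the very first step. You assert that $V \Subset \mathbb{D}$ forces $F^{-1}(V)$ to be relatively compact in $\mathbb{D}$, and everything afterwards (properness of $F|_U$, finite degree, holomorphy of $F$ on a neighbourhood of $\overline{U}$, treating $\partial U$ as a compact subset of $\mathbb{D}$ on which $F$ is a local homeomorphism) rests on this. But the claim is false for every inner function that is not a finite Blaschke product: the set $\{|F| < 1-\delta\}$ accumulates on the unit circle at the singular set of $F$ (e.g.\ at the accumulation points of the zeros of an infinite Blaschke product, or at the atom of a singular inner function such as $\exp\bigl(\frac{z+1}{z-1}\bigr)$). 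The paper itself makes this explicit in Lemma \ref{one-component-anatomy}: if $F^{-1}(\mathbb{D}_r)$ is compactly contained in $\mathbb{D}$, then $F$ \emph{is} a finite Blaschke product; otherwise $F^{-1}(\mathbb{D}_r)$ is a Jordan domain whose closure meets $\partial\mathbb{D}$ and on which $F$ has infinite degree. The ``hyperbolic contraction property'' you invoke (Schwarz--Pick) controls images of compact sets, not preimages, so it cannot rescue the claim.

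Consequently the genuinely hard part of the lemma is exactly what your argument assumes away: a component $U$ of $F^{-1}(V)$ may touch $\partial\mathbb{D}$, $F|_U$ is then neither proper onto $V$ nor of finite degree, and one must show that $\partial U$ is locally connected (indeed a Jordan curve) at the points of $\overline{U}\cap\partial\mathbb{D}$, where $F$ has no analytic continuation at all. This requires cluster-set considerations in the spirit of Lemma \ref{cluster-set} (the cluster set of $F$ at a singular point is the whole closed disk, which constrains how $\overline{U}$ can approach $\partial\mathbb{D}$), not local biholomorphy. The salvageable parts of your sketch are the simple connectivity of $U$ (which can be obtained by a maximum-principle argument applied to $\phi\circ F$ for a Riemann map $\phi: V\to\mathbb{D}$, without any compactness of $U$) and the local real-analyticity of $\partial U$ at interior noncritical preimages of $\partial V$; and for finite Blaschke products your covering argument is essentially complete. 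For the general case the argument as written does not prove the statement, and the reduction to \cite[Lemma 1.3]{IK22} at the end is not a proof but a restatement of the citation the paper already gives.
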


For a function $g$ on the unit disk and a point $\zeta \in \partial \mathbb{D}$, the {\em cluster set} $\Cl(g, \zeta)$ consists of all possible limits
of $g(z_n)$, for sequences $z_n \in \mathbb{D}$ converging to $\zeta$. The following lemma is well known, e.g.~see \cite[Theorem 2.6]{mashreghi}:

\begin{lemma}
\label{cluster-set}
Let $F$ be an inner function. A point $\zeta \in \partial \mathbb{D}$ belongs to $\Sigma$ if and only if the
cluster set  $\Cl(F, \zeta)$ is the closed unit disk.
\end{lemma}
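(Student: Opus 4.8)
The plan is to prove both implications by contraposition. The direction $\zeta\notin\Sigma\Rightarrow\Cl(F,\zeta)\neq\overline{\mathbb D}$ is immediate: if $F$ extends holomorphically to a neighbourhood of $\zeta$, then $F$ is continuous at $\zeta$, so $\Cl(F,\zeta)=\{F(\zeta)\}$ is a single point and in particular not all of $\overline{\mathbb D}$. (One may note in passing that $|F(\zeta)|=1$, since $|F^*|=1$ a.e.\ on $\partial\mathbb D$ forces $|F|\equiv 1$ on the arc across which $F$ is analytic, but this is not needed here.)

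For the converse I would prove the contrapositive: assume $\Cl(F,\zeta)\neq\overline{\mathbb D}$ and construct a holomorphic extension of $F$ across $\zeta$. Since $\Cl(F,\zeta)=\bigcap_{n}\overline{F(\mathbb D\cap B(\zeta,1/n))}$ is compact and, being closed, cannot contain all of $\mathbb D$ without equalling $\overline{\mathbb D}$, it misses some $a\in\mathbb D$; compactness then gives $r,\varepsilon>0$ with $|F-a|\ge\varepsilon$ on $U:=\mathbb D\cap B(\zeta,r)$. Composing $F$ with the disc automorphism $\phi_a$ taking $a$ to $0$ --- which keeps $F$ inner and, being biholomorphic on a neighbourhood of $\overline{\mathbb D}$, does not affect whether $F$ extends analytically across $\zeta$ --- we may assume $\delta:=\inf_U|F|>0$. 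Then $u:=-\log|F|$ is a bounded, strictly positive harmonic function on the Jordan domain $U$, and its nontangential boundary values vanish a.e.\ on the real-analytic arc $I:=\partial\mathbb D\cap B(\zeta,r)$, because $|F^*|=1$ a.e.\ there. Representing $u$ as the Poisson integral of its boundary data on $U$ (a bounded harmonic function on a Jordan domain is recovered this way), we conclude $u(z)\to 0$ as $z$ tends to any point of the open arc $I$; equivalently, after shrinking $r$ to some $r'$, $|F(z)|\to 1$ as $z\to\xi$ for every $\xi$ in the sub-arc $I':=\partial\mathbb D\cap B(\zeta,r')$.

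It remains to reflect. By the previous step $v:=\log|F|$ extends continuously to $U'\cup I'$ by $v|_{I'}\equiv 0$, where $U':=\mathbb D\cap B(\zeta,r')$, so Schwarz reflection across the analytic arc $I'$ yields a harmonic function $V$ on a simply connected neighbourhood $\Lambda$ of $I'$ with $V=v$ on $U'$. On the simply connected, zero-free region $U'$ we may write $F=e^{G}$ with $G$ holomorphic and $\re G=v$; picking a holomorphic $\widehat G$ on $\Lambda$ with $\re\widehat G=V$, the function $\widehat G-G$ is holomorphic on $U'$ with vanishing real part, hence an imaginary constant. Thus $G$, and with it $F=e^{G}$, extends holomorphically from $U'$ to $\Lambda\ni\zeta$, so $\zeta\notin\Sigma$. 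I expect the middle paragraph to carry the real weight: one must justify that a bounded harmonic function on a Jordan domain is the Poisson integral of its a.e.\ boundary values (so that the vanishing data on $I$ genuinely forces $u(z)\to 0$ at interior points of $I$, using that harmonic measure and arc length are mutually absolutely continuous on the analytic arc), and that the ensuing reflection across that analytic arc is legitimate; both are classical, but they are the substantive points, everything else being bookkeeping.
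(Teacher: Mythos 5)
Your proof is correct. Note first that the paper does not actually prove this lemma: it is stated as ``well known'' with a pointer to \cite[Theorem 2.6]{mashreghi}, so there is no in-paper argument to compare against; your write-up supplies the standard proof of this classical fact. The easy direction is handled correctly, and the substantive direction follows the canonical route: an omitted value $a\in\mathbb{D}$ of the cluster set gives, after composing with the automorphism $\phi_a$ (which changes neither innerness nor analytic continuability across $\zeta$), a lower bound $|F|\ge\delta>0$ on a lune $U$; then $u=-\log|F|$ is a bounded positive harmonic function on the Jordan domain $U$ whose boundary data vanish a.e.\ on the free arc $I$, forcing $u\to 0$ at interior points of $I$; and Schwarz reflection of $\log|F|$ plus exponentiation of a harmonic conjugate extends $F$ across $\zeta$. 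You correctly identify the two points that carry weight. For completeness I would add two words of justification at exactly those points: (i) the nontangential boundary values of $u$ computed relative to $U$ agree a.e.\ on $I$ with those computed relative to $\mathbb{D}$, since near an interior point of $I$ a Stolz angle in $\mathbb{D}$ eventually lies in $U$; and (ii) the representation $u=\int_{\partial U}u^*\,d\omega$ together with mutual absolute continuity of harmonic measure and arclength on $I$ follows by transporting to the disk with a Riemann map, which extends conformally across the preimage of the analytic arc $I$, so that the vanishing of the transported boundary function on an open arc of $\partial\mathbb{D}$ gives the pointwise limit $0$ there by localization of the Poisson kernel. With those remarks the argument is complete; the final reflection-and-exponentiation step is routine bookkeeping, as you say.
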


We will also need the following elementary observation:

\begin{lemma}
\label{covering-lemma}
Suppose $g: \mathbb{D} \to \mathbb{D}$ is a inner function, which extends continuously to $\partial \mathbb{D} \setminus \{-1,1\}$.
If $g$ preserves the upper and lower semicircles, then $g$ is a M\"obius transformation.
\end{lemma}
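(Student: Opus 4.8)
The plan is to conjugate $g$ into a holomorphic self-map of an infinite strip, where the hypothesis becomes a statement about the boundary values of a bounded harmonic function. Let $\phi(z)=i\frac{1-z}{1+z}$ be the Cayley transform carrying $\mathbb D$ onto the upper half-plane $\mathbb H$, with $\phi(1)=0$ and $\phi(-1)=\infty$; it carries the open upper semicircle $I_+$ onto $(0,\infty)$ and the open lower semicircle $I_-$ onto $(-\infty,0)$. Composing with a branch of the logarithm, $L:=\log\circ\,\phi$ maps $\mathbb D$ conformally onto the strip $\mathbb S:=\{w:0<\im w<\pi\}$, sending $I_+$ to the lower edge $\mathbb R$, $I_-$ to the upper edge $\mathbb R+i\pi$, and the two possible singular points $\pm1$ to the two ends of $\mathbb S$. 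Put $\widetilde g:=L\circ g\circ L^{-1}$, a holomorphic self-map of $\mathbb S$. Since $g$ is inner, $|g|\equiv1$ on $\partial\mathbb D\setminus\{\pm1\}$ by the assumed continuity, and the hypothesis that $g$ preserves the two semicircles means precisely that $g(I_\pm)\subseteq\overline{I_\pm}$; in the strip this reads $\widetilde g(\mathbb R)\subseteq\mathbb R\cup\{\pm\infty\}$ and $\widetilde g(\mathbb R+i\pi)\subseteq(\mathbb R+i\pi)\cup\{\pm\infty\}$.

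Next I would study $v:=\im\widetilde g$. As $\widetilde g$ maps $\mathbb S$ into $\mathbb S$, $v$ is a bounded harmonic function on $\mathbb S$ with values in $(0,\pi)$. I claim its boundary values are $0$ a.e.\ on the lower edge and $\pi$ a.e.\ on the upper edge. On the lower edge $\widetilde g$ is continuous with values in $\mathbb R\cup\{\pm\infty\}$, and it takes a value at infinity only where $g$ sends a point of $I_+$ to $1$ or to $-1$; by Schwarz reflection (valid because $|g|\equiv1$ on $I_+$) $g$ is analytic across $I_+$, hence, being non-constant, attains the values $\pm1$ only on a discrete subset of $I_+$. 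Thus $v=0$ off a null set on the lower edge, and symmetrically $v=\pi$ a.e.\ on the upper edge. But $w\mapsto\im w$ is a bounded harmonic function on $\mathbb S$ with precisely these a.e.\ boundary values, so by uniqueness of bounded harmonic functions on $\mathbb S$ with given a.e.\ boundary data --- $\mathbb S$ being conformally a disk, with its two ends carrying zero harmonic measure --- we get $v(w)=\im w$ throughout $\mathbb S$. Hence $\widetilde g(w)-w$ is holomorphic with vanishing imaginary part, so it is a real constant $b$, and $\widetilde g(w)=w+b$.

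Unwinding the conjugation, $g=L^{-1}\circ(w\mapsto w+b)\circ L$, that is $g(z)=\phi^{-1}\!\bigl(e^{b}\phi(z)\bigr)$. Since $\phi$ and the dilation $w\mapsto e^{b}w$ (with $e^{b}>0$, so it preserves $\mathbb H$) are M\"obius, $g$ is a M\"obius transformation, necessarily a conformal automorphism of $\mathbb D$. The only delicate point is the second paragraph: controlling $v$ up to $\partial\mathbb S$, and in particular showing that the exceptional set where $\widetilde g$ runs off to an end of the strip --- equivalently, where $g$ maps a semicircle point to $\pm1$ --- is negligible. This is exactly where one invokes the analyticity of $g$ across the open arcs $I_\pm$ furnished by the reflection principle; everything else is the maximum principle for bounded harmonic functions. (If one reads ``preserves'' as allowing $g$ to interchange the two semicircles, one simply applies the above to $-g$, which fixes each semicircle and is still inner.)
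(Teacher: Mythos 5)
Your proof is correct, but it follows a genuinely different route from the paper's. The paper stays inside the Aleksandrov--Clark framework it has just set up: since $g$ maps each open semicircle continuously and orientation-preservingly into the closure of that same semicircle without being surjective, it is injective on each semicircle; hence for any $\alpha$ avoiding the (at most two) radial limits at $\pm 1$, the fiber of $\alpha$ on the circle is a single point, so $\mu_\alpha$ is a unit point mass and the Herglotz identity (\ref{eq:ac-def}) reads $\frac{\alpha+g(z)}{\alpha-g(z)}=\frac{\zeta+z}{\zeta-z}$, which forces $g$ to be M\"obius (constancy being excluded). You instead conjugate by $\log\circ\,\phi$ to the strip, show $\im\widetilde g$ and $\im w$ are bounded harmonic functions with the same a.e.\ boundary values (the exceptional set $g^{-1}(\{\pm1\})\cap(I_+\cup I_-)$ being countable by Schwarz reflection, and the two ends carrying zero harmonic measure), and invoke the uniqueness of the Poisson representation of bounded harmonic functions to get $\widetilde g(w)=w+b$. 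Both arguments are sound; the paper's is a two-line consequence of machinery already on the table, while yours is self-contained, and it yields the sharper conclusion that $g$ is a hyperbolic automorphism fixing $\pm1$ (or the identity), which is in fact the form relevant to the covering-map application. Your care about the negligibility of the set where $\widetilde g$ escapes to the ends is exactly the right point to worry about, and your reflection argument handles it.
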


\begin{proof}
We begin by observing that $g$ is injective on $\partial \mathbb{D} \cap \mathbb{H}$ and  $\partial \mathbb{D} \cap \overline{\mathbb{H}}$, being orientation-preserving but not surjective.
 Let $\alpha$ be a point on the unit circle, which is not the radial limit of $g$ at $-1$ or $+1$, if they exist. As the Aleksandrov-Clark measure $\mu_\alpha$ can be supported on at most one point, (\ref{eq:ac-def}) tells us that $g$ is either a constant or a M\"obius transfomation. The assumption rules out the former case, so that $g$ is a M\"obius transformation.
\end{proof}

\begin{proof}[Proof of Theorem \ref{characterizations-of-one-component-functions}]

We show (a) $\Rightarrow$ (b) $\Rightarrow$ (c) $\Rightarrow$ (a) and (c) $\Leftrightarrow$ (d):

\medskip

(a) $\Rightarrow$ (b). By Lemma \ref{preimage-components}, $F^{-1}(\mathbb{D}_r)$ is a Jordan domain. If $F^{-1}(\mathbb{D}_r)$ is compactly contained in the unit disk, then $F$ is a finite Blaschke product. Otherwise, $\mathbb{D} \setminus  F^{-1}(\mathbb{D}_r)$ consists of countably many simply-connected regions $\Omega_i$, bounded by a curve $\gamma_i$ in the unit disk and an arc $I_i$ on the unit circle. Usually $\gamma_i$ and $I_i$ share two distinct endpoints, however, when there is only one complementary region, then $\gamma_i$ and $I_i$ meet twice at a single point.

In view of Lemma \ref{cluster-set}, $F$ extends continuously to each open arc $I_i$.
Therefore, $F$ is continuous on $\overline{\Omega_i}$, with possible exception of one or two points where $\gamma_i$ meets the unit circle. 
To see that $F$ acts as a universal covering map
from $\Omega_i$ onto the annulus $A = A(0; r, 1)$, we lift $F$ to the universal cover of $A = A(0; r, 1)$, compose with conformal maps as necessary and apply Lemma \ref{covering-lemma}.

\medskip

(b) $\Rightarrow$ (c). Suppose $F: \mathbb{D} \to \mathbb{D}$ is a covering map over an annulus $A(0; \rho, 1)$. Let $\Omega_1, \Omega_2, \dots$ denote the connected components of $F^{-1}(A(0; \rho, 1))$. For each $i = 1, 2, \dots$, let $\widetilde{\Omega}_i$ be the double of $\Omega_i$ across the unit circle.
In view of Lemma \ref{cluster-set}, $F$ extends holomorphically across $I_i = \widetilde{\Omega}_i \cap \partial \mathbb{D}$. From the Schwarz reflection principle, it follows that $F$ is a covering map from $\widetilde{\Omega}_i$ to $A(0; \rho, 1/\rho)$.
 
\medskip

(c) $\Rightarrow$ (a). Take a slightly smaller annulus $A(0; r, 1/r) \subset A(0; \rho, 1/\rho)$. Each connected component of $F^{-1}(A(0; r, 1/r))$ is a Jordan domain $\widetilde{\Omega}_i$ which is symmetric with respect to the unit circle. The boundaries of the connected components can only meet at points of $\Sigma$. As such, $\mathbb{D} \setminus F^{-1}(A(0; r, 1/r))$ is connected so that $F$ is a one component inner function.

\medskip

(c) $\Rightarrow$ (d). Let $\zeta \in \partial \mathbb{D}$ be a point at which the radial limit of $F$ belongs to the unit circle. By applying Koebe's Quarter Theorem to the holomorphic branch of $F^{-1}$ on the ball $B \bigl (F(\zeta), 1-\rho \bigr )$ that maps $F(\zeta)$ to $\zeta$, we get
$$
\mathscr B \, = \, B \biggl (\zeta, \, \frac{1-\rho}{4 |F'(\zeta)|} \biggr ) \, \subset \,\sph \setminus \Sigma.
$$
In particular, $\Sigma$ has Lebesgue measure $0$ and $F'(\zeta) \to \infty$ as $\dist(\zeta, \Sigma) \to 0$. 

Recall that the class $\mathcal S$ of conformal maps $\varphi: \mathbb{D} \to \mathbb{C}$ with $\varphi(0) = 0$ and $\varphi'(0) = 1$ is compact in the topology of uniform convergence on compact subsets. In particular, there exists a constant $C > 0$ so that the second derivative $|\varphi''(0)| \le C$ for every $\varphi \in \mathcal S$. After rescaling appropriately, we may apply this bound to $F$ on $\mathscr B$ to get (\ref{eq:analytic-definition}). 

\medskip

(d) $\Rightarrow$ (c).
Let $\zeta \in \partial \mathbb{D} \setminus \Sigma$. Together with Gronwall's inequality, (\ref{eq:analytic-definition}) implies that there exists $c > 0$ so that 
$$|F'(\omega)| < 2 |F'(\zeta)| \ \text{on the arc } \ I = I \biggl (\zeta,\, \frac{c}{|F'(\zeta)|} \biggr )$$ of length $2c/|F'(\zeta)|$ centered at $\zeta$. The assumption that $F'(\omega) \to \infty$ as $\dist(\omega, \Sigma) \to 0$ assures us that $I \subset \partial \mathbb{D} \setminus \Sigma$. By shrinking $c > 0$ if necessary, we can guarantee that $F|_I$ omits an arc of length $\pi$.

In particular, $F: \mathbb{D} \cup I \cup (\sph \setminus \mathbb{D}) \to \sph$ is contained in a compact family
of meromorphic functions. An argument involving rescaling and normal families shows that
 $F$ is injective on the ball
$$
\mathscr B \, = \, B \biggl (\zeta, \,  \frac{c_2}{|F'(\zeta)|} \biggr ) \, \subset \,\sph \setminus \Sigma,
$$
for some $0 < c_2 < c$. Koebe's Quarter Theorem then shows that $F(\mathscr B)$ contains a ball centered at $F(\zeta)$ whose radius bounded is bounded below by $c_3 = c_2/4$. It remains to set $\rho = 1 - c_3$.
\end{proof}

\begin{remark}The proof of (c) $\Rightarrow$ (d) shows that for every integer $m \ge 2$ there exists a constant $C_m\in(0,+\infty)$ such that
\begin{equation}
\label{eq:analytic-definition-hd}
\biggl | \frac{F^{(m)}(\zeta)}{F'(\zeta)^m} \biggr | \le C_m, \qquad \zeta \in \partial \mathbb{D} \setminus \Sigma.
\end{equation}
For the original proof, see \cite[Corollary 1]{aleksandrov}. 
\end{remark}

We now describe two illuminating classes of examples of one component inner functions.

\paragraph*{Character-Automorphic Functions.} Let $\Gamma \subset \aut(\mathbb{D})$ be a Fuchsian group which possesses a fundamental polygon $P$ with finitely many sides, including at least one ideal side. In other words, $\Gamma$ is a geometrically finite Fuchsian group of co-infinite area. We can form an infinite Blaschke product whose zeros constitute an orbit of $\Gamma$:
$$
g(z) \, = \,  \prod_{\gamma \in \Gamma} -\frac{\gamma(0)}{|\gamma(0)|} \cdot \frac{z-\gamma(0)}{1-\overline{\gamma(0)}z}.
$$
The function $g$ is related to the Green's function of the Riemann surface $\mathbb{D}/\Gamma$.

Contrary to what one may initially expect, the function $g$ is not automorphic under $\Gamma$ but only character-automorphic. A {\em character} $v$ of a Fuchsian group $\Gamma \subset \aut(\mathbb{D})$ is a homomorphism from $\Gamma$ to the unit circle. A function $f$ on the unit disk is called {\em character automorphic} if 
$$
f(\gamma(z)) = v(\gamma) \cdot f(z), \qquad \gamma \in \Gamma.
$$

It is easy to see that if  $B(0, c)$ is a ball which meets all the finite sides of $P$, then the inverse image $F^{-1}(B(0,c))$ is connected. For further properties of the Blaschke product $g$, we refer the reader to \cite{pom2}.

\paragraph*{Stephenson's Cut and Paste Construction.} Another natural way of building inner functions is Stephenson's cut and paste construction, see \cite{stephenson, bishop}. We confine ourselves to a particular example but the reader can easily modify it to construct a plethora of other examples.

Take an infinite collection of tiles $T_j$ of the form $\mathbb{D} \setminus [1/2, 1)$, indexed by the integers. We form a simply-connected Riemann surface $S$ by gluing the lower side of $[1/2,1)$ in $T_{j}$ to the upper side of $[1/2,1)$ in $T_{j+1}$. The surface $S$ comes equipped with a natural projection to the disk $\mathbb{D}$ which sends a point in a tile $T_j$ to its representative in $\mathbb{D} \setminus [1/2,1)$. We may uniformize $S \cong \mathbb{D}$ by taking $0$ in the base tile $T_0$ to $0$. In this uniformizing coordinate, the projection $F:S\to \mathbb{D} \setminus \{1/2\}$ becomes a holomorphic self-map  of the disk. Since all the slits have been glued up, $F$ is an inner function, and a little thought shows that it is the universal covering map of $\mathbb{D} \setminus \{1/2\}$.

It is clear from this construction, that if $B(0,c)$ is a ball with radius $c\in (1/2,1)$, then the inverse image $F^{-1}(B(0,c))$ is connected.
 
\section{Thermodynamic Formalism for Centered One Component Inner Functions}
\label{sec:one-component}

In this section, we discuss centered one component inner functions from a dynamical point of view. We show that a one component inner function admits a Markov partition, which allows us to code the unit circle by a countable alphabet shift space. For one component inner functions of finite entropy, we prove a variational principle and show that the potential $\psi(z) = - \log|F'|$ has a mixing property needed for Orbit Counting.

\subsection{Markov Partitions}
\label{sec:markov-partitions}

Let $X \subset \mathbb{C}$ be a closed set, $\mu$ be a measure on $X$ and $F: X \to X$ be a conformal dynamical system defined $\mu$ almost everywhere. A {\em Markov partition} for $F$ is a countable collection of closed subsets $\{ X_i \}_{i \in I}$ called {\em tiles} which satisfy the following axioms:
\begin{enumerate}
\item The tiles $X_i$ cover $X$ up to a set of $\mu$ measure zero.
\item As subspaces of $X$, each tile $X_i$ is the closure of its interior.
\item The interiors of the tiles are disjoint.
\item $F|_{\Interior X_i}$ is injective and extends to a conformal map on a neighbourhood of $X_i$.
\item Up to a set of measure zero, $F(X_i)$ is a union of tiles.
\end{enumerate}
When working with Graph Directed Markov systems (GDMS), one additionally requires that each inverse branch $F^{-1}: F(X_i) \to X_i$ extends to a conformal map defined on a neighbourhood of $F(X_i)$.

We first construct a {\em classical Markov partition} for a centered one component inner function (with respect to the Lebesgue measure) and then modify it slightly in order to represent the unit circle as the limit set of a Graph Directed Markov System (GDMS), the concept defined and explored in \cite{MU}. For more recent expositions on GDMS, we refer the reader to \cite{URM22,KU1}.

Below, we will use the fact that a centered inner function is uniformly expanding on the unit circle:
\begin{equation}
\label{eq:uniformly-expanding}
\inf_{\zeta \in \partial \mathbb{D}} |F'(\zeta)|> 1,
\end{equation}
where we use the convention that $|F'(\zeta)| = \infty$ if $F$ does not have an angular derivative at $\zeta$ in the sense of Carath\'eodory. For a proof, see \cite[Theorem 4.15]{mashreghi}. For one component inner functions, this convention amounts to setting $|F'(\zeta)| = \infty$ on $\Sigma \subset \partial \mathbb{D}$.

\paragraph*{Finite Blaschke Products.}
Suppose first that $F$ is a finite Blaschke product of degree $d \ge 2$, for example $z \mapsto z^d$. Let $p$ be one of the $d-1$ repelling fixed points of $F$ on the unit circle.
The set $F^{-1}(p)$ divides the unit circle into $d$ open arcs $I_1, I_2, \dots, I_d$, which are mapped by $F$ bijectively onto $\partial \mathbb{D} \setminus \{ p \}$.

This Markov partition allows us to code points on the unit circle by infinite words in $\{1, 2, \dots, d\}^{\mathbb{N}}$.
More precisely, we say that a point $\zeta \in \partial \mathbb{D}$ is coded by 
$$
e_0 e_1 e_2 \dots
$$
 if $e_n = i$ whenever $F^{\circ n}(\zeta) \in \overline{I_i}$. It is easy to see that any infinite word $\{1, 2, \dots, d\}^{\mathbb{N}}$ arises as the code of some point on the unit circle, and all but countably many
 points are coded by unique words. The {\em exceptional set} $\mathscr E$ consists of the endpoints of the arcs $I_i$ and their iterated pre-images under $F$. Points in $\mathscr E$ have two codings, while all other points have exactly one code.

\paragraph*{Infinite-Degree Inner Functions.}
We now assume that $F$ is a centered one component inner function that is not a finite Blaschke product. 
In this case, the singular set $\Sigma \subset \partial \mathbb{D}$ of $F$ is not empty.

To construct a Markov partition of the unit circle for $F$, fix an arbitrary point $p \in \partial \mathbb{D}$ on the unit circle. Let $\{ J_k \}_{k=1}^\infty$ be the collection complementary arcs in $\partial \mathbb{D} \setminus \Sigma$. Since for each $k\in\mathbb N$, the restriction $F|_{J_k}$ is an infinite degree covering map of $\partial \mathbb{D}$, the set $F^{-1}(p)$ partitions each $J_k$ into infinitely many arcs: $J_k = \bigcup_{l=-\infty}^\infty \overline{I_{k,l}}$. For convenience, we enumerate the arcs $\{I_i\} = \{ I_{k,l} \} $ by a single index $i\in\mathbb N$.

As $F(I_i) = \partial \mathbb{D} \setminus \{ p \}$,  each closed arc $\overline{I_i}$ contains a fixed point of $F$. In particular, $F$ has infinitely many fixed points. We may therefore assume that $p$ is one of these fixed points.

We again have a near-bijection between infinite words in $\{1, 2, \dots \}^{\mathbb{N}}$ and points in $\partial \mathbb{D}$. This time, the exceptional set $\mathscr E$ of points without unique codings consists of iterated pre-images of endpoints of the arcs $I_i$, $i\in\mathbb N$, as well as points of $\Sigma$. The former have two codings, while the latter have no codings. Since the singular set $\Sigma$ has Lebesgue measure zero, so does $\mathscr E$.

\paragraph*{Graph Directed Markov Systems.}

From the perspective of GDMS, the Markov partitions constructed above are slightly inadequate. Arbitrarily choose a point $q \in \Sigma$. The points $p$ and $q$ divide the unit circle into two arcs $X_1$ and $X_2$, which we call {\em tiles} of the GDMS. For each tile $X_j$, $j=1,2$, choose an open set $X_j \subset U_j \subset A(0; r, 1/r)$, where $A(0; r, 1/r)$ is an annulus over which $F$ is a covering map.

By construction, each arc $I_i$ in the classical Markov partition is wholly contained in one of the two tiles. Recall that
$F(I_i) = \partial \mathbb{D} \setminus \{ p \}$.
We can 
 split $I_i$ into two smaller arcs $I_{i,1}$ and $I_{i,2}$ with $F(I_{i,1}) = X_1$ and $F(I_{i,2}) = X_2$. We refer
 to the collection of intervals $\{ I_n \} = \{I_{i, j}\}$ as the {\em Markov partition for the GDMS}.
 
 We write $\phi_{i,j}: U_j \to \mathbb{C}$ for the holomorphic branch of $F^{-1}$ which maps $X_j$ onto $I_{i,j}$. We define the {\em alphabet} as the set of these  {\em contractions} $\{ \phi_{n} \} = \{ \phi_{i,j} \}$.
Two contractions $\phi_m$ and $\phi_n$ can only be composed if the image of $\phi_n$ is contained in the tile on which $\phi_m$ is defined. We say that a word (finite or infinite)
$$
e_0 e_1 e_2 \dots
$$
is {\em admissible} if for any $n \ge 0$, the composition $\phi_{e_n} \circ \phi_{e_{n+1}}$ makes sense. It is easy to see that the incidence matrix which indicates admissible compositions is finitely primitive (see Section \ref{sec:shift-shape} for the definition).

The normalization $F(0) = 0$ guarantees that the inverse branches $\phi_{e_n}$ contract by a definite factor. In particular, admissible infinite words code single points. The set of admissible infinite words is almost in bijection with points on the unit circle: the only obstruction being that points in $\mathscr E$ do not have a unique coding. 

\medskip

From the topological description of a one component inner function (Lemma \ref{one-component-anatomy}), it follows that if $F$ has a radial boundary value at a point $\zeta \in \Sigma$, then it cannot lie on the unit circle.

\begin{lemma}
Let $F$ be a one-component centered inner function.
For any $\alpha \in \partial \mathbb{D}$, the Aleksandrov-Clark measure $\mu_\alpha$ does not charge the set $\Sigma$. In particular, each measure $\mu_\alpha$ is discrete and the adjoint definition of the transfer operator coincides with the classical one.
\end{lemma}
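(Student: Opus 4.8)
The plan is to deduce the claim about Aleksandrov-Clark measures from the fact that each $\mu_\alpha$ is concentrated on the set $\mathcal R_F$ of points where the radial limit of $F$ exists and is unimodular (see Lemma~\ref{composition-operators-and-radial-limits}), together with the observation that for a one component inner function no point of $\Sigma$ belongs to $\mathcal R_F$. First I would recall from Theorem~\ref{characterizations-of-one-component-functions} that for a one component inner function the singular set $\Sigma$ is precisely the set of points on the unit circle whose radial limit fails to exist or fails to be unimodular; equivalently, $\Sigma \cap \mathcal R_F = \emptyset$. Alternatively, one can argue geometrically via Lemma~\ref{one-component-anatomy}: near a point $\zeta \in \Sigma$ the map $F$ restricted to the annulus $A(0;r,1/r)$ is a covering map over that annulus, so the cluster set of $F$ at $\zeta$ is the closed disk (Lemma~\ref{cluster-set}), and in particular there is no radial limit in $\partial\mathbb D$ at $\zeta$.

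Next I would invoke the concentration property of Aleksandrov-Clark measures stated in Section~\ref{sec:composition-operators-on-measures}: each $\mu_\alpha$ is supported on $\mathcal R_F$, i.e.\ $\mu_\alpha(\mathcal R_F^c) = 0$. Since $\Sigma \subset \mathcal R_F^c$ by the previous paragraph, it follows immediately that $\mu_\alpha(\Sigma) = 0$ for every $\alpha \in \partial\mathbb D$. This is the crux of the lemma and requires no computation beyond assembling these two facts.

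For the remaining assertions, I would argue that $\mu_\alpha$ is discrete as follows. The measure $\mu_\alpha$ is concentrated on $F^{-1}(\alpha) \setminus \Sigma$, and at each point $\zeta$ of this set the map $F$ extends analytically to a neighborhood of $\zeta$ (since $\zeta \notin \Sigma$), so $F$ has a finite, nonzero angular derivative there. Using the uniform expansion estimate $|F'(\zeta)| > 1$ on $\partial\mathbb D \setminus \Sigma$ from \eqref{eq:uniformly-expanding} (or merely that $F$ is a local diffeomorphism near each such $\zeta$), the set $F^{-1}(\alpha) \setminus \Sigma$ is discrete in $\partial\mathbb D \setminus \Sigma$, hence at most countable; by the description of atoms of $\mu_\alpha$ recalled in Section~\ref{sec:composition-operators}, $\mu_\alpha$ has an atom of mass $|F'(\zeta)|^{-1}$ at each such $\zeta$. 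Since $\mu_\alpha$ puts no mass on $\Sigma$ and no mass off $F^{-1}(\alpha)$, these atoms carry the full mass of $\mu_\alpha$, so $\mu_\alpha$ is purely atomic. The final sentence — that the adjoint definition of the transfer operator agrees with the classical one \eqref{eq:classical-transfer} — then follows from the discussion after \eqref{eq:modern-transfer}, or directly by plugging the explicit atomic decomposition of $\mu_\alpha$ into the representation $(L_F g)(\alpha) = \int g \, d\mu_\alpha$.

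The main obstacle, such as it is, is purely one of bookkeeping: one must be careful that "discrete'' for $\mu_\alpha$ means purely atomic (a countable convex combination of point masses) and verify that the atoms exhaust the mass, which is exactly what the vanishing $\mu_\alpha(\Sigma)=0$ guarantees once combined with concentration on $F^{-1}(\alpha)$. No delicate estimate is needed; everything reduces to the structural facts already established about $\Sigma$, $\mathcal R_F$, and Aleksandrov-Clark measures.
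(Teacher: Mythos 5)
Your proposal is correct and follows essentially the same route as the paper, which justifies the lemma in one sentence by noting (via the topological description of one component inner functions) that a radial boundary value at a point of $\Sigma$ cannot be unimodular, so that $\mu_\alpha$, being concentrated on points with radial limit equal to $\alpha$, gives no mass to $\Sigma$. Your additional bookkeeping for discreteness — countability of $F^{-1}(\alpha)\setminus\Sigma$ and the atoms of mass $|F'(\zeta)|^{-1}$ exhausting the total mass — is exactly the intended (and correct) completion of the argument.
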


As a result, little is lost when passing to the subset of points on the unit circle which have unique codes.

\subsection{Some Properties of the Potential \texorpdfstring{$-\log |F'|$}{-log |F'|}}
\label{sec:some-properties-of-the-geometric-potential}

We now record some properties of the potential 
$$
\psi(z) = -\log|F'(z)|
$$ 
associated to a centered one component inner function $F$ which will be used in the sequel. In the language of Part \ref{shift-space}, one would say that $\psi$ is a normal potential.

\begin{enumerate}
\item (Summable) The sum over the intervals $I_n$ in the Markov partition (for the GDMS),
$$
\sum_{n=1}^\infty \sup_{I_n} |e^\psi|   < \infty.
$$
Indeed, by Koebe's Distortion Theorem, the above sum is essentially
$$
\sum_{n=1}^\infty \sup_{I_n} |e^\psi| \, \asymp \, \sum_{n=1}^\infty | I_n| \, = \, 2\pi.
$$
More generally, for any integer $p \ge 0$, we say that a potential $\psi$ is {\em $(1,p)$-summable} if
$$
\sum_{n=1}^\infty  \sup_{I_n} |\psi^p e^\psi|  < \infty.
$$
By Koebe's Distortion Theorem, the above  condition is equivalent to
$$
\int_{\partial \mathbb{D}} |\psi|^p dm < \infty.
$$

\item (Negative) Since $F(0) = 0$, the derivative $|F'(z)| > c > 1$ on the unit circle $\partial \mathbb{D}$ by (\ref{eq:uniformly-expanding}). As a result,
$$\sup_{z \in \partial \mathbb{D}} \psi(z) < 0.$$

\item (Centered) In Section \ref{sec:composition-operators}, we saw that the spectral radius of the composition operator $C_F$ was 1. 
As $\mathcal L_{-\log|F'|}$ is its adjoint, we have 
$$P(- \log |F'|) = \log 1 = 0.$$

\item (Level $1$ Lipschitz continuity)

Unless $F$ is a finite Blaschke product, the potential $\psi(z) = -\log |F'(z)|$ is not Lipschitz on the unit circle, let alone bounded. Nevertheless, there is a constant $C\in [1,+\infty)$ such on each interval $I_n$ from the Markov partition (for the GDMS), one has
\begin{equation}
\label{eq:level1-lipschitz}
| \psi(x) - \psi(y) | \le C \cdot |F(x) - F(y)|,
\end{equation}
which we call {\em level 1 Lipschitz continuity}\/. In other words, $\psi$ is Lipschitz on each interval $I_n$ from the Markov partition when it is rescaled to definite size, with a uniform Lipschitz constant.

To verify (\ref{eq:level1-lipschitz}), we use the fundamental theorem of calculus:
$$| \psi(x) - \psi(y) | \le \int_x^y \biggl | \frac{F''(z)}{F'(z)} \biggr | \, |dz|, \qquad x, y \in I_n.$$
From Koebe's Distortion Theorem, we know that
$$
|F'(z)| \asymp \frac{|F(x) - F(y)|}{|x-y|}, \qquad x, y, z \in I_n.
$$
Therefore,
$$| \psi(x) - \psi(y) | \le \frac{|F(x) - F(y)|}{|x-y|} \int_x^y \biggl | \frac{F''(z)}{F'(z)^2} \biggr | \, |dz|.$$
Finally, by the analytic definition of one component inner functions (\ref{eq:analytic-definition}), the integrand is bounded, which shows (\ref{eq:level1-lipschitz}).
\end{enumerate}

\subsection{Variational Principle}

To proceed, we impose an additional assumption on our inner function $F$: we assume that its derivative lies in the Nevanlinna class:
$$
\int_{\partial \mathbb{D}} \log|F'| dm < \infty.
$$
In a beautiful work, M.~Craizer \cite{craizer} showed that this condition is equivalent to the Lebesgue measure $m$ having finite measure-theoretic entropy with respect to the action of $F$ on the unit circle.

Let $\mathcal P = \{I_n\}_{n\in\N}$ be the classical Markov partition of the unit circle defined in Section \ref{sec:markov-partitions}.
Since each arc $I_n \in \mathcal P$ is mapped onto $\partial \mathbb{D} \setminus \{ p\}$, by Koebe's Distortion Theorem, $|F'(\zeta)| \asymp 1/|I_n|$ for $\zeta \in I_n$. Therefore,
$$
\sum |I_n| \log \frac{1}{|I_n|} \asymp \int_{\partial \mathbb{D}} \log |F'| dm < \infty.
$$

Notice that each element of the refined partition $$\mathcal P^{(n)} = \mathcal P \vee \mathcal F^{-1}(\mathcal P) \vee \mathcal F^{-2}(\mathcal P) \vee \dots \vee \mathcal F^{-(n-1)}(\mathcal P)$$ is an arc and that the maximal size of arcs decays exponentially. As such, $\mathcal P$ is a one-sided generator in the sense that $\bigvee \mathcal P^{(n)}$ generates the $\sigma$-algebra of Borel subsets of the circle. 
Since $\mathcal P$ is a generating partition of finite entropy, by \cite[Theorem 1.9.7]{conformal-fractals}, the measure theoretic entropy is given by
$$
h(F, m) = \int_{\partial \mathbb{D}} \log J_F \, dm,
$$
where $J_F$ is the measure-theoretic Jacobian of $F$. By definition, the measure-theoretic Jacobian is a non-negative function in $L^1(\partial \mathbb D, m)$ such that
$$m(F(E)) = \int_{E} J_F \, dm$$ for any measurable set $E$ on which $F$ is injective. In the setting of one component inner functions, $J_F(\zeta) = |F'(\zeta)|$ a.e.~on the unit circle.

\begin{theorem} Suppose that $F$ is a centered one component inner function whose derivative lies in the Nevanlinna class.
If $\mu$ is an $F$-invariant probability measure on the unit circle, then its measure-theoretic entropy
$$
h(F, \mu) \le \int_{\partial \mathbb{D}} \log |F'(\zeta)| d\mu.
$$
Equality holds only for the normalized Lebesgue measure $m$.
\end{theorem}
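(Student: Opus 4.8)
The plan is to prove the variational inequality $h(F,\mu) \le \int_{\partial\mathbb D}\log|F'|\,d\mu$ for an arbitrary $F$-invariant probability measure $\mu$, and then identify the equality case. First I would handle the case where the right-hand side is infinite, which is trivial, so assume $\int\log|F'|\,d\mu < \infty$. The key tool is the classical Markov partition $\mathcal P = \{I_n\}$ constructed in Section~\ref{sec:markov-partitions}, which is a one-sided generator: the refinements $\mathcal P^{(n)} = \bigvee_{k=0}^{n-1}F^{-k}(\mathcal P)$ generate the Borel $\sigma$-algebra because the arcs shrink exponentially (by the uniform expansion \eqref{eq:uniformly-expanding} together with Koebe distortion). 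By the Kolmogorov–Sinai theorem, if $H_\mu(\mathcal P) < \infty$ then $h(F,\mu) = h(F,\mu,\mathcal P) = \lim_n \frac1n H_\mu(\mathcal P^{(n)})$; if $H_\mu(\mathcal P) = \infty$ one needs a small separate argument (truncate $\mathcal P$ to a finite partition plus a tail, use that on the tail $|F'|$ is large, so the entropy contribution is dominated by $\int\log|F'|\,d\mu$ anyway). So the main line of argument is: bound $\frac1n H_\mu(\mathcal P^{(n)})$ above by $\int \log|F'|\,d\mu$ plus an error that vanishes as $n\to\infty$.

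The heart of the estimate is the following. Each element $P$ of $\mathcal P^{(n)}$ is an arc on which $F^{\circ n}$ is injective with $F^{\circ n}(P) = \partial\mathbb D\setminus\{p\}$, so by Koebe's distortion theorem $|P| \asymp 1/\sup_P|(F^{\circ n})'| \asymp 1/\inf_P|(F^{\circ n})'|$ with constants independent of $n$ (this is where one-component-ness is crucial: the bounded Schwarzian-type estimate \eqref{eq:analytic-definition} gives uniform distortion on arcs rescaled to definite size, and it iterates). Writing $\log\mu(P) = \log\big(\mu(P)/|P|\big) + \log|P|$, and using concavity of $t\mapsto -t\log t$ (Jensen), the term $-\sum_P \mu(P)\log(\mu(P)/|P|)$ is $\le -\log\big(\sum_P \mu(P)\cdot|P|/\mu(P)\big)\cdot$... more carefully: $-\sum_P\mu(P)\log(\mu(P)/|P|) \le \log\sum_P |P| = \log(2\pi)$ by Jensen's inequality applied to the probabilities $\mu(P)$ and the values $|P|/\mu(P)$. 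Hence
$$
H_\mu(\mathcal P^{(n)}) = -\sum_P \mu(P)\log\mu(P) \le \log(2\pi) + \sum_P \mu(P)\log\frac{1}{|P|} \le C + \sum_P\mu(P)\log\sup_P|(F^{\circ n})'|.
$$
By the chain rule $\log|(F^{\circ n})'(x)| = S_n(\log|F'|)(x)$, so by bounded distortion $\sum_P\mu(P)\log\sup_P|(F^{\circ n})'| = \int S_n(\log|F'|)\,d\mu + O(1) = n\int\log|F'|\,d\mu + O(1)$, using $F$-invariance of $\mu$. Dividing by $n$ and letting $n\to\infty$ gives $h(F,\mu)\le\int\log|F'|\,d\mu$. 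I expect the main obstacle to be making the distortion estimates genuinely uniform in $n$ — one must check that the composition of inverse branches still enjoys Koebe-type control on the fixed "definite-size" scale, which relies on \eqref{eq:analytic-definition}/\eqref{eq:analytic-definition-hd} and the uniform expansion, rather than on any single inverse branch being a uniform contraction — and secondarily the bookkeeping when $H_\mu(\mathcal P)=+\infty$.

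For the equality case: if $h(F,\mu) = \int\log|F'|\,d\mu$, then tracing back through the Jensen step we need equality (asymptotically) in $-\sum_P\mu(P)\log(\mu(P)/|P|) \le \log(2\pi)$ for the partitions $\mathcal P^{(n)}$, which forces $\mu(P)/|P|$ to be (asymptotically) constant across the cells, i.e. $\mu$ agrees with $m$ up to the distortion error on every cell of $\mathcal P^{(n)}$; since the cells generate the Borel $\sigma$-algebra and the distortion constants are uniform, this yields $\mu = m$. Alternatively, and more cleanly, I would invoke the formula $h(F,m) = \int\log J_F\,dm = \int\log|F'|\,dm$ established just above the theorem together with the fact that $m$ is ergodic (Lemma in the introduction); then the Rokhlin formula / the above inequality shows any equality-achieving $\mu$ is an equilibrium state for the normalized potential $-\log|F'|$, and uniqueness of the equilibrium state (the transfer operator $\mathcal L_{-\log|F'|}$ has a spectral gap, so $m$ is its unique conformal measure with $P(-\log|F'|)=0$, and $\mu = \rho\,dm$ with $\rho$ forced to be constant by ergodicity) gives $\mu = m$. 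I would present the self-contained Jensen-rigidity argument as the primary route since it does not require invoking the full spectral machinery of Part~\ref{shift-space}.
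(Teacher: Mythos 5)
The paper does not actually prove this theorem: it cites \cite[Section 2.1]{MU} for the inequality and Proof II of \cite[Theorem 4.6.2]{conformal-fractals} for the equality case. Your argument for the inequality is essentially the standard one from those references and is correct: the Jensen step $\sum_P \mu(P)\log\bigl(|P|/\mu(P)\bigr)\le \log\sum_P |P|$, the identification $\log(1/|P|)=S_n\log|F'|+O(1)$ via distortion, and invariance of $\mu$ give $H_\mu(\mathcal P^{(n)})\le n\int\log|F'|\,d\mu+O(1)$. Two small remarks: (a) your worry about $H_\mu(\mathcal P)=\infty$ is vacuous, since the same level-$1$ Jensen estimate shows $H_\mu(\mathcal P)\lesssim 1+\int\log|F'|\,d\mu<\infty$ whenever the right-hand side is finite (and the theorem is trivial otherwise); (b) for the distortion to be genuinely uniform in $n$ you should run the estimate on the GDMS partition of Section \ref{sec:markov-partitions} (cells mapping onto the tiles $X_j\Subset U_j$), not on the classical partition whose cells map onto all of $\partial\mathbb D\setminus\{p\}$ — the inverse branches are then univalent on a fixed neighbourhood of each tile and Koebe applies to the whole composition.

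The equality case is where your primary route breaks down. Writing $m(P)=|P|/2\pi$ and $D_n=\sum_P\mu(P)\log\bigl(\mu(P)/m(P)\bigr)\ge 0$, your computation gives
$$
\tfrac1n H_\mu(\mathcal P^{(n)})=\int_{\partial\mathbb D}\log|F'|\,d\mu-\tfrac{D_n}{n}+O(1/n),
$$
so equality in the limit only yields $D_n=o(n)$. This is far from forcing $\mu(P)/|P|$ to be asymptotically constant across cells: $D_n$ could grow like $\sqrt n$ with $\mu\ne m$, and indeed $D_n=o(n)$ is exactly the condition that the relative entropy rate of $\mu$ with respect to the Gibbs state vanishes — which is the hypothesis, not a contradiction. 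So "tracing back through the Jensen step" gives no rigidity, and the self-contained argument you propose as the primary route does not close. Your alternative route (uniqueness of the equilibrium state for $-\log|F'|$) is the correct strategy and is what the paper invokes, but as written it is circular: the implication "$\mu$ achieves equality $\Rightarrow\mu\ll m$" is precisely the content of the uniqueness theorem and does not follow from the spectral gap of $\mathcal L_{-\log|F'|}$ alone. One genuine way to finish is via Jacobians: Rokhlin's formula gives $h(F,\mu)=\int\log J_\mu\,d\mu$, so equality means $\int\log(J_\mu/|F'|)\,d\mu=0$; combining this with $\sum_{F(y)=x}|F'(y)|^{-1}=1$, $\sum_{F(y)=x}J_\mu(y)^{-1}\le 1$ and strict concavity of $\log$ forces $J_\mu=|F'|$ $\mu$-a.e., after which the Markov structure pins down $\mu=m$. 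Alternatively, use the Gibbs property $m(P)\asymp \exp\bigl(-\sup_P S_n\log|F'|\bigr)$ together with the Shannon--McMillan--Breiman theorem for an ergodic component of $\mu$. Either way, a substantive additional argument is required.
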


The inequality in the variational principle above is neatly explained in \cite[Section 2.1]{MU}.
To see that the Lebesgue measure is the only measure for which equality holds, we refer the reader to Proof II of \cite[Theorem 4.6.2]{conformal-fractals}.

\subsection{Spectral Gap on Smooth Functions}
\label{sec:smooth-spectral-gap}

Below, we continue to abbreviate $\psi(x) = - \log|F'(x)|$ for simplicity.
From the estimate (\ref{eq:analytic-definition-hd}), it is not difficult to see that for every integer $m \ge 1$ there exists a constant ${\tilde C}_m\in(0,+\infty)$ such that 
\begin{equation}
\label{eq:Am-bound2}
\biggl | \frac{ \psi^{(m)}(x) }{F'(x)^{m}} \biggr | \le {\tilde C}_m, \qquad x \in \partial \mathbb{D} \setminus \Sigma,
\end{equation}
where $\psi^{(m)}$ denotes the $m$-th derivative of $\psi$ along the unit circle.

\begin{lemma}
\label{smooth-spectral-gap}
Suppose that $F$ is a centered one component inner function, which is not a rotation. If $\re s \ge 1$, then
$$
\mathcal L_{s} g(x) = \sum_{F(y) = x} e^{s \psi(y)} g(y)
$$ 
satisfies the two-norm inequality of Ionescu-Tulcea and Marinescu:
There exist constants $\theta\in(0,1)$ and $C\in(0,+\infty)$, which may depend on $k$ and $s$ such that
\begin{equation}
 \label{eq:itm1}
\|  \mathcal L_{s} g(x) \|_{C^k} \le  \theta \cdot \| g \|_{C^k} + C \cdot \| g \|_{C^{k-1}}.
\end{equation}
In particular, the norms
\begin{equation}
\label{eq:itm2}
\|  \mathcal L^n_{s}  g(x) \|_{C^k} \le  \theta^n \cdot \| g \|_{C^k} + \frac{C}{1-\theta} \cdot \| g \|_{C^{k-1}}
\end{equation}
are uniformly bounded for $n \ge 1$.
\end{lemma}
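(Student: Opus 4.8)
The plan is to run the standard Ionescu--Tulcea--Marinescu / Lasota--Yorke argument, differentiating the series defining $\mathcal L_s$ term by term. Writing $\{\phi_i\}$ for the inverse branches of $F$ attached to the (classical) Markov partition $\{I_i\}$, we have
\[
\mathcal L_s g(x) = \sum_i w_{s,i}(x)\,g(\phi_i(x)), \qquad w_{s,i}(x) = e^{s\psi(\phi_i(x))}, \qquad |w_{s,i}(x)| = |\phi_i'(x)|^{\re s},
\]
where all derivatives are taken along the unit circle. Two facts carry the argument. First, since $F(0)=0$ and $F$ is a one component inner function, the Aleksandrov--Clark measure $\mu_x$ is a discrete probability measure supported on $F^{-1}(x)$, so $\sum_i |\phi_i'(x)| = \sum_{F(y)=x}|F'(y)|^{-1} = \mu_x(\partial\mathbb{D}) = 1$ for \emph{every} $x \in \partial\mathbb{D}$. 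Second, by the uniform expansion (\ref{eq:uniformly-expanding}) one has $\kappa := (\inf_{\partial\mathbb{D}} |F'|)^{-1} < 1$, so $\sup_i\sup|\phi_i'| \le \kappa$, and since $\re s \ge 1$ and $|\phi_i'| < 1$, $|w_{s,i}(x)| \le |\phi_i'(x)|$, whence $\sum_i |\phi_i'(x)|^{\re s + p} \le \kappa^{\re s + p - 1} \le \kappa^{p}$ for every $p \ge 0$. In addition, Koebe's Distortion Theorem on the tiles gives $\sum_i \sup|\phi_i'| \asymp \sum_i |I_i| = 2\pi < \infty$, which will supply uniform convergence of the differentiated series.

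The next step is to record the distortion bounds needed to control the derivatives of the weights uniformly in $i$. Koebe's theorem on the compactly contained tiles yields $|\phi_i^{(\ell)}(x)| \lesssim_\ell |\phi_i'(x)|$ for every $\ell \ge 1$, uniformly in $i$, and the one component estimates (\ref{eq:analytic-definition-hd}) (equivalently (\ref{eq:Am-bound2})) give $|\psi^{(m)}(y)| \lesssim_m |F'(y)|^m$. Substituting into Fa\`a di Bruno for $\psi\circ\phi_i$, the powers of $|F'(\phi_i(x))| = |\phi_i'(x)|^{-1}$ cancel the powers of $|F'(\phi_i(x))|$, so every derivative of order $m \ge 1$ of $s\,\psi\circ\phi_i$ is bounded by a constant depending only on $m$ and $|s|$; exponentiating via the Bell polynomials gives $|w_{s,i}^{(m)}(x)| \le C_m(s)\,|w_{s,i}(x)| \le C_m(s)\,|\phi_i'(x)|$, again uniformly in $i$.

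Now differentiate $\mathcal L_s g$ exactly $k$ times. By the Leibniz rule together with Fa\`a di Bruno, $(\mathcal L_s g)^{(k)}(x)$ is a finite sum over $i$ consisting of one \emph{leading term} $w_{s,i}(x)\,(\phi_i'(x))^k\, g^{(k)}(\phi_i(x))$ plus other terms, each of which contains either a factor $w_{s,i}^{(m)}$ with $m\ge 1$ or a derivative $g^{(l)}(\phi_i(x))$ with $l \le k-1$ multiplied by a nonempty product of $\phi_i$-derivatives. Summing the leading term over $i$ and using the two facts above,
\[
\Bigl\| \sum_i w_{s,i}\,(\phi_i')^k\, (g^{(k)}\circ\phi_i) \Bigr\|_\infty \le \Bigl(\sup_x \sum_i |\phi_i'(x)|^{\re s + k}\Bigr)\,\|g\|_{C^k} \le \kappa^k \,\|g\|_{C^k}.
\]
Every remaining term, after applying $|w_{s,i}^{(m)}| \lesssim |\phi_i'|$ and $|\phi_i^{(\ell)}| \lesssim |\phi_i'|$, retains at least one factor $|\phi_i'(x)|$ with everything else bounded by a constant depending on $k$ and $s$; summing over $i$ and invoking $\sum_i |\phi_i'(x)| = 1$ bounds it by $C(k,s)\,\|g\|_{C^{k-1}}$. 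The bound $\sum_i \sup|\phi_i'| < \infty$ makes all these series converge uniformly, so $\mathcal L_s g \in C^k(\partial\mathbb{D})$ (the accumulation of the partition intervals at $\Sigma$ causes no trouble), and the same analysis applied to the lower derivatives $(\mathcal L_s g)^{(j)}$ with $j < k$ — whose leading terms only improve the estimate — yields (\ref{eq:itm1}) with $\theta = \kappa^k < 1$.

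Finally, (\ref{eq:itm2}) follows by iterating (\ref{eq:itm1}). An induction on $k$ shows $\sup_{n\ge 1}\|\mathcal L_s^n\|_{C^{k-1}\to C^{k-1}} < \infty$: when $k-1 = 0$ this is $\|\mathcal L_s^n g\|_\infty \le \|g\|_\infty$, since $\sum_{F^{\circ n}(y)=x}|(F^{\circ n})'(y)|^{-1} = 1$; the inductive step uses (\ref{eq:itm1}) for $k-1$. Then
\[
\|\mathcal L_s^n g\|_{C^k} \le \theta^n\|g\|_{C^k} + C\sum_{j=0}^{n-1}\theta^j\,\|\mathcal L_s^{n-1-j}g\|_{C^{k-1}} \le \theta^n\|g\|_{C^k} + \frac{C'}{1-\theta}\,\|g\|_{C^{k-1}},
\]
which is (\ref{eq:itm2}) and in particular is uniformly bounded in $n$. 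I expect the main obstacle to be the combinatorial bookkeeping in the third paragraph — pinning down that the $k$-fold derivative has a single non-improving term and that all the others fall into the $\|g\|_{C^{k-1}}$ bucket, with constants uniform in the branch index $i$ — which is exactly the point where the higher-order one component estimates (\ref{eq:analytic-definition-hd}) and the Koebe distortion on the tiles are indispensable.
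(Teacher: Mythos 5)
Your proof is correct and follows essentially the same route as the paper: term-by-term differentiation of the transfer sum, the contraction estimates $\sum_{F(y)=x}|F'(y)|^{-\re s-p}\le\kappa^{p}$ coming from $F(0)=0$ together with the unit mass of the Aleksandrov--Clark measures, and the one component bounds (\ref{eq:analytic-definition-hd})/(\ref{eq:Am-bound2}) to absorb all derivatives landing on the weights into the $\|g\|_{C^{k-1}}$ term. The paper only writes out the case $k=1$ and asserts the rest; you supply the Fa\`a di Bruno bookkeeping for general $k$ and the induction justifying the uniform bound in (\ref{eq:itm2}), both of which are consistent with the paper's sketch.
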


\begin{proof}
We treat the case when $k = 1$ as the general case is similar.  Differentiating the definition of
$\mathcal L_{s} g$, we get
$$
\frac{d}{dx} \mathcal L_{s} g(x) = \sum_{F(y) = x} \frac{1}{|F'(y)|} \biggl \{ e^{s \psi(y)} g'(y) + s e^{s \psi(y)} \psi'(y) \cdot g(y) \biggr \}.
$$
Thus,
$$
\biggl | \frac{d}{dx}  \mathcal L_{s} g(x) \biggr | \le  \sum_{F(y) = x} \biggl \{ |F'(y)|^{-1-\re s} \, \| g \|_{C^1} + |s| \cdot |F'(y)|^{-\re s} \, \biggl | \frac{F''(y)}{F'(y)^2} \biggr| \, \| g \|_{C^0} \biggr \},
$$
which simplifies to (\ref{eq:itm1}). In the last step, we have used that
$$
\sum_{F(y) = x} |F'(y)|^{- \re s} \le 1 \quad \text{and} \quad \sum_{F(y) = x} |F'(y)|^{-1 - \re s} \le \theta < 1,
$$
which can be proved in the same way as Corollary \ref{spectral-gap-bergman-space}.
\end{proof}

As in \cite[Proposition 2.1 and Theorem 2.2]{PP}, the above lemma implies that $\mathcal L_1$ 
has a spectral gap on $C^k(\partial \mathbb{D})$ for every integer $k \ge 1$:

\begin{corollary}
\label{smooth-spectral-gap2}
If  $F$ is a centered one component inner function, then the operator $\mathcal L_1$ has a simple eigenvalue 1 with eigenfunction ${\bf 1}$, while the rest of the spectrum is contained in a smaller ball centered at the origin. Moreover, the spectral radius $r_{C^k}(\mathcal L_{1+ia}) \le 1$ for any $a \in \mathbb{R}$.
\end{corollary}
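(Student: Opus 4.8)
The plan is to run the Ionescu--Tulcea--Marinescu (Doeblin--Fortet) quasi-compactness scheme on the nested pair of spaces $C^k(\partial\mathbb D)\hookrightarrow C^{k-1}(\partial\mathbb D)$, exactly along the lines of \cite[Proposition~2.1 and Theorem~2.2]{PP}, using the two-norm inequality $(\ref{eq:itm1})$ of Lemma~\ref{smooth-spectral-gap} as the key analytic input. The only point where I would deviate from the purely real-analytic argument is in identifying the peripheral spectrum, where I would bootstrap from the H\"older spectral gap already established in Section~\ref{sec:duals-of-some-function-spaces}.

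First I would establish power-boundedness of the iterates on $C^k$. For $k=0$ this is free: $\mathcal L_1$ is a positive operator with $\mathcal L_1{\bf 1}={\bf 1}$ (centered potential, invariant Lebesgue measure), so $\|\mathcal L_1^n\|_{C^0\to C^0}=1$ for all $n$, and the pointwise domination $|\mathcal L_{1+ia}^n g|\le \mathcal L_1^n|g|$ gives $\|\mathcal L_{1+ia}^n\|_{C^0\to C^0}\le 1$ as well. Feeding this into $(\ref{eq:itm1})$ and inducting on $k$ yields $(\ref{eq:itm2})$, i.e.\ $\sup_n\|\mathcal L_s^n g\|_{C^k}<\infty$ for every fixed $g\in C^k$ and $s\in\{1\}\cup(1+i\mathbb R)$. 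Since the embedding $C^k\hookrightarrow C^{k-1}$ is compact (Arzel\`a--Ascoli) and $\mathcal L_s$ is bounded on both spaces, Hennion's quantitative form of the ITM theorem then shows $\mathcal L_s$ is quasi-compact on $C^k$ with essential spectral radius $\le\theta<1$; power-boundedness forces $r_{C^k}(\mathcal L_s)\le 1$. This already gives the last assertion of the corollary; for $s=1$ the eigenfunction ${\bf 1}\in C^k$ yields $r_{C^k}(\mathcal L_1)=1$, and outside $\overline{B(0,\theta)}$ the spectrum of $\mathcal L_1$ on $C^k$ consists of finitely many eigenvalues of finite multiplicity.

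The main obstacle, as usual for such statements, is pinning down the \emph{peripheral} spectrum of $\mathcal L_1$ on $C^k$: showing that $1$ is the only spectral value of modulus $1$, that it is a simple eigenvalue with eigenfunction ${\bf 1}$, and that there is a genuine gap rather than eigenvalues accumulating to the unit circle. Accumulation is already ruled out by quasi-compactness. For the points on $|z|=1$, I would bootstrap: any (generalized) eigenfunction $g\in C^k$ for such an eigenvalue $\lambda$ lies in $C^\alpha$ (take $\alpha=1/2$, since $C^1\subset C^{1/2}$), hence is a (generalized) eigenfunction of $\mathcal L_1$ acting on $C^\alpha$, and by Section~\ref{sec:duals-of-some-function-spaces} (or Lemma~\ref{L-contraction}, whose proof applies verbatim in $C^\alpha$) the operator $\mathcal L_1$ has a true spectral gap on $C^\alpha$: a simple isolated eigenvalue $1$ with eigenfunction ${\bf 1}$, with the rest of the spectrum inside some $B(0,\rho)$, $\rho<1$. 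Thus $\lambda=1$, the $C^k$-eigenspace is $\mathbb C\cdot{\bf 1}$, and a hypothetical Jordan vector $h\in C^k$ with $\mathcal L_1 h = h + c\,{\bf 1}$, $c\ne 0$, would be a Jordan vector on $C^\alpha$, contradicting simplicity there. (Alternatively, to keep the argument internal to $C^k$, one can use the classical Ruelle--Perron--Frobenius route of \cite[Theorem~2.2]{PP}: positivity together with $\mathcal L_1^* m = m$ forces $|\mathcal L_1 g|=|g|$, hence $|g|$ constant, and mixing of $F$ with respect to $m$, Corollary~\ref{L-contraction2}, excludes any peripheral eigenfunction other than constants.) Putting the pieces together, the spectrum of $\mathcal L_1$ on $C^k$ is $\{1\}$ together with finitely many points in the annulus $\theta<|z|<1$ and a subset of $\overline{B(0,\theta)}$, so everything but $1$ lies in $\overline{B(0,r')}$ for some $r'<1$, which is the assertion.
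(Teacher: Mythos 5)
Your proposal is correct and is essentially the paper's argument: the paper simply cites \cite[Proposition~2.1 and Theorem~2.2]{PP}, which is exactly the Ionescu--Tulcea--Marinescu quasi-compactness step fed by the two-norm inequality of Lemma~\ref{smooth-spectral-gap}, followed by the Ruelle--Perron--Frobenius identification of the peripheral spectrum via positivity, $\mathcal L_1^*m=m$ and mixing --- i.e.\ the route you give parenthetically (your bootstrap to the $C^\alpha$ spectral gap of Section~\ref{sec:duals-of-some-function-spaces} is an equally valid shortcut for that last step). The only caveat, inherited from the statement rather than your proof, is that the hypothesis ``not a rotation'' from Lemma~\ref{smooth-spectral-gap} is implicitly needed here as well.
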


\subsection{Genericity and Orbit Counting}
\label{sec:weak-mixing}

Suppose $F$ is a centered one component inner function with derivative in the Nevanlinna class. Taking $s = 1+ia$ in Lemma \ref{smooth-spectral-gap}, we see that for any function $u \in C^k$, the sequence 
$\bigl \{ \mathcal L_{(-1+ia)\log|F'|}^n (u) \bigr \}_{n=0}^\infty$ is bounded in $C^k$, which is an analogue of Lemma \ref{boundedness-of-the-iterates-of-L}.
Adapting the proof of Lemma \ref{smoothness-of-eigenfunctions4} to our current setting, we obtain:

\begin{lemma}
\label{smoothness-of-eigenfunctions2}
Any $L^2$ eigenfunction of $e^{-i a \psi} C_F$ is $C^\infty$.
\end{lemma}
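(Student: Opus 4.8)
The plan is to run the argument of Lemma~\ref{smoothness-of-eigenfunctions4} almost verbatim, with the space $W^{1/2,2}$ replaced by $C^k$ (for, say, $k=1$, or any $k\ge 1$) and with the relevant facts borrowed from the one-component setting: the transfer operator $\mathcal L_1 = \mathcal L_{-\log|F'|}$ has a spectral gap on $C^k(\partial\mathbb D)$ (Corollary~\ref{smooth-spectral-gap2}), the uniform boundedness of the iterates $\{\mathcal L_{(-1+ia)\log|F'|}^n u\}_{n\ge 0}$ in $C^k$ (Lemma~\ref{smooth-spectral-gap}, with $s=1+ia$), the ergodicity and mixing of $m$ under $F$ (Lemma~\ref{L-contraction2}), and the discreteness of the Aleksandrov-Clark measures for one-component inner functions, so that $\mathcal L_1$ is the classical transfer operator and Lemma~\ref{iterated-AC} applies with $\gamma = e^{-ia\psi}$.

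First I would take an $L^2$ eigenfunction $w$ of $e^{-ia\psi}C_F$, so that $e^{-ia\psi(x)}w(F(x)) = \lambda w(x)$; since $m$ is ergodic and $F$-invariant, $|\lambda|=1$ and $|w|$ is a.e.\ constant, so after rescaling we may take $|w|\equiv 1$ a.e. Iterating gives $w(F^{\circ n}(x))\overline{w(x)} = \lambda^n e^{iaS_n\psi(x)}$. Then, for an arbitrary test function $h\in C^k(\partial\mathbb D)$, multiply through by $h$, integrate against the Aleksandrov-Clark measure $\mu_{F^{\circ n},\alpha}$, and use Lemma~\ref{iterated-AC} (with weight $e^{-ia\psi}$ so that $\Pi_n\gamma = e^{-iaS_n\psi}$) to obtain the identity
$$
w(\alpha)\cdot\bigl[\mathcal L_1^n(h\overline w)\bigr](\alpha) = \lambda^n\cdot\bigl[\mathcal L_{-\log|F'| + ia\psi}^n h\bigr](\alpha),
$$
where on the right the exponent $-\log|F'| + ia\psi = (-1+ia)\log|F'|$. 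By Lemma~\ref{smooth-spectral-gap}, the sequence on the right is bounded in $C^k$; since the inclusion $C^k\hookrightarrow L^2(\partial\mathbb D,m)$ is compact, I would pass to a subsequence $n_j$ with $\mathcal L_{(-1+ia)\log|F'|}^{n_j} h \to h^*$ in $L^2$ for some $h^*\in C^k$, and a further subsequence with $\lambda^{n_j}\to\lambda^*\in\partial\mathbb D$. Meanwhile $\mathcal L_1^{n_j}(h\overline w)\to \int_{\partial\mathbb D}\overline w\,h\,dm$ in $L^2$ by the mixing/spectral-gap statement of Corollary~\ref{L-contraction2} (applied after subtracting the mean). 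Passing to the $L^2$ limit in the displayed identity yields $w\cdot\int_{\partial\mathbb D}\overline w\,h\,dm = \lambda^* h^*$. Choosing $h\in C^k$ with $\int_{\partial\mathbb D}\overline w\,h\,dm\ne 0$ (possible since $w\not\equiv 0$) forces $w$ to equal a nonzero scalar multiple of $h^*\in C^k$ a.e. This already gives $w\in C^k$; bootstrapping, i.e.\ repeating the argument for every $k\ge 1$ and invoking the $C^k$ spectral gap and the bound (\ref{eq:itm2}) for each $k$, upgrades $w$ to $C^\infty$.

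The main obstacle to watch is the passage to the $L^2$ limit: I must make sure the identity is genuinely being taken in $L^2$, which requires (i) that $w\in L^\infty$ so that $w\cdot(\text{$L^2$ limit})$ makes sense and $h\overline w\in L^2$, and (ii) the compact embedding $C^k\hookrightarrow L^2$ to extract a convergent subsequence from the $C^k$-bounded family. Point (i) is automatic once $|w|\equiv 1$. A secondary technical point is checking that Lemma~\ref{iterated-AC} is legitimately applicable — i.e.\ that the weight $e^{-ia\psi}$ is in $L^\infty$, which it is \emph{not} globally since $\psi=-\log|F'|$ is unbounded — so I would instead argue directly, using that for one-component inner functions the Aleksandrov-Clark measures $\mu_{F^{\circ n},\alpha}$ are discrete and $\mathcal L_1$ is the classical transfer operator $\mathcal L_1^n g(x) = \sum_{F^{\circ n}(y)=x}|(F^{\circ n})'(y)|^{-1}g(y)$, so that the identity $w(\alpha)\mathcal L_1^n(h\overline w)(\alpha) = \lambda^n \sum_{F^{\circ n}(y)=\alpha}|(F^{\circ n})'(y)|^{-1}e^{-iaS_n\psi(y)}h(y)$ is just a rearrangement of $w(F^{\circ n}y)\overline{w(y)} = \lambda^n e^{iaS_n\psi(y)}$, valid a.e.\ and requiring no boundedness of $\psi$. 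This is the one place where the one-component hypothesis is essential, and it is exactly the step where the argument departs from the general-inner-function setting of Lemma~\ref{smoothness-of-eigenfunctions4}.
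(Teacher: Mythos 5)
Your argument is correct and is precisely the adaptation the paper intends: the paper's own ``proof'' of Lemma~\ref{smoothness-of-eigenfunctions2} consists of the remark that Lemma~\ref{smooth-spectral-gap} (with $s=1+ia$) supplies the $C^k$ analogue of Lemma~\ref{boundedness-of-the-iterates-of-L}, after which one reruns the proof of Lemma~\ref{smoothness-of-eigenfunctions4} verbatim with $W^{1/2,2}$ replaced by $C^k$ and bootstraps over $k$. One small correction: the weight in question is $e^{\pm ia\psi}$ with $a$ and $\psi$ real, so it is unimodular and hence does lie in $L^\infty(\partial\mathbb{D},m)$ and Lemma~\ref{iterated-AC} applies directly (the obstacle you flag does not arise), although your alternative derivation of the key identity via the discreteness of the Aleksandrov--Clark measures is also perfectly valid; and strictly speaking the compactness argument places the limit $h^*$ only in $C^{k-1}$, which is harmless since $k$ is arbitrary.
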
 

We now show:

\begin{lemma}
\label{non-existence-of-eigenfunctions2}
Suppose $F$ is an infinite degree centered one component  inner function of finite entropy. For any $a \ne 0$, the operator $e^{-i a \psi} C_F$ does not possess any continuous eigenfunctions, let alone any $C^\infty$ eigenfunctions.
\end{lemma}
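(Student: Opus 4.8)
The plan is to follow the two–step strategy of Theorem~\ref{no-eigenfunctions-thm2}: first boost the regularity of any hypothetical eigenfunction, then exploit that the potential $\psi$ blows up near the singular set $\Sigma$ to reach a contradiction. Suppose $w$ is an eigenfunction of bounded variation, so that $e^{-ia\psi(x)}w(F(x)) = \lambda w(x)$ for some $\lambda \in \mathbb{C}$. Because $\psi$ is real-valued, $e^{-ia\psi}$ is unimodular and $e^{-ia\psi}C_F$ is an isometry of $L^2(\partial\mathbb{D}, m)$; by the ergodicity and $F$-invariance of $m$ recalled at the start of the paper, $|\lambda| = 1$ and $|w|$ is a.e.\ constant, and we rescale so that $|w| \equiv 1$. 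A function of bounded variation on the circle is bounded, hence lies in $L^2(\partial\mathbb{D}, m)$, so Lemma~\ref{smoothness-of-eigenfunctions2} upgrades $w$ to a $C^\infty$ function. It therefore suffices to show that $e^{-ia\psi}C_F$ has no unimodular $C^\infty$ eigenfunction when $a \ne 0$ (a constant eigenfunction will be subsumed into the case $d = 0$ below).

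Since $w$ is smooth, unimodular and nowhere vanishing, $g := -i\,w'/w$ is a well-defined smooth $2\pi$-periodic real function on $\partial\mathbb{D}$ with $\tfrac{1}{2\pi}\int_0^{2\pi} g\,d\theta = d$, the winding number of $w : \partial\mathbb{D} \to \partial\mathbb{D}$. On each complementary arc of $\Sigma$ the maps $F$ and $\psi$ are real-analytic; choosing there a continuous branch of the argument of the identity $e^{-ia\psi}\,w\circ F = \lambda w$ gives $\arg w(F(x)) - a\psi(x) - \arg w(x) \equiv \mathrm{const}$, and differentiating in the arc-length parameter yields
\begin{equation}
(g\circ F)(x)\cdot|F'(x)| \; = \; a\,\psi'(x) + g(x), \qquad x \in \partial\mathbb{D}\setminus\Sigma .
\end{equation}
The crucial input is that, by the analytic characterisation of one component inner functions in Theorem~\ref{characterizations-of-one-component-functions}, $\psi(\zeta) = -\log|F'(\zeta)| \to -\infty$ as $\zeta \to \Sigma$.

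I then integrate this identity over the arcs of the classical Markov partition. For such an arc $I$, $F|_I$ is an orientation-preserving diffeomorphism onto $\partial\mathbb{D}\setminus\{p\}$, so the substitution $\phi = \arg F$, $d\phi = |F'|\,d\theta$, gives $\int_I (g\circ F)|F'|\,d\theta = \int_0^{2\pi} g\,d\phi = 2\pi d$, while $\int_I \psi'\,d\theta$ telescopes and $\int_I g\,d\theta$ is bounded by $\|g\|_\infty |I|$. Summing over the partition arcs $I_{k,0},\dots,I_{k,N-1}$ that tile one side of a complementary arc $J_k$ of $\Sigma$ and shrink to an endpoint $\zeta \in \Sigma$ of $J_k$, the $\psi$-terms collapse to $\psi(a_N) - \psi(a_0)$ with $a_N \to \zeta$, the $g$-terms stay bounded by $\|g\|_\infty|J_k|$, and hence
$$
2\pi d\, N \; = \; a\bigl(\psi(a_N) - \psi(a_0)\bigr) + O(1) , \qquad N \to \infty .
$$
Since $\psi(a_N) \to -\infty$, the right-hand side tends to $-\infty$ if $a > 0$ and to $+\infty$ if $a < 0$; this already contradicts $d = 0$ and otherwise pins down $\mathrm{sign}(d)$. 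Performing the same summation over the arcs accumulating at the \emph{other} endpoint of $J_k$ produces $2\pi d\, M = -a\,\psi(a_{-M}) + O(1)$ with $\psi(a_{-M}) \to -\infty$, which forces the opposite sign of $d$; this contradiction finishes the proof. The main obstacle is the bookkeeping in this last step: justifying the differentiation of the eigenvalue equation and the change of variables on each arc, checking that the endpoints of the partition arcs lie off $\Sigma$ so that $\psi$ is finite there, and fixing the orientation and indexing so that the two one-sided asymptotics genuinely conflict.
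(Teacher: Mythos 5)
Your argument is correct, but it takes a genuinely different route from the paper's. The paper stays entirely at the level of bounded variation: it takes the variation of both sides of the eigenfunction equation over a single arc $I_k$ of the classical Markov partition, notes that the left-hand side returns the \emph{full} variation of $w$ over $\partial\mathbb{D}\setminus\{p\}$ (because $F|_{I_k}$ is a homeomorphism onto $\partial\mathbb{D}\setminus\{p\}$), while the right-hand side is controlled by local data on $I_k$ ($|a|$ times the variation of $\log|F'|$ on $I_k$ plus $\|w\|_{\BV(I_k)}$), both of which can be made arbitrarily small by choosing $I_k$ far out in the infinite partition; this forces $w$ to be constant and then $\log|F'|$ to be constant, contradicting infinite degree. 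Your proof instead first boosts the eigenfunction to $C^\infty$ via Lemma~\ref{smoothness-of-eigenfunctions2} (legitimate, since $\BV\subset L^\infty\subset L^2$), then differentiates the argument of the eigenvalue equation to obtain the cohomological identity $(g\circ F)\,|F'| = g + a\psi'$ for the logarithmic derivative $g=-iw'/w$, and derives a contradiction by telescoping $\psi'$ along chains of partition arcs accumulating at the two ends of a complementary arc of $\Sigma$, where $\psi\to-\infty$ forces incompatible signs for the winding number of $w$. The paper's argument is shorter, needs only finite entropy and the infinitude of the partition, and proves the nonexistence of $\BV$ eigenfunctions directly without any regularity upgrade; yours uses more structure (the blow-up of $|F'|$ at $\Sigma$ from Theorem~\ref{characterizations-of-one-component-functions}(d) and the smoothing lemma) but has the virtue of exhibiting a quantitative obstruction --- the winding number --- and of handling the constant-eigenfunction case and the nonconstant case by one and the same mechanism. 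The bookkeeping issues you flag (orientation, finiteness of $\psi$ at the partition endpoints, summability of the $\int_I g$ terms) are all resolvable: the endpoints of the $I_{k,l}$ lie in $F^{-1}(p)\subset\partial\mathbb{D}\setminus\Sigma$, and $\sum_l\|g\|_\infty|I_{k,l}|\le 2\pi\|g\|_\infty$.
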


\begin{proof}
 
We want to show that any continuous solution $w \ne 0$ of the eigenfunction equation
\begin{equation}
\label{smooth-eigenfunction-equation2}
w(F(z)) = e^{i a \log |F'(z)|} \cdot w(z), \qquad z \in \mathbb{D},
\end{equation}
is constant, which would imply that $a = 0$. Since $w$ is not identically zero, we may scale $w$ so that $|w(z)| = 1$. Let $d \in \mathbb{Z}$ be the topological degree of $w: \partial \mathbb{D} \to \partial \mathbb{D}$. As $F$ is an infinite degree one component inner function, the singular set $\Sigma \ne \emptyset$. Pick an arbitrary complementary arc 
$$J \, = \, \bigcup_{k \in \mathbb{Z}} I_k \subset \partial \mathbb{D} \, \setminus \, \Sigma,
$$ 
where the sub-arcs $I_k$ are elements of the classical Markov partition defined in Section \ref{sec:markov-partitions}.
Since $J$ is simply-connected, $\arg w(x)$ and $\arg w(F(x))$ admit continuous branches on $J$. We consider two cases:
 
 \medskip
 
{\em Case I. $d = 0$.} Let $I_k = [z_k, z_{k+1}]$. As $z$ moves from $z_k$ to $z_{k+1}$, the net change of the argument of $w(F(z))$ is zero. Consequently, $\arg w(F(z))$ is bounded on $J$. As $\arg w(z)$ is bounded on $J$, equation (\ref{smooth-eigenfunction-equation2}) implies that $a \log |F'(z)|$ must also be bounded on $J$. This contradicts Theorem \ref{characterizations-of-one-component-functions}(d), which says that $|F'(z)| \to \infty$ as $z \to \Sigma$.
 
  \medskip

{\em Case II. $d \ne 0$.} Below, we assume that $d > 0$, as the case $d < 0$ is similar. As $z$ moves from $z_k$ to $z_{k+1}$, the argument of $w(F(z))$ increases by $d$, so that
$$
\arg w(F(z_k)) - \arg w(F(z_0)) = kd + O(1).
$$
In particular, this implies that $\log |F'(z_k)| \to -\infty$ as $k \to -\infty$. This once again contradicts Theorem \ref{characterizations-of-one-component-functions}(d).
\end{proof}

In the language of Part \ref{shift-space}, Lemma \ref{non-existence-of-eigenfunctions2} says that the potential $\psi(z) = - \log|F'(z)|$ is D-generic.
Together with the discussion in Section \ref{sec:some-properties-of-the-geometric-potential}, this completes the verification that $\psi$ is a robust potential when $F$ is an infinite degree one component inner function of finite entropy. For the case of finite Blaschke products, we refer the reader to \cite[Section 7]{ivrii-wp}.

To deduce Theorems \ref{orbit-counting1a} and \ref{orbit-counting2a} as special cases of  Theorems \ref{orbit-counting-tdf} and \ref{orbit-counting-tdf2},
we code the unit circle as in Section \ref{sec:markov-partitions}.
In this case, the conformal and equilibrium measures for the potential $\psi \circ \pi: E_A^\infty \to \mathbb{R}$ are equal to the pullback of the normalized Lebesgue measure on the unit circle under the coding map $\pi : E^\infty_A \to \partial \mathbb{D}$. Inserting these quantities into Theorems \ref{orbit-counting-tdf} and  \ref{orbit-counting-tdf2}, one obtains Theorems \ref{orbit-counting1a} and \ref{orbit-counting2a} for points on the unit circle with unique codings, which are dense in the unit circle.
To obtain the full statements of Theorems \ref{orbit-counting1a} and \ref{orbit-counting2a}, one can approximate a general point $x \in \partial \mathbb{D}$ by a sequence of points  $x_n \in \partial \mathbb{D}$ in the image of $\pi$ and apply Koebe's Distortion Theorem. We leave the details to the reader.

\section{Thermodynamic Formalism for Parabolic One Component Inner Functions}
\label{POCIF}

An inner function $f: \mathbb{D} \to \mathbb{D}$ is {\em parabolic} if its Denjoy-Wolff fixed point $p \in \partial \mathbb{D}$ and
$f'(p) := \lim_{r \to 1} f'(rp) = 1$. 
It is convenient to think of parabolic inner functions as holomorphic self-maps of the upper half-plane with a Denjoy-Wolff point at infinity.
For this purpose, take a M\"obius transformation $M$ which maps $\mathbb{D}$ to $\mathbb{H}$ and sends $p$ to $\infty$. Then,
$F = M \circ f \circ M^{-1}$ is a holomorphic self-map of the upper half-plane such that for Lebesgue almost every point $x \in \mathbb{R}$, the vertical boundary value
$\lim_{y \to 0} F(x+iy)$ exists and is real.

We say that an inner function $f$  is {\em doubly parabolic} if $f$ is holomorphic in a neighbourhood of $p$ and 
$$
f(z) = z + c (z-p)^3 + \dots, \qquad c \ne 0,
$$ 
as $z \to p$.
Translating to the upper half plane, this means that $F$ is doubly parabolic if
$$
F(z) = z - a/z + \dots, \qquad a > 0,
$$ 
near infinity.
By contrast, an expansion
$$
f(z) = z + c(z-p)^2 + \dots, \qquad c \ne 0,$$
 or $$F(z) = z + T - a/z + \dots, \qquad T \ne 0, \qquad a > 0,$$ 
suggests that $f$ and $F$ are {\em simply parabolic}\/.

We say that $F$ is a {\em parabolic one component inner function} if one of the following equivalent conditions is satisfied:

\begin{theorem}
\label{characterizations-of-one-component-functions2}
Suppose $F$ is a  parabolic inner function, viewed as a map of the upper half-plane to itself, with a parabolic fixed point at infinity. 
The following are equivalent:

{\em (a)} The set $F^{-1}(\{\im z > y \})$ is connected for some $y > 0$.

{\em (b)} There is a strip $S = \{ 0 < \im z < \tau\}$ such that $F: \mathbb{H} \to \mathbb{H}$ is a covering map over $S$.

{\em (c)} There is a strip $\tilde{S} =  \{ -\tau < \im z < \tau\}$ such that $F: \mathbb{C} \setminus \Sigma_\infty \to \mathbb{C}$ is a covering map over $\tilde S$.

{\em (d)} The singular set $\Sigma_\infty \subset \mathbb{R}$ has Lebesgue measure $0$, the derivative $F'(\zeta) \to \infty$ as $\zeta \in \mathbb{R}$ approaches $\Sigma$ and
\begin{equation}
\label{eq:analytic-definition2}
\biggl | \frac{F''(\zeta)}{F'(\zeta)^2} \biggr | \le C, \qquad \zeta \in \mathbb{R} \setminus \Sigma_\infty,
\end{equation}
for some constant $C > 0$.
\end{theorem}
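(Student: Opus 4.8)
The plan is to prove Theorem~\ref{characterizations-of-one-component-functions2} by transporting the disk version, Theorem~\ref{characterizations-of-one-component-functions}, through the M\"obius change of coordinates $M: \mathbb{D} \to \mathbb{H}$ sending the Denjoy--Wolff point $p$ to $\infty$, while being careful that the relevant statements are \emph{local} near the rest of the boundary and behave well under this change. First I would fix such an $M$ and write $F = M \circ f \circ M^{-1}$, recording that $f$ is holomorphic in a neighbourhood of $p$ (this is part of the standing parabolicity hypothesis in this section), so that $M$ is biholomorphic on a neighbourhood of every point of $\partial\mathbb{D} \setminus \{p\}$ and carries the singular set $\Sigma$ of $f$ onto $\Sigma_\infty \subset \mathbb{R}$ together with possibly the point at infinity; since $f$ is analytic near $p$, in fact $\Sigma_\infty = M(\Sigma)$ is exactly the set of real points where $F$ fails to extend analytically. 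The neighbourhoods $B(0,r)$ of the origin in the disk picture correspond, under $M$, to neighbourhoods of $\infty$ in $\mathbb{H}$, which up to a bounded conformal error are half-planes $\{\im z > y\}$; this gives the dictionary (a)$_\mathbb{D}$ $\Leftrightarrow$ (a)$_\mathbb{H}$. Similarly, annuli $A(0;\rho,1)$ around the unit circle map to neighbourhoods of $\mathbb{R}$ in $\mathbb{H}$, which after the conformal distortion of $M$ contain and are contained in strips $\{0 < \im z < \tau\}$; this handles (b) and (c).

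Concretely, I would organize the proof as the four implications (a)$\Rightarrow$(b)$\Rightarrow$(c)$\Rightarrow$(a) and (c)$\Leftrightarrow$(d), exactly paralleling the structure of the proof of Theorem~\ref{characterizations-of-one-component-functions}. For (a)$\Rightarrow$(b): if $F^{-1}(\{\im z > y\})$ is connected, pull back by $M^{-1}$ to see that $f^{-1}$ of a disk neighbourhood of $p$ is connected, invoke Theorem~\ref{characterizations-of-one-component-functions}(a)$\Rightarrow$(b) to get that $f$ is a covering over an annulus $A(0;\rho,1)$, and push forward by $M$; since $M$ is conformal on a neighbourhood of $\partial\mathbb{D}\setminus\{p\}$, the image $M(A(0;\rho,1))$ is an open neighbourhood of $\mathbb{R}\setminus\{\text{a compact piece near }\infty\}$ and contains a strip $\{0 < \im z < \tau\}$, and the covering property is preserved since covering maps compose with biholomorphisms to covering maps. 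The implications (b)$\Rightarrow$(c) and (c)$\Rightarrow$(a) follow by the same translation: (b)$\Rightarrow$(c) uses the Schwarz reflection step across $\mathbb{R}$ (the analogue of reflecting across $\partial\mathbb{D}$ in the disk proof) together with the fact that $\Cl(F,\zeta)$ is all of $\mathbb{C}$ exactly at points of $\Sigma_\infty$, so $F$ extends analytically across the real complementary arcs; and (c)$\Rightarrow$(a) is the observation that preimages of a slightly smaller strip are Jordan-type regions symmetric under reflection whose boundaries meet only in $\Sigma_\infty$, so the complement of the preimage of the strip is connected.

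For the equivalence (c)$\Leftrightarrow$(d), I would follow the Koebe/Gronwall argument from the disk case verbatim, now in the half-plane. Given (c), at a real point $\zeta$ where $F$ has a finite nonzero derivative, apply Koebe's Quarter Theorem to the local inverse branch of $F$ defined on a fixed-size strip around $F(\zeta)$ to get a ball $B(\zeta, c/|F'(\zeta)|) \subset \mathbb{C}\setminus\Sigma_\infty$, which forces $\Sigma_\infty$ to have measure zero, $F'(\zeta) \to \infty$ near $\Sigma_\infty$, and (after rescaling into the compact class $\mathcal{S}$) the Schwarzian-type bound $|F''/ (F')^2| \le C$; conversely, (d) with Gronwall's inequality gives control of $|F'|$ on a small arc, then rescaling and normal-family compactness give injectivity of $F$ on a ball of radius $\asymp 1/|F'(\zeta)|$, hence by Koebe a definite-size strip is evenly covered. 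The one genuine subtlety — and the step I expect to be the main obstacle — is the behaviour at and near the parabolic point $\infty$: I need to confirm that the standing hypothesis (analyticity of $f$ near $p$, i.e. $\infty \notin \Sigma_\infty$ as an analytic singularity) really does make all the ``near the boundary'' arguments uniform down to, but excluding, $\infty$, and that the conformal distortion introduced by $M$ near $\infty$ does not spoil the comparison between annuli and strips. This should be handled by noting that a parabolic inner function holomorphic near $p$ looks, in the half-plane coordinate, like $z + O(1/z)$ near $\infty$, so $F$ genuinely extends through a neighbourhood of $\infty$ on the Riemann sphere and there is nothing pathological there; the compactness of $\mathcal{S}$ and Koebe's theorem then apply uniformly on the remaining (now relatively compact in $\hat{\mathbb{C}}\setminus\Sigma_\infty$) part of $\mathbb{R}$. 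With that observation in place, every other step is a routine translation of the already-proved disk statement, so I would present the argument compactly rather than repeating the earlier computations.
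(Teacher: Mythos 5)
Your overall plan --- rerun the four implications of Theorem~\ref{characterizations-of-one-component-functions} directly in the half-plane, with Koebe/Gronwall for (c)$\Leftrightarrow$(d) and reflection across $\mathbb{R}$ for (b)$\Rightarrow$(c) --- is exactly what the paper intends (it gives no details and simply declares the proof ``essentially the same''), and your treatment of (c)$\Leftrightarrow$(d) is sound. The problem is the ``transport through $M$'' thread, which you lean on for (a)$\Rightarrow$(b) and which contains a genuine error: the regions defining the two theorems are \emph{not} M\"obius-equivalent. The half-plane $\{\im z > y\}$ pulls back under $M^{-1}$ to a horodisk internally tangent to $\partial\mathbb{D}$ at $p$, not to a ball $B(0,r)$; conversely $M(B(0,r))$ is a disk compactly contained in $\mathbb{H}$, not a neighbourhood of $\infty$. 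Likewise $M(A(0;\rho,1))$ is all of $\mathbb{H}$ minus a compact disk --- it \emph{contains} a strip but is not contained in one, since it also contains $\{\im z > Y\}$ for large $Y$. So the hypothesis ``$F^{-1}(\{\im z>y\})$ is connected'' does not give you the hypothesis ``$f^{-1}(B(0,r))$ is connected'' of Theorem~\ref{characterizations-of-one-component-functions}(a), and you cannot ``invoke (a)$\Rightarrow$(b)'' for $f$. The discrepancy is not cosmetic: conditions (b)--(d) of Theorem~\ref{characterizations-of-one-component-functions2} only constrain $F$ near $\mathbb{R}$ and allow its singular values to accumulate at the parabolic point $\infty$ inside $\{\im z > \tau\}$, whereas the Cohn condition for $f$ would force the singular values into $B(0,\rho)$, i.e.\ away from a full annular neighbourhood of $\partial\mathbb{D}$ including near $p$. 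So $f = M^{-1}\circ F\circ M$ need not be a one-component inner function in the sense of Theorem~\ref{characterizations-of-one-component-functions} at all, and no observation about $F$ being analytic at $\infty$ repairs this.

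The fix is to abandon the transport entirely and redo (a)$\Rightarrow$(b)$\Rightarrow$(c)$\Rightarrow$(a) in the half-plane, as you already do for (c)$\Leftrightarrow$(d): show $F^{-1}(\{\im z>y\})$ is simply connected (half-plane analogue of Lemma~\ref{preimage-components}), decompose its complement in $\mathbb{H}$ into regions $\Omega_i$ bounded by an arc in $\mathbb{H}$ and an interval of $\mathbb{R}\setminus\Sigma_\infty$, and show $F$ maps each $\Omega_i$ as a covering onto the strip $\{0<\im z< y\}$. Note one further difference you should address there: the target of this covering is a \emph{simply connected} strip rather than an annulus, so Lemma~\ref{covering-lemma} (which characterizes covers of the annulus via lifting to its universal cover) must be replaced by the statement that $F|_{\Omega_i}$ is a conformal bijection onto the strip, proved by the same boundary-injectivity argument. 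With that adjustment, and with $\infty$ handled as a regular point where $F(z)=z-a/z+\dots$ is analytic and injective, the remaining steps go through as you describe.
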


In the theorem above, $\Sigma_\infty \subset \mathbb{R}$ is the set of points $x \in \mathbb{R}$ where $F$ does not admit a finite analytic extension to a neighbourhood of $x$. By definition, $\Sigma_\infty$ is a closed subset of the real line.
The proof of Theorem \ref{characterizations-of-one-component-functions2} is essentially the same as that of Theorem \ref{characterizations-of-one-component-functions}. We leave the details to the reader.

In this section, we prove an orbit counting theorem for doubly parabolic one component inner functions (dp1c inner functions).  In order to apply the machinery developed in Section \ref{sec:orbit-counting}, we work with the first return map.

\subsection{Expansion}

The lemma below says that parabolic inner functions are expanding on the real line:

\begin{lemma}
Suppose $F$ is a  parabolic inner function, viewed as a map of the upper half-plane to itself, with a parabolic fixed point at infinity. Then, $F'(x) > 1$ on $\mathbb{R}$.
\end{lemma}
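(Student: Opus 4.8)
The statement to prove is that a parabolic inner function $F:\mathbb{H}\to\mathbb{H}$ with Denjoy--Wolff point at $\infty$ satisfies $F'(x)>1$ for all $x\in\mathbb{R}$. The plan is to exploit the fact that a holomorphic self-map of $\mathbb{H}$ with no interior fixed point has positive real part (up to translation) and that its derivative on the boundary is a ratio of harmonic-type quantities controlled by a Herglotz/Nevanlinna representation. Concretely, since the Denjoy--Wolff point is at infinity and $F$ has no fixed point in $\mathbb{H}$, the function $z\mapsto F(z)-z$ maps $\mathbb{H}$ into the closed upper half-plane (this is the standard consequence of the Julia--Wolff--Carathéodory theory: iterates $F^{\circ n}$ escape to $\infty$, which forces $\im F(z)\ge \im z$), and hence admits a Herglotz representation
$$
F(z) = z + b + \int_{\mathbb{R}} \frac{1+tz}{t-z}\, d\nu(t),
$$
with $b\in\mathbb{R}$ and $\nu\ge 0$ a (possibly zero) finite positive measure, where in the doubly- or singly-parabolic cases $\nu$ is nontrivial.

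First I would differentiate this representation. For $x\in\mathbb{R}$ a point of analyticity of $F$ (equivalently $x\notin\Sigma_\infty$; at points of $\Sigma_\infty$ the angular derivative is $+\infty$ and the inequality is vacuous or trivial), the measure $\nu$ is absolutely continuous near $x$ with a smooth density, and differentiating gives
$$
F'(x) = 1 + \int_{\mathbb{R}} \frac{1+t^2}{(t-x)^2}\, d\nu(t).
$$
Since the integrand $\frac{1+t^2}{(t-x)^2}$ is strictly positive wherever $d\nu>0$, and since $\nu$ is not the zero measure (a parabolic $F$ cannot be a translation $z\mapsto z+b$, as that has $f'(p)\neq$ the required form — indeed translations correspond to $f$ being Möbius with parabolic fixed point, which is excluded for genuine parabolic inner functions of the type considered, or more simply: if $\nu\equiv 0$ then $F$ is a Möbius map and the angular derivative at $\infty$ would not be $1$ in the disk model unless $F$ is the identity), the integral is strictly positive, giving $F'(x)>1$. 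I would phrase the non-vanishing of $\nu$ carefully: if $F(z)=z+b$, then $f=M^{-1}\circ F\circ M$ is an elliptic/parabolic Möbius transformation whose boundary derivative at the Denjoy--Wolff point equals $1$ only in the degenerate identity case, contradicting that $F$ is a nonconstant parabolic inner function with the stated expansions.

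Alternatively — and this may be cleaner to write — I would deduce the inequality directly from the Julia--Carathéodory inequality together with strict contraction: the Schwarz--Pick lemma on $\mathbb{H}$ gives $|F'(x)|\ge$ something, but one should instead use that $F$ maps $\mathbb{H}$ strictly inside itself unless $F$ is Möbius, so for the parabolic (non-Möbius) case the map $z\mapsto F(z)-z$ is a nonconstant self-map of $\overline{\mathbb{H}}$, and the boundary Schwarz lemma / Löwner-type estimate forces $\frac{d}{dx}(F(x)-x)>0$ wherever $F$ extends analytically to $\mathbb{R}$. The main obstacle I anticipate is the bookkeeping at boundary points: one must be careful that "$F'(x)$" means the (possibly infinite) angular derivative, verify that the Herglotz density is genuinely absolutely continuous and differentiable at points $x\in\mathbb{R}\setminus\Sigma_\infty$ (which follows from analytic extension of $F$ there), and handle the points of $\Sigma_\infty$ separately, where $F'=+\infty>1$ by the expansion results of Theorem~\ref{characterizations-of-one-component-functions2}(d). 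Everything else is a short computation once the Herglotz representation for $F(z)-z$ is in place.
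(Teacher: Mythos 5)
Your proposal follows essentially the same route as the paper: write $F$ via its Nevanlinna--Herglotz representation on the upper half-plane, observe that the parabolicity of the fixed point at infinity forces the linear coefficient to equal $1$ (the paper phrases this as $\alpha=\lim_{t\to\infty}F'(it)=1$, which is exactly the Julia--Wolff--Carath\'eodory input you invoke), differentiate under the integral, and conclude $F'(x)=1+\int_{\mathbb{R}}\frac{1+t^2}{(t-x)^2}\,d\nu(t)\ge 1$. So the core of the argument matches.

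Two details in your write-up are off, though neither is fatal. First, for an inner function the representing measure $\nu$ is \emph{singular}; near a point $x$ where $F$ extends analytically across $\mathbb{R}$ with real values, $\nu$ simply has no mass (since $\im F=0$ there), rather than being ``absolutely continuous with a smooth density.'' Differentiation under the integral is justified because the integrand is smooth in $x$ for $t$ bounded away from $x$, not by any regularity of $\nu$. Second, your argument that $\nu\not\equiv 0$ does not work as stated: the translation $F(z)=z+b$ with $b\in\mathbb{R}\setminus\{0\}$ \emph{is} a parabolic self-map of $\mathbb{H}$ with Denjoy--Wolff point at infinity and angular derivative exactly $1$ there, so no contradiction with the multiplier arises. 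The correct way to exclude this degenerate case (which the paper itself silently ignores, writing only that the strict inequality ``is clear'') is to note that in the intended applications $F$ is doubly-parabolic, i.e.\ $F(z)=z-a/z+\dots$ with $a>0$, and $a=\int_{\mathbb{R}}(1+t^2)\,d\nu(t)$, so $\nu\ne 0$; for a translation the conclusion $F'>1$ genuinely fails and only $F'\equiv 1$ holds.
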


\begin{proof}
An inner function, viewed as self-map of the upper half-plane, has the form
$$
F(z) = \alpha z + \beta + \int_{\mathbb{R}} \frac{1 + zw}{w-z} d\mu(w),
$$
for some constants $\alpha > 0$, $\beta \in \mathbb{R}$ and finite positive singular measure $\mu$ on the real line, e.g.~see \cite{tsuji}.
Differentiating, we get
\begin{equation}
\label{eq:tsuji-derivative}
F'(z) = \alpha + \int_{\mathbb{R}} \frac{w^2+1}{(w-z)^2} d\mu(w).
\end{equation}
From the above representation, it is readily seen that $\alpha = \lim_{t \to \infty} F'(it)$. For $F$ to have a parabolic fixed point at infinity, we must have $\alpha = 1$.
Since the integrand in (\ref{eq:tsuji-derivative}) is positive, $F'(z) > 1$ for any $z \in \mathbb{R}$.
\end{proof}

\subsection{Markov Partitions}

Let $F$ be a dp1c inner function.
The set $F^{-1}(\infty)$ partitions $(\mathbb{R} \cup \{ \infty \}) \setminus \Sigma$ into countably many intervals $\{I_i\}_{i \in \Lambda}$ with $F(I_i) = \mathbb{R}$. The collection of intervals $\{I_i\}_{i \in \Lambda}$ is finite if and only if $F$ is a finite Blaschke product.
We refer to the collection $\{I_i\}_{i \in \Lambda}$ as the {\em basic Markov partition} for $F: \mathbb{R} \cup  \{ \infty \} \to \mathbb{R} \cup  \{ \infty \}$.

Suppose $I_- = (-\infty, p_1^-)$ and $I_+ = (p_1^+, \infty)$ are the unbounded intervals in $\Lambda$. As the dynamics of $F$ is repelling from $\infty$ in both directions, one can construct inverse orbits
$$
 \dots < p_3^- < p_2^- < p_1^-
 \qquad \text{and} \qquad   p_1^+ < p_2^+ < p_3^+ < \dots
$$
We refine the Markov partition $\Lambda$ by subdividing $I_+$ and $I_-$ into 
countably many intervals:
$$
I_+ = \bigcup_{n = 1}^\infty J^+_n, \qquad I_{-} = \bigcup_{n = 1}^\infty J^-_{n},
$$
where $J_n^{\pm}$, $n\in\N$, is the segment between $p_n^{\pm}$ and $p_{n+1}^{\pm}$.
By construction, $F$ maps 
\begin{itemize}
\item $J_{n+1}^{\pm}$ onto $J_n^{\pm}$ for any $n \ge 1$,
\item $J_1^+$ onto $(-\infty, p_{1})$,
\item $J_1^-$ onto $(p_{-1}, \infty)$,
\item each bounded basic interval $I_i \subset (p_1^-, p_1^+)$ onto $\mathbb{R}$.
\end{itemize}
Since $\infty$ is a doubly parabolic fixed point, $\diam J^{\pm}_n \asymp 1/\sqrt{n}$ and is located $\asymp \sqrt{n}$ away from the origin.

For $N \ge 0$, define
$
X_N = [p_{N+1}^-, p_{N+1}^+].
$
Given a bounded subset $B$ of the real line, we choose $N \ge 1$ sufficiently large so that $B \subset X_N$. From now on, we simply write $X$ instead of $X_N$.

\subsection{The First Return Map}

We define the {\em first return map} $\hat F: X \to X$  as the first iterate of $F$ which lands in $X$, i.e.~
$$
\hat F(x):= F^{\circ N(x)}(x),
$$ 
where $N(x) \ge 1$ is the smallest positive integer such that $F^{\circ N(x)}(x) \in X$. (Since every point in $\mathbb{R} \setminus X$ eventually maps to $J_1^{\pm}$ under the dynamics of $F$, the first return map is well-defined outside the measure zero set $\Sigma_\infty \subset X$. In fact, any doubly parabolic Blaschke product is recurrent by \cite[Theorem 4.2]{doering-mane}.)

We may view $\hat{F} : X \to X$ as a GDMS with the tiles $J_1^{\pm}, J_2^{\pm}, \dots J_N^{\pm}$ and $[p^-_1, p^+_1]$ by splitting $X$ into intervals that map univalently onto one of the tiles under $\hat{F}$. We leave it to the reader to check that the associated incidence matrix is finitely primitive.

Let $\ell_X$ be the normalized Lebesgue measure on $X$. As $\ell$ is invariant for $F: \mathbb{R} \cup \{\infty\} \to \mathbb{R} \cup \{\infty\}$, $\ell_X$ is invariant for $\hat F: X \to X$. We now prove two lemmas which relate the properties of $F$ and $\hat{F}$\,:

\begin{lemma}
\label{lyapunov1}
The Lyapunov exponents of $\hat{F}: X \to X$ and $F: \mathbb{R} \to \mathbb{R}$ are equal:
$$
\int_{X} \log |{\hat F}'(x)| d\ell = \int_{\mathbb{R}} \log |F'(x)| d\ell.
$$
\end{lemma}

The above lemma follows from a generalization of Kac's Lemma \cite[Proposition 10.2.5]{URM22a} and the identity $\log|(F^{\circ n})'(x)| = S_n \log |F'(x)|$.

\begin{lemma}
\label{lyapunov2}
For any $p \ge 1$, $\hat F: X \to X$ is $(1,\,p)$-integrable if and only if $F: \mathbb{R} \to \mathbb{R}$ is $(1,\,p)$-integrable:
$$
\int_{X} \bigl ( \log |{\hat F}'(x)| \bigr)^p d\ell < \infty \quad \Longleftrightarrow \quad \int_{\mathbb{R}} \bigl ( \log |F'(x)|  \bigr )^p d\ell < \infty.
$$
\end{lemma}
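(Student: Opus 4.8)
The plan is to work with the first‑return time function $N(\cdot)$ on $X$ and the identity $\log|\hat F'(x)|=S_{N(x)}(\log|F'|)(x)=\sum_{j=0}^{N(x)-1}\log|F'|(F^{\circ j}x)$, a sum of the \emph{positive} numbers $\log|F'|(F^{\circ j}x)$ (recall $|F'|>1$ on $\mathbb R$). Fix $p\ge1$. The implication ``$\hat F$ is $(1,p)$-integrable $\Rightarrow F$ is $(1,p)$-integrable'' is the easy one: from the elementary inequality $\sum_j a_j^p\le\bigl(\sum_j a_j\bigr)^p$ (valid for $a_j\ge0$, $p\ge1$) we obtain the pointwise bound $S_{N(x)}\bigl((\log|F'|)^p\bigr)(x)\le\bigl(\log|\hat F'(x)|\bigr)^p$ on $X$; integrating over $X$ and applying the generalization of Kac's lemma underlying Lemma~\ref{lyapunov1}, namely $\int_X S_{N(x)}g(x)\,d\ell(x)=\int_{\mathbb R}g\,d\ell$ for any non-negative measurable $g$ (\cite[Proposition~10.2.5]{URM22a}), with $g=(\log|F'|)^p$, yields $\int_{\mathbb R}(\log|F'|)^p\,d\ell\le\int_X(\log|\hat F'|)^p\,d\ell$, which is what is needed.

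For the converse, assume $\int_{\mathbb R}(\log|F'|)^p\,d\ell<\infty$. Enlarging the index $N$ with $X=X_N$ if necessary (this is harmless, as we only need $X\supset B$), we may assume $|J^{\pm}_n|$ is so small, for every $n>N$, that the covering property of $F$ over the strip $\tilde S$ (Theorem~\ref{characterizations-of-one-component-functions2}) yields a uniform Koebe distortion bound for the relevant inverse branches of $F$ on each $J^{\pm}_n$; together with $\diam J^{\pm}_n\asymp n^{-1/2}$ this gives the parabolic derivative estimate $\log|F'|\asymp 1/n$ on $J^{\pm}_n$. Write $E_k=\{x\in X:N(x)=k\}$, so $X=\bigsqcup_{k\ge1}E_k$ up to a null set. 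On $E_1$ one has $\hat F=F$, hence $\int_{E_1}(\log|\hat F'|)^p\,d\ell\le\int_{\mathbb R}(\log|F'|)^p\,d\ell<\infty$. For $k\ge2$ the Markov structure of $F$ forces $F(x)\in J^{+}_{N+k-1}\cup J^{-}_{N+k-1}$ and then $F^{\circ j}(x)\in J^{\pm}_{N+k-j}$ for $1\le j\le k-1$, so on $E_k$
$$
S_{N(x)}(\log|F'|)(x)=\log|F'(x)|+\sum_{j=1}^{k-1}\log|F'|(F^{\circ j}x)\le\log|F'(x)|+C\!\!\sum_{l=N+1}^{N+k-1}\frac1l\le\log|F'(x)|+C'(1+\log k),
$$
whence $\bigl(\log|\hat F'(x)|\bigr)^p\le 2^{p-1}\bigl(\log^p|F'(x)|+(C')^p(1+\log k)^p\bigr)$. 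Summing the $\log^p|F'|$ part over $k\ge2$ contributes at most $\int_{X\setminus E_1}(\log|F'|)^p\,d\ell\le\int_{\mathbb R}(\log|F'|)^p\,d\ell<\infty$, so the whole matter reduces to the finiteness of $\sum_{k\ge2}(1+\log k)^p\,\ell(E_k)$.

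Here I would use the $F$-invariance of $\ell$. For $k\ge2$ one has $E_k=F^{-1}\bigl(J^{+}_{N+k-1}\cup J^{-}_{N+k-1}\bigr)\cap X$ (mod a null set), and for every $m>N$ the only connected component of $F^{-1}(J^{\pm}_m)$ not contained in $X$ is the whole interval $J^{\pm}_{m+1}$. Since $\ell(F^{-1}(J^{\pm}_m))=|J^{\pm}_m|$ by invariance, we get $\ell(F^{-1}(J^{\pm}_m)\cap X)=|J^{\pm}_m|-|J^{\pm}_{m+1}|$ and therefore
$$
\ell(E_k)=\bigl(|J^{+}_{N+k-1}|-|J^{+}_{N+k}|\bigr)+\bigl(|J^{-}_{N+k-1}|-|J^{-}_{N+k}|\bigr),
$$
a telescoping difference of the positive, decreasing (because $F$ is expanding) sequences $a^{\pm}_k:=|J^{\pm}_{N+k}|\asymp k^{-1/2}$. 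Writing $f(k):=(1+\log k)^p$ and summing by parts,
$$
\sum_{k\ge2}f(k)\bigl(a^{\pm}_{k-1}-a^{\pm}_k\bigr)\;\asymp\;\sum_{k}\bigl(f(k+1)-f(k)\bigr)\,a^{\pm}_k\;\asymp\;\sum_k\frac{\log^{p-1}k}{k^{3/2}}\;<\;\infty ,
$$
the boundary term $(1+\log k)^p a^{\pm}_k$ tending to $0$. Combining the three contributions ($E_1$, the $\log^p|F'|$ sum, and the $(1+\log k)^p$ sum) gives $\int_X(\log|\hat F'|)^p\,d\ell<\infty$, completing the proof.

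The step I expect to be the main obstacle is precisely this tail estimate. The crude bound $\ell(E_k)\lesssim|J^{\pm}_{N+k-1}|\asymp k^{-1/2}$ is \emph{not} summable against $(\log k)^p$, so one genuinely needs the cancellation $|J^{\pm}_{N+k-1}|-|J^{\pm}_{N+k}|\asymp k^{-3/2}$, which relies both on the exact formula for $\ell(E_k)$ coming from the invariance of $\ell$ and on the sharp parabolic asymptotics $\diam J^{\pm}_n\asymp n^{-1/2}$. By contrast, the other ingredients — the $1/n$ derivative estimate on $J^{\pm}_n$, the bounded distortion of inverse branches of $F$ on subintervals of the strip $\tilde S$, and the identification $E_k=F^{-1}(J^{\pm}_{N+k-1})\cap X$ for $k\ge2$ — are routine consequences of Koebe's theorem and the Markov partition described above.
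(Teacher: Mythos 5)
Your proof is correct, and it is not merely a stylistic variant of the paper's argument for the direction $(\Leftarrow)$ --- it repairs it. The paper reduces that direction to the finiteness of $\int_{\mathbb{R}\setminus X}\bigl(\log|(F^{\circ N(x)})'(x)|\bigr)^p\,d\ell$ and evaluates it interval by interval as $\sum_n n^{-1/2}(1+\log n)^p$, which it declares finite; that series diverges (already $\sum_n n^{-1/2}=\infty$), and indeed the integral over $\mathbb{R}\setminus X$ is genuinely infinite, the integrand being $\asymp(\log n)^p$ on $J_n^{\pm}$ while $\ell(J_n^{\pm})\asymp n^{-1/2}$. The obstacle you single out --- that the crude bound $\ell(E_k)\lesssim|J_{N+k-1}^{\pm}|\asymp k^{-1/2}$ is not summable against $(\log k)^p$ --- is thus exactly the gap in the paper's argument, and your fix is the right one: keep the tail contribution inside $X$, identify $E_k=F^{-1}(J_{N+k-1}^{+}\cup J_{N+k-1}^{-})\cap X$, and use the invariance of $\ell$ together with $F^{-1}(J_m^{\pm})\setminus X=J_{m+1}^{\pm}$ to get the telescoping value $\ell(E_k)=\sum_{\pm}\bigl(|J_{N+k-1}^{\pm}|-|J_{N+k}^{\pm}|\bigr)$, whose Abel summation against $(1+\log k)^p$ converges (equivalently, $p_m\asymp\sqrt{m}$ gives $|J_m^{\pm}|-|J_{m+1}^{\pm}|\asymp m^{-3/2}$ directly). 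Your $(\Rightarrow)$ direction, via Kac's formula for nonnegative observables and the superadditivity $\sum_j a_j^p\le\bigl(\sum_j a_j\bigr)^p$, is also clean and gives the sharp inequality in one step, whereas the paper deduces it from $\log|\hat F'|\ge\log|F'|$ on $X$ together with the separately needed (and true) finiteness of $\int_{\mathbb{R}\setminus X}(\log|F'|)^p\,d\ell\asymp\sum_n n^{-1/2-p}$. The supporting claims you defer as routine --- the $1/n$ derivative estimate on $J_n^{\pm}$, bounded distortion of inverse branches, monotonicity of $|J_n^{\pm}|$ from $|F'|>1$ --- all check out.
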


\begin{proof}
For $x \in \mathbb{R} \setminus X$, we continue to use the notation $N(x)$ for the smallest positive integer such that 
$\hat F(x):= F^{\circ N(x)}(x) \in X$. 

Since $\diam J_n^\pm \asymp n^{-1/2}$, the derivative $|(F^{\circ N(x)})'(x)| \asymp n^{1/2}$ for $x \in J_n^{\pm}$, whence
$$
 \int_{J_n^- \cup J_n^+} \bigl ( \log |(F^{\circ n})'(x)|  \bigr )^p d\ell(x) \, \lesssim \, n^{-1/2} \cdot (1 + \log n)^p.
 $$
Summing over $n = 1, 2, \dots$, we obtain
$$
\int_{\mathbb{R} \setminus X} \bigl ( \log |(F^{\circ N(x)})'(x)| \bigr)^p d\ell(x) \, \lesssim \, \sum_{n=1}^\infty n^{-1/2} \cdot (1 + \log n)^p \, < \, +\infty.
$$
From the above equation, the $(\Rightarrow)$ implication is immediate since 
$$
\log |{\hat F}'(x)| \ge  \log |F'(x)|
$$
for every $x \in \mathbb{R}$.
By the elementary inequality $(A+B)^p \le 2^p(A^p+B^p)$ and the invariance of the Lebesgue measure, we get
\begin{equation*}
\int_{X} \bigl ( \log |{\hat F}'(x)| \bigr)^p d\ell  \lesssim
 \int_{X} \bigl ( \log |F'(x)|  \bigr )^p d\ell +  \int_{\mathbb{R} \setminus X} \bigl ( \log |(F^{\circ N(x)})'(x)|  \bigr )^p d\ell,
 \end{equation*}
which proves the  $(\Leftarrow)$ implication.
\end{proof}

\subsection{Proof of the Orbit Counting Theorem~\ref{parabolic-orbit-counting}}

In order to apply the abstract symbolic results developed in Part \ref{shift-space} of this manuscript to the first return map $\hat F: X \to X$, we show that the potential
$$
\psi(x):= - \log |\hat F'(x)|
$$
is  robust under the integrability hypothesis 
\begin{equation}
\label{eq:integrability-hypothesis}
\int_{ \mathbb{R}} \bigl ( \log |F'(z)| \bigl )^{1+\varepsilon} d\ell < \infty, \qquad \varepsilon > 0.
\end{equation}
In view of Lemma \ref{lyapunov2}, the assumption (\ref{eq:integrability-hypothesis}) is equivalent to the potential $\psi$ being $(1,1+\varepsilon)$-summable. 
From formula (\ref{eq:tsuji-derivative}), it is clear that $\psi$ is negative as $X \subset \mathbb{R}$ is compact. An argument similar to the one in Section \ref{sec:some-properties-of-the-geometric-potential} shows that $\psi$ is level 1 Lipschitz continuous (and hence  level 1 $\alpha$-H\"older continuous for any $0 < \alpha < 1$).

Since $\psi$ is level 1 H\"older continuous and summable, the conformal and equilibrium measures for $\psi$ are unique by Theorem \ref{rpf-symbolic-dynamics}. As $\ell_X$ is a conformal measure for $\psi$ (with eigenvalue 1), $m_\psi = \ell_X$.
Since the equilibrium measure $\mu_\psi$ is the unique $\hat{F}$-invariant measure on $X$ that is absolutely continuous with respect
to $m_\psi$, we also have $\mu_\psi = \ell_X$. In particular, the eigenfunction $\rho_\psi = \mu_\psi/m_\psi = {\bf 1}$ and the potential $\psi$ is centered. Summarizing, we have proved that $\psi$ is a normal $(1, 1+\varepsilon)$-summable potential.

\paragraph*{D-Genericity.}

According to \cite[Theorem 9.7]{PU}, parabolic dynamical systems are automatically D-generic. For convenience of the reader, we give a brief sketch of the argument in the present setting: for $n \ge 2$,
let $q_n \in J_{n-1}^+$ be the unique periodic point of period $n$ with itinerary
$$
J_{n-1}^+ \, \to \, \dots \to J_1^{+} \, \to \, J_1^{-} \, \to \, J_n^+ \, \to \, J_{n-1}^+.
$$
Inspection shows that the logarithms of the multipliers $$L_n = \log |(F^{\circ n})'(q_n)|$$ satisfy
$$
L_{n+1} - L_n \to 0, \qquad L_n \to \infty, \qquad \text{as } n \to \infty,$$ so they cannot be contained in a discrete subgroup of $\mathbb{R}$. It remains to notice that any periodic orbit of $F: \mathbb{R} \to \mathbb{R}$ which passes through $X$ can be viewed as a periodic orbit of the induced map $\hat{F} : X \to X$ with the same multiplier.

\paragraph*{Conclusion.} Applying Theorem \ref{orbit-counting-tdf} to the induced map $\hat{F} : X \to X$ and using Lemmas \ref{lyapunov1} and \ref{lyapunov2}, we get
$$
\lim_{T \to \infty} \frac{n_{\widehat F:\, X \to X}(x, T, B)}{e^T} \, = \, \frac{\ell_X(B)}{\int_{X} \log|\hat{F}'(z)| d\ell_X} \, = \, \frac{\ell(B)}{\int_{\mathbb{R}} \log|F'(z)| d\ell},
$$
for $x \in X$ outside a Lebesgue measure zero set of points that do not have unique codings.
Replacing $n_{\widehat F:\, X \to X}(x, T, B)$ with $n_{F:\, \mathbb{R} \to \mathbb{R}}(x, T, B)$, we see that
Theorem \ref{parabolic-orbit-counting} holds for a.e.~$x \in X$.
 
Recalling that $X = X_N$ and taking $N \to \infty$ shows that Theorem \ref{parabolic-orbit-counting} holds for a.e.~$x \in \mathbb{R}$. Approximating a general point $x \in \mathbb{R}$ by a sequence of points $x_n$ for which the conclusion of Theorem \ref{parabolic-orbit-counting} is valid and using Koebe's Distortion Theorem as in 
Section \ref{sec:weak-mixing} shows that Theorem \ref{parabolic-orbit-counting} holds for all $x \in \mathbb{R}$.

\appendix

\section[Appendix. Continuity of Transfer Operators]{Continuity of Transfer Operators}
\label{sec:continuity-of-sp-transfer-operators}

In this appendix, $q \ge 0$ will a positive real number and $0 \le \varepsilon < 1$.
We show that if $\psi$ is a normal $(1, q+\varepsilon)$-summable potential, then the modified transfer operator $s \to \mathcal L_{s,q}$ given by
(\ref{eq:Lsp-def}) defines a $C^\varepsilon_{\loc}$ function from $\overline{\mathbb{C}_1^+}$ to $\mathcal B(C^\alpha(E_A^\infty))$. We will often abbreviate $p = q + \varepsilon$.

The proof is not difficult but is somewhat tedious because one has to deal with the $C^{\varepsilon}$ and $C^\alpha$ H\"older norms, both of which consist of two pieces (the $L^\infty$ norm and the H\"older variation). 

\subsection{\texorpdfstring{Some Properties of the Function $x \mapsto G(x) = x^p e^{-sx}$}{Some properties of the Function G(x) = x\textasciicircum p e\textasciicircum \{-sx\}}}

Fix a real number $c > 0$. Loosely speaking, the following lemma says that for $x \in [c, \infty)$ and $s \in \overline{\mathbb{C}_1^+}$,
the function 
$G(x) = x^p e^{-sx}$ behaves similarly to $G_0(x) = e^{-sx}$\,:

\begin{lemma}
\label{exp-bounds}
 The function $G(x,s) = x^p e^{-sx}$ enjoys the following properties on $[c,\infty) \times 
\overline{\mathbb{C}_1^+}$\,:

\begin{enumerate}
\item 
If $1 \le \sigma_1 \le \sigma_2$ and $x_1 \le x_2$ then $G(x_2, \sigma_2) \lesssim G(x_1, \sigma_1).$

\item $|G_x(x,s)| \lesssim |s| \cdot |G(x,s)|.$

\item $|G_s(x,s)| \lesssim x \cdot |G(x,s)|.$

\item $|G_{xs}(x,s)| \lesssim x \cdot |s| \cdot |G(x,s)|.$
\end{enumerate}
The implicit constants depend on the parameters $p \ge 0$ and $c > 0$.
\end{lemma}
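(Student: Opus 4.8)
The plan is to reduce everything to elementary one-variable calculus estimates for $x^p$, $e^{-sx}$ and their derivatives, exploiting that $\re s \ge 1$ and $x$ is bounded below by $c > 0$. Throughout, write $G(x,s) = x^p e^{-sx}$ and note $|G(x,s)| = x^p e^{-(\re s)x}$, so the modulus only depends on $x$ and $\sigma := \re s \ge 1$.

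First I would dispose of (2)--(4), which are the soft part. Differentiating directly,
$$
G_x = (p x^{-1} - s)\, G, \qquad G_s = -x\, G, \qquad G_{xs} = -x\, G_x - G = \bigl(-p + sx - 1\bigr) G \cdot \tfrac{1}{1} \ \text{(rewritten)},
$$
so in each case the prefactor is a polynomial in $x$, $x^{-1}$ and $s$. For (2) we have $|G_x| \le (p x^{-1} + |s|)|G| \le (p c^{-1} + 1)|s|\,|G|$ since $|s| \ge \re s \ge 1$; for (3) the identity $G_s = -xG$ is exact, so the implicit constant is $1$; for (4), $G_{xs} = -x G_x - G$ gives $|G_{xs}| \le x|G_x| + |G| \lesssim x|s||G| + |G| \lesssim x|s||G|$, again using $x \ge c$ and $|s| \ge 1$. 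So the implicit constants depend only on $p$ and $c$, as claimed.

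The main obstacle is (1): monotonicity of $|G(x,\sigma)| = x^p e^{-\sigma x}$ jointly in $x$ and $\sigma$ on $[c,\infty)\times[1,\infty)$, which is genuinely false without the factor implicit in ``$\lesssim$'' (the function $x \mapsto x^p e^{-x}$ increases up to $x = p$). The strategy is to absorb the non-monotone initial stretch into a constant. Concretely: for $x \ge 1$ (say), $x^p e^{-\sigma x} = x^p e^{-(\sigma-1)x} e^{-x} \le x^p e^{-x}$ since $\sigma \ge 1$ and $x > 0$, and $x^p e^{-x}$ attains its maximum $M_p := \sup_{x > 0} x^p e^{-x} = p^p e^{-p}$; meanwhile $x_1^p e^{-\sigma_1 x_1} \ge c^p e^{-\sigma_1 c}$ is \emph{not} bounded below uniformly in $\sigma_1$, so a crude global bound will not do. Instead I would argue as follows. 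Write $G(x_2,\sigma_2)/G(x_1,\sigma_1) = (x_2/x_1)^p \exp\bigl(\sigma_1 x_1 - \sigma_2 x_2\bigr)$. Since $x_1 \le x_2$ and $\sigma_1 \le \sigma_2$ we have $\sigma_1 x_1 - \sigma_2 x_2 \le \sigma_1 x_1 - \sigma_1 x_2 = -\sigma_1(x_2 - x_1) \le -(x_2-x_1)$, so the ratio is $\le (x_2/x_1)^p e^{-(x_2-x_1)}$. Now bound $(x_2/x_1)^p \le (1 + (x_2-x_1)/c)^p$ using $x_1 \ge c$, and observe that $t \mapsto (1 + t/c)^p e^{-t}$ is bounded on $[0,\infty)$ by a constant $C(p,c)$ (it tends to $0$ at infinity and is continuous). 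Hence $G(x_2,\sigma_2) \le C(p,c)\, G(x_1,\sigma_1)$, which is exactly (1).

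I expect the only subtlety to be keeping track that the required inequality $\sigma_1 \ge 1$ (rather than merely $\sigma_1 \ge 0$) is what makes the exponential gain $e^{-(x_2-x_1)}$ available to beat the polynomial blow-up of $(x_2/x_1)^p$; this is the one place where $\overline{\mathbb{C}_1^+}$, as opposed to $\overline{\mathbb{C}_0^+}$, is genuinely used. Everything else is bookkeeping, and I would present (1) first (as the load-bearing estimate) and then (2)--(4) as short consequences of direct differentiation.
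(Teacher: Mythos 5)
Your proof is correct. The paper actually states Lemma \ref{exp-bounds} without proof (it passes directly to Corollary \ref{exp-bounds2}, which invokes Properties 1--4), so there is nothing to compare against; your argument is the natural way to fill that gap. The load-bearing step is indeed (1), and your ratio argument handles it cleanly: the bound $\sigma_1 x_1 - \sigma_2 x_2 \le -\sigma_1(x_2-x_1) \le -(x_2-x_1)$ uses exactly the hypotheses $\sigma_1 \le \sigma_2$ and $\sigma_1 \ge 1$, and the function $t \mapsto (1+t/c)^p e^{-t}$ is bounded on $[0,\infty)$, giving a constant depending only on $p$ and $c$. Parts (2)--(4) follow from the identities $G_x = (px^{-1}-s)G$, $G_s = -xG$, $G_{xs} = -xG_x - G$ together with $x \ge c$ and $|s| \ge \re s \ge 1$, exactly as you wrote.
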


\begin{corollary}
\label{exp-bounds2}
Suppose $(x_1, s_1), (x_2, s_2) \in (c,\infty) \times \overline{\mathbb{C}_1^+}$.

{\em (i)}  If  $x_{\min} = \min(x_1,x_2)$, then
$$
\Delta_x  \, = \,  |G(x_2, s) - G(x_1,s)| \, \lesssim \, G(x_{\min}, s) \cdot |x_2 - x_1| \cdot |s|.
$$

{\em (ii)}  If $\sigma_{\min} = \min(\re s_1, \re s_2)$, then
$$
\Delta_s  \, = \,  |G(x, s_2) - G(x,s_1)| \, \lesssim \, G(x, \sigma_{\min}) \cdot x \cdot |s_2 - s_1|.
$$

{\em (iii)} With $x_{\min}$ and $\sigma_{\min}$ above, we have
\begin{align*}
\Delta_{xs} & \, = \, |G(x_2, s_2) - G(x_1,s_2) - G(x_2, s_1) + G(x_1,s_1)| \\
& \, \lesssim \, G(x_{\min}, \sigma_{\min}) \cdot  |x_2 - x_1| \cdot \max(|s_1|, |s_2|) \cdot \min \bigl (1, |s_2 - s_1| \cdot \max(x_1, x_2) \bigr ).
\end{align*}
\end{corollary}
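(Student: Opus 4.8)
The plan is to deduce all three estimates from Lemma~\ref{exp-bounds} by integrating along straight line segments. The two points to keep in mind throughout are that $\overline{\mathbb{C}_1^+}$ is convex, so the segments stay inside the region where Lemma~\ref{exp-bounds} applies, and that $|G(x,s)| = x^p e^{-(\re s)x} = G(x,\re s)$, which lets us invoke the monotonicity property (1) of Lemma~\ref{exp-bounds} with a real second argument (here $G(x_{\min},s)$ and $G(x,\sigma_{\min})$ in the statement are understood as $|G(\cdot,\cdot)|$, i.e.\ evaluated at the real part).

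For (i), I would write $G(x_2,s) - G(x_1,s) = \int_{x_1}^{x_2} G_x(t,s)\,dt$ and bound $|G_x(t,s)| \lesssim |s|\,|G(t,s)| = |s|\,G(t,\re s)$ by Lemma~\ref{exp-bounds}(2); since $x_{\min} \le t$ and $\re s \ge 1$, property (1) gives $G(t,\re s) \lesssim G(x_{\min},\re s)$, and the interval of integration has length $|x_2-x_1|$. For (ii), since $G(x,\cdot)$ is entire, I would integrate along the segment $[s_1,s_2]\subset\overline{\mathbb{C}_1^+}$: $G(x,s_2) - G(x,s_1) = \int_{[s_1,s_2]} G_s(x,\sigma)\,d\sigma$, bound $|G_s(x,\sigma)| \lesssim x\,G(x,\re\sigma)$ by Lemma~\ref{exp-bounds}(3), and note that $\re\sigma \ge \sigma_{\min}$ along the segment, so $G(x,\re\sigma) \le G(x,\sigma_{\min})$; the segment has length $|s_2-s_1|$.

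For (iii) I would prove two bounds and take their minimum. First, the triangle inequality gives $\Delta_{xs} \le |G(x_2,s_2)-G(x_1,s_2)| + |G(x_2,s_1)-G(x_1,s_1)|$, and part (i) applied to each term, together with $G(x_{\min},\re s_i) \le G(x_{\min},\sigma_{\min})$, yields $\Delta_{xs} \lesssim G(x_{\min},\sigma_{\min})\,|x_2-x_1|\,\max(|s_1|,|s_2|)$ --- this is the ``$1$'' alternative inside the $\min$. Second, parametrizing $\sigma_u = s_1 + u(s_2-s_1)$ and iterating the two one-variable fundamental theorems of calculus gives
\[
\Delta_{xs} \;=\; \left| \int_0^1 \!\!\int_{x_1}^{x_2} G_{xs}(t,\sigma_u)\,(s_2-s_1)\;dt\,du \right| ,
\]
and Lemma~\ref{exp-bounds}(4) bounds $|G_{xs}(t,\sigma_u)| \lesssim t\,|\sigma_u|\,G(t,\re\sigma_u)$; using $t \le \max(x_1,x_2)$, $|\sigma_u| \le \max(|s_1|,|s_2|)$ (a point on a segment has modulus at most that of the larger endpoint), and $G(t,\re\sigma_u) \le G(x_{\min},\sigma_{\min})$ by property (1) produces the ``$|s_2-s_1|\max(x_1,x_2)$'' alternative. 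Combining gives the stated $\min$.

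I do not expect a real obstacle here: all the analytic content sits in Lemma~\ref{exp-bounds}, and the work is the bookkeeping of which real part or modulus dominates along each segment, plus the repeated use of monotonicity (1) to replace $t$ by $x_{\min}$ and $\re\sigma$ by $\sigma_{\min}$. The only point needing a moment's care is that the integration paths in the $s$-variable lie in $\overline{\mathbb{C}_1^+}$, which is immediate from convexity.
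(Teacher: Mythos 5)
Your proposal is correct and follows essentially the same route as the paper: both parts deduce everything from Lemma~\ref{exp-bounds} via the fundamental theorem of calculus along the segments $[x_1,x_2]$ and $[s_1,s_2]$, using monotonicity (Property 1) to replace the integration variable by $x_{\min}$ and $\sigma_{\min}$, and both obtain (iii) as the minimum of a coarse bound (term-by-term via part (i)) and a fine bound (iterated FTC with $G_{xs}$ and Property 4). Your direct pointwise bound on the integrand is if anything slightly cleaner than the paper's mean-value phrasing, since $G_x(\cdot,s)$ is complex-valued.
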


\begin{proof}
(i, ii) By the fundamental theorem of calculus, $\Delta_x = \bigl | \int_{x_1}^{x_2} G_x(x,s) dx \bigr |$. By the mean value theorem, there exists $x^* \in [x_1, x_2]$ for which
$$
\Delta_x \, = \, |G_x(x^*, s)(x_2 - x_1) | \, \le \, |G_x (x^*, s)| \cdot |x_2 - x_1|,
$$
Part (i) now follows from Properties 1 and 2. 
The proof of (ii) is similar.

(iii) To prove the estimate with the minimum replaced by 1, we write 
$$
\Delta_{xs} = \int_{x_1}^{x_2} \bigl \{ G_x(x,s_2) - G_x(x,s_1) \bigr \} \, dx.
$$
and proceed as above. On the other hand, the representation 
$$\Delta_{xs} = \biggl | \int_{s_2}^{s_1} \int_{x_2}^{x_1}  G_{xs}(x,s) dx ds \biggr |,$$
 tells us that
$$
\Delta_{xs} = |G_{xs} (x^*, s^*)| \cdot |x_2 - x_1| \cdot |s_2 - s_1|,
$$
for some $x^* \in [x_1, x_2]$ and $s^* \in [s_1,s_2]$. The estimate with the minimum replaced by $|s_2 - s_1| \cdot \max(x_1, x_2)$ now follows from Properties 1 and 4.
\end{proof}

\subsection{Proof of Boundedness}

We first show that if $\psi$ is a normal (1,\,$p$)-summable potential on $E_A^\infty$, then for any $s \in \overline{\mathbb{C}^+_1}$, the linear operator $\mathcal L_{s,p}$ is bounded on $C^\alpha(E_A^\infty)$. For this purpose, we examine the behaviour of $ \psi^p e^{s\psi}$ on a cylinder $[a] \subset E^\infty_A$:

\begin{lemma}
\label{spe-holder-estimate}
For any $p \ge 0$, $s \in \overline{\mathbb{C}^+_1}$ and normal $(1,p)$-summable potential $\psi$, we have 
\begin{equation}
\label{eq:spe-holder-estimate}
\| \psi^p e^{s\psi} \circ a \|_\alpha \le C(s, p, \psi) \cdot \sup_{[a]} |\psi^p e^{s\psi}|, \qquad a \in E.
\end{equation}
\end{lemma}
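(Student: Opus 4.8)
The plan is to split the target norm as $\|\psi^p e^{s\psi}\circ a\|_\alpha = \|\psi^p e^{s\psi}|_{[a]}\|_\infty + v_\alpha(\psi^p e^{s\psi}|_{[a]})$. The sup-norm piece requires nothing at all: $\|\psi^p e^{s\psi}|_{[a]}\|_\infty = \sup_{[a]}|\psi^p e^{s\psi}|$, so it already has the desired form with constant $1$. Hence everything reduces to estimating the H\"older seminorm $v_\alpha(\psi^p e^{s\psi}|_{[a]})$, and the only features of $\psi$ I will use are that it is negative, so $-\psi \ge c := -\sup\psi > 0$ on $E_A^\infty$, and level $1$ H\"older continuous, so $M := \sup_{b\in E} v_\alpha(\psi|_{[b]}) < \infty$; the $(1,p)$-summability plays no role at the level of a single cylinder.

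For the seminorm, fix $\xi \ne \xi'$ in $[a]$ and put $x = -\psi(\xi)$, $x' = -\psi(\xi')$, both $\ge c > 0$. With the convention $\psi^p = |\psi|^p$ (up to a fixed unimodular factor, irrelevant for norms, exactly as in the summability conditions of Lemma~\ref{sp-summability-lemma}), we have $|(\psi^p e^{s\psi})(\xi)| = |G(x,s)|$ and $|(\psi^p e^{s\psi})(\xi) - (\psi^p e^{s\psi})(\xi')| = |G(x,s) - G(x',s)|$, where $G(x,s) = x^p e^{-sx}$ is the function analyzed in Lemma~\ref{exp-bounds}. Level $1$ H\"older continuity gives $|x - x'| = |\psi(\xi) - \psi(\xi')| \le M\, d(\xi,\xi')^\alpha$, and Corollary~\ref{exp-bounds2}(i), with $x_{\min} = \min(x,x') \ge c$, yields
$$
|G(x,s) - G(x',s)| \, \lesssim \, |G(x_{\min},s)| \cdot |x - x'| \cdot |s| \, \lesssim \, |G(x_{\min},s)| \cdot M\,|s| \cdot d(\xi,\xi')^\alpha ,
$$
with implied constant depending only on $p$ and $c$.

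The key point is that $x_{\min}$ is attained at one of the two points: $x_{\min} = -\psi(\xi^\ast)$ with $\xi^\ast \in \{\xi,\xi'\} \subset [a]$, so $|G(x_{\min},s)| = |(\psi^p e^{s\psi})(\xi^\ast)| \le \sup_{[a]}|\psi^p e^{s\psi}|$ --- no monotonicity of $G$ in $x$ is needed, just that $G(x_{\min},\cdot)$ \emph{is} a function value at a point of the cylinder. Feeding this back, dividing by $d(\xi,\xi')^\alpha$, and taking the supremum over $\xi \ne \xi'$ in $[a]$ gives $v_\alpha(\psi^p e^{s\psi}|_{[a]}) \le C' |s|\, M \sup_{[a]}|\psi^p e^{s\psi}|$ with $C'$ depending only on $p$ and $c$; together with the sup-norm bound this proves (\ref{eq:spe-holder-estimate}) with $C(s,p,\psi) = 1 + C' |s|\, M$, a constant independent of $a \in E$ which depends on $s$ only through $|s|$ and on $\psi$ only through $c$ and $M$. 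The only thing requiring any care is matching the \emph{level $1$} nature of the hypothesis --- we control the oscillation of $\psi$, and hence of $G(-\psi,s)$, only inside a single cylinder --- with the fact that $\psi$ may be unbounded across $E_A^\infty$; this is precisely what Corollary~\ref{exp-bounds2}(i) is built to absorb, since the dangerous factor $|s|$ there is always tied to the small increment $|x-x'|$ and the whole product is weighed against $|G(x_{\min},s)|$, which the supremum dominates.
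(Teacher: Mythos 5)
Your proof is correct and follows essentially the same route as the paper's: the sup-norm piece is an identity, and the H\"older seminorm is controlled by Corollary \ref{exp-bounds2}(i) together with level 1 H\"older continuity of $\psi$, yielding the constant $C(s,p,\psi) \lesssim 1 + |s|\, v_\alpha^{(1)}(\psi)$. Your explicit remark that $x_{\min}$ is realized at a point of the cylinder (so $|G(x_{\min},s)| \le \sup_{[a]}|\psi^p e^{s\psi}|$) is a detail the paper leaves implicit, but the argument is the same.
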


\begin{proof}
The $L^\infty$ norm
$$
\| \psi^p e^{s\psi} \circ a \|_\infty = \sup_{[a]} |\psi^p e^{s\psi}|
$$
is trivially bounded by the right side of (\ref{eq:spe-holder-estimate}). To estimate the $\alpha$-H\"older variation of
$\psi^p e^{s\psi} \circ a$, we use Corollary \ref{exp-bounds2}(i):
$$
|\psi^p e^{s\psi} (a\omega) - \psi^p e^{s\psi} (a\tau)|  \lesssim |\psi(a\omega) - \psi(a\tau)| \cdot |s| \cdot \sup_{[a]} |\psi^p e^{s\psi}|.
$$
Dividing both sides by $d(\omega, \tau)^\alpha$ and taking the supremum over $\omega, \tau \in E_A^\infty$, we obtain
$$
v_\alpha \bigl ( \psi^p \exp(s\psi) \circ a  \bigr ) \lesssim v_\alpha^{(1)}(\psi) \cdot |s| \cdot \sup_{[a]} |\psi^p e^{s\psi}|.
$$
Absorbing $v_\alpha^{(1)}(\psi) \cdot |s|$ into the implicit constant completes the proof.
\end{proof}

\begin{corollary}
For any $p \ge 0$ and $s \in \overline{\mathbb{C}^+_1}$, the operator $\mathcal L_{s,p}$ is bounded on $C^\alpha(E_A^\infty)$.
\end{corollary}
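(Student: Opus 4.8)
The plan is to bound $\mathcal L_{s,p}$ termwise against the summability hypothesis on $\psi$. Fix $g \in C^\alpha(E_A^\infty)$ and write
$$
\mathcal L_{s,p} g = \sum_{a \in E} \bigl ( \psi^p e^{s\psi} \circ a \bigr ) \cdot \bigl ( g \circ a \bigr ),
$$
where $h \circ a$ denotes precomposition of $h$ with the map $\omega \mapsto a\omega$, understood to vanish on the cylinders $[b]$ with $A(a,b) = 0$. Since each of these zero-extension sets is clopen, the extension contributes to the $C^\alpha$-norm of a summand only through the sup-norm of $h \circ a$ (distinct first letters force $d(\omega,\tau) = 1$), so it is harmless. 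It then suffices to bound $\| ( \psi^p e^{s\psi} \circ a )( g \circ a ) \|_\alpha$ uniformly in $a$ by a quantity summable over $a \in E$: absolute summability of these norms shows at once that the series converges in $C^\alpha(E_A^\infty)$ and yields the operator-norm estimate.

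Next I would record that precomposition with $\omega \mapsto a\omega$ is a $C^\alpha$-contraction. That map is $\tfrac12$-Lipschitz for the metric $d$ (one has $d(a\omega, a\tau) = \tfrac12 d(\omega,\tau)$), so $v_\alpha(g \circ a) \le 2^{-\alpha} v_\alpha(g)$, while $\| g \circ a \|_\infty = \sup_{[a]} |g| \le \| g \|_\infty$; hence $\| g \circ a \|_\alpha \le \| g \|_\alpha$. Combining the algebra property of $C^\alpha(E_A^\infty)$ (Lemma \ref{basic-holder}) with the estimate of Lemma \ref{spe-holder-estimate} then gives, with $C(s,p,\psi)$ independent of $a$,
$$
\bigl \| ( \psi^p e^{s\psi} \circ a )( g \circ a ) \bigr \|_\alpha \le 3 \, \| \psi^p e^{s\psi} \circ a \|_\alpha \, \| g \circ a \|_\alpha \le 3 \, C(s,p,\psi) \Bigl ( \sup_{[a]} |\psi^p e^{s\psi}| \Bigr ) \| g \|_\alpha.
$$

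Finally I would sum over $a \in E$. This is where the hypothesis $s \in \overline{\mathbb{C}_1^+}$ is used: since $\psi < 0$ (normality) and $\re s \ge 1$, we have $|e^{s\psi}| = e^{(\re s)\psi} \le e^{\psi}$ pointwise, so $\sup_{[a]} |\psi^p e^{s\psi}| \le \sup_{[a]} |\psi^p e^{\psi}|$, and the latter is summable over $a$ precisely because $\psi$ is $(1,p)$-summable (cf.\ Lemma \ref{sp-summability-lemma}). Summing the displayed inequality therefore yields
$$
\| \mathcal L_{s,p} g \|_\alpha \le 3 \, C(s,p,\psi) \Bigl ( \sum_{a \in E} \sup_{[a]} |\psi^p e^{\psi}| \Bigr ) \| g \|_\alpha,
$$
which is the asserted boundedness, with an operator norm depending on $s$ and $p$. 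No step here is genuinely hard once Lemma \ref{spe-holder-estimate} is in hand; the only points requiring a little care are the bookkeeping around extending the composed functions by zero outside the admissible cylinders, and the passage — via $\re s \ge 1$ together with $\psi < 0$ — from the $(s,p)$-summability one would want to the $(1,p)$-summability one actually has.
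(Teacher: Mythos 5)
Your proof is correct and follows essentially the same route as the paper: the same decomposition $\mathcal L_{s,p}g = \sum_a (\psi^p e^{s\psi}\circ a)(g\circ a)$, the algebra property of $C^\alpha$ together with Lemma \ref{spe-holder-estimate} for the termwise bound, and Lemma \ref{sp-summability-lemma} for summability over $a$. The extra details you supply (the zero-extension bookkeeping and the explicit reduction from $(s,p)$- to $(1,p)$-summability via $\re s \ge 1$ and $\psi < 0$) are points the paper leaves implicit, and they are handled correctly.
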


\begin{proof}
Suppose $g \in C^\alpha(E_A^\infty)$. We may write
$$
\mathcal L_{s,p} g = \sum_{e \in E} (\psi^p e^{s\psi} \circ a) \cdot (g \circ a).
$$
By Lemma \ref{basic-holder},
$$
\| \mathcal L_{s,p} g \|_\alpha \, \le \, 3 \sum_{a \in E}  \| \psi^p e^{s\psi} \circ a \|_\alpha \cdot \|g \|_\alpha
\, \lesssim \,  \|g \|_\alpha \cdot \sum_{a \in E} \sup_{[a]} |\psi^p e^{s\psi}|,
$$
which is finite by Lemma \ref{sp-summability-lemma}.
\end{proof}

\subsection{Proof of Continuity (Part 1)}
\label{sec:continuity-I}

We first examine the case when $\varepsilon > 0$:

\begin{lemma}
\label{continuity-Ia}
If the potential $\psi$ is $(1,\, q+\varepsilon)$ summable with $q \ge 0$ and $0 < \varepsilon < 1$, then $s \to \mathcal L_{s, q}$ is a $C^\varepsilon_{\loc}$ mapping from $\overline{\mathbb{C}_1^+}$ to $\mathcal B(C(E^\infty_A))$.
\end{lemma}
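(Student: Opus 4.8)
The plan is to prove directly the defining estimate for $C^\varepsilon_{\loc}$ regularity of a Banach-space-valued map: for every compact $K \subset \overline{\mathbb{C}_1^+}$ there is a constant $C_K < \infty$ with
$$
\| \mathcal L_{s_2,q} - \mathcal L_{s_1,q} \|_{\mathcal B(C(E_A^\infty))} \le C_K \, |s_2 - s_1|^\varepsilon, \qquad s_1, s_2 \in K.
$$
Since this family of bounds is exactly what $C^\varepsilon_{\loc}$ means (and it entails ordinary continuity), establishing the displayed inequality is the whole task. First I would write out the difference operator applied to $g \in C(E_A^\infty)$,
$$
(\mathcal L_{s_2,q} - \mathcal L_{s_1,q}) g(\omega) = \sum_{a \in E} \big[ \psi^q e^{s_2\psi} - \psi^q e^{s_1\psi} \big](a\omega) \cdot g(a\omega),
$$
and reduce the operator norm to the series bound
$$
\| (\mathcal L_{s_2,q} - \mathcal L_{s_1,q}) g \|_\infty \le \| g \|_\infty \sum_{a \in E} \sup_{[a]} \big| \psi^q (e^{s_2\psi} - e^{s_1\psi}) \big|.
$$
The same uniform convergence of the series shows each $\mathcal L_{s,q} g$ is continuous, so the operators genuinely act on $C(E_A^\infty)$.

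The heart of the matter is a pointwise estimate on a single cylinder $[a]$. For $y \in [a]$ set $x = -\psi(y)$, so $x \ge c > 0$ since $\psi$ is negative, and write $G(x,s) = x^q e^{-sx}$; the quantity to control is $|G(x,s_2) - G(x,s_1)|$. I would combine two bounds. The trivial one, $|G(x,s_2) - G(x,s_1)| \le x^q (e^{-\sigma_2 x} + e^{-\sigma_1 x}) \lesssim x^q e^{-\sigma_{\min} x}$ with $\sigma_i = \re s_i$ and $\sigma_{\min} = \min(\sigma_1,\sigma_2)$. And the mean-value bound furnished by Corollary \ref{exp-bounds2}(ii), which gives $|G(x,s_2) - G(x,s_1)| \lesssim x^{q+1} e^{-\sigma_{\min} x} \, |s_2 - s_1|$; here one uses that the segment from $s_1$ to $s_2$ stays in $\overline{\mathbb{C}_1^+}$, so all the relevant exponential factors are dominated by $e^{-\sigma_{\min} x}$. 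Interpolating the two bounds with exponents $1-\varepsilon$ and $\varepsilon$ (via $|A| \le B^{1-\varepsilon} D^\varepsilon$ whenever $|A| \le B$ and $|A| \le D$) and noting that $q(1-\varepsilon) + (q+1)\varepsilon = q+\varepsilon$, we obtain
$$
\big| \psi^q (e^{s_2\psi} - e^{s_1\psi}) \big|(y) \, \lesssim \, |s_2 - s_1|^\varepsilon \, x^{q+\varepsilon} e^{-\sigma_{\min} x} \, \le \, |s_2 - s_1|^\varepsilon \, |\psi(y)|^{q+\varepsilon} e^{\psi(y)},
$$
where the last step uses $\sigma_{\min} \ge 1$ and $\psi < 0$ to replace $e^{-\sigma_{\min} x}$ by the smaller $e^{-x} = e^{\psi}$. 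This is the decisive reduction: it converts the $s$-dependent exponential weight into precisely the weight appearing in the $(1,q+\varepsilon)$-summability hypothesis.

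Finally I would take the supremum over $[a]$ — level $1$ Hölder continuity of $\psi$ keeps $\sup_{[a]} |\psi| \asymp \inf_{[a]} |\psi|$, so the supremum of the product is comparable to the product of the two suprema — and sum over $a \in E$. By Lemma \ref{sp-summability-lemma} and the $(1,q+\varepsilon)$-summability of $\psi$, one has $\sum_{a \in E} \sup_{[a]} \big( |\psi|^{q+\varepsilon} e^{\psi} \big) < \infty$, and hence $\| \mathcal L_{s_2,q} - \mathcal L_{s_1,q} \|_{\mathcal B(C(E_A^\infty))} \lesssim |s_2 - s_1|^\varepsilon$, with an implicit constant that is in fact uniform over all of $\overline{\mathbb{C}_1^+}$, so the local statement follows comfortably. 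No deep obstacle arises; the only genuinely delicate points are the interpolation step and the accompanying observation that the real parts stay $\ge 1$ along the segment $[s_1,s_2]$ — everything else is bookkeeping with the hypotheses already assembled in Section 5 and Appendix A.1.
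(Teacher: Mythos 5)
Your argument is correct and follows essentially the same route as the paper: the same reduction to the series $\sum_{a}\sup_{[a]}|\psi^q(e^{s_2\psi}-e^{s_1\psi})|$, the same pair of bounds (trivial versus the mean-value estimate of Corollary \ref{exp-bounds2}(ii)), and your interpolation $|A|\le B^{1-\varepsilon}D^{\varepsilon}$ is just the paper's inequality $\min(1,u)\le u^{\varepsilon}$ in disguise. The only cosmetic difference is that you replace $e^{\sigma_{\min}\psi}$ by $e^{\psi}$ at the end rather than invoking Lemma \ref{sp-summability-lemma} for general $s$; both are fine.
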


\begin{proof}
We show that the quotient
\begin{equation}
\label{eq:pointwise-Ceps}
\frac{\| \mathcal L_{s, q} g - \mathcal L_{t, q} g \|_{C(E^\infty_A)}}{|s-t|^\varepsilon}.
\end{equation}
is uniformly bounded above over all $s \ne t \in \overline{\mathbb{C}_1^+}$. The numerator of (\ref{eq:pointwise-Ceps}) evaluated at a point $\omega \in E^\infty_A$ is
$$
 \biggl | \sum_{a \in E} (\psi^q e^{s\psi}\circ  a -\psi^q e^{t\psi} \circ a)(\omega) \cdot g(\omega) \biggr |,
$$
which by Corollary \ref{exp-bounds2}(ii) and the inequality $\min(1, x) \le x^\varepsilon$, $x \ge 0$ is
\begin{align*}
&\lesssim \| g \|_\infty \cdot \sum_{a \in E}  \sup_{[a]} |\psi^q e^{\sigma_{\min} \psi}| \cdot \min \bigl (1, |(s-t) \psi(a \omega)| \bigr ) \\
& \lesssim \| g \|_\infty \cdot \sum_{a \in E}  \sup_{[a]} |\psi^q e^{\sigma_{\min} \psi}| \cdot  |(s-t) \psi(a \omega)|^\varepsilon \\
& \lesssim \| g \|_\infty \cdot \sum_{a \in E}  \sup_{[a]} |\psi^{q + \varepsilon} e^{\sigma_{\min} \psi}| \cdot  |s-t|^\varepsilon,
\end{align*}
where $\sigma_{\min} = \min(\re s, \re t)$.
In view of Lemma \ref{sp-summability-lemma}, the quotient (\ref{eq:pointwise-Ceps}) is
$$
\lesssim \,  \| g \|_\infty \cdot \sum_{a \in E}  \sup_{[a]} |\psi^{q + \varepsilon} e^{\sigma_{\min} \psi}| \, < \, \infty,
$$
as desired.
\end{proof}

We now treat the case when $\varepsilon = 0$:

\begin{lemma}
\label{continuity-Ib}
If the potential $\psi$ is $(1,\, q)$ summable for some $q \ge 0$, then the mapping $s \to \mathcal L_{s,q}$ is continuous from $ \overline{\mathbb{C}_1^+}$  to  $\mathcal B(C(E^\infty_A))$.
\end{lemma}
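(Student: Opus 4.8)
The plan is to show continuity of $s \mapsto \mathcal L_{s,q}$ into $\mathcal B(C(E_A^\infty))$ by estimating, for fixed $s \in \overline{\mathbb{C}_1^+}$ and a perturbation $t \to s$, the operator norm difference $\| \mathcal L_{s,q} - \mathcal L_{t,q} \|$, and bounding it term-by-term over the alphabet. Evaluating $(\mathcal L_{s,q}g - \mathcal L_{t,q}g)(\omega)$ gives the series $\sum_{a \in E} (\psi^q e^{s\psi} - \psi^q e^{t\psi})(a\omega) \cdot g(a\omega)$, so after pulling out $\|g\|_\infty$ it suffices to control $\sum_{a\in E} \sup_{[a]} |\psi^q e^{s\psi} - \psi^q e^{t\psi}|$.

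First I would invoke Corollary \ref{exp-bounds2}(ii) with $G(x,\cdot) = x^q e^{-(\cdot)x}$ applied to $x = |\psi|$, which yields the pointwise bound
$$
|\psi^q e^{s\psi}(a\omega) - \psi^q e^{t\psi}(a\omega)| \,\lesssim\, |\psi(a\omega)|^{q+1} e^{\sigma_{\min}\psi(a\omega)} \cdot |s - t|,
$$
where $\sigma_{\min} = \min(\re s, \re t) \ge 1$. This is the naive estimate, and it only works directly if $\psi$ is $(1, q+1)$-summable, which is stronger than what we have assumed. To get by with mere $(1,q)$-summability, the key point is to use the ``$\min(1, \cdot)$'' form of Corollary \ref{exp-bounds2}(ii): the difference is also bounded by $|\psi^q e^{\sigma_{\min}\psi}(a\omega)|$ itself (without the extra factor), hence
$$
|\psi^q e^{s\psi}(a\omega) - \psi^q e^{t\psi}(a\omega)| \,\lesssim\, |\psi^q e^{\sigma_{\min}\psi}(a\omega)| \cdot \min\bigl(1, |(s-t)\psi(a\omega)|\bigr).
$$
Since $\psi$ is $(1,q)$-summable (for the relevant $\sigma_{\min} \ge 1$, by Lemma \ref{sp-summability-lemma} the summability is uniform), the series $\sum_a \sup_{[a]} |\psi^q e^{\sigma_{\min}\psi}|$ is a finite dominating series, and $\min(1, |(s-t)\psi|) \to 0$ pointwise as $t \to s$. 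The dominated convergence theorem for series then gives $\sum_a \sup_{[a]} |\psi^q e^{s\psi} - \psi^q e^{t\psi}| \to 0$.

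The one subtlety — and the main obstacle — is that the supremum $\sup_{[a]}$ is taken inside the sum, so I cannot literally apply dominated convergence to a fixed sequence of scalars; rather I need that for each $a$, $\sup_{[a]} |\psi^q e^{s\psi} - \psi^q e^{t\psi}| \to 0$ as $t\to s$, which follows from the uniform continuity of $(x,\sigma)\mapsto x^q e^{-\sigma x}$ on the relevant compact-in-$x$ region combined with $\sup \psi < 0$ and level-$1$ H\"older continuity (so $|\psi|$ ranges over a set bounded away from $0$ on each $[a]$, with controlled oscillation), and that each term is dominated by $2\sup_{[a]} |\psi^q e^{\sigma_{\min}\psi}| \le 2\sup_{[a]}|\psi^q e^{\psi}|$ uniformly for $t$ near $s$ — the latter being summable. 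I should also remark that this argument only produces continuity into $\mathcal B(C(E_A^\infty))$; upgrading the target to $\mathcal B(C^\alpha(E_A^\infty))$, which is what is ultimately needed, requires the companion estimate on the H\"older variation $v_\alpha$ via Corollary \ref{exp-bounds2}(iii), and that refinement is carried out in the subsequent parts of the appendix rather than here.
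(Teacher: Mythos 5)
Your proposal is correct and follows essentially the same route as the paper: dominate the series $\sum_{a}\sup_{[a]}|\psi^q e^{s\psi}-\psi^q e^{t\psi}|$ by the convergent series $\sum_a \sup_{[a]}|\psi^q e^{\sigma_{\min}\psi}|$ furnished by $(1,q)$-summability, observe that each term tends to zero as $t\to s$, and conclude by dominated convergence for series. Your extra care about the supremum sitting inside the sum (and the uniform-on-$[a]$ convergence coming from $\sup\psi<0$ and the bounded oscillation of $\psi$ on cylinders) is exactly the content the paper compresses into the phrase ``the individual terms tend to $0$ uniformly in $\omega$.''
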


\begin{proof}
Fix an $s \in \overline{\mathbb{C}_1^+}$. Evidently, for any $t \in \overline{\mathbb{C}_1^+}$,
\begin{align*}
(\mathcal L_{s, q} g - \mathcal L_{t, q})(\omega) & = 
\sum_{a \in E} (\psi^q e^{s\psi}\circ  a -\psi^q e^{t\psi} \circ a)(\omega) \cdot g(\omega) \\
& \le 2 \| g \|_\infty \cdot \sum_{a \in E}  \sup_{[a]} |\psi^q e^{\sigma_{\min} \psi}|
\end{align*}
is dominated by a convergent series. Since the individual terms tend to 0  uniformly in $\omega$ as $t \to s$, 
$$(\mathcal L_{s, q} g - \mathcal L_{t, q})(\omega) \to 0, \qquad \text{uniformly in }\omega \in E_A^\infty, \qquad \text{as }t \to s,$$ which is the desired continuity statement.
\end{proof}

\subsection{Proof of Continuity (Part 2)}
\label{sec:continuity-II}

As before, we first examine the case when $\varepsilon > 0$:

\begin{lemma} If the potential $\psi$ is $(1,\, q+\varepsilon)$ summable with $q \ge 0$ and $0 < \varepsilon < 1$, then $s \to \mathcal L_{s, q}$ is a $C^\varepsilon_{\loc}$ mapping  from $\overline{\mathbb{C}_1^+}$ to $\mathcal B(C^\alpha(E^\infty_A))$.
\end{lemma}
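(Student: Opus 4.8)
The plan is to upgrade the $C(E_A^\infty)$-valued $C^\varepsilon_{\loc}$ statement of Lemma~\ref{continuity-Ia} to a $C^\alpha(E_A^\infty)$-valued statement by controlling the H\"older \emph{variation} of $\mathcal L_{s,q}g - \mathcal L_{t,q}g$ in addition to its sup norm. Since the sup-norm part was already handled in Lemma~\ref{continuity-Ia}, the remaining task is to show that the quotient
$$
\frac{v_\alpha\bigl(\mathcal L_{s,q}g - \mathcal L_{t,q}g\bigr)}{|s-t|^\varepsilon}
$$
is bounded uniformly for $s,t$ in a compact subset of $\overline{\mathbb{C}_1^+}$, with constant controlled by $\|g\|_\alpha$. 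First I would write, for $\omega,\tau\in E_A^\infty$,
$$
\bigl(\mathcal L_{s,q}g - \mathcal L_{t,q}g\bigr)(\omega) - \bigl(\mathcal L_{s,q}g - \mathcal L_{t,q}g\bigr)(\tau)
= \sum_{a\in E}\Bigl\{ H_a(\omega) g(a\omega) - H_a(\tau) g(a\tau)\Bigr\},
$$
where $H_a = (\psi^q e^{s\psi} - \psi^q e^{t\psi})\circ a$, and split each summand in the usual way as
$$
H_a(\omega)\bigl(g(a\omega)-g(a\tau)\bigr) + g(a\tau)\bigl(H_a(\omega)-H_a(\tau)\bigr).
$$

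For the first piece, $|g(a\omega)-g(a\tau)| \le v_\alpha(g)\, d(a\omega,a\tau)^\alpha \le v_\alpha(g)\, 2^{-\alpha} d(\omega,\tau)^\alpha$, while $|H_a(\omega)|$ is bounded exactly as in Lemma~\ref{continuity-Ia} by (a constant times) $\sup_{[a]}|\psi^{q+\varepsilon}e^{\sigma_{\min}\psi}|\cdot|s-t|^\varepsilon$ using Corollary~\ref{exp-bounds2}(ii) together with the interpolation $\min(1,|u|)\le|u|^\varepsilon$; summing over $a$ gives a bound by $v_\alpha(g)\cdot|s-t|^\varepsilon\cdot\sum_a\sup_{[a]}|\psi^{q+\varepsilon}e^{\sigma_{\min}\psi}|$, which is finite by Lemma~\ref{sp-summability-lemma}. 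The second piece is the genuinely new one: here I need a \emph{mixed} estimate on $H_a(\omega)-H_a(\tau)$ that is simultaneously of order $d(\omega,\tau)^\alpha$ (so it can absorb the division by $d(\omega,\tau)^\alpha$) and of order $|s-t|^\varepsilon$. This is precisely what Corollary~\ref{exp-bounds2}(iii) is designed to supply: with $x_i=\psi(a\cdot)$, the fourth-order difference $\Delta_{xs}$ is dominated by $G(x_{\min},\sigma_{\min})\cdot|\psi(a\omega)-\psi(a\tau)|\cdot\max(|s|,|t|)\cdot\min\bigl(1,|s-t|\max(|\psi(a\omega)|,|\psi(a\tau)|)\bigr)$. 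Using level-1 H\"older continuity of $\psi$, $|\psi(a\omega)-\psi(a\tau)|\le v_\alpha^{(1)}(\psi)\, d(\omega,\tau)^\alpha$; using $\min(1,|u|)\le|u|^\varepsilon$ on the last factor converts it into $|s-t|^\varepsilon\cdot\sup_{[a]}|\psi|^\varepsilon$; and $G(x_{\min},\sigma_{\min})\cdot\sup_{[a]}|\psi|^\varepsilon \lesssim \sup_{[a]}|\psi^{q+\varepsilon}e^{\sigma_{\min}\psi}|$. Dividing by $d(\omega,\tau)^\alpha$, multiplying by $|g(a\tau)|\le\|g\|_\infty$, and summing over $a\in E$ yields a bound by $\|g\|_\infty\cdot|s-t|^\varepsilon\cdot\sum_a\sup_{[a]}|\psi^{q+\varepsilon}e^{\sigma_{\min}\psi}|<\infty$.

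Combining the two pieces gives $v_\alpha(\mathcal L_{s,q}g-\mathcal L_{t,q}g)\lesssim \|g\|_\alpha\,|s-t|^\varepsilon$ with a constant uniform on compact subsets of $\overline{\mathbb{C}_1^+}$ (the implicit constants in Lemma~\ref{exp-bounds} depend only on $p=q+\varepsilon$ and the lower bound $c>0$ on $-\psi$, and $\max(|s|,|t|)$ is bounded on compacta), and together with the $L^\infty$ bound from Lemma~\ref{continuity-Ia} this shows $\|\mathcal L_{s,q}-\mathcal L_{t,q}\|_{\mathcal B(C^\alpha)}\lesssim|s-t|^\varepsilon$, i.e.\ $s\mapsto\mathcal L_{s,q}$ is $C^\varepsilon_{\loc}$ into $\mathcal B(C^\alpha(E_A^\infty))$. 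The main obstacle is purely bookkeeping: making sure that the single application of Corollary~\ref{exp-bounds2}(iii) really delivers \emph{both} powers of $d(\omega,\tau)$ and $|s-t|$ that are needed at once — one factor of $d(\omega,\tau)^\alpha$ comes from the $|x_2-x_1|=|\psi(a\omega)-\psi(a\tau)|$ term via level-1 H\"older continuity, and the factor of $|s-t|^\varepsilon$ from the $\min(1,\,|s-t|\max x_i)$ term via interpolation — and that after extracting $\sup_{[a]}|\psi|^\varepsilon$ from the surviving powers of $\psi$ the residual series is exactly the $(1,q+\varepsilon)$-summability series. The $\varepsilon=0$ case is then handled separately, mirroring Lemma~\ref{continuity-Ib}, by dominated convergence applied to the same $C^\alpha$-valued series.
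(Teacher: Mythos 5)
Your proposal is correct and follows essentially the same route as the paper: the same split of each summand into a piece controlled by $v_\alpha(g)$ together with Corollary~\ref{exp-bounds2}(ii) plus the interpolation $\min(1,|u|)\le|u|^\varepsilon$, and a piece controlled by $\|g\|_\infty$ together with the mixed estimate of Corollary~\ref{exp-bounds2}(iii), level-1 H\"older continuity of $\psi$, and the $(1,q+\varepsilon)$-summability series. The only (inessential) difference is the base point at which $H_a$ and $g$ are frozen in the two cross terms.
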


\begin{proof}
Since we have already proved that $s \to \mathcal L_{s,q}$ is a $C^\varepsilon_{\loc}$ mapping  from $\overline{\mathbb{C}_1^+}$ to $\mathcal B(C(E^\infty_A))$ in Lemma \ref{continuity-Ia}, it remains to estimate the $\alpha$-variation, which amounts to giving an upper bound for
$$
\frac{ \bigl | \sum_{a \in E} (\psi^q e^{s\psi}\circ  a -\psi^q e^{t\psi} \circ a)(\omega) \cdot g(\omega) - (\psi^q  e^{s\psi}\circ  a -\psi^q e^{t\psi} \circ a)(\tau) \cdot g(\tau) \bigr |}
{|s-t|^\varepsilon \cdot d(\omega, \tau)^\alpha},
$$
for a fixed $s \in \overline{\mathbb{C}_1^+}$. Below, we assume $|s - t| \le 1$ so that $|s| \asymp |t|$. We split the numerator of the above expression into two parts $A + B$, which we estimate separately. 

For the first summand, the estimate is similar to the one in the proof of Lemma \ref{continuity-Ia}:
\begin{align*}
A & \le  \sum_{a \in E} \bigl | (\psi^q e^{s\psi}\circ  a -\psi^q e^{t\psi} \circ a)(\tau) \cdot (g(\tau) - g(\omega)) \bigr | \\
& \le \| g \|_\alpha \cdot d(\omega, \tau)^\alpha \cdot \sum_{a \in E} \bigl | (\psi^q e^{s\psi}\circ  a -\psi^q e^{t\psi} \circ a)(\tau) | \\
& \le \| g \|_\alpha \cdot d(\omega, \tau)^\alpha \cdot \sum_{a \in E} |\psi^q e^{\sigma_{\min} \psi}| \cdot   \min \bigl (1, |(s-t) \psi(a \tau)| \bigr ) \\
& \le \| g \|_\alpha \cdot d(\omega, \tau)^\alpha \cdot \sum_{a \in E} \sup_{[a]}  |\psi^q e^{\sigma_{\min} \psi}| \cdot  |(s-t) \psi(a \tau)|^\varepsilon \\
& \lesssim \, \| g \|_\alpha \cdot d(\omega, \tau)^\alpha \cdot |s-t|^\varepsilon \cdot \sum_{a \in E}  \sup_{[a]} |\psi^{q + \varepsilon} e^{s_{\min} \psi}|,
\end{align*}
where as usual, $\sigma_{\min} = \min(\re s, \re t)$.

For the other summand, we use Corollary \ref{exp-bounds2}(iii):
\begin{align*}
B & \le \sum_{a \in E} \bigl |(\psi^q e^{s\psi}\circ  a -\psi^q e^{t\psi} \circ a)(\omega) \cdot g(\omega) - (\psi^q e^{s\psi}\circ  a -\psi^q e^{t\psi} \circ a)(\tau) \cdot g(\omega) \bigr | \\
& \le \|g \|_\infty \cdot \sum_{a \in E} \bigl |(\psi^qe^{s\psi}\circ  a -\psi^q e^{t\psi} \circ a)(\omega)  - (\psi^q e^{s\psi}\circ  a -\psi^q e^{t\psi} \circ a)(\tau) \bigr | \\
& \le \|g \|_\infty \cdot \sum_{a \in E} \sup_{[a]} |\psi^q e^{\sigma_{\min} \psi}| \cdot |s| \cdot |\psi(a\omega) - \psi(a\tau)| \cdot
\min \Bigl (1, \, |s-t| \cdot \sup_{[a]} |\psi| \Bigr ) \\
& \le  |s| \cdot \|g \|_\infty \cdot d(\omega, \tau)^\alpha \cdot v_\alpha^{(1)}(\psi) \cdot |s-t|^{\varepsilon} \cdot \sum_{a \in E}  
\sup_{[a]} |\psi^{q+\varepsilon} e^{s_{\min} \psi}|,
\end{align*}
as desired.
\end{proof}

We now treat the case when $\varepsilon = 0$:

\begin{lemma}
If the potential $\psi$ is $(1,\, q)$ summable for some $q \ge 0$, then the mapping $s \to \mathcal L_{s, q}$ is continuous from $\overline{\mathbb{C}_1^+}$ to $\mathcal B(C^\alpha(E^\infty_A))$.
\end{lemma}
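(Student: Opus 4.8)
The plan is to bootstrap from the two preceding results. By Lemma~\ref{continuity-Ib}, the map $s \mapsto \mathcal L_{s,q}$ is already continuous from $\overline{\mathbb{C}_1^+}$ into $\mathcal B(C(E^\infty_A))$, so it only remains to control the $\alpha$-variation. Fixing $s \in \overline{\mathbb{C}_1^+}$ and restricting to $t \in \overline{\mathbb{C}_1^+}$ with $|s-t| \le 1$ (local continuity at each point is all that is needed), I would show that the $\alpha$-difference quotient of $(\mathcal L_{s,q} - \mathcal L_{t,q}) g$ is $o(1) \cdot \|g\|_\alpha$ as $t \to s$, uniformly over $g$ and over pairs $\omega \ne \tau \in E^\infty_A$.

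Following the split used in the $\varepsilon > 0$ version of this lemma, I would write the numerator $\sum_{a} \bigl[ (\psi^q e^{s\psi}\circ a - \psi^q e^{t\psi}\circ a)(\omega)\, g(\omega) - (\psi^q e^{s\psi}\circ a - \psi^q e^{t\psi}\circ a)(\tau)\, g(\tau) \bigr]$ of the quotient as $A + B$, where $A$ gathers the part in which $g$ carries the increment, so that
$$
A \ \le \ \|g\|_\alpha \cdot d(\omega,\tau)^\alpha \cdot \sum_{a \in E} \bigl| (\psi^q e^{s\psi}\circ a - \psi^q e^{t\psi}\circ a)(\tau) \bigr|,
$$
and $B$ gathers the part in which the difference $\psi^q e^{s\psi} - \psi^q e^{t\psi}$ is evaluated at the two points $a\omega,\, a\tau$. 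For $A$, the $a$-th summand is estimated by combining Corollary~\ref{exp-bounds2}(ii) with the trivial bound $\Delta_s \le |G(x,s)| + |G(x,t)| \lesssim G(x,\sigma_{\min})$ for $x = -\psi(a\tau)$ (valid since $\sigma_{\min} := \min(\re s, \re t) \ge 1$, by Property~1 of Lemma~\ref{exp-bounds}), giving
$$
\bigl| (\psi^q e^{s\psi}\circ a - \psi^q e^{t\psi}\circ a)(\tau) \bigr| \ \lesssim \ \sup_{[a]} |\psi^q e^{\sigma_{\min}\psi}| \cdot \min\!\bigl( 1,\ |s-t| \cdot \textstyle\sup_{[a]} |\psi| \bigr).
$$
For $B$, I would use Corollary~\ref{exp-bounds2}(iii) together with the level~1 H\"older continuity $|\psi(a\omega) - \psi(a\tau)| \le v_\alpha^{(1)}(\psi)\, d(\omega,\tau)^\alpha$, arriving at a bound of the same shape, up to the harmless extra factor $|s|\, v_\alpha^{(1)}(\psi)$.

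The conclusion then follows from dominated convergence for series. Since $\sigma_{\min} \ge 1$ and $\psi < 0$, one has $\sup_{[a]}|\psi^q e^{\sigma_{\min}\psi}| \le \sup_{[a]}|\psi^q e^{\psi}|$, which is summable by the $(1,q)$-summability hypothesis together with Lemma~\ref{sp-summability-lemma}; and because $\psi$ has oscillation at most $2^{-\alpha} v_\alpha^{(1)}(\psi)$ on each cylinder $[a]$ (and $e^\psi$ is summable), $\psi$ is bounded on $[a]$, so $\min\!\bigl( 1,\ |s-t|\sup_{[a]}|\psi| \bigr) \to 0$ as $t \to s$ for each fixed $a$, while staying $\le 1$. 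Hence each summand is dominated, uniformly in $t$ with $|s-t|\le 1$, by $C_s \cdot \sup_{[a]}|\psi^q e^{\psi}|$ and tends to $0$, so the full sums bounding $A$ and $B$ tend to $0$. Combined with Lemma~\ref{continuity-Ib}, this yields $\|\mathcal L_{s,q} - \mathcal L_{t,q}\|_{\mathcal B(C^\alpha(E^\infty_A))} \to 0$ as $t \to s$.

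The only real subtlety — hence the main obstacle — is obtaining an estimate uniform over all pairs $(\omega,\tau)$ simultaneously: the naive bounds from Corollary~\ref{exp-bounds2}(ii),(iii) carry a factor $\sup_{[a]}|\psi|$ that is \emph{not} summable against $\sup_{[a]}|\psi^q e^{\psi}|$ under a mere $(1,q)$-summability assumption (one would need $(1,q+1)$-summability). Replacing that factor by $\min\!\bigl( 1,\ |s-t|\sup_{[a]}|\psi| \bigr)$ is precisely what simultaneously produces a $t$-uniform summable majorant and the termwise decay, which is why the argument goes through at $\varepsilon = 0$ with no loss in the summability exponent.
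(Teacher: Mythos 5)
Your proof is correct and follows essentially the same route as the paper's: the paper also reduces to showing the $\alpha$-difference quotient of $(\mathcal L_{s,q}-\mathcal L_{t,q})g$ tends to $0$, dominating the sum over $a\in E$ by $\|g\|_\alpha\sum_a \sup_{[a]}|\psi^q e^{\sigma_{\min}\psi}|$ for $|t-s|\le 1$ and invoking termwise convergence. Your write-up is merely more explicit than the paper's about why each term tends to $0$ uniformly in $(\omega,\tau)$ and about the role of the factor $\min\bigl(1,|s-t|\sup_{[a]}|\psi|\bigr)$ in avoiding any loss in the summability exponent.
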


\begin{proof}
Fix an $s \in \overline{\mathbb{C}_1^+}$. Since we have already established that $s \to \mathcal L_{s, q}$ is continuous from $\overline{\mathbb{C}_1^+}$ to $\mathcal B(C(E^\infty_A))$ in Lemma \ref{continuity-Ib}, it remains to control the $\alpha$-variation. Namely, we need to check that
\begin{equation*}
\sup_{\omega \ne \tau} \, \frac{(\mathcal L_{s, q} g - \mathcal L_{t, q} g)(\omega) - (\mathcal L_{s, q} g - \mathcal L_{t, q} g)(\tau) }{d(\omega, \tau)^\alpha} \, \to \, 0, \qquad \text{as }\overline{\mathbb{C}_1^+} \ni t \to s.
\end{equation*}
We assume that $|s - t| \le 1$ and write the quotient on left hand side as
 $$
\sum_{a \in E} \, \biggl \{ \frac{(\psi^q e^{s\psi}\circ  a -\psi^q e^{t\psi} \circ a)(\omega) - (\psi^q  e^{s\psi}\circ  a -\psi^q e^{t\psi} \circ a)(\tau)}
{d(\omega, \tau)^\alpha} \cdot  g(\omega)
$$
$$
+ \, (\psi^q  e^{s\psi}\circ  a -\psi^q e^{t\psi} \circ a)(\tau) \cdot \frac{ g(\omega) - g(\tau)}
{d(\omega, \tau)^\alpha} \biggr \}.
$$
As each individual term tends to 0 as $t \to s$ and the sum is dominated by the convergent series
$$
\Bigl ( |s| \cdot \| g \|_\infty \cdot v_\alpha^{(1)}(\psi) + \| g \|_\alpha \Bigr ) \cdot \sum_{a \in E} |\psi^q e^{\sigma_{\min} \psi}|,
$$
the entire sum tends to 0 as $t \to s$, uniformly in $\omega \ne \tau \in E^\infty_A$.
\end{proof}

\subsection*{Acknowledgements}
This research was supported by the Israel Science Foundation (grant no.~3134/21) and the Simons Foundation (grant no.~581668). The authors also wish to thank 
the organizers of the thematic research programme ``Modern holomorphic dynamics and related fields'' at the University of Warsaw for their hospitality.

\bibliographystyle{amsplain}

\end{document}